\newtheorem{theorem}{Theorem}
\newtheorem{corollary}[theorem]{Corollary}
\newtheorem{lemma}[theorem]{Lemma}
\newtheorem{proposition}[theorem]{Proposition}
\newtheorem{assumption}{Assumption}
\newtheorem{example}{Example}
\theoremstyle{definition}
\newtheorem{definition}{Definition}
\newtheorem{remark}{Remark}
\newcommand{\R}{\mathbb{R}}
\newcommand{\N}{\mathbb{N}}
\newcommand{\mK}{\mathcal{K}}
\newcommand{\Ep}{\mathbb{E}}
\renewcommand{\Pr}{\mathbb{P}}
\renewcommand{\hat}{\widehat}
\renewcommand{\tilde}{\widetilde}
\newcommand{\pr}{\mbox{Pr}}
\newcommand{\argmin}{\operatornamewithlimits{argmin}}
\newcommand{\mone}{\textbf{1}}
\newcommand{\corrcoef}{\omega}
\newcommand{\orderomega}{\upsilon}
\newcommand{\RiskboundEuclidcoef}{\gamma}
\newcommand{\interpolatorboundEuclidcoef}{\varepsilon}
\newcommand{\RiskboundEuclidfunction}{g}
\newcommand{\trace}{\mathrm{tr}}
\DeclareMathOperator{\rank}{rank}
\DeclareMathOperator{\rad}{rad}
\title{Benign Overfitting of Non-Sparse High-Dimensional \\ Linear Regression with Correlated Noise}
\author{Toshiki Tsuda$^\dagger$ \and Masaaki Imaizumi$^{\dagger \ddagger}$}
\address{$^\dagger$The University of Tokyo, $^\ddagger$RIKEN Center for Advanced Intelligence Project}
\date{\today, \textit{Contact}: \textit{tsuda-toshiki@g.ecc.u-tokyo.ac.jp, imaizumi@g.ecc.u-tokyo.ac.jp}}
\begin{document}
\maketitle

\begin{abstract}
    We investigate the high-dimensional linear regression problem in the presence of noise correlated with Gaussian covariates. This correlation, known as endogeneity in regression models, often arises from unobserved variables and other factors. It has been a major challenge in causal inference and econometrics. When the covariates are high-dimensional, it has been common to assume sparsity on the true parameters and estimate them using regularization, even with the endogeneity. However, when sparsity does not hold, it has not been well understood to control the endogeneity and high dimensionality simultaneously. This study demonstrates that an estimator without regularization can achieve consistency, that is, benign overfitting, under certain assumptions on the covariance matrix. Specifically, our results show that the error of this estimator converges to zero when the covariance matrices of correlated noise and instrumental variables satisfy a condition on their eigenvalues. We consider several extensions relaxing these conditions and conduct experiments to support our theoretical findings. As a technical contribution, we utilize the convex Gaussian minimax theorem (CGMT) in our dual problem and extend CGMT itself.
\end{abstract}

\section{Introduction}

We consider a high-dimensional linear regression model with correlated noise and the $p$-dimensional true parameter $\theta_0$:
\begin{align*}
    Y_{i}=\langle X_{i}, \theta_{0}\rangle+\xi_{i}, ~~\Ep[X_i \xi_i] \neq 0, ~~ i= 1,...,n,
\end{align*}
where $n \in \N$ is the number of observations, $X_i$ is a $p$-dimensional centered Gaussian vector as observed covariates, $\xi_i$ is a centered Gaussian noise variable, and $Y_i$ is a response variable.
In this model, the noise variable $\xi_i$ is correlated with the covariate $X_i$.
We assume that the dimension $p$ is much larger than the number of observations $n$ ($p \gg n$) and also the true parameter $\theta_0$ does not have sparsity, that is, the $p$ coordinates of $\theta_0$ are not restricted to zero.
In this high-dimensional setting, we adopt an \textit{instrumental variable} framework, that is, assuming that there exists variable $Z_i$ such that $\Ep[Z_i \xi_i] = 0$, we investigate risk of a \textit{ridgeless estimator} under certain conditions.

We can find many real situations where covariates and noise variables are correlated.
For example, when part of covariates is not observed and the effects are included in a noise variable $\xi_i$, the observed covariates $X_i$ and the noise $\xi_i$ are often correlated.
In this situation, several statistical methods are biased as they require the independent property between the covariates $X_i$ and the noise variables $\xi_i$.
This situation is referred to as \textit{endogeneity}, especially in the domain of econometrics.
One of the most well-known solutions for endogeneity is the method of \textit{instrumental variables} by \citep{stock2002survey}, which utilizes a variable $Z_i$ that is uncorrelated with the noise variable but approximates the covariates $X_i$.
Two properties in this regard are called \textit{exclusion restriction} and \textit{relevance restriction} and are fundamental conditions for instrumental variables $Z_{i}$.
This method has been actively studied in a great deal of literature \citep{soderstrom2002instrumental,newey2003instrumental,baiocchi2014instrumental,isaiah2018weak}.

Estimation with instrumental variables has been extensively investigated in the high-dimensional setting as well, associated with the \textit{sparse} setting. 
As data become high-dimensional, the dimension of covariates of instrumental variables becomes larger than the number of observations $n$. 
To handle this situation, one can utilize the sparsity, which assumes that most of $p$ coordinates of the true parameters $\theta_0$ are zero, then estimate a small number of nonzero parameters using lasso-type regularization and its variants.
\citet{fan2014endogeneity} formulate a new generalized method of moments estimator for estimation and model selection with sparse parameters.
\citet{belloni2014high} and \citet{gautier2021high} focus on the case $\theta_{0}$ is (approximately) sparse and utilize a lasso-type regularization or the Dantzig selector in the instrumental variable framework.
\citet{gold2020inference} consider the one-step update approach and provide sufficient conditions  for inference. 
Several works \citep{belloni2010lasso,belloni2012sparse,belloni2017program,chernozhukov2015valid,chernozhukov2018double,belloni2022high,gautier2013pivotal} estimate nuisance parameters to deal with their high dimensionality by introducing a new instrumental variable orthogonal to nuisance parameters. 

In recent years, high-dimensional statistics with non-sparse parameters have been emerging rapidly. 
The establishment of methods for large-scale data, such as modern machine learning, has led to the emergence of many non-sparse data sets and models. 
Among several existing methods, a \textit{ridgeless estimator}, which perfectly fits the observed data without any regularization, has attracted much attention.
As a theoretical analysis for the setup, \citet{belkin2019reconciling} and \citet{hastie2022surprises} analyze the ridgeless estimator of high-dimensional linear regression models without sparsity using random matrix theory.
\citet{bartlett2020benign} utilize the notion of effective ranks of a covariance matrix to show convergence of the ridgeless estimator in high-dimensional linear regression.
These studies have shown that the ridgeless estimator has several advantages over regularized estimators in the high-dimensional setting \citep{dobriban2018high,tsigler2020benign}.
These results have been extended in various applications \citep{bunea2022interpolating,li2022benign,frei2022benign,nakakita2022benign}.
However, despite successive developments, these theories are still restrictive and require independence on noise variables; hence, they are not flexible enough to analyze the instrumental variable framework.

This study investigates a ridgeless estimator in non-sparse high-dimensional regression models with correlated noise.
As a setup, we assume that the data follow a centered Gaussian distribution and model the correlation between covariates and noise variables using instrumental variables. 
We then assess the estimation error of the ridgeless estimator using a projected residual mean squared error (projected RMSE). 
Consequently, we achieve the following two results.
(i) We show that the estimation error possesses an upper bound that is independent of the dimension $p$ of the covariates.
Specifically, this bound can be expressed by (normalized) correlation coefficients and the effective rank of the covariance matrix of the instrumental variable. 
(ii) We specify sufficient conditions on data distributions for which the derived upper bound converges to zero. 
Specifically, the sufficient condition is that covariance matrices of both instrumental variables and auxiliary variables for covariates must have appropriate effective ranks. 
These sufficient conditions are satisfied by several specific covariance matrices.
We derive these results for each case in which instrumental and auxiliary variables comprising the covariates are orthogonal or not.

Our above theoretical results suggest the following implications.
(i) In the correlated noise setting, the error of the ridgeless estimator is independent of the dimension $p$ under the conditions of instrumental variables. 
This means that we can estimate non-sparse high-dimensional parameters under the instrumental variable setting; in other words, benign overfitting occurs.
(ii) In this setting, the covariance of instrumental variables has a critical role in the risk.
Specifically, a covariance matrix of instrumental variables should have a certain number of ranks but also decay to some extent so that an eigenvalue sum does not diverge too quickly. 
Hence, this result aligns with the common idea that instrumental variables should not be weak.

On the technical side, we develop a proof technique for the evaluation of risks using Gaussian comparison inequalities. 
The most-related study \citep{bartlett2020benign} on non-sparse high-dimensional regression relies on matrix concentration inequalities and the leave-one-out method, but these approaches cannot handle the correlated noise in our setting. 
Therefore, we developed a proof using the convex Gaussian minimax theorem (CGMT) \citep{thrampoulidis2015regularized,thrampoulidis2018precise}, which allows a wider range of models. 
Rigorously, we  rewrite the risk of the ridgeless estimator in a minimax optimization problem using a dual form, then analyze it by CGMT. 
This approach was developed by \citet{koehler2022uniform}, and we applied it to the instrumental variable case. 
In addition, we derive a new extended CGMT and develop a method to analyze risk in situations where the covariance matrix is not orthogonal.

\subsection{Notation}
We denote $\|\cdot\|_{p}$ as $\|x\|_{p}=(\sum_{i}|x_{i}|^{p})^{1/p}$.
For a square matrix $A$, we define $\|A\|_{op}$ as an operator norm of $A$.  
For a positive semidefinite matrix $A$, $\|x\|_{A}^{2}:=\langle x,Ax\rangle$ denotes the Mahalanobis (semi-)norm. 
For a set $S \subset \R^p$, we define its radius as $\rad(S):=\sup_{s\in S}\|s\|_{2}$.
$\Sigma^{+}$ denotes the generalized inverse matrix of $\Sigma$.
$N(\mu, \Sigma)$ denotes a multivariate normal distribution with a mean $\mu \in \R^d$ and a symmetric positive definite matrix $\Sigma \in \R^{d \times d}$.
If $\Sigma$ is positive semi-definite with rank $k < d$, $N(0, \Sigma)$ denotes a distribution of $A X'$ where $A \in \R^{d \times k}$, $\Sigma = A A^\top$, and $X' \sim N(0, I)$ is a $k$-dimensional normal variable. 
$\mathbbm{1}\{\cdot\}$ denotes an indicator function.
For $x,x' \in \R, x \vee x' := \max\{x,x'\}.$
For $a,b \in \R$, $\lesssim$ and $ \gtrsim$ mean $a\leq Cb$ and $Ca\geq b$ for some absolute constant $C$, respectively.
For real-valued sequences $\{a_{n}\}_{n \in \N}$ and $\{b_{n}\}_{n \in \N}$, $a_{n}=O(b_{n})$ means $a_{n}/b_{n} \leq C$ for any sufficiently large $n$, $a_{n}=o(b_{n})$ means $a_{n}/b_{n}$ converges to zero as $n \to \infty$,  $a_{n}=\orderomega(b_{n})$ means $a_{n}/b_{n}$ diverges to $\infty$  as $n \to \infty$, and  $a_{n}=\Theta(b_{n})$ means $C_{1}b_{n}\leq a_{n}\leq C_{2}b_{n}$ holds for any sufficiently large $n$, where  $C, C_{1}$, and $C_{2}$ are some absolute constants. 
Let $\overset{\bold{p}}{\to} $ denote the convergence in probability.

\subsection{Paper Organization}
Section \ref{sec:preliminary} presents the problem setup and various definitions.
Section \ref{sec:orthogonal} provides an error analysis of the ridgeless estimator under the assumption that the covariance matrices of the noise and instrumental variables are orthogonal.
Section \ref{sec:non-orthogonal} provides an error analysis under a relaxation of orthogonality.
Section \ref{sec:general_norm} offers additional error analysis with a generalized norm.
Section \ref{sec:proof_outline} outlines the proof and explains the technical contributions.
Section \ref{Experiment} relates the experiments.
Section \ref{sec:discussion} presents the discussion and conclusion.

\section{Preliminary} \label{sec:preliminary}

\subsection{Setting}

We consider a linear regression problem with dependent noise and instrumental variables.
Let $n \in \N$ be the number of data points, $p,k \in \N$ be dimensions of variables, and $\Theta \subset \R^p$ be the parameter space.
Suppose that there exist $n$ i.i.d. variables $(X_{i},Z_{i},Y_{i})\in\mathbb{R}^{p}\times\mathbb{R}^{k}\times\mathbb{R}$ of the centered variables for $i=1,\cdots,n$ from the following data generating process
\begin{align}
    Y_{i}=\langle X_{i}, \theta_{0}\rangle+\xi_{i}, \mbox{~and~}
    X_{i}=\Pi_{0}Z_{i}+ u_{i}, \label{model:reg}
\end{align}
where $\theta_{0}\in\mathbb{R}^{p}$ is a true unknown parameter such that $\|\theta_{0}\|_{2}<\infty$, $\xi_{i}$ is a Gaussian variable from $N(0, \sigma^2)$, $\Pi_{0} \in \R^{p \times k}$ is an unknown matrix, and $u_i \in \R^p$ is a (potentially correlated) latent noise vector such that $\Ep[u_i|Z_{i}] = 0$.
Here, we refer to $X_i$ as a covariate and $Z_i$ as an instrumental variable.
We assume that $\Ep[X_i|Z_i]$ always exists. 
We define covariance matrices $\Sigma_x = \Ep[X_i X_i^\top]$, $\Sigma_z = \Ep[Z_i Z_i^\top]$, and $\Sigma_u = \Ep[u_i u_i^\top]$. 
Let $(\mathbf{X},\mathbf{Y},\mathbf{Z},\mathbf{\xi})$ denote design matrices and vectors $\mathbf{X}=(X_1,...,X_n)^\top , \mathbf{Y}=(Y_1,...,Y_n)^\top, \mathbf{\xi} = (\xi_1,...,\xi_n)^\top,$ and $ \mathbf{Z}=(Z_1,...,Z_n)^\top$.
Note that the covariance matrices $\Sigma_z$ and $\Sigma_u$ need not be positive definite, that is, positive semi-definite is sufficient for our analysis.

We describe how these variables are related.
We define $\corrcoef \in \R^p$ as the correlation between the covariate $X_i$ and the noise $\xi_i$:
\begin{align*}
    &\corrcoef := \Ep[X_{i}\xi_{i}]\neq 0.
\end{align*}
Further, we assume that the instrument $Z_i$ satisfies the following moment condition:
\begin{align*}
    &\Ep[\xi_{i}|Z_{i}]=0,
\end{align*}
which implies the instrument $Z_i$ and its noise $\xi_i$ are uncorrelated, that is, $\Ep[Z_{i}\xi_{i}]=0$.

\begin{remark}[Modeling with $\Pi_0$] \label{remark:pi0}
We employ the modeling \eqref{model:reg}, because of the following two reasons.
First, this model is often used in applied fields (e.g., econometrics and psychostatistics) \cite{newey2003instrumental,chen2012estimation}, that study a specific interpretation of instrumental variables. 
Second, the usage of the coefficient $\Pi_0$ yields the property $\mathbb{E}[u_i|Z_i] = 0$, which simplifies theoretical analysis for an estimation error. 
\end{remark}

We make an assumption concerning the problem.
\begin{assumption}[Gaussianity] \label{asmp:Gaussianity}
Assume $X_{i}$ and $\xi_{i}$ are normally distributed, that is,
\begin{align*}
    &X_{i}\sim N(0,\Sigma_x),\quad \xi_{i}\sim N(0,\sigma^{2}). 
\end{align*}
\end{assumption}
For $X_{i}$, Assumption \ref{asmp:Gaussianity} enables us to use the convex Gaussian minimax theorem (CGMT), which is a central tool to derive the upper bound for the risk. 
A possible way to mitigate Gaussianity includes the application of universality \cite{montanari2022universality,han2022universality}.
As long as $X_{i}$ is Gaussian, $Z_{i}$ and $u_{i}$ do not have to be Gaussian.

\subsection{Measure for Estimation Error}

A goal of the setting is to estimate the true parameter $\theta_0$ in a high-dimensional setting, that is,  $p,k\gg n$, without the sparse setting. 
Specifically, for $\theta \in \Theta$, we consider a residual mean squared error (RMSE) projected on a space of $Z$:
\begin{align}
    {E\left[\left(E[\langle {\theta}, X \rangle-\langle\theta_{0}, X \rangle|Z]\right)^{2}\right]}, \label{def:projected_RMSE}
\end{align}
where the random element $(X,Z)$ is an i.i.d. copy of $(X_i,Z_i)$ that follows \eqref{model:reg}.
In the literature of nonparametric instrumental variables, the projected RMSE is often used to evaluate the convergence rate of the estimators \citep{ai2003efficient,chen2012estimation,dikkala2020minimax}. It is because we need to deal with ill-posedness in nonparametric instrumental variable estimators. By controlling the dependence of instrumental variables, it is also possible to evaluate the non-projected RMSE. For more details, see \citet{chen2012estimation}.
In our setting, we use this useful evaluation criterion because we face difficulty evaluating RMSE in non-sparse high-dimensional settings. 
Furthermore, it always holds that the projected RMSE is equal or small than the RMSE. Hence, our results are necessary conditions for the convergence of the RMSE.

Note that the projected RMSE can be expressed as a weighted norm $\|{\theta}-\theta_{0}\|^{2}_{\Xi_z}$ with a transformed covariance matrix $\Xi_z:=\Pi_{0}E[ZZ^\top ]\Pi_{0}^\top $: 
\begin{align*}
    \eqref{def:projected_RMSE} &= ({\theta}-\theta_{0})^\top E\left[E[X|Z]E[X^\top|Z]\right]({\theta}-\theta_{0})\\
    &=({\theta}-\theta_{0})^\top\Pi_{0} E\left[ZZ^\top\right]\Pi_{0}^\top({\theta}-\theta_{0}) \\
    &=\|{\theta}-\theta_{0}\|^{2}_{\Xi_z}.
\end{align*}
The second equation follows the property $\Ep[u_i|Z_i] = 0$, which follows the modeling \eqref{model:reg}.
The use of norms weighted by covariance matrices is common in non-sparse high-dimensional statistics. 
For example, in the usual linear regression setting, \citet{hastie2022surprises} and \citet{bartlett2020benign} study an estimation error in terms of a norm weighted by a covariance matrix of covariates $X$. 
As our setting utilizes the projected RMSE, it is natural to use a similar norm with $\Xi_z$.

\subsection{Ridgeless Estimator}
We consider an estimator with interpolation, that is, a prediction by an estimator perfectly corresponds to the response in the observed set of data, which always appears when $p \geq  n$ holds.
Rigorously, with an empirical squared risk
\begin{equation}
    \hat{L}(\theta)= \frac{1}{n}\sum_{i=1}^{n}(Y_{i}-\langle X_{i}, \theta\rangle)^{2}, \label{def:empirical_risk}
\end{equation}
the estimator with interpolation is a parameter $\Theta \subset \R^p$ that satisfies $\hat{L}(\theta)= 0$.
As there may be an infinite number of interpolators, we define a ridgeless estimator, also known as a minimal norm interpolator, as
\begin{align*}
    \hat{\theta}&=\argmin_{\theta \in \Theta : \hat{L}(\theta)=0}\|\theta\|_{2} =\mathbf{X}^{\top}(\mathbf{X}\mathbf{X}^{\top})^{+}\mathbf{Y}.
\end{align*}
Note that we can calculate the minimum norm interpolator only from $(\mathbf{X},\mathbf{Y})$.

Such estimators have been examined frequently in the context of the linear regression problem.
In particular, the motivation for examining the ridgeless estimator (the minimum norm interpolator) is that the gradient descent algorithm for learning parameters converges to a parameter with the smallest norm among parameters that minimize the loss (see Lemma 1 in \citet{hastie2022surprises}).

\section{Error Analysis: Orthogonal Case} \label{sec:orthogonal}

\subsection{Orthogonality Assumption}

In this section, we consider a setting in which there is orthogonality between the transformed covariance matrix of instrumental variables $\Xi_{z}=\Pi_{0}E[ZZ^\top ]\Pi_{0}^\top $ and the covariance matrix of the latent noise $\Sigma_{u}=E[uu^\top ]$. 
This situation simplifies our error analysis and is therefore an appropriate first step. 
This assumption will be relaxed in the next section.

Specifically, we consider the following assumption.
\begin{assumption}[Orthogonality Condition] \label{asmp:orthogonal}
$\Sigma_{u}, and \Xi_z$ are orthogonal, that is,  their sets of eigenvectors $\{\varphi_j\}_{j=1}^{J_u}, \{\varphi'_j\}_{j = 1}^{J_z} \subset \R^p$ are such that there exists the decompositions $\Sigma_u = \sum_{j=1}^{J_u} \lambda^u_j \varphi_j \varphi_j^\top$ and $\Xi_z = \sum_{j=1}^{J_z} \lambda^z_j \varphi'_j (\varphi'_j)^\top$ with $J_u + J_z = p$ and positive eigenvalues $\{\lambda_j^u\}_j$ and $\{\lambda_j^z\}_j$ satisfying $\varphi_j^\top \varphi'_{\ell} = 0$ for every $j$ and $\ell$.
\end{assumption}

Intuitively, the $p$-dimensional eigenspaces of $\Sigma_x$ are divided into $J_u$-dimensional eigenspaces of $\Sigma_u$ and $J_z$-dimensional eigenspaces of $\Xi_z$, which are orthogonal.
We note two points.
In this setting, the ranks of $\Sigma_u$ and $\Xi_z$ are $J_u$ and $J_z$, respectively; hence they are not full-rank. 
Consequently, we obtain the following equality: 
\vskip 0.1in
\noindent
\textbf{Lemma.}
\textit{
Assume Assumption \ref{asmp:orthogonal} holds. 
Then, the positive semidefinite matrices $\Xi_z$ and $\Sigma_{u}$ whose eigenspaces are orthogonal satisfy the following covariance splitting:
\begin{equation*}
    \Sigma_x=\Xi_z + \Sigma_{u}.
\end{equation*}
}
\vskip 0.1in
We will restate this result as Lemma \ref{lem:Covariance_IV} in the supplementary material and offer its proof.
This property is essential for our error analysis below, which uses the speed of decay of the eigenvalues.

\subsection{Result 1: Upper Bound on Projected RMSE}

Here, as the first primary result, we derive an upper bound for the projected RMSE of the ridgeless estimator.
As preparation, we introduce a notion of the effective rank for the  upper bound.
\begin{definition}[Effective Rank]\label{def:effrank}
For a positive semidefine matrix $\Sigma$, two types of the effective rank are defined as
\begin{equation*}
    r(\Sigma)=\frac{\trace(\Sigma)}{\|\Sigma\|_{\mathrm{op}}}\ \ \textit{and}\ \ R(\Sigma)=\frac{\trace(\Sigma)^{2}}{\trace(\Sigma^{2})}.
\end{equation*}
\end{definition}
This notion is a more elaborate version of the notion of matrix ranks, which uses the decay speed of the eigenvalues of a matrix to express the complexity of the matrix. 
Specifically, $r(\Sigma)$ denotes a trace of $\Sigma$ normalized by its largest eigenvalue, and $R(\Sigma)$ denotes the intrinsic complexity of $\Sigma$ considering the decay rate of the eigenvalues of $\Sigma$. 
As these effective ranks fully utilize the information of eigenvalues of $\Sigma$, they are useful in measuring the complexity of $\Sigma$ and the stable quantity compared with the usual rank, especially in the high-dimensional setting.
This has been used in dealing with concentration of random matrices \citep{koltchinskii2017concentration} and has also been applied to the analysis of over-parameterized linear regression with independent noise \citep{bartlett2020benign,koehler2022uniform,tsigler2020benign}.

Using the notion of effective rank, we define an auxiliary coefficient as follows.
For $\delta \in (0,1)$, we define 
\begin{align*}
    \eta(\delta) := \sqrt{\log(1/\delta)} \left( \frac{1}{\sqrt{r(\Xi_z)}} + \sqrt{\frac{ \mathrm{rank}(\Sigma_u)}{n}} + \frac{n }{R(\Xi_z)} \right).
\end{align*}
This coefficient $\eta(\delta)$ becomes asymptotically negligible under appropriate conditions, which will be presented in the latter half of this section.

We develop a generic bound for the projected RMSE $\|\hat{\theta}-\theta\|_{\Xi_z}^{2}$.
With the result of Corollary \ref{Variant_Corollary2} and Theorem \ref{Variant_Theorem2}, we obtain the following sufficient conditions for benign overfitting.
Recall that we define $\Sigma_u^{+}$ as the generalized inverse matrix of $\Sigma_{u}$.

\begin{theorem}[Projected-RMSE Bound]\label{Variant_Theorem3}
Fix any $\delta\leq1/2$. Under Assumptions \ref{asmp:Gaussianity}-\ref{asmp:orthogonal} with covariance splitting $\Sigma_x=\Xi_z + \Sigma_{u}$,
suppose that $n$ and the effective ranks are such that $R(\Xi_z)\gtrsim \log(1/\delta)^{2}$ and $\eta(\delta)\leq1$.
Define $\psi(t) = t + t^2$ and  $\tilde{\sigma}^2 := \sigma^2 - \|\corrcoef\|_{\Sigma_u^+}^2\geq 0$. 
Then, with probability at least $1-\delta$, it holds that
\begin{align}\label{Benign}
\|\hat{\theta}-\theta_{0}\|_{\Xi_z}^{2}&\lesssim (1+\eta(\delta))(1 \vee \tilde{\sigma}) \psi \left( (\|\Sigma_u^+ \corrcoef\|_{2}+\|\theta_{0}\|_{2})\sqrt{\frac{\trace(\Xi_z)}{n}}\right).
\end{align}
\end{theorem}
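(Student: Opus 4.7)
My plan is to reduce the endogenous regression to an exogenous one with independent Gaussian noise by projecting $\xi_i$ onto the span of $X_i$, and then invoke Corollary \ref{Variant_Corollary2} and Theorem \ref{Variant_Theorem2}, which handle the standard independent-noise case.

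Under Assumption \ref{asmp:Gaussianity} the pair $(X_i,\xi_i)$ is jointly Gaussian with cross-covariance $\corrcoef$, so I can write $\xi_i = \corrcoef^{\top}\Sigma_x^{+} X_i + \varepsilon_i$, where $\varepsilon_i \sim N(0,\sigma^2 - \corrcoef^{\top}\Sigma_x^{+}\corrcoef)$ is independent of $X_i$. The moment condition $\Ep[\xi_i\mid Z_i]=0$ combined with $X_i = \Pi_0 Z_i + u_i$ yields $\corrcoef = \Ep[u_i\xi_i]$, so $\corrcoef$ lies in the column space of $\Sigma_u$. Together with the orthogonal covariance splitting $\Sigma_x = \Xi_z + \Sigma_u$ of Assumption \ref{asmp:orthogonal}, this gives $\Sigma_x^{+}\corrcoef = \Sigma_u^{+}\corrcoef$ and hence $\corrcoef^{\top}\Sigma_x^{+}\corrcoef = \|\corrcoef\|_{\Sigma_u^{+}}^{2}$, which matches $\tilde\sigma^2$. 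Substituting rewrites the data-generating process as
\[
Y_i = \langle X_i,\theta_0^\ast\rangle + \varepsilon_i,\qquad \theta_0^\ast := \theta_0 + \Sigma_u^{+}\corrcoef,
\]
a standard linear model with true parameter $\theta_0^\ast$ and $\varepsilon_i$ independent of $X_i$ with variance $\tilde\sigma^2$.

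Because $\hat\theta = \mathbf{X}^{\top}(\mathbf{X}\mathbf{X}^{\top})^{+}\mathbf{Y}$ depends only on $(\mathbf{X},\mathbf{Y})$, this rewriting leaves $\hat\theta$ unchanged; only the effective target and the noise change. The key identity is that the shift $\theta_0^\ast - \theta_0 = \Sigma_u^{+}\corrcoef$ lies in the column space of $\Sigma_u$, which is orthogonal to that of $\Xi_z$ by Assumption \ref{asmp:orthogonal}, so $\Xi_z(\theta_0^\ast-\theta_0)=0$ and therefore
\[
\|\hat\theta - \theta_0\|_{\Xi_z}^{2} = \|\hat\theta - \theta_0^\ast\|_{\Xi_z}^{2}.
\]
Now Corollary \ref{Variant_Corollary2} and Theorem \ref{Variant_Theorem2}, which analyze the minimum-norm interpolator in the exogenous setting via CGMT on a dual formulation, apply directly to the right-hand side with noise variance $\tilde\sigma^2$ and target $\theta_0^\ast$. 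They produce a bound of the form $(1+\eta(\delta))(1\vee\tilde\sigma)\,\psi\!\left(\|\theta_0^\ast\|_{2}\sqrt{\trace(\Xi_z)/n}\right)$; combining the triangle inequality $\|\theta_0^\ast\|_{2} \leq \|\theta_0\|_{2} + \|\Sigma_u^{+}\corrcoef\|_{2}$ with the monotonicity of $\psi(t)=t+t^{2}$ then yields \eqref{Benign}.

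The main obstacle does not lie in this reduction but inside Corollary \ref{Variant_Corollary2} and Theorem \ref{Variant_Theorem2}, where CGMT must relate the primal ridgeless interpolator to an auxiliary Gaussian optimization and the resulting objective must be analyzed against the effective-rank quantities in $\eta(\delta)$. For the present theorem, the only genuinely new step is verifying the kernel identity $\theta_0^\ast - \theta_0 \in \ker(\Xi_z)$, which uses both the orthogonality of $\Sigma_u$ and $\Xi_z$ and the instrumental-variable condition $\Ep[Z_i\xi_i]=0$ that forces $\corrcoef \in \mathrm{col}(\Sigma_u)$; once this is in place, the endogenous problem collapses cleanly to the exogenous one handled by the earlier results.
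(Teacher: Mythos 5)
Your proof is correct, but it takes a genuinely different route from the paper's. The paper carries the endogeneity through the CGMT machinery itself: in the auxiliary optimizations of Lemmas~\ref{Variant_Lemma5} and \ref{Variant_Lemma8} the joint law of $(W_{1,i},W_{2,i},\xi_i)$ from \eqref{def:cov_wwxi} is used directly, the test point in Lemma~\ref{Variant_Lemma8} is $\theta = P_u(\Sigma_u^+\corrcoef) + sP_z v^*$, and $\tilde\sigma^2$ emerges as $\min_{\theta_2}\bigl(\sigma^2 - \|\rho\|^2 + \|\theta_2-\rho\|_2^2\bigr)$ inside the CGMT objective. You instead reduce to the exogenous problem up front by writing $\xi_i = \corrcoef^\top\Sigma_x^{-1}X_i + \varepsilon_i$ with $\varepsilon_i \perp X_i$, $\Var(\varepsilon_i)=\tilde\sigma^2$, absorbing the first term into a shifted target $\theta_0^* = \theta_0 + \Sigma_u^+\corrcoef$, and then invoking the exogenous CGMT corollaries. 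All of the supporting identities you need are available: $\corrcoef\in\mathrm{col}(\Sigma_u)$ follows since $v\in\ker\Sigma_u$ gives $v^\top u_i = 0$ a.s. and hence $v^\top\Ep[u_i\xi_i]=0$; the decomposition $\Sigma_x^{-1}=\Sigma_u^+ + \Xi_z^+$ under Assumption~\ref{asmp:orthogonal} (established in the proof of Lemma~\ref{rho-sigma_inq}) gives $\Sigma_x^{-1}\corrcoef = \Sigma_u^+\corrcoef$ and the variance match; and $\Xi_z\Sigma_u^+\corrcoef = 0$ gives the kernel identity $\|\hat\theta-\theta_0\|_{\Xi_z}^2 = \|\hat\theta-\theta_0^*\|_{\Xi_z}^2$. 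With $B = \|\theta_0^*\|_2 + (1+\varepsilon)^{1/2}\tilde\sigma\sqrt{n/\trace(\Xi_z)}$ from the exogenous norm bound and the triangle inequality $\|\theta_0^*\|_2 \leq \|\theta_0\|_2 + \|\Sigma_u^+\corrcoef\|_2$, you recover precisely the same intermediate bound $(1+\gamma)(1+\varepsilon)\bigl(\tilde\sigma + (\|\theta_0\|_2 + \|\Sigma_u^+\corrcoef\|_2)\sqrt{\trace(\Xi_z)/n}\bigr)^2 - \tilde\sigma^2$ that the paper derives in its version of Theorem~\ref{Variant_Theorem3} in the appendix. Two small remarks: first, Corollary~\ref{Variant_Corollary2} and Theorem~\ref{Variant_Theorem2} are stated in the paper for the endogenous case with $\tilde\sigma$; what you really invoke is their specialization to $\corrcoef=0$ (equivalently the Koehler et al.~exogenous versions, e.g., Lemma~\ref{Exogenous_Corollary2}), applied to $(\theta_0^*,\tilde\sigma)$, which is logically fine but worth stating precisely. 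Second, your reduction is cleaner but genuinely depends on orthogonality (Assumption~\ref{asmp:orthogonal}): without it one no longer has $\Sigma_x^{-1}\corrcoef = \Sigma_u^+\corrcoef$ nor $\Xi_z\Sigma_u^+\corrcoef = 0$, which is exactly why the paper pursues the more laborious direct CGMT route --- that route is what generalizes to Section~\ref{sec:non-orthogonal}. So each approach buys something: yours isolates and exposes the elementary Gaussian-conditioning content of the orthogonal case, while the paper's is uniform across the orthogonal and non-orthogonal settings.
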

This upper bound consists of the following two parts: (i) the coefficient part $(1+\eta(\delta))(1 \vee \tilde{\sigma})$ reflects the asymptotically negligible eigenvalues and noises, and (ii) the principal part $\psi((\|\Sigma_u^+ \corrcoef\|_{2}+\|\theta_{0}\|_{2})\sqrt{{\trace(\Xi_z)} / {n}})$ describes a complexity of the  true parameter and the distribution of the data.
With this upper bound, an appropriate assumption on $\Sigma_u$ and $\Xi_z$ guarantees that the projected RMSE converges to zero as $n \to \infty $, which will be explained below.
Note that $\tilde{\sigma}^2 \geq 0$ follows from Lemma \ref{rho-sigma_inq}.

\begin{remark}[Comparison with the independent noise case]
We compare Theorem \ref{Variant_Theorem3} with the endogeneity to the result without the endogeneity.
Particularly, \citet{koehler2022uniform} develop an upper bound of the mean squared error of the ridgeless estimator as
\begin{align}\label{Comparison}
     \|\hat{\theta}-\theta_{0}\|_{\Sigma_x}^{2} \lesssim (1+\eta'(\delta)) (1 \vee \sigma) \psi \left( \|\theta_0\|_{2} \sqrt{\frac{\mathrm{tr}(\Sigma_2)}{n}} \right),
\end{align}
where $\Sigma_{1}$ and $\Sigma_{2}$ are some matrices such that $\Sigma_{x}=\Sigma_{1} + \Sigma_{2}$, and 
        $\eta'(\delta)= \sqrt{\log(1/\delta)}( {1} / {\sqrt{r(\Sigma_{2})}} + \sqrt{{\mathrm{rank}(\Sigma_1)}/ {n}} + {n } / {R(\Sigma_{2})} )$.
This result suggests several implications. 
First, our decomposition of $\Sigma_x$ in Theorem \ref{Variant_Theorem3} can be regarded as a specific case of the decomposition of $\Sigma_x$ by \citet{koehler2022uniform}.
Second, our bound in Theorem \ref{Variant_Theorem3} pays an additional cost to handle covariate correlations, such as the replacement of $\sigma$ with $\tilde{\sigma}$ and introducing a correlation coefficient $\|\Sigma_u^+ \corrcoef\|_{2}$ in \eqref{Benign}.

\end{remark}

\subsection{Result 2: Benign Condition for Consistency}
In this section, we further investigate the upper bound in Theorem \ref{Variant_Theorem3} and derive sufficient conditions for the upper bound to converge to zero. 
We also provide several examples of distributions satisfying the condition.

We first provide a basic condition that is widely used for over-parameterized models (e.g., \citet{bartlett2020benign}).
\begin{definition}[Basic condition] \label{cond:basic}
This condition requires that the value of the following three limits be zero:
    \begin{align}
        \lim_{n\rightarrow\infty}\frac{\rank(\Sigma_{u})}{n}=\lim_{n\rightarrow\infty}\frac{n}{R(\Xi_z)} = \lim_{n\rightarrow\infty}\|\theta_{0}\|_{2}\sqrt{\frac{\trace(\Xi_z)}{n}}= 0. \label{eq:benign_basic}
    \end{align}
\end{definition}
Their details are as follows:
\begin{itemize}
    \item[(i)] (Small  latent noise) The first term, ${\rank(\Sigma_{u})} / {n}$, describes the size of the  latent noise vector relative to $n$, and the condition requires that the  latent noise is small.
    \item[(ii)] (Large effective dimension) The second term, ${n} / {R(\Xi_z)}$, decreases as the effective rank $R(\Xi_z)$ is larger than $n$, which plays the role of an effective dimension in the over-parameterized model.
    \item[(iii)] (No aliasing) The third term, $\|\theta_{0}\|_{2}\sqrt{{\trace(\Xi_z)} / {n}}$, represents the magnitude of the error in a noiseless situation and intuitively plays a role similar to bias.
\end{itemize}
These assumptions are commonly used in the over-parameterized linear regression problem without endogeneity \citep{bartlett2020benign,koehler2022uniform,tsigler2020benign}.
We will provide examples of covariance matrices that satisfy these assumptions in Section \ref{sec:example_orthogonal}.

We derive a result where the projected RMSE converges to zero. 
We achieve this result by introducing new assumptions corresponding to the endogeneity in addition to the basic assumptions in Definition \ref{cond:basic}.
\begin{theorem}[Sufficient conditions]\label{Variant_Theorem12}
Under Assumptions \ref{asmp:Gaussianity} and \ref{asmp:orthogonal} with $\Sigma_x=\Xi_z + \Sigma_{u}$, let $\hat{\theta}$ be the ridgeless estimator.
Suppose that the basic condition in Definition \ref{cond:basic} holds, and the following condition is also satisfied:
\begin{align}
        \lim_{n\rightarrow\infty}\|\Sigma_{u}^{+}\corrcoef\|_{2}\sqrt{\frac{\trace(\Xi_z)}{n}} = 0. \label{eq:benign_cond_orthogonal}
    \end{align}
Then, the following holds:
\begin{align*}
    \|\hat{\theta}-\theta_{0}\|_{\Xi_z}^{2} \overset{\bold{p}}{\to} 0, ~(n \to \infty).
\end{align*}
\end{theorem}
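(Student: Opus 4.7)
The plan is to derive the conclusion directly from Theorem \ref{Variant_Theorem3} by showing that, under the stated hypotheses, every factor on the right-hand side of \eqref{Benign} tends to zero as $n\to\infty$ while $\delta$ is held fixed. This yields a deterministic upper bound that vanishes with probability at least $1-\delta$, and since $\delta\in(0,1/2]$ is arbitrary, convergence in probability of $\|\hat\theta-\theta_0\|_{\Xi_z}^{2}$ follows by the standard $\epsilon$--$\delta$ argument. The two prerequisites of Theorem \ref{Variant_Theorem3}, namely $R(\Xi_z)\gtrsim\log(1/\delta)^{2}$ and $\eta(\delta)\le 1$, involve only quantities that grow with $n$, so they will hold for all $n$ large enough.

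The main verification is $\eta(\delta)\to 0$. Two of its three summands, $\sqrt{\rank(\Sigma_u)/n}$ and $n/R(\Xi_z)$, vanish directly by the first two limits in Definition \ref{cond:basic}. The remaining summand $1/\sqrt{r(\Xi_z)}$ is not named in Definition \ref{cond:basic}, but I would dispatch it with the elementary comparison $r(\Xi_z)\ge\sqrt{R(\Xi_z)}$: writing $\lambda_1,\dots,\lambda_{J_z}>0$ for the positive eigenvalues of $\Xi_z$, one has $\sum_i\lambda_i^{2}\ge\max_i\lambda_i^{2}$, whence $R(\Xi_z)=\bigl(\sum_i\lambda_i\bigr)^{2}\big/\sum_i\lambda_i^{2}\le\bigl(\sum_i\lambda_i/\max_i\lambda_i\bigr)^{2}=r(\Xi_z)^{2}$. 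Since $n\to\infty$ together with $n/R(\Xi_z)\to 0$ forces $R(\Xi_z)\to\infty$, this yields $r(\Xi_z)\to\infty$ and hence $1/\sqrt{r(\Xi_z)}\to 0$.

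For the principal factor of \eqref{Benign}, the coefficient $1\vee\tilde\sigma$ is bounded by the constant $1\vee\sigma$ since $\tilde\sigma^{2}=\sigma^{2}-\|\corrcoef\|_{\Sigma_u^{+}}^{2}\le\sigma^{2}$ by definition. The argument of $\psi$ satisfies
\begin{equation*}
(\|\Sigma_u^{+}\corrcoef\|_{2}+\|\theta_0\|_{2})\sqrt{\trace(\Xi_z)/n}\longrightarrow 0,
\end{equation*}
because the $\|\theta_0\|_{2}$-term is controlled by the third limit in \eqref{eq:benign_basic} and the $\|\Sigma_u^{+}\corrcoef\|_{2}$-term is exactly the new hypothesis \eqref{eq:benign_cond_orthogonal}. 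Continuity of $\psi(t)=t+t^{2}$ at the origin then gives $\psi(\cdot)\to 0$, so the entire right-hand side of \eqref{Benign} converges to zero, completing the argument after the $\delta$-removal step described above.

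The expected obstacle is modest: Theorem \ref{Variant_Theorem3} does the heavy lifting, and the only observation that is not an immediate quotation of the hypotheses is the one-line comparison $r\ge\sqrt{R}$ that bridges the gap between what Definition \ref{cond:basic} controls and what $\eta(\delta)$ demands. Everything else is a mechanical matching of each summand of $\eta(\delta)$ and each factor inside $\psi$ to the limits assumed in \eqref{eq:benign_basic} and \eqref{eq:benign_cond_orthogonal}, followed by the routine passage from a high-probability bound to convergence in probability.
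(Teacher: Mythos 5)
Your proposal is correct and follows essentially the same route as the paper's own proof: both derive the result from Theorem~\ref{Variant_Theorem3}, both replace $\tilde\sigma$ by $\sigma$, and both use the comparison $R(\Xi_z)\le r(\Xi_z)^2$ (which you rederive inline; the paper cites Lemma~5 of \citet{bartlett2020benign}) to deduce $r(\Xi_z)\to\infty$ from $n/R(\Xi_z)\to 0$. The concluding passage from a high-probability bound to convergence in probability is the same $\epsilon$--$\delta$ argument the paper uses.
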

This result states that condition \eqref{eq:benign_cond_orthogonal} is a key factor of the convergence of the projected RMSE to zero in the setting with endogeneity because the basic assumption in Definition \ref{cond:basic} is also needed in ordinary regression without endogeneity.
Intuitively, condition \eqref{eq:benign_cond_orthogonal} means that the replacement of $\sigma^{2}$ with $\tilde{\sigma}^{2}$ in Theorem \ref{Variant_Theorem3} is asymptotically negligible.
For condition \eqref{eq:benign_cond_orthogonal}, the structure of $\corrcoef$ plays an essential role because it is challenging to satisfy \eqref{eq:benign_cond_orthogonal} with only the property of $\Sigma_u^+$.
However, we have $\|(\Sigma_u^+)^{1/2} \corrcoef\|_{2} \leq \sigma^2$ (Lemma \ref{rho-sigma_inq}), which implies a slow increase of $\|\Sigma_u^+ \corrcoef\|_{2}$.
Another implication is about the first term in \eqref{eq:benign_basic}: a strong correlation between $X_i$ and $Z_i$ is necessary for benign overfitting.
This is suggested by the fact that $\rank(\Sigma_u) \geq  p - \min\{\mathrm{rank}(\Sigma_{z}),\mathrm{rank}(\Pi_{0})\}$ (see Proposition \ref{prop:pi_0}).

\begin{remark}[Relation to weakness of instrumental variables]
    Here, we discuss the relation of our results to the study of \textit{weak instrumental variables}. 
    It is known that having many instrumental variables with weak correlations reduces the efficiency of estimation \citep{stock2002survey}.
    In our theory, from the result in Theorem \ref{Variant_Theorem12}, one can also claim that the weak instrumental variables reduce the validity of the estimation in the over-parameterized setting.
    Specifically, the instrumental variable $Z_i$ with weak correlations will decrease the rank of $\Pi_0$ which increases the rank of $\Sigma_u$, and also decreases the effective rank $R(\Xi_z)$.
    These effects makes the assumptions \eqref{eq:benign_basic} in Definition \ref{cond:basic} less likely to hold. 
    Hence, our result in the over-parameterized setting implies almost the same claim on the weak instrumental variables, while our approach is different from the previous studies.

    One can also consider that the independent setting can be recovered by setting $\Pi_0=I_p$, $Z_i=X_i$, $u_i=0$, in which case $X_i$ and $Z_i$ are perfectly correlated.
    However, this setting does not satisfy our sufficient condition, specifically (iii) in Definition \ref{cond:basic}, and hence it is out of the purview of our theoretical framework.
\end{remark}

\begin{remark}[Necessary condition]
We discuss a necessary condition for the benign overfitting.
When the noise $\xi_i$ is independent of $X_i$, there is a necessary condition (or rather a necessary and sufficient condition) for the benign overfitting that the eigenvalue decay of $\Sigma_x$ has a specific rate, which is shown in Theorem 6 in \cite{bartlett2020benign}.
In contrast, when the noise is dependent as in our setting,  no necessary condition is clarified. 
This is because the correlation coefficient $\corrcoef$ increases the flexibility of the estimation error, and thus the eigenvalues of $\Sigma_x$ alone cannot describe the necessary condition. 
\end{remark}

\subsubsection{Examples} \label{sec:example_orthogonal}

In this section, we provide examples that satisfy the condition in Theorem \ref{Variant_Theorem12}.
The example here uses a matrix derived by \cite{bartlett2020benign} as a base matrix $\overline{\Sigma}$, then constructs a  latent noise covariance matrix $\Sigma_u$ and of the instrumental variable $\Xi_z$ based on the base matrix $\overline{\Sigma}$.
Throughout this section, we assume that $\|\theta_0\|_2 = o(\sqrt{n})$.

\begin{example} \label{example1}
    Consider the dimension $p \in \N \cup \{\infty\}$ and  a base matrix $\overline{\Sigma}$ whose  $i$-th largest eigenvalue has the form
    \begin{align*}
        \lambda_i = Ci^{-1}\log^{-\beta}(i+1), ~i=1,...,p,
    \end{align*}
    with some constant $C>0$ and $\beta>1$, and also assume condition \eqref{eq:benign_cond_orthogonal} holds.
    We further define a truncated version of $\overline{\Sigma}$ with a truncation level $k \leq p$ as
        $\overline{\Sigma}_{1:k} = U^\top \mathrm{diag}(\lambda_1,...,\lambda_k, 0,...,0) U$,
    where $U \in \R^{p \times p}$ is an orthogonal matrix generated from a singular value decomposition $\overline{\Sigma} = U^\top \mathrm{diag}(\lambda_1,...,\lambda_p) U$.
    Using the notion, we define our truncation level $k^{*}_{n}$ as
\begin{align}
    k^{*}_{n}:=\min\{k\geq 0:r(\overline{\Sigma}-\overline{\Sigma}_{1:k})>n\}, \label{def:truncation_lebel}
\end{align}
    which balances the complexities of the  latent noise and the instrumental variable.
    Then, we define the (transformed) covariance matrices of $u$ and $z$ as
\begin{align}  
    \Sigma_{u}=\overline{\Sigma}_{1: k^{*}_{n}},   \quad  
    \Xi_{z}=\overline{\Sigma}-\overline{\Sigma}_{1: k^{*}_{n}}. \label{def:covs_exp1}
\end{align}
\end{example}

The example is adapted to our setting with endogeneity by considering the example of a covariance matrix by \citet{bartlett2020benign}.
Rigorously, we set the covariance matrix by \citet{bartlett2020benign} as the base matrix $\overline{\Sigma}$ and decompose it under the appropriate cutoff level $k_n^*$ to the (transformed) covariance matrices. 
Importantly, this example can freely choose the dimension $p$ (even infinite is possible).
The following proposition shows that this example yields benign overfitting.

\begin{proposition}\label{example_1}
Consider Example \ref{example1}.
Assume $\|\theta_{0}\|_{2}=o(\sqrt{n})$. If $\overline{\Sigma}$ and $\corrcoef$ satisfy
\begin{align*}  
    \lambda_i=C i^{-1}\log^{-\beta}(i+1),\quad (U\corrcoef)_{i}=\Theta(i^{-1}\log^{-\beta}(i+1)),
\end{align*}
where $\beta>1$ and $C>0$, then $\Sigma_{u}$ and $\Xi_{z}$ defined in \eqref{def:covs_exp1} and associated $\corrcoef$ as $\|\Sigma_u^+ \corrcoef\|_2 = o(\sqrt{n})$ satisfy all the conditions in Definition \ref{cond:basic} and  Theorem \ref{Variant_Theorem12}.
\end{proposition}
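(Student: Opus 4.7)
The plan is to reduce all four conditions to tail estimates for the eigenvalue sequence $\lambda_i = Ci^{-1}\log^{-\beta}(i+1)$, via the single scaling parameter $k_n^*$ defined in \eqref{def:truncation_lebel}. Because $\Sigma_u$, $\Xi_z$, and $\overline{\Sigma}$ share the eigenbasis $U$, every quantity appearing in Definition \ref{cond:basic} and condition \eqref{eq:benign_cond_orthogonal}—namely $\rank(\Sigma_u)$, $\trace(\Xi_z)$, $\trace(\Xi_z^2)$, and $\|\Sigma_u^+\corrcoef\|_2$—admits an explicit formula in that basis, so the proof reduces to asymptotic calculus.

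The first step is to pin down $k_n^*$. By integral comparison, for large $k$,
\[
\sum_{i>k}\lambda_i \;\asymp\; \int_k^\infty \frac{C\,dx}{x\log^\beta x} \;=\; \frac{C}{(\beta-1)\log^{\beta-1}k},
\qquad
\lambda_{k+1} \;\asymp\; \frac{C}{k\log^\beta k},
\]
so $r(\overline{\Sigma}-\overline{\Sigma}_{1:k}) = (\sum_{i>k}\lambda_i)/\lambda_{k+1} \asymp k\log k/(\beta-1)$, and inverting the relation $k_n^*\log k_n^* \asymp n$ yields $k_n^* = \Theta(n/\log n)$ with $\log k_n^* \sim \log n$. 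A parallel integration-by-parts estimate gives $\trace(\Xi_z^2) \asymp (k_n^*)^{-1}\log^{-2\beta}k_n^*$.

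Conditions (i)--(iii) of Definition \ref{cond:basic} are then immediate: first, $\rank(\Sigma_u)/n = k_n^*/n = \Theta(1/\log n)\to 0$; second, $R(\Xi_z) = (\trace\Xi_z)^2/\trace(\Xi_z^2) \asymp k_n^*\log^2 k_n^* \asymp n\log n$, so $n/R(\Xi_z)\to 0$; third, since $\trace(\Xi_z) \asymp \log^{-(\beta-1)}k_n^* \to 0$ and $\|\theta_0\|_2 = o(\sqrt{n})$, we get $\|\theta_0\|_2\sqrt{\trace(\Xi_z)/n} = o(\sqrt{\trace(\Xi_z)}) = o(1)$. For \eqref{eq:benign_cond_orthogonal}, expanding in the common eigenbasis,
\[
\|\Sigma_u^+\corrcoef\|_2^2 \;=\; \sum_{i=1}^{k_n^*}\lambda_i^{-2}(U\corrcoef)_i^2 \;=\; \Theta(k_n^*) \;=\; \Theta(n/\log n),
\]
because the assumed rates on $\lambda_i$ and $(U\corrcoef)_i$ cancel to make each summand $\Theta(1)$. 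Hence $\|\Sigma_u^+\corrcoef\|_2 = \Theta(\sqrt{n/\log n}) = o(\sqrt{n})$, and multiplying by $\sqrt{\trace(\Xi_z)/n}$ shows $\|\Sigma_u^+\corrcoef\|_2\sqrt{\trace(\Xi_z)/n} = O(\log^{-\beta/2}n) \to 0$.

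The only technical care required is bookkeeping of the exponents $\log^\beta$ versus $\log^{\beta-1}$ in the integral approximations, and checking that the relations $\sum_{i>k}\lambda_i^s \sim \int_k^\infty \lambda(x)^s\,dx$ for $s \in \{1,2\}$ carry over uniformly in $n$ to $k = k_n^* \to \infty$, so that the $\asymp$ manipulations and the final cancellation in $\lambda_i^{-2}(U\corrcoef)_i^2$ are legitimate. No deeper obstacle appears to arise; the argument is essentially a scaling computation driven by the single balancing identity $k_n^*\log k_n^* \asymp n$.
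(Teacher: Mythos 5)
Your proof is correct and follows essentially the same route as the paper. The only difference is that you re-derive the conditions $\rank(\Sigma_u)/n\to 0$ and $n/R(\Xi_z)\to 0$ from the balancing identity $k_n^*\log k_n^*\asymp n$, whereas the paper just cites Theorem 2(1) of \citet{bartlett2020benign} for those two facts; your calculation of $\|\Sigma_u^+\corrcoef\|_2^2 = \sum_{i\le k_n^*}\lambda_i^{-2}(U\corrcoef)_i^2=\Theta(k_n^*)$ and the resulting convergence of $\|\Sigma_u^+\corrcoef\|_2\sqrt{\trace(\Xi_z)/n}$ matches the paper's argument.
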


\begin{example} \label{example2}
    We consider the dimension $p = p_n$, which increases faster than $n$, that is, $\forall c>0, \exists \Bar{n} \in \N, \forall n \geq \Bar{n}, p \geq cn $ holds. 
    Furthermore, consider a base matrix $\overline{\Sigma}$ whose  $i$-th largest eigenvalue has the form
    \begin{align*}
        \lambda_i = \gamma_{i}+\varepsilon_{n}, ~i=1,...,p,
    \end{align*}
    where $\{\gamma_i\}_i$ and $\{\varepsilon_n\}_n$ are sequences such that
\begin{align*}  
    \gamma_{i}=\Theta(\exp(-i/\tau)), \quad ne^{-o(n)}=\varepsilon_{n}p=o(n), 
\end{align*}
with some $\tau>0$.
We further assume condition \eqref{eq:benign_cond_orthogonal} holds.
Similar to Example \ref{example1}, we use the truncation level $k^{*}_{n}$ as \eqref{def:truncation_lebel}
and define the (transformed) covariance matrices of $u_{i}$ and $Z_{i}$ as
\begin{align}  
    \Sigma_{u}=\overline{\Sigma}_{1: k^{*}_{n}},   \quad  
    \Xi_{z}=\overline{\Sigma}-\overline{\Sigma}_{1: k^{*}_{n}}. \label{def:covs_exp2}
\end{align}
\end{example}
In the example, we consider the case where $p$ diverges faster than $n$. 
In this case, the eigenvalues consist of two terms: an exponentially decaying term, and a term that behaves like noise. 
The next proposition shows benign overfitting in this setting.

\begin{proposition}\label{example_2}
Consider Example \ref{example2}. Set eigenvalues of $\overline{\Sigma}$ as follows:
\begin{align*}  
    \lambda_i=\gamma_{i}+\varepsilon_{n},
\end{align*}
where $\gamma_{i}=\Theta(\exp(-i/\tau))$ and $\tau>0$. Assume $\|\theta_{0}\|_{2}=o(\sqrt{n})$.  If $p$ and $\corrcoef$ satisfy
\begin{align*}  
    p=\orderomega(n), \quad ne^{-o(n)}=\varepsilon_{n}p=o(n), \quad (U\corrcoef)_{i}=\Theta(\exp(-i/\tau)),
\end{align*}
then $\Sigma_{u}$ and $\Xi_{z}$ defined in \eqref{def:covs_exp2} and associated $\corrcoef$ as $\|\Sigma_u^+ \corrcoef\|_2 = o(\sqrt{n})$ satisfy all the conditions in Definition \ref{cond:basic} and Theorem \ref{Variant_Theorem12}.
\end{proposition}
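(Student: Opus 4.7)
The strategy is to exploit the two-scale structure $\lambda_i=\gamma_i+\varepsilon_n$ (an exponentially decaying head plus a constant floor), pin down the truncation index $k_n^*$ in closed form, and then read off each quantity appearing in Definition \ref{cond:basic} and in condition \eqref{eq:benign_cond_orthogonal}.

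First I would characterize $k_n^*$ from its defining inequality $r(\overline{\Sigma}-\overline{\Sigma}_{1:k})>n$. Write $\varepsilon_n p = n\,e^{-a_n}$, so that the hypothesis $n e^{-o(n)} = \varepsilon_n p = o(n)$ forces $a_n\to\infty$ and $a_n=o(n)$. Since $\gamma_i=\Theta(e^{-i/\tau})$, the geometric tail satisfies $\sum_{i>k}\gamma_i=\Theta(\gamma_{k+1})$, giving
\begin{equation*}
r(\overline{\Sigma}-\overline{\Sigma}_{1:k})
=\frac{\sum_{i>k}\gamma_i+(p-k)\varepsilon_n}{\gamma_{k+1}+\varepsilon_n}
=\Theta(1)+\frac{(p-k)\varepsilon_n}{\gamma_{k+1}+\varepsilon_n}.
\end{equation*}
Requiring the right-hand side first to exceed $n$ forces $\gamma_{k_n^*+1}\asymp p\varepsilon_n/n=e^{-a_n}$, and inverting $\gamma_i=\Theta(e^{-i/\tau})$ gives $k_n^*=\Theta(a_n)$ (which is strictly below the ``knee'' $\tau(\log(p/n)+a_n)$ at which $\gamma_i$ meets $\varepsilon_n$).

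With $k_n^*=\Theta(a_n)\ll p$ in hand, direct computation gives
\begin{equation*}
\trace(\Xi_z)=\sum_{i>k_n^*}\gamma_i+(p-k_n^*)\varepsilon_n=\Theta(e^{-a_n})+p\varepsilon_n(1+o(1))=\Theta(ne^{-a_n}),
\end{equation*}
and, after separating $\gamma_i^2$, cross, and $\varepsilon_n^2$ contributions, $\trace(\Xi_z^2)=\Theta(e^{-2a_n})+n^2 e^{-2a_n}/p$, so $R(\Xi_z)=\Theta(\min(p,n^2))$. Condition~(i) of Definition \ref{cond:basic} then follows from $\rank(\Sigma_u)=k_n^*=\Theta(a_n)=o(n)$, and condition~(ii) from $n/R(\Xi_z)=\Theta(\max(1/n,n/p))\to 0$ using $p=\orderomega(n)$. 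For condition~(iii) and for \eqref{eq:benign_cond_orthogonal}, substituting the trace estimate reduces both to showing $\|\theta_0\|_2\,e^{-a_n/2}\to 0$ and $\|\Sigma_u^+\corrcoef\|_2\,e^{-a_n/2}\to 0$, which follow from the hypotheses $\|\theta_0\|_2=o(\sqrt n)$, $\|\Sigma_u^+\corrcoef\|_2=o(\sqrt n)$ together with $e^{-a_n}=\varepsilon_n p/n=o(1)$ at the rate prescribed by the hypothesis.

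The main obstacle is the careful bookkeeping between the two scales: $k_n^*$ sits exactly at the crossover where $\gamma_{k_n^*+1}$ matches the rescaled noise $p\varepsilon_n/n=e^{-a_n}$, and this single identification drives every downstream trace and rank estimate. A more delicate point is matching the rate $a_n\to\infty$ (constrained only to $a_n=o(n)$) against the $o(\sqrt n)$ bounds on $\|\theta_0\|_2$ and $\|\Sigma_u^+\corrcoef\|_2$; this is why the hypothesis specifies the coupling $\varepsilon_n p=n e^{-o(n)}$ rather than an arbitrary sublinear rate, so that the products in condition~(iii) and \eqref{eq:benign_cond_orthogonal} genuinely vanish.
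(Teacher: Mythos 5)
Your explicit bookkeeping---pinning $k_n^*=\Theta(a_n)$, $\trace(\Xi_z)=\Theta(ne^{-a_n})=\Theta(\varepsilon_n p)$, and $R(\Xi_z)=\Theta(\min(p,n^2))$ through the parametrization $\varepsilon_n p = n e^{-a_n}$---is a self-contained version of the paper's route. The paper disposes of conditions (i) and (ii) of Definition~\ref{cond:basic} by citing Theorem 2(2) of \citet{bartlett2020benign}, bounds $\trace(\Xi_z)\le\trace(\Sigma_x)=\sum_i\gamma_i+p\varepsilon_n$ for condition (iii), and establishes $\|\Sigma_u^+\corrcoef\|_2^2\lesssim k_n^*$ for \eqref{eq:benign_cond_orthogonal}. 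Your computations are accurate and in fact tighter, so the difference from the paper is in level of detail, not in substance.

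However, your final deduction for condition (iii) has a genuine gap. You reduce (iii) to showing $\|\theta_0\|_2 e^{-a_n/2}\to 0$ and claim this follows from $\|\theta_0\|_2 = o(\sqrt n)$ together with $e^{-a_n}=o(1)$, but an $o(\sqrt n)$ factor times an $o(1)$ factor is only $o(\sqrt n)$, not $o(1)$. Writing it out, $\|\theta_0\|_2 e^{-a_n/2}=(\|\theta_0\|_2/\sqrt n)\sqrt{\varepsilon_n p}$, and the hypothesis $ne^{-o(n)}=\varepsilon_n p=o(n)$ permits $\varepsilon_n p\to\infty$: with $a_n=\log\log n$ (so $\varepsilon_n p=n/\log n$) and $\|\theta_0\|_2=\sqrt n/\log\log n$, the product $\sqrt n/(\log\log n\,\sqrt{\log n})$ diverges. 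The ``coupling'' $\varepsilon_n p=ne^{-o(n)}$ you appeal to is a lower bound on $\varepsilon_n p$, so it cannot cap the trace. Two remarks. First, for \eqref{eq:benign_cond_orthogonal} you should not downgrade to the crude $\|\Sigma_u^+\corrcoef\|_2=o(\sqrt n)$: your own estimate gives $\|\Sigma_u^+\corrcoef\|_2^2\lesssim k_n^*=\Theta(a_n)$, whence $\|\Sigma_u^+\corrcoef\|_2\,e^{-a_n/2}\lesssim\sqrt{a_n}\,e^{-a_n/2}\to 0$ unconditionally, which is how the paper argues this part. Second, the paper's own proof of (iii) is subject to the same criticism: it asserts $p\varepsilon_n\to 0$ because ``$p\varepsilon_n$ is equal to $ne^{-o(n)}$,'' which the stated hypothesis does not imply. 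The conclusion is fine in the concrete Setup (ii) of the experiments (where $a_n=\sqrt n$ so $p\varepsilon_n=ne^{-\sqrt n}\to 0$), but closing (iii) in general requires something like $\varepsilon_n p=O(1)$, which the proposition's hypotheses do not state.
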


\section{Error Analysis: Non-Orthogonal Case} \label{sec:non-orthogonal}

In this section, we relax the orthogonality condition of Assumption \ref{asmp:orthogonal} and study the sufficient conditions for benign overfitting when the covariance matrices $\Sigma_u$ and $\Xi_{z}$ are not orthogonal. 
The approach to derive the conditions is almost the same as in Section \ref{sec:orthogonal}; we first derive an upper bound for the projected RMSE, then use it to reveal sufficient conditions. 
To simplify the presentation, we defer the upper bounds to a later section and present only a theorem on the sufficient conditions.

\begin{theorem}[Sufficient conditions: Non-Orthogonal Case]\label{Non_ortho_Variant_Theorem12}
Under Assumption \ref{asmp:Gaussianity}, let $\hat{\theta}$ be the ridgeless estimator. Further, assume $\tilde{\sigma}^2 := \sigma^2 - \|\corrcoef\|_{\Sigma_u^+}^2>0$.  Suppose that the basic condition in Definition \ref{cond:basic} holds, and the covariance splitting $\Sigma_x=\Xi_z + \Sigma_{u}$ satisfies  the following conditions:
\begin{align}
    \lim_{n\rightarrow\infty}\frac{\|\Sigma_{u}^{+}\corrcoef\|_{2}}{\tilde{\sigma}}\sqrt{\frac{\trace(\Xi_{z})}{n}}=\lim_{n\rightarrow\infty}\frac{n}{R(\Xi_z)}\frac{\trace(\Sigma_{u}\Xi_{z})}{\trace(\Xi_{z}^{2})}=\lim_{n\rightarrow\infty}\corrcoef^\top \Sigma_{u}^{+}\Xi_{z} \Sigma_{u}^{+}\corrcoef=0. \label{eq:benign_cond_non_orthogonal}
\end{align}
Then, the following holds:
\begin{align*}
    \|\hat{\theta}-\theta_{0}\|_{\Xi_z}^{2} \overset{\bold{p}}{\to} 0, ~(n \to \infty).
\end{align*}
\end{theorem}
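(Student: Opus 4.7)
The plan is to follow the same two-stage strategy as in the orthogonal case of Theorem \ref{Variant_Theorem12}: first establish a high-probability upper bound on the projected RMSE $\|\hat{\theta}-\theta_{0}\|_{\Xi_{z}}^{2}$ under the weaker decomposition $\Sigma_{x}=\Xi_{z}+\Sigma_{u}$ (without Assumption \ref{asmp:orthogonal}), and then verify that each term of that bound vanishes as $n\to\infty$ under Definition \ref{cond:basic} together with \eqref{eq:benign_cond_non_orthogonal}. The expected form of the bound should reduce to that of Theorem \ref{Variant_Theorem3} when $\Sigma_{u}$ and $\Xi_{z}$ are orthogonal, and should carry additional coupling contributions involving $\trace(\Sigma_{u}\Xi_{z})$ and $\corrcoef^{\top}\Sigma_{u}^{+}\Xi_{z}\Sigma_{u}^{+}\corrcoef$, both of which vanish identically in the orthogonal regime.

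To obtain the generic bound I would start from the dual formulation of the ridgeless estimator used by \citet{koehler2022uniform}, writing $\|\hat{\theta}-\theta_{0}\|_{\Xi_{z}}^{2}$ as the optimal value of a minimax problem in a primal perturbation of $\theta_{0}$ and a dual Lagrange multiplier. Because $\Sigma_{u}$ and $\Xi_{z}$ are no longer simultaneously diagonalizable, the Gaussian design $\mathbf{X}$ cannot be split into two independent blocks acting on orthogonal eigenspaces; instead, I would invoke the extended CGMT developed elsewhere in the paper, which admits a single Gaussian design of covariance $\Xi_{z}+\Sigma_{u}$ with overlapping supports. The principal ingredients are (i) a Gaussian comparison that tracks the cross-covariance between $\Xi_{z}^{1/2}$ and $\Sigma_{u}^{1/2}$ along common eigendirections, and (ii) operator-norm concentration of projected Gaussian matrices of the form $\Xi_{z}^{1/2}\mathbf{X}^{\top}(\mathbf{X}\mathbf{X}^{\top})^{+}\mathbf{X}\Sigma_{u}^{1/2}$, whose residual fluctuations produce precisely the two coupling factors identified above.

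Once such a bound is in hand, the asymptotic analysis is essentially bookkeeping. The first limit in \eqref{eq:benign_cond_non_orthogonal}, combined with $\|\theta_{0}\|_{2}\sqrt{\trace(\Xi_{z})/n}\to 0$ from Definition \ref{cond:basic}, kills the principal term $\psi\bigl((\|\Sigma_{u}^{+}\corrcoef\|_{2}+\|\theta_{0}\|_{2})\sqrt{\trace(\Xi_{z})/n}\bigr)$ exactly as in Theorem \ref{Variant_Theorem12} (here $\tilde{\sigma}$ in the denominator merely rescales the coefficient $1\vee\tilde{\sigma}$). The second limit controls the term $(n/R(\Xi_{z}))\cdot\trace(\Sigma_{u}\Xi_{z})/\trace(\Xi_{z}^{2})$ produced by the non-commuting interaction between the signal and noise covariances, and the third limit kills the endogeneity-induced contribution $\corrcoef^{\top}\Sigma_{u}^{+}\Xi_{z}\Sigma_{u}^{+}\corrcoef$, which represents the leakage of the correlated-noise direction $\Sigma_{u}^{+}\corrcoef$ into the instrumental subspace. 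Convergence in probability then follows by applying the high-probability bound along a sequence $\delta_{n}\to 0$ slowly enough that the $\sqrt{\log(1/\delta_{n})}$ factor appearing in $\eta(\delta_{n})$ is absorbed by the vanishing factors.

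The chief obstacle is the extended CGMT argument used to establish the generic upper bound. In the orthogonal case, a direct-sum decomposition of the ambient space into a $\Sigma_{u}$-part and a $\Xi_{z}$-part reduces the Gaussian comparison to two decoupled auxiliary problems. Without orthogonality, the auxiliary Gaussian optimization is coupled through the cross-term $\Xi_{z}^{1/2}\Sigma_{u}^{1/2}$, which breaks the standard CGMT template and necessitates an inequality that can handle correlated Gaussian perturbations. Producing this extension, and showing that its residual error terms are captured \emph{exactly} by $\trace(\Sigma_{u}\Xi_{z})/\trace(\Xi_{z}^{2})$ and $\corrcoef^{\top}\Sigma_{u}^{+}\Xi_{z}\Sigma_{u}^{+}\corrcoef$, is where the bulk of the work lies.
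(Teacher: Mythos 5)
Your high-level two-stage strategy (projected-RMSE bound, then norm bound on $\hat{\theta}$, then asymptotic bookkeeping) matches the paper, but you have misplaced where the non-orthogonality actually bites, and the concrete mechanism you propose would not produce the three terms in \eqref{eq:benign_cond_non_orthogonal}.

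First, the extended CGMT (Theorem~\ref{VariantCGMT}) is the same in both the orthogonal and non-orthogonal cases and is \emph{already} established; it is invoked to handle the correlation between the noise $\xi_i$ and the auxiliary Gaussian block $\mathbf{W}_{2}$ (the endogeneity), not any coupling between $\Xi_z^{1/2}$ and $\Sigma_u^{1/2}$. Those are deterministic matrices, and $\mathbf{W}_{1}$ and $\mathbf{W}_{2}$ remain independent under the splitting $\Sigma_x=\Xi_z+\Sigma_u$ regardless of whether $\Xi_z$ and $\Sigma_u$ commute. Consequently, Lemmas~\ref{Variant_Lemma3}, \ref{Variant_Lemma4}, \ref{Variant_Lemma6}, \ref{Variant_Lemma7}, and the projected-RMSE bound of Corollary~\ref{Variant_Corollary2} carry over essentially verbatim; so does the reduction $\mathrm{PO}\to\mathrm{AO}$. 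Your claim that ``producing this extension'' of CGMT to overlapping supports is ``where the bulk of the work lies'' is therefore mistaken. Likewise, the suggested operator-norm concentration of $\Xi_z^{1/2}\mathbf{X}^\top(\mathbf{X}\mathbf{X}^\top)^{+}\mathbf{X}\Sigma_u^{1/2}$ is a random-matrix/leave-one-out device that the paper deliberately avoids precisely because it is not available in the correlated-noise setting.

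The genuinely new step is the analysis of the auxiliary optimization for the interpolant norm (Lemma~\ref{Non_ortho_Variant_Lemma8}). As in the orthogonal case one plugs in the test vector $\theta=P_u\Sigma_u^{+}\corrcoef+sP_zv^{*}$, but now $\Sigma_u^{1/2}P_zv^{*}\neq 0$, so $\theta_2=\Sigma_u^{1/2}\theta$ picks up a contribution proportional to $s$. The constraint in the AO therefore becomes a genuine quadratic in $s$ (with coefficients $\beta_1,\beta_2,\beta_3$), rather than a ratio as in Lemma~\ref{Variant_Lemma8}, and solving it produces the correction terms $\eta_1,\eta_2$ together with the factor $(1+\trace(\Sigma_u\Xi_z)/\trace(\Xi_z^2))\,n/R(\Xi_z)$. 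The final reduction to the stated conditions (Theorem~\ref{Non_ortho_Pre} plus Lemmas~\ref{sufficient_3}--\ref{sufficient_4}, showing $\eta_1,\eta_2\to 0$ under (iv) and (v)(b)) is also a step your sketch does not anticipate. So while the bookkeeping paragraph of your proposal is in the right spirit, the central argument — the quadratic-in-$s$ analysis of the non-orthogonally coupled AO — is missing and your substitute mechanism would not supply it.
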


In this non-orthogonal case, the above three conditions  \eqref{eq:benign_cond_non_orthogonal} play a critical role, in addition to Definition \ref{cond:basic}. 
We provide explanations of the terms in \eqref{eq:benign_cond_non_orthogonal} one by one below.
\begin{enumerate}
    \item[(i)] (Non-degenerated noise) The first condition on $({\|\Sigma_{u}^{+}\corrcoef\|_{2}} / {\tilde{\sigma}})(\sqrt{{\trace(\Xi_{z})} / {n}})$ requires that the variance $\tilde{\sigma}^2$ be non-degenerated and condition \eqref{eq:benign_cond_orthogonal} hold. Therefore, this condition is a sufficient condition for the condition \eqref{eq:benign_cond_orthogonal}. 
    \item[(ii)] (Effective rank with non-orthogonality) The second condition on the term $({n} / {R(\Xi_z)}) ({\trace(\Sigma_{u}\Xi_{z})} / {\trace(\Xi_{z}^{2})})$ takes into account the effect of non-orthogonality on the effective rank $R(\Xi_z)$, which already appears in the basic condition in Definition \ref{cond:basic}. 
    This means that non-orthogonality term $({\trace(\Sigma_{u}\Xi_{z})} / {\trace(\Xi_{z}^{2})})$ has a role in reducing the effective rank $R(\Xi_z)$.
    \item[(iii)] (Mixed effect) The third condition on $\corrcoef^\top \Sigma_{u}^{+}\Xi_{z} \Sigma_{u}^{+}\corrcoef$ includes both the effects of the non-orthogonality and the correlation $\corrcoef$. This condition is asymptotically satisfied as $\Sigma_u$ and $\Xi_z$ gradually approach orthogonality.
\end{enumerate}

Of the conditions in \eqref{eq:benign_cond_non_orthogonal}, (i) and (iii) are necessary to handle the endogeneity. 
In other words, (i) and (iii) are always satisfied when $\corrcoef=0$ holds. 
However, condition (ii) is required to achieve benign overfitting under non-orthogonality even in the absence of endogeneity.
To make it clear, we reveal a sufficient condition for benign overfitting with non-orthogonality in the setting of ordinary linear regression without endogeneity ($\corrcoef = 0$).

\begin{theorem}\label{Exogenous_Variant_Theorem12}{(Sufficient conditions: Non-Orthogonal Case when $X_{i}$ and $\xi_{i}$ are independent)}
Under Assumption \ref{asmp:Gaussianity}, let $\hat{\theta}$ be the ridgeless estimator.
Suppose that $\corrcoef = 0$ holds.
Suppose that the basic condition in Definition \ref{cond:basic} holds, and there exists a sequence of covariance $\Sigma_x=\Sigma_{1}+\Sigma_{2}$ such that the following conditions hold:
\begin{align}\label{eq:benign_cond_non_orthogonal_indep}
    \lim_{n\rightarrow\infty}\frac{n}{R(\Sigma_{2})}\left(\frac{\trace(\Sigma_{1}\Sigma_{2})}{\trace(\Sigma_{2}^{2})}\right)=0.
\end{align}
Then, $ L(\hat{\theta})$ converges to $\sigma^{2}$ in probability where $L(\theta)=\Ep(y-\langle \theta,x\rangle)^{2}$.
\end{theorem}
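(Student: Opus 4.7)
The strategy is to reduce Theorem \ref{Exogenous_Variant_Theorem12} to the non-orthogonal upper bound of Section \ref{sec:non-orthogonal} that underpins Theorem \ref{Non_ortho_Variant_Theorem12}, and then observe that $\corrcoef = 0$ annihilates every endogeneity-related term in that bound, leaving precisely the conditions in the hypothesis.

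First I would reduce the population risk to a Mahalanobis norm. Since $(X_i, \xi_i)$ is jointly Gaussian with $\Ep[X_i \xi_i] = \corrcoef = 0$, one has $X_i \indep \xi_i$. Taking a fresh test pair $(X,\xi)$ independent of the training sample (and thus independent of the estimator $\hat{\theta}$), the cross term vanishes and
\begin{align*}
L(\hat{\theta}) = \Ep_{X,\xi}\!\left[(\xi - \langle \hat{\theta} - \theta_0, X\rangle)^2\right] = \sigma^2 + \|\hat{\theta} - \theta_0\|_{\Sigma_x}^2.
\end{align*}
Thus $L(\hat{\theta}) \overset{\bold{p}}{\to} \sigma^2$ is equivalent to $\|\hat{\theta} - \theta_0\|_{\Sigma_x}^2 \overset{\bold{p}}{\to} 0$.

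Next I would invoke the non-orthogonal high-probability bound underpinning Theorem \ref{Non_ortho_Variant_Theorem12}. For the chosen decomposition $\Sigma_x = \Sigma_1 + \Sigma_2$, it follows the same template as \eqref{Benign} but with $\Sigma_2$ in place of $\Xi_z$ and $\Sigma_1$ in place of $\Sigma_u$; in addition, the coefficient $\eta(\delta)$ is augmented by a non-orthogonality correction proportional to $(n/R(\Sigma_2))(\trace(\Sigma_1\Sigma_2)/\trace(\Sigma_2^2))$, and there are extra endogeneity factors involving $\|\Sigma_1^+ \corrcoef\|_2$ and $\corrcoef^\top \Sigma_1^+ \Sigma_2 \Sigma_1^+ \corrcoef$. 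Setting $\corrcoef = 0$ eliminates those endogeneity factors and turns $\tilde{\sigma}$ into $\sigma$, leaving
\begin{align*}
\|\hat{\theta} - \theta_0\|_{\Sigma_x}^2 \lesssim (1+\eta_{\mathrm{no}}(\delta))(1 \vee \sigma)\, \psi\!\left(\|\theta_0\|_2 \sqrt{\trace(\Sigma_2)/n}\right),
\end{align*}
with $\eta_{\mathrm{no}}(\delta)$ absorbing both the original Theorem \ref{Variant_Theorem3}-style terms and the new $(n/R(\Sigma_2))(\trace(\Sigma_1\Sigma_2)/\trace(\Sigma_2^2))$ correction. Each piece is then controlled termwise: $\rank(\Sigma_1)/n$, $n/R(\Sigma_2)$, and $\|\theta_0\|_2\sqrt{\trace(\Sigma_2)/n}$ vanish by the basic condition (now applied to $(\Sigma_1, \Sigma_2)$), and the new cross term vanishes by \eqref{eq:benign_cond_non_orthogonal_indep}; since $\psi(t)=t+t^2 \to 0$ as $t \to 0$, we conclude $\|\hat{\theta}-\theta_0\|_{\Sigma_x}^2 \overset{\bold{p}}{\to} 0$.

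The hard part is Step 2 --- establishing the non-orthogonal upper bound with the exact $(n/R(\Sigma_2))(\trace(\Sigma_1\Sigma_2)/\trace(\Sigma_2^2))$ correction. When $\Sigma_1$ and $\Sigma_2$ commute, the Gaussian design $\mathbf{X}$ splits into independent blocks along the common eigenbasis, so the standard CGMT reduction delivers $\eta(\delta)$ cleanly. Under non-orthogonality the split produces correlated components whose contribution to the dual objective must be matched to $\trace(\Sigma_1\Sigma_2)/\trace(\Sigma_2^2)$ via the extended CGMT announced in Section \ref{sec:proof_outline}, together with trace concentration inequalities for Wishart-type matrices. This is where the substantive technical work lies; once that bound is in hand, Theorem \ref{Exogenous_Variant_Theorem12} reduces to plugging in $\corrcoef=0$ and checking decay.
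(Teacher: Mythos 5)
Your plan is sound at the top level and matches the paper's architecture: rewrite $L(\hat\theta)-\sigma^2$ as a Mahalanobis error, obtain a high-probability bound involving $\trace(\Sigma_2)/n$ plus a non-orthogonality correction, and then let the bound vanish under the hypotheses. But there is a localization error in where the correction actually enters, and the substantive step is left unproved.

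First, the non-orthogonality correction $(n/R(\Sigma_2))(\trace(\Sigma_1\Sigma_2)/\trace(\Sigma_2^2))$ does not arise in the risk bound itself, contrary to what you write. The paper's Lemma~\ref{Exogenous_Corollary2} (Koehler et al.'s Corollary 2) gives $\sup_{\|\theta\|_2\le B,\ \hat L(\theta)=0} L(\theta) \le (1+\gamma) B^2 \trace(\Sigma_2)/n$ for \emph{any} decomposition $\Sigma_x=\Sigma_1+\Sigma_2$ into PSD matrices; orthogonality plays no role there, and note it bounds $L(\theta)$ directly (hence $\|\theta-\theta_0\|_{\Sigma_x}^2$, not the $\Sigma_2$-weighted norm of \eqref{Benign} that your analogy points to). The correction appears only in the bound on $B=\|\hat\theta\|_2$, i.e., in Theorem~\ref{Exogenous_Variant_Theorem2}. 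There, after the CGMT reduction, one plugs in the candidate $\theta = s\,Pv^*$ in the AO, and the term $\|\Sigma_1^{1/2}Pv^*\|_2^2$ --- which vanishes when $\mathrm{range}(\Sigma_1)\perp\mathrm{range}(\Sigma_2)$ but not otherwise --- is what, via Lemma~\ref{Variant_10_Orthogonal}, gets converted to $(n/R(\Sigma_2))(\trace(\Sigma_1\Sigma_2)/\trace(\Sigma_2^2))$.

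Second, you explicitly defer the part that carries the weight of the theorem ("this is where the substantive technical work lies"). That is a genuine gap: the proof of Lemma~\ref{Exogenous_Variant_Lemma8} --- constructing the candidate, solving for the scale $s$, and controlling the cross term with concentration --- is precisely what establishes the claim, and it cannot be waved away. A further minor imprecision: in the $\corrcoef=0$ case, the extended CGMT of Theorem~\ref{VariantCGMT} is used for its variable-splitting feature (only $\theta_1$ multiplies the fresh Gaussian $\mathbf{W}_1$), not to handle correlation between design and noise; your explanation in terms of "correlated components" is off-target here. So the blueprint is right but the proof is incomplete, and the attribution of the key new term to "the dual objective" rather than the interpolator-norm estimate is inaccurate.
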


Theorem \ref{Exogenous_Variant_Theorem12} states that condition \eqref{eq:benign_cond_non_orthogonal_indep} is a key factor in RMSE converging to zero in the setting without orthogonality. 
When we set $\Sigma_{1}=\Sigma_{u}$ and $\Sigma_{2}=\Xi_{z}$, condition (10) is exactly equal to condition (ii) in the above discussion. Intuitively, $\trace(\Sigma_{u}\Xi_{z})$ is the degree of non-orthogonality between $\Sigma_{u}$ and $\Xi_{z}$, and Theorem \ref{Exogenous_Variant_Theorem12} requires the degree to be small.

\subsection{Example} \label{sec:exmpale_non_orthogonal}

We provide an example, similar to those provided in Section \ref{sec:example_orthogonal}. 
That is, we first specify the base matrix $\overline{\Sigma}$, then construct (transformed) covariance matrices based on it. 
Note that the definition of the dimension and the way of decomposition are slightly different.
Throughout this section, we also assume that $\|\theta_0\|_2 = o(\sqrt{n})$.

\begin{example}[Non-orthogonal version of Example \ref{example1}]
\label{example3}
    Consider the dimension $p=qn$ with some $q>1$, and a base matrix $\overline{\Sigma}$ whose  $i$-th largest eigenvalue has the form
    \begin{align*}
        \lambda_i = Ci^{-1}\log^{-\beta}(i+1), ~i=1,...,p,
    \end{align*}
    with some constant $C>0$ and $\beta>0$.
    We also assume that $\|\Sigma_u^+ \corrcoef\|_2 = o(\sqrt{n})$, $\lim_{n\rightarrow\infty}  ( \sigma^{2}-\|\corrcoef\|^{2}_{\Sigma_{u}^{+}})>0$, and consider the truncation level as \eqref{def:truncation_lebel}.
    Then, we define the (transformed) covariance matrices with $\alpha > 1$:
\begin{align*}  
    \Sigma_{u}:=\left(1-\frac{1}{n^\alpha}\right)\overline{\Sigma}_{1: k^{*}_{n}},   \quad  
    \Xi_{z}:=\overline{\Sigma}-\left(1-\frac{1}{n^\alpha}\right)\overline{\Sigma}_{1: k^{*}_{n}}.
\end{align*}
\end{example}

The base matrix $\overline{\Sigma}$ used in this example is identical to that in Example \ref{example1}. 
In contrast, the decomposition to construct the (transformed) covariance matrices is different. 
The following result demonstrates the validity of this example.

\begin{proposition}\label{non-ortho_ex}
Consider Example \ref{example3}. Suppose $\lim_{n\rightarrow\infty}  ( \sigma^{2}-\|\corrcoef\|^{2}_{\Sigma_{u}^{+}})>0$ does hold.
Under the assumptions $\|\theta_{0}\|_{2}=o(\sqrt{n})$ and $ (U\corrcoef)_{i}=\Theta(i^{-1}\log^{-\beta}(i+1))$ as in Proposition \ref{example_1}, $\Sigma_{u}$, $\Xi_{z}$, and $\corrcoef$ defined above satisfy all the conditions in Definition \ref{cond:basic} and Theorem \ref{Non_ortho_Variant_Theorem12}.
\end{proposition}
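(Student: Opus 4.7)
The plan is to exploit the fact that $\Sigma_u$ and $\Xi_z$ are simultaneously diagonalized by the orthogonal matrix $U$ from Example \ref{example3}, so that every relevant trace and bilinear form reduces to an explicit sum in the eigenvalues $\lambda_i = Ci^{-1}\log^{-\beta}(i+1)$. Writing $k := k^*_n$, the $i$-th eigenvalues of $\Sigma_u$ and $\Xi_z$ in this common basis are
\[
\mu^u_i = (1-n^{-\alpha})\lambda_i \mathbbm{1}\{i \leq k\}, \qquad
\mu^z_i = n^{-\alpha}\lambda_i \mathbbm{1}\{i \leq k\} + \lambda_i \mathbbm{1}\{i > k\},
\]
and I would verify the basic condition (Definition \ref{cond:basic}) and each of the three conditions in \eqref{eq:benign_cond_non_orthogonal} in turn using these expressions.

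For the basic conditions, since $\rank(\Sigma_u) = k$, the integral estimate $r(\overline{\Sigma} - \overline{\Sigma}_{1:k}) \gtrsim k\log k$ (uniformly over $\beta > 0$) combined with the defining inequality $r(\overline{\Sigma} - \overline{\Sigma}_{1:k^*_n}) > n$ forces $k = o(n/\log n)$, exactly as in Proposition \ref{example_1}, giving $\rank(\Sigma_u)/n \to 0$. The extra $n^{-\alpha}$-weighted mass on $i \leq k$ contributes only lower-order corrections to $\trace(\Xi_z)$ and $\trace(\Xi_z^2)$ since $\alpha > 1$, so $n/R(\Xi_z) = o(1)$ carries over from the orthogonal analysis of Proposition \ref{example_1}. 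The third basic condition reduces to $\|\theta_0\|_2\sqrt{\trace(\Xi_z)/n} \to 0$, which follows from $\|\theta_0\|_2 = o(\sqrt n)$ because $\trace(\Xi_z)$ grows at most polylogarithmically in $n$.

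For the three conditions in \eqref{eq:benign_cond_non_orthogonal}: for (i), the hypothesis $\lim_n(\sigma^2 - \|\corrcoef\|^2_{\Sigma_u^+})>0$ bounds $\tilde\sigma$ away from zero, after which $\|\Sigma_u^+\corrcoef\|_2 = o(\sqrt n)$ together with the already-proven $\trace(\Xi_z)/n \to 0$ yields the condition. For (ii), diagonalization gives
\[
\trace(\Sigma_u \Xi_z) = (1-n^{-\alpha})n^{-\alpha}\sum_{i\leq k}\lambda_i^2, \qquad \trace(\Xi_z^2) \geq \sum_{i>k}\lambda_i^2,
\]
so the ratio $\trace(\Sigma_u\Xi_z)/\trace(\Xi_z^2) = O(n^{-\alpha})$ up to a bounded constant controlled by the regular variation of $\lambda_i$ near the cutoff, and multiplication by $n/R(\Xi_z) = o(1)$ drives the product to zero. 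For (iii), using $(U\corrcoef)_i^2/\lambda_i = \Theta(\lambda_i)$,
\[
\corrcoef^\top \Sigma_u^+ \Xi_z \Sigma_u^+ \corrcoef \;=\; \frac{n^{-\alpha}}{(1-n^{-\alpha})^2}\sum_{i\leq k} \frac{(U\corrcoef)_i^2}{\lambda_i} \;=\; \Theta\Bigl(n^{-\alpha}\sum_{i\leq k}\lambda_i\Bigr),
\]
and since $\sum_{i\leq k}\lambda_i$ grows at most polylogarithmically in $n$ while $\alpha > 1$, this vanishes.

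The main obstacle will be tracking the various logarithmic factors uniformly across the regimes $\beta > 1$, $\beta = 1$, and $0 < \beta < 1$, where the partial sums $\sum_{i\leq k}\lambda_i$ transition from $O(1)$ to polylogarithmic growth. The structural feature that makes all three conditions tractable is that the weight $n^{-\alpha}$ with $\alpha > 1$ decays faster than any polylogarithm, so both the non-orthogonality-induced cross term in (ii) and the endogeneity-induced bilinear form in (iii) are suppressed by the same mechanism. A secondary concern is ensuring that the ratio $\sum_{i\leq k}\lambda_i^2 / \sum_{i>k}\lambda_i^2$ appearing in (ii) remains bounded, which should follow from direct integral estimates on the tail of $\lambda_i$ around the cutoff $k^*_n$.
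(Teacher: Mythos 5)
Your high-level strategy matches the paper's: diagonalize $\Sigma_u$ and $\Xi_z$ in the common eigenbasis $U$, write each condition as an explicit eigenvalue sum, and use $k_n^*=o(n)$ plus the exponent $\alpha>1$ to kill the cross terms. Conditions (i), (iii), and the three basic conditions are handled essentially as in the paper. However, your treatment of condition (ii) contains a genuine error that you flag at the end as a "secondary concern" and assert "should follow from direct integral estimates": the claim that the ratio $\sum_{i\leq k}\lambda_i^2/\sum_{i>k}\lambda_i^2$ remains bounded is false. For $\beta>1$ the numerator is $\Theta(1)$ while the tail $\sum_{i>k_n^*}^{qn}\lambda_i^2 = \Theta\bigl(1/(k_n^*\log^{2\beta}k_n^*)\bigr)$, so (with $k_n^*\asymp n/\log n$) the ratio \emph{diverges} like $n\log^{2\beta-1}n$. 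Consequently $\trace(\Sigma_u\Xi_z)/\trace(\Xi_z^2)$ is not $O(n^{-\alpha})$ up to a bounded constant as you state; it is $O(n^{1-\alpha}\log^{2\beta-1}n)$. The final conclusion still holds because $\alpha>1$ makes $n^{1-\alpha}$ dominate any polylogarithm, but the argument as written reduces to a step that is simply wrong and that you have left open. The paper avoids the issue by rewriting $\trace(\Sigma_u\Xi_z)/\trace(\Xi_z^2)$ as
\[
\left(\frac{1}{n^{\alpha}-1}+\frac{\frac{n^{2\alpha}}{n^{\alpha}-1}\sum_{i>k_n^*}\lambda_i^2}{\sum_{i\leq k_n^*}\lambda_i^2}\right)^{-1}
\]
and showing the second term in the parentheses diverges via the lower bound $n^{\alpha}\sum_{i>k_n^*}^{p}\lambda_i^2 \gtrsim n^{\alpha-1}/\log^{2\beta}(qn+1)\to\infty$, which gives $\trace(\Sigma_u\Xi_z)/\trace(\Xi_z^2)\to 0$ on its own without any boundedness claim.

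A secondary remark: your stated "main obstacle" of tracking logarithmic factors across $\beta>1$, $\beta=1$, and $0<\beta<1$ is a non-issue here. The proposition inherits $\beta>1$ through the phrase "as in Proposition \ref{example_1}," and the paper's proof explicitly invokes $\beta>1$ (e.g., to conclude that $\trace(\Xi_z)\leq\trace(\Sigma_x)<\infty$ and $\sum_{i\leq k}\lambda_i=O(1)$). Worrying about $\beta\leq 1$ adds nothing and, if pursued, would also undermine your claim that $\sum_{i\leq k}\lambda_i$ and $\trace(\Xi_z)$ are controlled by $O(1)$ quantities.
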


Note that Theorem \ref{Exogenous_Variant_Theorem12} immediately holds from this proposition by setting $\Sigma_1 = \Sigma_u$ and $\Sigma_2 = \Xi_z$.

\section{Extension to General Norm} \label{sec:general_norm}

We extend the result of Theorem \ref{Variant_Theorem12} to the case when $\theta$ is measured in terms of a general norm. 
Let $\|\cdot\|$ be an arbitrary norm.
To achieve our aim, we introduce two definitions, the dual norm and effective $\|\cdot\|$-ranks.
\begin{definition}[Dual Norm]
The dual norm of norm $\|\cdot\|$ on $\mathbb{R}^{d}$ is $\|u\|_{*}:=\max_{\|v\|=1}\langle v,u \rangle$, and the set of all its
sub-gradients with respect to $u$ is $\partial \|u\|_{*}=\{v:\|v\|=1, \langle v, u\rangle=\|u\|_{*}\}$.
\end{definition}

\begin{definition}[Effective $\|\cdot\|$-rank] \label{def:effrank_gennorm}
The effective $\|\cdot\|$-ranks of a covariance matrix $\Sigma$ are listed as follows. Let $H$ be normally distributed with mean zero and variance $I_{d}$, that is, $H\sim N(0,I_{d})$. Denote $v^{*}$ as $\arg \min_{v\in\partial\|\Sigma^{1/2}H\|_{*}}\|v\|_{\Sigma}$. Then, we define
\begin{equation*}
    r_{\|\cdot\|}(\Sigma):=\left(\frac{E\|\Sigma^{1/2}H\|_{*}}{\sup_{\|u\|\leq1}\|u\|_{\Sigma}}\right)^{2}~\text{and}~ R_{\|\cdot\|}(\Sigma):=\left(\frac{E\|\Sigma^{1/2}H\|_{*}}{E\|v^{*}\|_{\Sigma}}\right)^{2}.
\end{equation*}
\end{definition}
Effective $\|\cdot\|$-ranks is a generalization of the effective rank in Definition \ref{cond:basic}, and the dual norm is necessary to define the general effective rank.

We provide basic conditions for general norm $\|\cdot\|$, which corresponds to Definition \ref{cond:basic} and advanced conditions \citet{koehler2022uniform} established.
\begin{definition}[Basic condition with general norm] \label{cond:basic_general_norm}
This condition requires that the value of the three limits be zero with respect to a general norm $\|\cdot\|$:
  \begin{align}
        \lim_{n\rightarrow\infty}\frac{\rank(\Sigma_{u})}{n}=\lim_{n\rightarrow\infty}\frac{n}{R_{\|\cdot\|}(\Xi_z)}=\lim_{n\rightarrow\infty}\frac{\|\theta_{0}\|\mathbb{E}\|\Xi_{z}^{1/2}H\|_{*}}{\sqrt{n}}=0. \label{eq:benign_basic_general}
    \end{align}
\end{definition}

Each condition in \eqref{eq:benign_basic_general} corresponds to conditions in \eqref{eq:benign_basic}. The first condition for small  latent noise remains unchanged. 
For the large effective dimension condition, we replace $R(\Xi_{z})$ with the general norm counterpart, $R_{\|\cdot\|}(\Xi_z)$. For the no aliasing condition, $\theta_{0}$ is measured in terms of any norm $\|\cdot\|$ and $\sqrt{\trace(\Xi_{z})}$ is replaced with $\mathbb{E}\|\Xi_{z}^{1/2}H\|_{*}$.

\begin{definition}[Advanced condition] \label{cond:advanced}
In addition to Definition \ref{cond:basic_general_norm}, we require the following two conditions: 
    \begin{align}\label{cond:advanced_eq}
        \lim_{n\rightarrow\infty}\frac{1}{r_{\|\cdot\|}(\Xi_z)}=\lim_{n\rightarrow\infty}\Pr(\|P_{u}v^{*}\|^{2}>1+\eta)=0,
    \end{align}
    for any $\eta>0$.
\end{definition}
We provide details of the terms in \eqref{cond:advanced_eq} below.
\begin{enumerate}
    \item[(i)] (Large effective dimension) The first term $1/r_{\|\cdot\|}(\Xi_z)$ decreases as the effective rank $r_{\|\cdot\|}(\Xi_z)$ becomes large as with the second condition in \eqref{eq:benign_basic_general}. In the Euclidean norm case, $1/r(\Xi_z)$ converges to zero as $n/R(\Xi_z)$ goes toward zero by definition.
    
    \item[(ii)] (Contracting $\ell_{2}$ projection condition) This condition implies the projected $v^{*}$ onto the space spanned by $\Sigma_{u}$ is asymptotically smaller than or equal to 1. This condition always holds in the Euclidean norm case because $\|P_{u}v^{*}\|^{2}_{2}\leq \|v^{*}\|^{2}_{2}=1$ holds. 
\end{enumerate}

For the projected RMSE to converge to zero, we introduce a new assumption corresponding to condition \eqref{eq:benign_cond_orthogonal} in Theorem 2 in addition to the conditions in Definitions \ref{cond:basic_general_norm} and \ref{cond:advanced}.
\begin{theorem}[Sufficient conditions]\label{Variant_Theorem11}
Under Assumptions \ref{asmp:Gaussianity} and \ref{asmp:orthogonal}, let $\hat{\theta}$ be the ridgeless estimator. Let $\|\cdot\|$ denote an arbitrary norm. Suppose that the basic conditions in Definitions \ref{cond:basic_general_norm} and \ref{cond:advanced} hold, and the covariance splitting $\Sigma_x=\Xi_z + \Sigma_{u}$ satisfies  the following conditions:
    \begin{equation*}
        \lim_{n\rightarrow\infty}\frac{\|\Sigma_u^+ \corrcoef\|\mathbb{E}\|\Xi_{z}^{1/2}H\|_{*}}{\sqrt{n}}=0.
    \end{equation*}
Then, the following holds:
\begin{align*}
    \|\hat{\theta}-\theta_{0}\|_{\Xi_z}^{2} \overset{\bold{p}}{\to} 0, ~(n \to \infty).
\end{align*}
\end{theorem}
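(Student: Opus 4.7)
The strategy is to mirror the Euclidean argument of Theorem \ref{Variant_Theorem12} with every Euclidean quantity replaced by its $\|\cdot\|$-counterpart: $\sqrt{\trace(\Xi_z)}$ becomes $\mathbb{E}\|\Xi_z^{1/2}H\|_{*}$, $\|\theta_0\|_2$ becomes $\|\theta_0\|$, $\|\Sigma_u^{+}\corrcoef\|_2$ becomes $\|\Sigma_u^{+}\corrcoef\|$, and the effective ranks $r(\Xi_z),R(\Xi_z)$ become $r_{\|\cdot\|}(\Xi_z),R_{\|\cdot\|}(\Xi_z)$. First I would establish the general-norm analogue of the high-probability bound \eqref{Benign}, namely
\begin{equation*}
\|\hat\theta-\theta_0\|_{\Xi_z}^{2}\lesssim \bigl(1+\eta_{\|\cdot\|}(\delta)\bigr)\bigl(1\vee\tilde\sigma\bigr)\,\psi\!\left(\bigl(\|\Sigma_u^{+}\corrcoef\|+\|\theta_0\|\bigr)\frac{\mathbb{E}\|\Xi_z^{1/2}H\|_{*}}{\sqrt{n}}\right),
\end{equation*}
where $\eta_{\|\cdot\|}(\delta)$ aggregates the residual terms $1/\sqrt{r_{\|\cdot\|}(\Xi_z)}$, $\sqrt{\rank(\Sigma_u)/n}$, and $n/R_{\|\cdot\|}(\Xi_z)$. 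Given this bound, the two basic conditions \eqref{eq:benign_basic_general} and the contracting projection piece of \eqref{cond:advanced_eq}, together with the new endogeneity hypothesis on $\|\Sigma_u^{+}\corrcoef\|\,\mathbb{E}\|\Xi_z^{1/2}H\|_{*}/\sqrt{n}$, force both $\eta_{\|\cdot\|}(\delta)\to 0$ and the argument of $\psi$ to vanish, so that the right-hand side tends to zero and the stated convergence in probability follows, since $\tilde\sigma^{2}\leq\sigma^{2}<\infty$ by Lemma \ref{rho-sigma_inq}.

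\textbf{Main technical route.} To prove the displayed bound I would follow the dual CGMT scheme of \citet{koehler2022uniform} as adapted in Section \ref{sec:proof_outline}. The first step is to decompose $\mathbf{X}=\Xi_z^{1/2}\mathbf{G}_z+\Sigma_u^{1/2}\mathbf{G}_u$ using Assumption \ref{asmp:orthogonal}, which makes $\mathbf{G}_z$ and $\mathbf{G}_u$ independent Gaussian ensembles because their covariance matrices have orthogonal eigenspaces, and to write $\xi_i=\langle\Sigma_u^{+}\corrcoef,u_i\rangle+\tilde\xi_i$ with $\tilde\xi_i\sim N(0,\tilde\sigma^{2})$ independent of $X_i$; this isolates the endogeneity as a deterministic linear correction in the $\Sigma_u$-subspace, after which $Y_i$ is conditionally homoscedastic given $Z_i$. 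The second step is to dualize the interpolation constraint defining $\hat\theta$, producing a saddle-point problem with a Lagrange vector $v\in\R^{n}$ that is amenable to the extended CGMT derived in Section \ref{sec:proof_outline}. The third step is to localize the saddle problem on the set $\{\theta:\|\theta-\theta_0\|\le t\}$ and to replace the primal Gaussian design by its Auxiliary Optimization, whose $\Xi_z$-part is exactly $t\,\mathbb{E}\|\Xi_z^{1/2}H\|_{*}$ by the definition of the dual norm and whose $\Sigma_u$-part is a $\rank(\Sigma_u)$-dimensional Gaussian quadratic controlled by the second condition in \eqref{eq:benign_basic_general}. Matching the two sides identifies a critical radius $t\asymp(\|\Sigma_u^{+}\corrcoef\|+\|\theta_0\|)\,\mathbb{E}\|\Xi_z^{1/2}H\|_{*}/\sqrt{n}$, and the passage from $\|\cdot\|$ to $\|\cdot\|_{\Xi_z}$ via $\sup_{\|u\|\le 1}\|u\|_{\Xi_z}$ (the denominator appearing in $r_{\|\cdot\|}(\Xi_z)$) produces the stated bound.

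\textbf{Main obstacle.} The hardest piece is controlling the dual subgradient $v^{*}\in\partial\|\Xi_z^{1/2}H\|_{*}$ in the presence of correlated noise and a non-Euclidean norm. In the Euclidean case $v^{*}=\Xi_z^{1/2}H/\|\Xi_z^{1/2}H\|_2$ is automatically bounded in $\ell_2$, which is why the contracting projection condition is vacuous there; for a general norm this can fail, so Definition \ref{cond:advanced} supplies the substitute $\|P_u v^{*}\|^{2}\leq 1+\eta$ with high probability, where $P_u$ is the projector onto the range of $\Sigma_u$. I would use this condition to bound the $\Sigma_u$-side Auxiliary Optimization and, crucially, to ensure that the endogeneity displacement $\Sigma_u^{+}\corrcoef$—which lies entirely in the $\Sigma_u$-subspace by Assumption \ref{asmp:orthogonal}—enters the final bound only through its $\|\cdot\|$-norm multiplied by the dual scale $\mathbb{E}\|\Xi_z^{1/2}H\|_{*}/\sqrt{n}$, rather than through some larger quantity mixing $\Xi_z$ and the non-orthogonal components of $v^{*}$. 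Once the bound is assembled, the three vanishing hypotheses yield $\|\hat\theta-\theta_0\|_{\Xi_z}^{2}\overset{\bold{p}}{\to}0$.
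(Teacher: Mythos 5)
Your proposal is correct and follows essentially the same route as the paper: obtain the general-norm high-probability bound (the paper's Theorem \ref{Variant_Theorem5}) by combining the CGMT-based uniform projected-RMSE bound over $\|\cdot\|$-balls (Corollary \ref{Variant_Corollary3}) with a CGMT-based norm bound on $\hat\theta$ built from the trial interpolator $\theta = P_u\Sigma_u^{+}\corrcoef + sP_z v^{*}$ (Theorem \ref{Variant_Theorem4} via Lemma \ref{Variant_Lemma8}), then let the stated conditions drive every error term to zero. Two small imprecisions, neither fatal: the contracting-projection condition actually controls $\|P_z v^{*}\|$, the $\Xi_z$-directed component of the trial $\theta$, rather than a ``$\Sigma_u$-side'' quantity as you describe; and the passage from $n/R_{\|\cdot\|}(\Xi_z)\to 0$ to the existence of a vanishing $\varepsilon_1$ in the norm bound requires an explicit Markov-inequality step on $\|v^{*}\|_{\Xi_z}/(\mathbb{E}\|\Xi_z^{1/2}H\|_{*}/\sqrt{n})$, which your sketch glosses over.
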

As in the condition in \eqref{eq:benign_basic_general}, $\Sigma_u^+ \corrcoef$ is measured in terms of any norm $\|\cdot\|$ and $\sqrt{\trace(\Xi_{z})}$ is replaced with $\mathbb{E}\|\Xi_{z}^{1/2}H\|_{*}$.
If we consider the general norm, compared to the Euclidean norm case, it is possible we can relax some of the sufficient conditions for benign overfitting, especially the condition in Theorem \ref{Variant_Theorem12}. However, as we must incorporate additional advanced conditions outlined in Definition \ref{cond:advanced} in conjunction with the basic conditions presented in Definition \ref{cond:basic_general_norm}, it remains uncertain whether benign overfitting is more probable.

\section{Proof Outline} \label{sec:proof_outline}

\subsection{Approach with CGMT}

Our proof relies on two techniques: (i) describing the ridgeless estimator as a solution to an optimization problem and bounding the projected RMSE, and (ii) evaluating the solution by an extended version of the convex Gaussian minimax theorem (CGMT).
CGMT was introduced into high-dimensional statistics by \citet{thrampoulidis2015regularized,thrampoulidis2018precise}. 
Furthermore, \cite{koehler2022uniform} discussed that CGMT can describe benign overfitting by \citet{bartlett2020benign} in the ordinary regression setting.
In this section, we deal with the non-orthogonal case results given in Section \ref{sec:non-orthogonal}, which can be easily applied to the orthogonal case in Section \ref{sec:orthogonal}.

We prepare some notations.
We define a normalized correlation coefficient $\rho = (\Sigma_u^{1/2})^+ \corrcoef$, which guarantees that $\|\rho\|_{2}^{2} \leq\sigma^{2}$ (see Lemma \ref{rho-sigma_inq}).
We also define $\mathbf{X} = (X_1,...,X_n)^\top$ as an $\R^{n \times p}$-valued random matrix, which has the form 
\begin{equation}
   \mathbf{X}\overset{\mathcal{D}}{=}\mathbf{W}_{1}\Xi_z^{1/2}+\mathbf{W}_{2}\Sigma_{u}^{1/2}, \label{eq:X_matrix}
\end{equation}
where $\mathbf{W}_{1}$ and $\mathbf{W}_{2}$ are $n\times p$ random matrices whose $i$-th row identically follows a joint distribution of $\xi_i$ such that
\begin{equation} \label{def:cov_wwxi}
\begin{pmatrix}
W_{1,i} \\
W_{2,i} \\
\xi_{i}
\end{pmatrix}
\sim N\left(
\begin{pmatrix}
\mathbf{0}_{p\times 1} \\
\mathbf{0}_{p\times 1} \\
0
\end{pmatrix},
\begin{pmatrix}
I_{p\times p} & \mathbf{0}_{p\times p}  & \mathbf{0}_{p\times 1}\\
\mathbf{0}_{p\times p} & I_{p\times p} & \mathbf{\rho} \\
\mathbf{0}_{p\times 1}^\top  & \mathbf{\rho}^\top  & \sigma^{2}
\end{pmatrix}
\right)
\end{equation}
for $i=1,...,n$.
Note that this form follows the  Gaussianity from Assumption \ref{asmp:Gaussianity}.

\subsection{Step (i): Bound Projected RMSE by Optimization Form}
First, we consider a uniform upper bound for the projected RMSE $E[(E[\langle {\theta}, X \rangle-\langle\theta_{0}, X \rangle|Z])^{2}]$ under the constraint that the estimator $\hat{\theta}$ is the ridgeless estimator (i.e., $\hat{L}(\hat{\theta}) = 0$).
Then, we transform it to a maximization problem with a constraint with some compact parameter space $\mK \subset \R^p$:
\begin{align*}
    \max_{\substack{\theta \in \mathcal{K}, \hat{L}(\theta)=0}}E\left[\left(E[\langle {\theta}, X \rangle-\langle\theta_{0}, X \rangle|Z]\right)^{2}\right]
    &=\max_{\theta\in\mathcal{K},\mathbf{X}\theta=\mathbf{Y}}\|\theta-\theta_{0}\|_{\Xi_z}^{2} \\
    &=\max_{\theta\in\mathcal{K},\mathbf{X}(\theta-\theta_{0})=\xi}\|\theta-\theta_{0}\|_{\Xi_z}^{2}.
\end{align*}
Using the surrogate Gaussians in \eqref{eq:X_matrix}, the upper bound above has the same distribution as the following term:
\begin{equation}\label{Variant_40}
    \Phi:=\max_{\substack{(\theta_{1},\theta_{2})\in S, \\ \mathbf{W}_{1}\theta_{1}+\mathbf{W}_{2}\theta_{2}=\xi}}\|\theta_{1}\|_{2}^{2},
\end{equation}
where we define
    $S:=\{(\theta_{1},\theta_{2}):\exists\theta\in\mathcal{K}\ s.t.\ \theta_{1}=\Xi_z^{1/2}(\theta-\theta_{0})\ and\ \theta_{2}=\Sigma_{u}^{1/2}(\theta-\theta_{0}) )\}$.
The details of the derivation are described in the proof of Lemma \ref{Variant_Lemma3} in the appendix.

Second, we approximate the distribution of the optimization problem \eqref{Variant_40} using CGMT.
CGMT approximates minimax optimization problems by a distribution of their simpler auxiliary problems.
Here, we present our variant of CGMT that can deal with correlation between variables, though we also use classical CGMT depending on the situation. 

\begin{theorem}[Extended CGMT]\label{VariantCGMT}
Let $\mathbf{W}:n\times d$ be a matrix with i.i.d. $N(0, 1)$ entries and suppose $G\sim N(0,I_{n})$ and $H\sim N(0,I_{d})$ are independent of $\mathbf{W}$ and each other. Let $S_{W}$ and $S_{U}$ be non-empty compact sets in $\mathbb{R}^{d}\times\mathbb{R}^{d'}$ and $\mathbb{R}^{n}\times\mathbb{R}^{n'}$, respectively, and let $\psi:S_{W}\times S_{U}\mapsto\mathbb{R}$ be an arbitrary continuous function. Define the Primary Optimization (PO) problem
\begin{equation}\label{POCGMT}
    \Phi(\mathbf{W}):=\min_{(\omega,\omega')\in S_{W}}\max_{(u,u')\in S_{U}}\langle u,\mathbf{W}\omega \rangle +\psi((\omega,\omega'),(u,u'))
\end{equation}
and the Auxiliary Optimization (AO) problem
\begin{equation}\label{AOCGMT}
    \phi(G,H):=\min_{(\omega,\omega')\in S_{W}}\max_{(u,u')\in S_{U}}\|\omega\|_{2}\langle G, u \rangle+ \|u\|_{2}\langle H, \omega \rangle +\psi((\omega,\omega'),(u,u')).
\end{equation}
If we suppose that $S_{W}$ and $S_{u}$ are convex sets and $\psi((\omega,\omega'),(u,u'))$ is convex in $(\omega,\omega')$ and concave in $(u,u')$, then $\Pr(\Phi(\mathbf{W}) > c)\leq2\Pr(\phi(G, H) \geq c)$ for any $c\in\mathbb{R}$.
\end{theorem}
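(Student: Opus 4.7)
The plan is to mirror the proof of the classical convex Gaussian minimax theorem of Thrampoulidis, Oymak and Hassibi, while accommodating the additional decision coordinates $\omega' \in \R^{d'}$ and $u' \in \R^{n'}$, which enter only through $\psi$ and not through the Gaussian interaction terms. The central observation is that both $\langle u, \mathbf{W}\omega\rangle$ in the PO and $\|\omega\|_{2}\langle G,u\rangle + \|u\|_{2}\langle H,\omega\rangle$ in the AO are completely invariant in $(\omega',u')$: the Gaussianity lives only on the primary coordinates. Hence every Gaussian comparison step in the classical CGMT proof carries over verbatim once we take the index set to be $\tau = ((\omega,\omega'),(u,u')) \in S_W \times S_U$, and the only place where genuinely new content is required is the convex-duality strengthening from the expectation-level Gordon comparison to the factor-of-two tail bound.

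Concretely, for the Gaussian comparison step, introduce the centered Gaussian processes
\begin{equation*}
X_\tau := \langle u, \mathbf{W}\omega\rangle, \qquad Y_\tau := \|\omega\|_{2} \langle G, u\rangle + \|u\|_{2} \langle H, \omega\rangle,
\end{equation*}
whose covariance kernels depend only on $(\omega,u)$. Augmenting $X_\tau$ by an independent centered Gaussian of variance $\|\omega\|_{2}^{2}\|u\|_{2}^{2}$ to match the variance of $Y_\tau$, a direct computation yields the key identity
\begin{equation*}
\mathbb{E}[X'_\tau X'_{\tilde\tau}] - \mathbb{E}[Y_\tau Y_{\tilde\tau}] = \bigl(\|\omega\|_{2}\|\tilde\omega\|_{2} - \langle \omega, \tilde\omega \rangle\bigr)\bigl(\|u\|_{2}\|\tilde u\|_{2} - \langle u, \tilde u\rangle\bigr) \ge 0
\end{equation*}
for $\omega \ne \tilde\omega$, together with equality when $\omega = \tilde\omega$; by Cauchy--Schwarz these are exactly Gordon's hypotheses. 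Gordon's Gaussian min-max inequality then delivers the expectation and one-sided tail comparisons of $\Phi(\mathbf{W})$ against $\phi(G,H)$, with the extra Gaussian scalar handled by conditioning.

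The final step upgrades the one-sided comparison to the two-sided factor-of-two bound $\Pr(\Phi(\mathbf{W}) > c) \le 2 \Pr(\phi(G,H) \ge c)$ by exploiting compactness and convexity of $S_W, S_U$ together with the joint convex-concavity of $\psi$. Following the standard CGMT strengthening, one applies Sion's minimax theorem to interchange $\min_{(\omega,\omega')}$ with $\max_{(u,u')}$ in the auxiliary representation, then symmetrizes using convexity of the sublevel sets of the PO to recover the factor of two. The main obstacle is precisely this step: one must verify that Sion's hypotheses are met on the \emph{extended} sets $S_W, S_U$, i.e., that the joint convex-concavity of $\psi$ in $(\omega,\omega')$ versus $(u,u')$ — rather than convex-concavity only in the primary coordinates — is enough to swap the minimum and maximum over the full variables. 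This is why the theorem's hypotheses are stated jointly on $(\omega,\omega')$ and $(u,u')$; once this interchange is secured, the remaining symmetrization argument is unchanged from the classical case and the claimed tail inequality follows.
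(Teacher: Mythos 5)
Your proposal is essentially correct and follows the same route as the paper's proof. The key observation — that the Gaussian interaction terms $\langle u,\mathbf{W}\omega\rangle$ and $\|\omega\|_{2}\langle G,u\rangle + \|u\|_{2}\langle H,\omega\rangle$ are invariant in $(\omega',u')$, so the entire Gordon comparison carries over with the index set $\tau = ((\omega,\omega'),(u,u'))$ — is exactly the point. Your covariance identity is right, and it is exactly the Gordon/Slepian hypothesis. The only structural difference from the paper is modularity: the paper cites Theorem 10 of \citet{koehler2022uniform} as a black box for the Gordon/tail comparison, whereas you rebuild it from the covariance computation; both lead to the same place.

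One imprecision in your final paragraph is worth correcting, since it reverses the roles of the two optimization problems. Sion's minimax theorem (the paper uses Rockafellar Cor.~37.3.2) is applied to the \emph{primary} problem $\Phi$, not to an ``auxiliary representation.'' Concretely: one first uses convexity to rewrite $\Phi = \max_{(u,u')}\min_{(\omega,\omega')}\{\langle u,\mathbf{W}\omega\rangle + \psi\}$, so that $-\Phi$ becomes a min--max; then the sign symmetry of $\mathbf{W}$ and the Gordon comparison (applied to that min--max) yield the factor-of-two tail bound; finally, sign symmetry of $(G,H)$ and the \emph{unconditional} minimax inequality (weak duality, Rockafellar Lemma~36.1 — no convexity needed here) bound the resulting max--min auxiliary expression by $\phi(G,H)$. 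The factor of two is already produced by the Gordon step (conditioning on the sign of the variance-matching scalar Gaussian), not by a later ``symmetrization using convexity of sublevel sets of the PO'' — that phrase does not correspond to any step in the argument. The new verification you should highlight is the routine one you already noted: $\langle u,\mathbf{W}\omega\rangle + \psi$ is jointly convex in $(\omega,\omega')$ and concave in $(u,u')$ on the extended compact convex sets, since the bilinear term is linear in $\omega$ and independent of $(\omega',u')$; this is what licenses the Rockafellar/Sion swap.
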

This theorem is an extension of the original CGMT to split the variables to be optimized so that it can handle our regression model \eqref{model:reg} with the endogeneity.
Rigorously, this theorem allows correlation between the covariates and the error terms.

Using the extended CGMT in Theorem \ref{VariantCGMT}, we approximate the distribution of the problem \eqref{Variant_40} by
\begin{equation*}
\phi:=\max_{{(\theta_{1},\theta_{2})\in S:
\|\xi-\mathbf{W}_{2}\theta_{2}-G\|\theta_{1}\|_{2}\|_{2}\leq\langle\theta_{1},H\rangle}} \|\theta_{1}\|_{2}^{2},
\end{equation*}
where $G\sim N(0,I_{n})$ and $H\sim N(0,I_{d})$ are Gaussian vectors independent of $\mathbf{W}_{1}, \mathbf{W}_{2}, \xi$, and each other.
A distribution of this term is tractable because of the relatively simple form.
Namely, we obtain the following result.
In the case of a Euclidean norm ball, we set $\mathcal{K}:=\{\theta\in\mathbb{R}^{p}|\|\theta\|_{2}\leq B\}$. By combining the upper bound of $\mathcal{K}$, we can derive a simpler upper bound for the Euclidean norm.

\begin{corollary}\label{Variant_Corollary2}
There exists an absolute constant $C_{1}\leq 64$ such that the following is true. Assume Assumptions \ref{asmp:Gaussianity} and \ref{asmp:orthogonal} hold. Pick $\Sigma_x=\Xi_z + \Sigma_{u}$ and fix $\delta\leq 1/4$. Define $\tilde{\sigma}^2 := \sigma^2 - \|\corrcoef\|_{\Sigma_u^+}^2\geq 0$ and $\RiskboundEuclidfunction(t_{1},t_{2})=t_{1}^{2}-t_{2}^{2}$.  If $B\geq\|\theta_{0}\|_{2}$ and $n$ is large enough that $\RiskboundEuclidcoef(\delta)\leq1$, the following holds with probability at least $1-\delta$:
\begin{align*}
    \max_{\|\theta\|_{2}\leq B,  \mathbf{Y}=\mathbf{X}\theta}\|\theta-\theta_{0}\|_{\Xi_z}^{2}&\lesssim (1+\RiskboundEuclidcoef(\delta))\RiskboundEuclidfunction\left(B\sqrt{\frac{\trace(\Xi_{z})}{n}},\tilde{\sigma}\right),
\end{align*}
where we define
\begin{equation*}
    \RiskboundEuclidcoef(\delta):=\sqrt{\log(1/\delta)} \left( \frac{1}{\sqrt{r(\Xi_z)}} + \sqrt{\frac{ \mathrm{rank}(\Sigma_u)}{n}}\right).
\end{equation*}

\end{corollary}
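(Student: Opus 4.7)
I would follow the CGMT pipeline outlined in Section \ref{sec:proof_outline}, specialised to the orthogonal splitting in Assumption \ref{asmp:orthogonal} and to the Euclidean-ball constraint $\|\theta\|_{2}\le B$. First, reparametrise by $(\theta_{1},\theta_{2})=(\Xi_{z}^{1/2}(\theta-\theta_{0}),\,\Sigma_{u}^{1/2}(\theta-\theta_{0}))$ and use the in-distribution identity \eqref{eq:X_matrix}: the interpolation constraint $\mathbf{X}\theta=\mathbf{Y}$ becomes $\mathbf{W}_{1}\theta_{1}+\mathbf{W}_{2}\theta_{2}=\xi$, the objective equals $\|\theta-\theta_{0}\|_{\Xi_{z}}^{2}=\|\theta_{1}\|_{2}^{2}$, and the ball constraint (together with $B\ge\|\theta_{0}\|_{2}$) is recorded in a convex set $S$ of admissible pairs. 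This recovers the PO form \eqref{Variant_40} restricted to $S$.

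Second, I would dualise the equality constraint through a Lagrange multiplier $u\in\R^{n}$, truncated to a large compact ball so that the resulting convex--concave min-max problem satisfies the hypotheses of Theorem \ref{VariantCGMT}. The structurally crucial point is that, under the block covariance \eqref{def:cov_wwxi}, the matrix $\mathbf{W}_{1}$ is independent of the correlated pair $(\mathbf{W}_{2},\xi)$; hence $\mathbf{W}_{1}$ can play the role of the Gaussian block to which the Gordon-type comparison of Theorem \ref{VariantCGMT} applies, while $(\mathbf{W}_{2},\xi)$ enters only inside the $\psi$ functional through $\langle u,\mathbf{W}_{2}\theta_{2}-\xi\rangle$. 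Applying Theorem \ref{VariantCGMT} then produces the AO
\begin{equation*}
\phi=\max_{\substack{(\theta_{1},\theta_{2})\in S \\ \|\xi-\mathbf{W}_{2}\theta_{2}-G\|\theta_{1}\|_{2}\|_{2}\le\langle\theta_{1},H\rangle}}\|\theta_{1}\|_{2}^{2},
\end{equation*}
with $G\sim N(0,I_{n})$ and $H\sim N(0,I_{p})$ independent of $(\mathbf{W}_{1},\mathbf{W}_{2},\xi)$, and it is enough to upper bound $\phi$ on a high-probability event.

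Third, I would evaluate $\phi$ via two concentration calculations. (a) Using the conditional law $\xi\mid\mathbf{W}_{2}\sim N(\mathbf{W}_{2}\rho,\tilde\sigma^{2}I_{n})$ with $\rho=(\Sigma_{u}^{1/2})^{+}\corrcoef$ and $\tilde\sigma^{2}=\sigma^{2}-\|\corrcoef\|_{\Sigma_{u}^{+}}^{2}$, a feasible choice of $\theta_{2}$ cancelling the conditional mean $\mathbf{W}_{2}\rho$ makes $\min_{\theta_{2}}\|\xi-\mathbf{W}_{2}\theta_{2}\|_{2}^{2}$ concentrate near $\tilde\sigma^{2}n$ with chi-square deviation of order $\tilde\sigma^{2}\sqrt{\log(1/\delta)\,\rank(\Sigma_{u})/n}$. (b) By Cauchy--Schwarz, $\langle\theta_{1},H\rangle\le 2B\,\|\Xi_{z}^{1/2}H\|_{2}$, and $\|\Xi_{z}^{1/2}H\|_{2}^{2}$ concentrates near $\trace(\Xi_{z})$ with relative deviation of order $\sqrt{\log(1/\delta)/r(\Xi_{z})}$ via a Hanson--Wright / Laurent--Massart estimate. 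Combining these with $\|G\|_{2}^{2}\approx n$ and the independence of $G$ from $\xi-\mathbf{W}_{2}\theta_{2}$ (so the cross term in $\|\xi-\mathbf{W}_{2}\theta_{2}-G\|\theta_{1}\|_{2}\|_{2}^{2}$ is negligible after a Gaussian concentration bound), the AO constraint reduces to
\begin{equation*}
\tilde\sigma^{2}n+n\|\theta_{1}\|_{2}^{2}\;\lesssim\;(1+\RiskboundEuclidcoef(\delta))\,B^{2}\,\trace(\Xi_{z}),
\end{equation*}
which rearranges to $\phi\lesssim(1+\RiskboundEuclidcoef(\delta))\,\RiskboundEuclidfunction\bigl(B\sqrt{\trace(\Xi_{z})/n},\tilde\sigma\bigr)$, the claimed bound.

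The hardest part is handling the correlation between $\xi$ and $\mathbf{W}_{2}$ while still being able to invoke a Gaussian comparison: classical CGMT only treats a single Gaussian block and would fail if $\xi$ were absorbed alongside $\mathbf{W}_{1}$, since Gordon's inequality requires independence of the compared Gaussian from the remaining data. Theorem \ref{VariantCGMT} is designed precisely for this split, but correctness demands casting the PO so that only $\mathbf{W}_{1}$ appears through the bilinear $\langle u,\mathbf{W}_{1}\omega\rangle$ term and the correlated block $(\mathbf{W}_{2},\xi)$ is absorbed into the deterministic-in-$\mathbf{W}_{1}$ functional $\psi$, then analysed by conditioning on $\mathbf{W}_{2}$. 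A secondary subtlety is driving the error term $\RiskboundEuclidcoef(\delta)$ by the effective rank $r(\Xi_{z})$ and by $\rank(\Sigma_{u})/n$ rather than by the ambient dimension $p$, which forces working with quadratic-form concentration for $\Xi_{z}^{1/2}H$ and for residuals living in the image of $\Sigma_{u}$, not with crude norm bounds on $H$ or on $\mathbf{W}_{2}$.
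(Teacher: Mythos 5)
Your proposal reproduces the paper's proof pipeline: Lemma \ref{Variant_Lemma3} reparametrises to the PO \eqref{Variant_40}, Lemma \ref{Variant_Lemma4} applies Theorem \ref{VariantCGMT} after conditioning on $(\mathbf{W}_{2},\xi)$ to reach the AO \eqref{Variant_43}, and Lemma \ref{Variant_Lemma5} together with Theorem \ref{Variant_Theorem1} carry out the concentration analysis, with Corollary \ref{Variant_Corollary2} obtained by specialising the Gaussian width and radius to the Euclidean ball. Your step (a), which handles the residual via the conditional law $\xi\mid\mathbf{W}_{2}\sim N(\mathbf{W}_{2}\rho,\tilde\sigma^{2}I_{n})$ plus a projection/chi-square bound, is an equivalent reformulation of the paper's marginal-distribution calculation in Lemma \ref{Variant_Lemma5}, which instead uses $\xi-\mathbf{W}_{2}\theta_{2}\overset{\mathcal{D}}{=}\sqrt{\sigma^{2}-2\rho^{\top}\theta_{2}+\|\theta_{2}\|_{2}^{2}}\,G$ for each fixed $\theta_{2}$ together with the uniform cross-term control from Lemma \ref{Variant_Lemma1}; both routes deliver the same $\tilde\sigma^{2}$ offset and the same $\sqrt{\rank(\Sigma_{u})/n}$ penalty.
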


In this corollary, the radius $B$ of $\mK$ plays an important role. 
That is, the bound in Corollary \ref{Variant_Corollary2} is valid only when the norm $\|\theta\|_2$ is no more than $B$. 
Here, our remaining task is to show that such a $B$ exists. 
In the next step, we will examine the norm $\|\hat{\theta}\|_2$ to show the existence of such $B$.

\subsection{Step (ii): Bound Norm of Estimator}

As the next step, we specify an upper bound on the norm of the solution, which is equivalent to deriving an upper bound of $B$ that appears in the constraint in Corollary \ref{Variant_Corollary2}.
To show the consistency of ridgeless estimators, we need to specify the value of $B$ so that $\mathcal{K}$ includes some parameters.

In the following theorem, we obtain the Euclidean norm bound for the ridgeless estimator.
To achieve this result, we again use CGMT from Theorem \ref{VariantCGMT}.
\begin{theorem}[Euclidean norm bound; special case of Theorem \ref{Non_ortho_Variant_Theorem4}]\label{Non_ortho_Variant_Theorem2}
Fix any $\delta\leq1/4$.  Suppose $\Sigma_x=\Xi_z+\Sigma_{u}$ and $\tilde{\sigma}^2 := \sigma^2 - \|\corrcoef\|_{\Sigma_u^+}^2>0$. If $n$ and the effective ranks are such that $\interpolatorboundEuclidcoef(\delta)\leq1$ and $R(\Xi_z)\gtrsim \log(1/\delta)^{2}$, then with probability at least $1-\delta$, it holds that
\begin{equation*}
\|\hat{\theta}\|_{2}\lesssim(1+\interpolatorboundEuclidcoef(\delta))^{1/2}\left(\|\theta_{0}\|_{2}+\|\Sigma_{u}^{+}\corrcoef\|_{2}+(2\eta_{1}+\tilde{\sigma}+\eta_{2})\sqrt{\frac{n}{\trace({\Xi_{z}})}}\right),
\end{equation*}
where $\eta_1, \eta_2, \varepsilon \in \R$ are sequences depending on $n$ and $\delta$ satisfying
\begin{align*}
    \eta_{1}&\lesssim\sqrt{\frac{n}{R(\Xi_{z})}}\|\Xi_{z}^{1/2}\Sigma_{u}^{+}\corrcoef\|_{2},  \\
    \eta_{2}&\lesssim\sqrt{\left(1+\sqrt{\frac{2\log(8/\delta)}{r(\Xi_z)}}\right)}\sqrt{\frac{(\mathbb{E}\|\Xi_z^{1/2}H\|_{2})^{2}}{n}\|\Sigma_{u}^{+}\corrcoef\|^{2}_{2}+\|\Xi_{z}^{1/2} \Sigma_{u}^{+}\corrcoef\|^{2}_{2}}, \\
    \interpolatorboundEuclidcoef&:=\sqrt{\log(1/\delta)} \left(\sqrt{\frac{\rank(\Sigma_{u})}{n}}+\left(1+\frac{\trace(\Sigma_{u}\Xi_{z})}{\trace(\Xi_{z}^{2})}\right)\left(\frac{n}{R(\Xi_z)}\right)+\frac{\|\Sigma_{u}^{+}\corrcoef\|_{2}}{\tilde{\sigma}}\sqrt{\frac{\trace(\Xi_{z})}{n}}\right).
\end{align*}
\end{theorem}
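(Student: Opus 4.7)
The plan is to express $\|\hat\theta\|_2$ as the value of a constrained norm-minimization, dualize it into a convex--concave saddle point, and then apply the extended CGMT (Theorem~\ref{VariantCGMT}) to replace the random matrices $\mathbf{W}_1, \mathbf{W}_2$ by low-dimensional Gaussian surrogates whose behavior is controlled by standard Gaussian concentration.

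First, since $\hat\theta$ is the minimum-norm interpolator, $\|\hat\theta\|_2 = \min\{\|\theta_0 + \Delta\|_2 : \mathbf{X}\Delta = \xi\}$, and by \eqref{eq:X_matrix} one may substitute $\mathbf{X} = \mathbf{W}_1 \Xi_z^{1/2} + \mathbf{W}_2 \Sigma_u^{1/2}$. The crucial preliminary manoeuvre is to decorrelate $\xi$ from $\mathbf{W}_2$: the joint covariance \eqref{def:cov_wwxi} gives $\xi = \mathbf{W}_2 \rho + \tilde\sigma\,\tilde\xi$ with $\tilde\xi \sim N(0, I_n)$ independent of $(\mathbf{W}_1, \mathbf{W}_2)$ and $\tilde\sigma^2 = \sigma^2 - \|\rho\|_2^2 \geq 0$. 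After changing variables to $\theta_1 = \Xi_z^{1/2}\Delta$ and $\theta_2 = \Sigma_u^{1/2}\Delta - \rho$, the constraint becomes the clean independent-noise equation $\mathbf{W}_1 \theta_1 + \mathbf{W}_2 \theta_2 = \tilde\sigma\,\tilde\xi$, while the objective is controlled by $\|\theta_0\|_2 + \|\Sigma_u^{+}\corrcoef\|_2$ plus a term in $(\theta_1, \theta_2)$ obtained by inverting $\Delta$ on the ranges of $\Xi_z^{1/2}$ and $\Sigma_u^{1/2}$.

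Second, I would recast this constrained minimization as a Lagrangian min--max over $(\theta_1, \theta_2)$ and a dual variable $u$ in a suitable norm ball, and verify that the resulting problem satisfies the convexity/concavity and compactness hypotheses of Theorem~\ref{VariantCGMT}. The extended CGMT then replaces the bilinear terms $\langle u, \mathbf{W}_1 \theta_1\rangle$ and $\langle u, \mathbf{W}_2 \theta_2\rangle$ by $\|\theta_1\|_2\langle G_1, u\rangle + \|u\|_2 \langle H_1, \theta_1\rangle$ and $\|\theta_2\|_2\langle G_2, u\rangle + \|u\|_2 \langle H_2, \theta_2\rangle$ respectively, with independent $G_i \sim N(0, I_n)$ and $H_i \sim N(0, I_p)$. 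Optimising over the direction of $u$ collapses the AO to a scalar inequality on $(\|\theta_1\|_2, \|\theta_2\|_2)$; feeding in the concentrations $\|G_i\|_2 = \Theta(\sqrt{n})$, $\|\Xi_z^{1/2} H\|_2 \approx \mathbb{E}\|\Xi_z^{1/2}H\|_2$, and $\|\Sigma_u^{1/2} H\|_2 \approx \sqrt{\trace(\Sigma_u)}$ (with deviations of order $\sqrt{\log(1/\delta)}$), together with $R(\Xi_z) \gtrsim \log(1/\delta)^2$ to guarantee well-posedness of the saddle point, yields the claimed bound with prefactor $(1 + \varepsilon(\delta))^{1/2}$.

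The main obstacle will be bookkeeping in the non-orthogonal regime: the terms $\eta_1 \asymp \sqrt{n/R(\Xi_z)}\,\|\Xi_z^{1/2}\Sigma_u^{+}\corrcoef\|_2$ and the cross contribution in $\eta_2$ involving $\mathbb{E}\|\Xi_z^{1/2}H\|_2 \cdot \|\Sigma_u^{+}\corrcoef\|_2$ arise precisely because, when $\Xi_z$ and $\Sigma_u$ are not orthogonal, inverting $\Delta$ from $(\theta_1, \theta_2)$ mixes the two components and the translation by $\rho$ propagates through both. Ensuring the correct appearance of $\tilde\sigma$ in place of $\sigma$ and the correct power of $R(\Xi_z)$ in the final bound requires carefully balancing these contributions against the $\sqrt{\rank(\Sigma_u)/n}$ and $n/R(\Xi_z)$ terms that compose $\varepsilon(\delta)$. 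A secondary technical point is the feasibility of the ridgeless problem, i.e., invertibility of $\mathbf{X}\mathbf{X}^{\top}$ with probability at least $1 - \delta/4$, which follows from concentration of $\mathbf{W}_1 \Xi_z \mathbf{W}_1^{\top}$ under the assumption $R(\Xi_z) \gtrsim \log(1/\delta)^2$.
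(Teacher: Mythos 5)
Your decorrelation $\xi=\mathbf{W}_2\rho+\tilde\sigma\tilde\xi$ is a legitimate alternative preprocessing step (it follows from the joint Gaussian covariance in \eqref{def:cov_wwxi} and, if used consistently, would let you apply the \emph{standard} CGMT rather than the paper's extended version, since $\mathbf{W}_1,\mathbf{W}_2,\tilde\xi$ become mutually independent). However, the plan has two genuine gaps. The CGMT step is misstated: Gordon's inequality, including Theorem~\ref{VariantCGMT}, replaces a single bilinear term $\langle u,\mathbf{W}\omega\rangle$ by $\|\omega\|_2\langle G,u\rangle+\|u\|_2\langle H,\omega\rangle$ with \emph{one} Gaussian pair $(G,H)$. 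Stacking $\mathbf{W}=[\mathbf{W}_1\;\mathbf{W}_2]$ with $\omega=(\theta_1,\theta_2)$ gives the surrogate $\sqrt{\|\theta_1\|_2^2+\|\theta_2\|_2^2}\,\langle G,u\rangle+\|u\|_2(\langle H_1,\theta_1\rangle+\langle H_2,\theta_2\rangle)$, not your form $\|\theta_1\|_2\langle G_1,u\rangle+\|\theta_2\|_2\langle G_2,u\rangle+\cdots$ with two independent $G_1,G_2$; those Gaussian processes have different increment covariances and no Gordon-type comparison delivers your surrogate. Chaining CGMT on $\mathbf{W}_1$ then on $\mathbf{W}_2$ also fails, because after the first application the term $\|u\|_2\langle H_1,\theta_1\rangle$ is not concave in $u$, breaking the convex--concave hypothesis required for a second application. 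The paper sidesteps this by applying CGMT only to $\mathbf{W}_1$, conditionally on $(\mathbf{W}_2,\xi)$ (Lemma~\ref{Variant_Lemma7}), keeping $\mathbf{W}_2$ as a literal random matrix inside the AO.

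Second, and more substantively, you skip the step that actually produces the specific quantities $\eta_1,\eta_2,\varepsilon$. The objective and the terms $\langle H_i,\theta_i\rangle$ depend on the \emph{directions} of $\theta_1,\theta_2$, so optimising over $u$ does not collapse the AO to a scalar inequality in $(\|\theta_1\|_2,\|\theta_2\|_2)$. The paper's Lemma~\ref{Non_ortho_Variant_Lemma8} instead exhibits a specific feasible candidate $\theta=P_u\Sigma_u^{+}\corrcoef+s\,P_z v^{*}$ and tunes the scalar $s$ so the AO constraint holds; the coefficients $\beta_1,\beta_2,\beta_3$ of the resulting quadratic in $s$ are precisely what generate $\eta_1$ (from $\beta_2=-v^{*\top}\Xi_z\Sigma_u^{+}\corrcoef$), $\eta_2$ (from $\beta_3$), and the non-orthogonality penalty $\trace(\Sigma_u\Xi_z)/\trace(\Xi_z^2)$ in $\varepsilon$ (from the term $(v^{*})^{\top}P_z\Sigma_u P_z v^{*}$ in $\beta_1$). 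Without constructing and analysing such a candidate, generic concentration of $\|G\|_2$, $\|\Xi_z^{1/2}H\|_2$, $\|\Sigma_u^{1/2}H\|_2$ cannot reproduce the claimed bound. Finally, be careful that your inversion of $\Delta$ from $(\theta_1,\theta_2)$ via $\Sigma_x^{-1/2}$ tacitly uses $\Xi_z^{1/2}+\Sigma_u^{1/2}=\Sigma_x^{1/2}$, which fails when $\Xi_z$ and $\Sigma_u$ do not commute, i.e.\ exactly in the non-orthogonal regime this theorem addresses.
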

The rigorous definitions of $\eta_1, \eta_2,$ and $ \varepsilon$ will be provided in the appendix.
To derive this upper bound, we again use the common uniform upper bound argument. 
Combining this result with Corollary \ref{Variant_Corollary2}, we derive our primary result on the upper bound of $\|\theta-\theta_{0}\|_{\Xi_z}^{2}$.

\section{Experiment}\label{Experiment}

We conduct experiments to justify our theoretical results. 
Specifically, we test whether our derived sufficient conditions in Theorems \ref{Variant_Theorem12} and \ref{Non_ortho_Variant_Theorem12}  lead to benign overfitting.
This section contains two experiments: (i) measuring the projected RMSE of the ridgeless estimator, and (ii) comparing the ridgeless estimator to existing high-dimensional operating variable methods.

\subsection{Projected RMSE of Ridgeless Estimator} \label{sec:experiment1}

\subsubsection{Setups}

We generate $n \in \{200,300,...,1000\}$ independent samples $(X_1,Y_1,Z_1),...,(X_n,Y_n,Z_n)$ from the regression model \eqref{model:reg},
and the covariate $X_i$, noise variable $\xi_i$, and  latent noise $u_i$  follow the distribution
\begin{align}
\begin{pmatrix}
X_{i} \\
\xi_{i}
\end{pmatrix}
&\sim N\left(
\begin{pmatrix}
\mathbf{0}_{p\times 1} \\
0 
\end{pmatrix},
\begin{pmatrix}
\Sigma_{x} & \Sigma_{u}^{1/2}\rho \\
(\Sigma_{u}^{1/2}\rho)^\top  & \sigma^{2}
\end{pmatrix}
\right), 
 \quad u_{i}\sim N(
\mathbf{0}_{p\times 1} ,\Sigma_{u}). \label{dgp_x_xi}
\end{align}
The covariance/coefficient matrices $\Sigma_x, \Sigma_u, \Pi_{0}$, and $\Sigma_{z}$ are determined separately 
for the following four setups. 
Through experiments, truncation level $k^{*}_{n}$ is determined in the same way as (\ref{def:truncation_lebel}).

\begin{enumerate}
    \item[Setup (i)] \textbf{Example \ref{example1} (Orthogonal Case)}: 
    This setting follows Example \ref{example1} with the orthogonal case in Section \ref{sec:orthogonal}.
    We set the parameter dimension as $p=5n$, and set a base matrix $\overline{\Sigma} \in \R^{p \times p}$ such that its $i$-th largest eigenvalue $\lambda_{i}$ is $300 i^{-1}(\log(i+1)\exp/2)^{-2}$ for $i=1,...,p$. 
    We set the true parameter $\theta_{0} \in \R^p$ whose $i$-th element is $20/\sqrt{i}$ and set $\corrcoef$ as $(\Sigma_{u}^{1/2})^{+}\corrcoef:=U\rho$, where $\rho \in \R^p$ has its $i$-th element $2/i$ and $U \in \R^{p \times p}$ is an orthogonalized version of $P \in \R^{p \times p}$ such that $P_{j,j'} = \mone\{ |j - j'| \neq p-2\}$ for $j,j'= 1,...,p$.
Then, we define $\Sigma_{x}, \Sigma_{u}$, and $\Xi_{z}$ as $\Sigma_{x}:=\Sigma_{u}+\Xi_{z}, \Sigma_{u}:=\overline{\Sigma}_{1: k^{*}_{n}}$, and $\Xi_{z}:=(\overline{\Sigma}-\overline{\Sigma}_{1: k^{*}_{n}})$ as in Example \ref{example1}.
This setting satisfies the sufficient conditions in Theorem \ref{Variant_Theorem12}, and also $\Xi_{z}$ and $\Sigma_{u}$ are orthogonal.

    \item[Setup (ii)] \textbf{Example \ref{example2} (Orthogonal Case)}: This setting follows Example \ref{example2} with the orthogonal case in Section \ref{sec:orthogonal}.
    We set the dimension $p=n^{3/2}$ 
    and a base matrix $\overline{\Sigma}$ as its $i$-th eigenvalue $\lambda_i$ being $\lambda_{i}=\gamma_{i}+\varepsilon_{n}$, where $\gamma_{i}=10 \exp(-(i/2))$ and $\varepsilon_{n}=\exp\left(-\sqrt{n}\right)/\sqrt{n}$. 
    We also set the true parameter $\theta_{0} \in \R^p$ whose $i$-th element is $20/\sqrt{i}$, and the correlation coefficient 
    $\corrcoef$ is defined to satisfy $(\Sigma_{u}^{1/2})^{+}\corrcoef:=U\rho$ where $\rho \in \R^p$ has $3\exp(-i/4)$ as its $i$-th element. 
    This setting
    satisfies the sufficient conditions in Theorem \ref{Variant_Theorem12}, and $\Xi_{z}$ and $\Sigma_{u}$ are orthogonal.
    
    \item[Setup (iii)] \textbf{Example 1 (Non-orthogonal Case)}: 
    We consider an extension of Example \ref{example1} to the non-orthogonal case in Section \ref{sec:non-orthogonal}. 
    This is identical to that treated in Example \ref{example_1}. Specifically, $p, \overline{\Sigma}$, and $\corrcoef$ are determined as in Setup (i) above. However, $\Sigma_{u}$ and $\Xi_{z}$ are the same as
    \begin{align}
    &\Sigma_{u}:=\left(1-\frac{1}{n^{1.01}}\right)\overline{\Sigma}_{1: k^{*}_{n}}, \mbox{~and~}\notag \\
    &\Xi_{z}:=(\overline{\Sigma}-\overline{\Sigma}_{1: k^{*}_{n}})+\frac{1}{n^{1.01}}\overline{\Sigma}_{1: k^{*}_{n}}, \label{def:non-ortho_matrix}
\end{align}

and $\Sigma_{x}:=\Sigma_{u}+\Xi_{z}$.
In this setting, $\Xi_{z}$ and $\Sigma_{u}$ are non-orthogonal.
    
    \item[Setup (iv)] \textbf{Example 2 (Non-orthogonal Case)}:
    We consider an extension of Example \ref{example2} to the non-orthogonal case in Section \ref{sec:non-orthogonal}.
      In this setting, $p, \overline{\Sigma}$, and $\corrcoef$ are determined as in Setup (ii) above, and $\Sigma_{u},\Xi_{z},$ and $\Sigma_x$ are set as \eqref{def:non-ortho_matrix}.
Here, $\Xi_{z}$ and $\Sigma_{u}$ are non-orthogonal.

    \item[Setup (v)] \textbf{Example \ref{example1} (Sparse and Orthogonal Case)}: 
   We consider Setup (i) under the sparse setting. The parameters are identical to Setup (i) except the setting of 
   $\theta_{0}$. When $i$ is not more than 100 and there exists a natural number $k$ such that $i+4=5k$, set the $i$-th element of $\theta_{0}$ as $20/\sqrt{i}$. Otherwise, the elements of $\theta_{0}$ are equal to zero.

       \item[Setup (vi)] \textbf{Example \ref{example1} (Sparse and Non-Orthogonal Case)}: 
   We study Setup (iii) in the sparse setting. All the settings are identical to Setup (iii) except the setting of 
   $\theta_{0}$. We impose the sparsity on $\theta_{0}$ as in Setup (v).
\end{enumerate}

In addition, beyond our theoretical framework, we also examine the situation when the variable data are non-Gaussian. 
Specifically, we study the situation where the vector of instrumental variable $Z_i$ follows the multivariate $t$-distribution with $5$ degrees of freedom, and the mean and variance are common.

\subsubsection{Results}

Figure \ref{Experiment_result} summarizes the results of each of the setups. 
The values are means of $50$ repetitions.
The red line shows the projected RMSE of the ridgeless estimator.
The blue lines show the case with the $t$-distribution. 

These results carry several implications:
(a) Despite the increase in dimension $p$ being related to $n$, that is, $p=5n$ or $p=n^{3/2}$, the projected RMSEs converge to zero.
This implies that benign overfitting occurs with this high-dimensional case even with the endogeneity.
(b) The convergence occurs even when $Z_{i}$ is not generated by the Gaussian distribution, which implies that our theoretical results would be applicable to the non-Gaussian case.

\begin{figure}[htbp]
    \centering
    \includegraphics[width=0.55\linewidth]{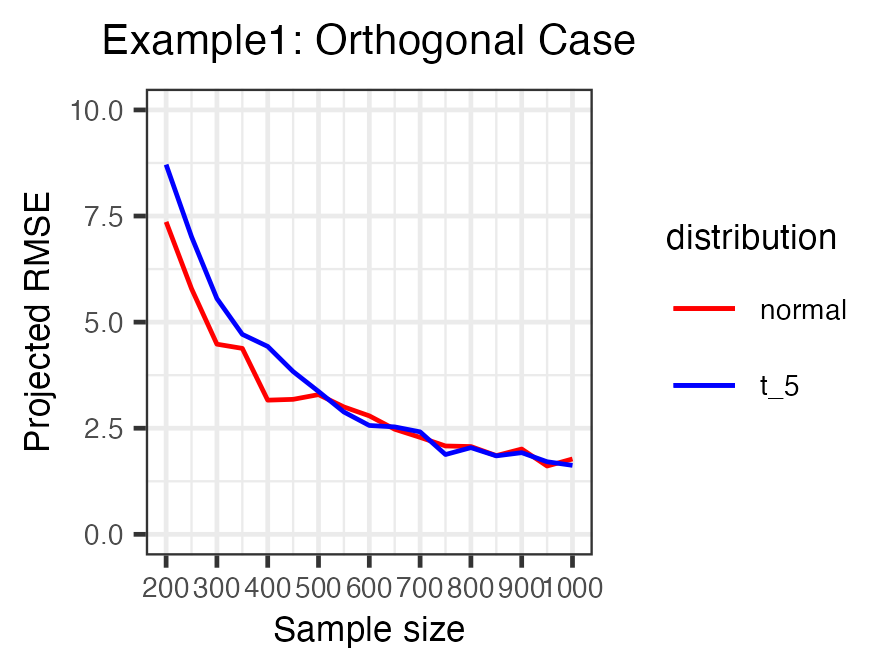}\includegraphics[width=0.55\linewidth]{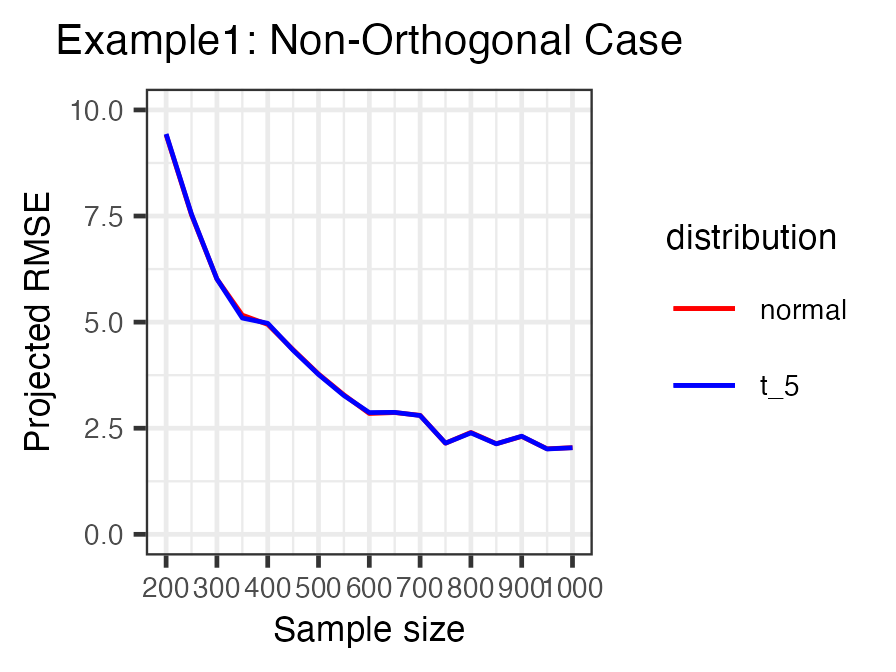}
    \centering
    \includegraphics[width=0.55\linewidth]{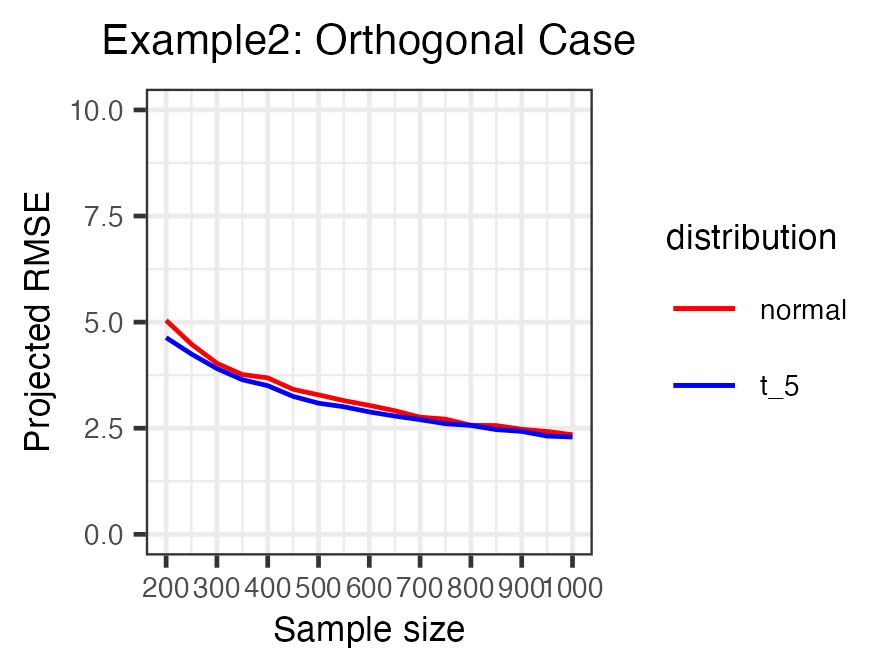}\includegraphics[width=0.55\linewidth]{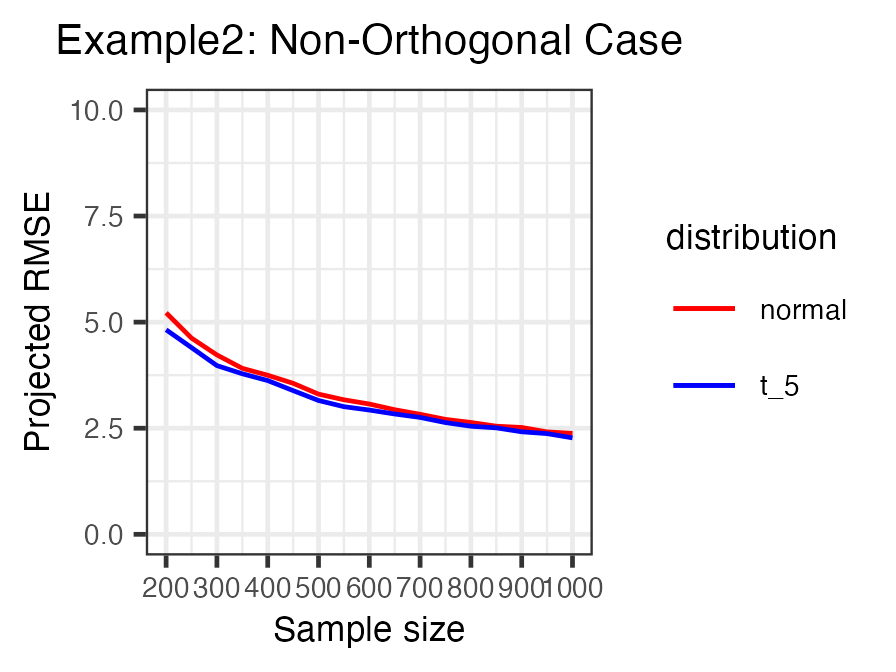}
    \centering
    \includegraphics[width=0.55\linewidth]{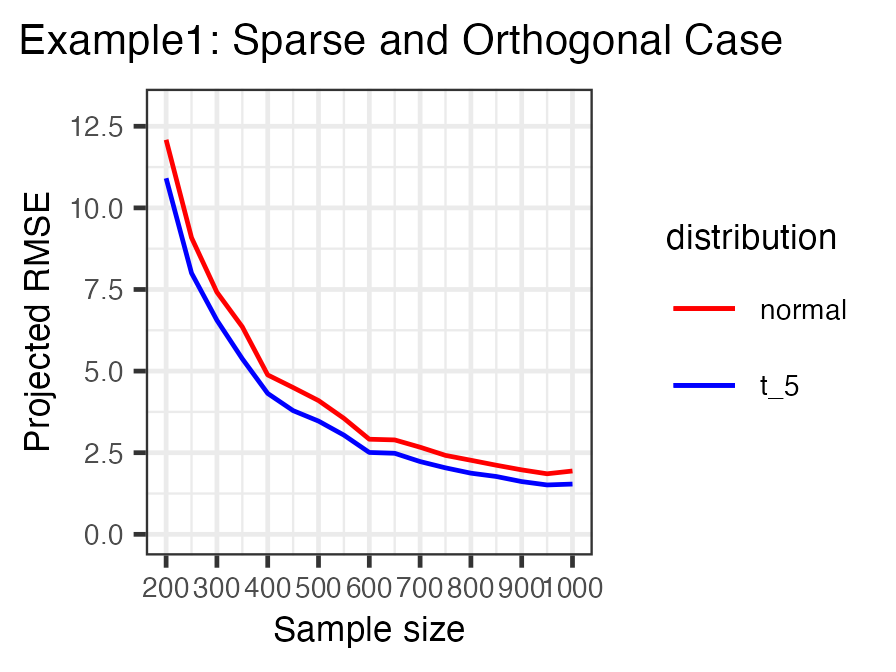}\includegraphics[width=0.55\linewidth]{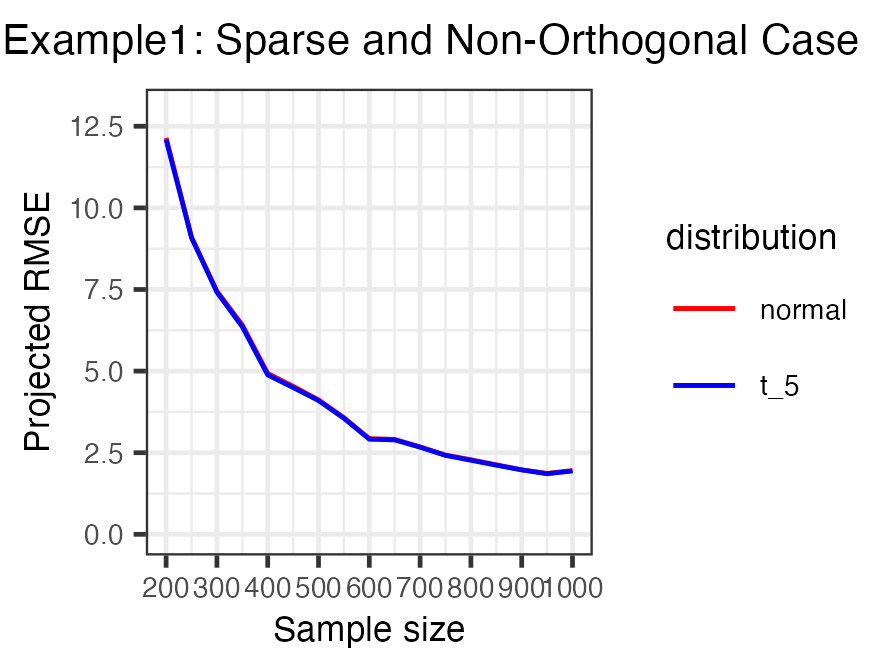}
    \caption{
    Projected RMSE of the ridgeless estimator against the sample size $n$. 
    The red line shows the Gaussian case, and the blue line shows the non-Gaussian case. The dimension $p$ of the parameters is set as $p=5n$ (Example \ref{example1}) or $p = n^{3/2}$ (Example \ref{example2}).
    }
    \label{Experiment_result}
\end{figure}

\subsection{Comparison with Related Method}

\subsubsection{Setups}

We compare the ridgeless estimator to a regularized estimator for high-dimensions, such as the lasso-type method.
Specifically, we consider methods for estimating sparse parameters under high-dimensional covariates and instrumental variables, such as those developed by
\citet{belloni2012sparse,chernozhukov2015post} and many others.

We present our setting.
Similar to Section \ref{sec:experiment1}, we generate $n \in \{100,200,...,1000\}$ observations $(X_1,Y_1,Z_1),...,(X_n,Y_n,Z_n)$ from the regression model \eqref{model:reg} and the data generating process \eqref{dgp_x_xi}.
Here, $k$ denotes a dimension of endogenous variables and we set $k = n/10$.

\begin{enumerate}
    \item[Setup (vii)] \textbf{Non-Sparse Case}: 
    We consider the case where the true parameter $\theta_0$ is not sparse.
    We set $p=5n$ and set the true parameter $\theta_{0} \in \R^p$, which has $20/\sqrt{i}$ as its $i$-th element.
    Further, we set the base matrix $\overline{\Sigma} \in \R^{p \times p}$ that has $\lambda_{i}=300 i^{-1}(\log(i+1)\exp/2)^{-2}$ as its $i$-th largest eigenvalue. The correlation coefficient $\corrcoef \in \R^p$ with its $i$-th element is $(2/i)\mathbbm{1}\{i \in [1,k]\} $.
    With these settings, we define $\Sigma_{x}, \Sigma_{u}$, and $\Xi_{z}$ as in \eqref{def:non-ortho_matrix}.
The matrices $\Sigma_{u}$ and $\Xi_{z}$ with the correlation term $\corrcoef$ satisfy the sufficient conditions in Theorem \ref{Non_ortho_Variant_Theorem12}. 
    
    \item[Setup (viii)] \textbf{Partially Sparse Case}: 
     We consider the case where the true parameter $\theta_0$ is less sparse.
     We set $p=5n$ and define $\theta_{0} \in \R^p$ whose $i$-th element is $(20/\sqrt{i})\mathbbm{1}\{i\leq 0.8n\}$.
     We define $\Sigma_{x}, \Sigma_{u}$, $\Xi_{z}$, and $\corrcoef$ in the same way as the non-sparse case (Setup (v)).

    \item[Setup (ix)] \textbf{Non-Sparse Case (Rotated)}: 
     We set $p=5n$ and define $\Sigma_{x}, \Sigma_{u}$, $\Xi_{z}$, $\theta_{0}$, and $\corrcoef$ in the same way as the non-sparse case (Setup (v)).
    Further, we set the $(k/5+1)$-th to $k$-th variables and the $(k_{n}^{*}+1)$-th to $(k_{n}^{*}+k/5)$-th variables as endogenous.  
\end{enumerate}

For the method to be compared, we utilize the estimator by \citet{chernozhukov2015post} named \textit{LassoIV}.
First, we divide the sample in half, then we use one-half of the sample to estimate the parameters of endogenous variables and use the other to estimate the other parameters. We use the R package $hdm$ \citep{chernozhukov2016high} for implementation. For the estimation of exogenous variables, we subtract the endogenous part from the outcome and define the new outcome $\tilde{Y}_{i}$, that is,
\begin{align*}
    \tilde{Y}_{i}:=Y_{i}-\hat{\beta}^{\top}W_{i},
\end{align*}
where $W_{i}$ is a $k\times 1$ endogenous variable and $\hat{\beta}$ is an estimator by LassoIV. To obtain an estimator for the parameters of exogenous variables, we regress $\tilde{Y}_{i}$ on exogenous variables.

\subsubsection{Results}

Figure \ref{fig:comparison} summarizes the results of each experiment. 
We report means of $30$ repetitions.
When the sample size is small, the projected RMSE by the Lasso method is notably larger than that by the ridgeless estimator. 
As the sample size grows, though the errors get smaller, the error by the ridgeless estimator is still  relatively small.
Specifically, in setup (ix), we change the location of endogenous variables. 
Nevertheless, we can see that the ridgeless estimator provides the smaller projected RMSE.

\begin{figure}[htbp]
    \centering
    \includegraphics[width=\hsize]{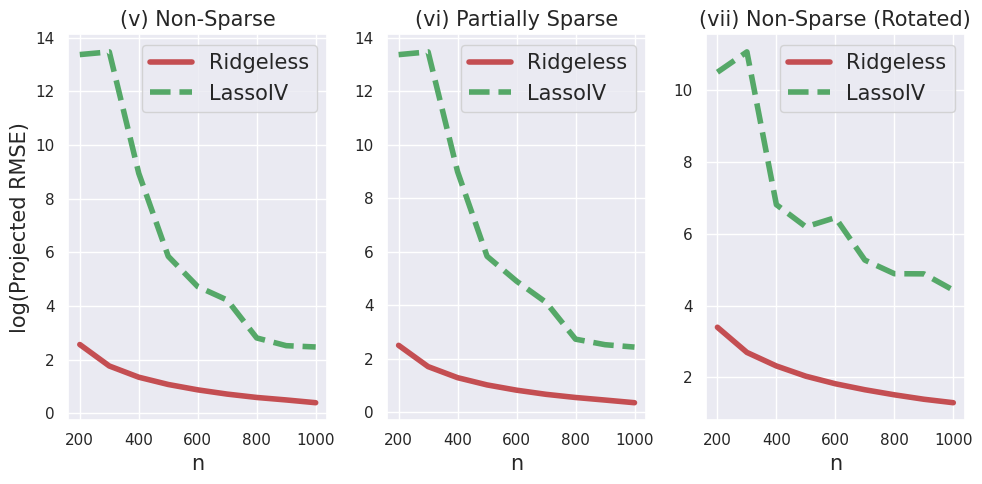}
    \caption{Projected RMSEs of the ridgeless estimator and LassoIV. Each value is a mean of $30$ repetitions.}
    \label{fig:comparison}
\end{figure}

\subsection{Real Data Analysis}

We implement real data analysis in this subsection to exemplify our theoretical result. We used the Current Population Survey (CPS), a monthly survey of U.S. households conducted by the Bureau of the Census of the Bureau of Labor Statistics. Our data consists of the March 2009 survey, including the Asian male individuals who were employed full-time (defined as those who had worked at least 36 hours per week for at least 48 weeks the past year), and excluded those in the military. The sample size is 1,435.

In this analysis, we set the natural log of hourly wage as the outcome variable $Y_{i}$. From the dataset, we use the year of education, the square of the year of education, age, the square of age, and the product of education and age as the covariates. 
Furthermore, to study a high-dimensional setting, we generate the 20,000-dimensional normal variables $X^{*}_{i}$ with the diagonal variance matrix $\Sigma$ whose $\ell$-th diagonal $300 \ell^{-1}(\log(\ell+1)\exp/2)^{-2}$ for $\ell=1,...,20,000$. For each $i\in\{1,\cdots,1435\}$, we have
\begin{equation}\label{applied_model}
Y_{i}=\beta_{1}education_{i}+\beta_{2}education_{i}^{2}+\beta_{3}age_{i}+\beta_{4}age_{i}^{2}+\beta_{5}education_{i}age_{i}+\gamma^{\top} X^{*}_{i}+\xi_{i}.
\end{equation}
As the error term $\xi_{i}$ included the unobserved ability of an individual that will affect both the natural log of hourly wage and the year of education, the year of education
will correlate with the error term $\xi_{i}$, that is, the year of education is endogenous.

Under the setting \eqref{applied_model}, we calculate the sample RMSE. We estimate the interpolator and evaluate the sample RMSE by using 5-fold cross validation. The sample RMSE is 0.6165. As the estimated RMSE obtained from the LASSO estimator with 5-fold cross validation is 0.4463, this result implies the sample RMSE obtained by the interpolator will approximate RMSE even with the presence of  the correlation between the covariates and the noise $\xi_{i}$.

\section{Discussion and Conclusion} \label{sec:discussion}

We studied the estimation error in the over-parameterized linear regression problem when the covariates are endogenous. 
In particular, we examined the situation where data are Gaussian and the covariates have a linear model on an instrumental variable. 
In this setting, we derived sufficient conditions under which the risk of the ridgeless estimator converges to zero.
In other words, we show the ridgeless estimator achieves benign overfitting even in the presence of endogeneity in this setting. 
To show this result, we developed an extended version of CGMT.

An important future challenge for the study of over-parameterization with endogeneity is the development of methods to infer whether our sufficient conditions hold from data. 
This challenge may be addressed, for example, by estimating a decay rate of eigenvalues of covariance matrices, as in the Hill estimator \citep{hill1975simple}. 
The development of such practical methods is an important future task.

One limitation of this study depends on the Gaussianity of data. 
As this is an essential condition for using CGMT, it is not easy to relax. 
However, there has been some research to extending risks with Gaussian data to those of non-Gaussian data, known as universality \citep{han2022universality,montanari2022universality}, so it may be a way to analyze non-Gaussian data.

\appendix

\section{Organization of Appendix} \label{app_sec:organization}

This appendix provides the full proofs of the results in the main body.
The first half of the appendix follows the proof outline described in Section \ref{sec:proof_outline}: (i) a proof of CGMT (Section \ref{app_sec:cgmt}), (ii) a proof of an upper bound for the projected RMSE (Section \ref{app_sec:prmse}), and (iii) a proof of an upper bound for the ridgeless estimator (Section \ref{app_sec:norm_estimator}). 
In Section \ref{app_sec:example_benign}, we provide proofs for the primary statement for benign overfitting. 
In Section \ref{app_sec:non_orthogonal}, we independently present the proof for the non-orthogonal case in Section \ref{sec:non-orthogonal}. 
Finally, supportive results are listed in Section \ref{app_sec:support}.

\section{Proof of CGMT} \label{app_sec:cgmt}

We present a proof of Theorem \ref{VariantCGMT} for CGMT. 
The proof of the standard CGMT is given in \citet{thrampoulidis2015regularized}. 
We extend the standard proof to accommodate partitions of a parameter space.
Remember that $\mathbf{W}:n\times d$ is a matrix with i.i.d. $N(0, 1)$ entries and suppose $G\sim N(0,I_{n})$ and $H\sim N(0,I_{d})$ are independent Gaussian vectors.

\begin{proof}[Proof of Theorem \ref{VariantCGMT}]
The sets $S_{\omega}$ and $S_{u}$ are non-empty, compact, and convex by assumption. As the function $\langle u,\mathbf{W}\omega \rangle +\psi((\omega,\omega'),(u,u'))$ is continuous, finite, and convex-concave on $S_{\omega}\times S_{u}$, it holds from the minimax result in \citet{rockafellar1997convex} (Corollary 37.3.2) that
\begin{equation*}
    \Phi(\mathbf{W})=\max_{(u,u')\in S_{U}}\min_{(\omega,\omega')\in S_{W}}\langle u,\mathbf{W}\omega \rangle +\psi((\omega,\omega'),(u,u')),
\end{equation*}
where we define $\Phi(\mathbf{W})$ as $\min_{(\omega,\omega')\in S_{W}}\max_{(u,u')\in S_{U}}\langle u,\mathbf{W}\omega \rangle +\psi((\omega,\omega'),(u,u'))$.
Consequently, the min-max problem in (\ref{POCGMT}) is replaced with a max-min problem.
This form implies
\begin{equation*}
    -\Phi(\mathbf{W})=\min_{(u,u')\in S_{U}}\max_{(\omega,\omega')\in S_{W}}-\langle u,\mathbf{W}\omega \rangle -\psi((\omega,\omega'),(u,u')).
\end{equation*}
By using the symmetry of $\mathbf{W}$, we obtain that for any $c\in\mathbb{R}$,
\begin{equation*}
    \Pr(-\Phi(\mathbf{W})\leq c)=\Pr \left(\min_{(u,u')\in S_{U}}\max_{(\omega,\omega')\in S_{W}}\{\langle u,\mathbf{W}\omega \rangle -\psi((\omega,\omega'),(u,u'))\}\leq c \right).
\end{equation*}
Then, by a variant of the Gaussian minimax theorem (Theorem 10 of \citet{koehler2022uniform}), we have
\begin{align*}
     &\Pr(-\Phi(\mathbf{W})< c)\\
     &\leq 2 \Pr\left(\min_{(u,u')\in S_{U}}\max_{(\omega,\omega')\in S_{W}}\{\|u\|\langle H, \omega \rangle+\|\omega\|\langle G, u \rangle -\psi((\omega,\omega'),(u,u'))\}\leq c \right) \\
     &=2\Pr \left(\min_{(u,u')\in S_{U}}\max_{(\omega,\omega')\in S_{W}}\{-\|u\|\langle H, \omega \rangle-\|\omega\|\langle G, u \rangle -\psi((\omega,\omega'),(u,u'))\}\leq c \right),
\end{align*}
where the last equation follows because of the symmetry of $H$ and $G$.  Note that we have
\begin{align*}
    &\min_{(u,u')\in S_{U}}\max_{(\omega,\omega')\in S_{W}}\{-\|u\|\langle H, \omega \rangle-\|\omega\|\langle G, u \rangle -\psi((\omega,\omega'),(u,u'))\} \\
  =-&\max_{(u,u')\in S_{U}}\min_{(\omega,\omega')\in S_{W}}\{\|u\|\langle H, \omega \rangle+\|\omega\|\langle G, u \rangle +\psi((\omega,\omega'),(u,u'))\}.
\end{align*}
By  the minimax inequality (\citet{rockafellar1997convex}, Lemma 36.1), we obtain that for all $G,H$,
\begin{align*}
    &\max_{(u,u')\in S_{U}}\min_{(\omega,\omega')\in S_{W}}\{\|\omega\|\langle G, u \rangle+\|u\|\langle H, \omega \rangle+\psi((\omega,\omega'),(u,u'))\} \\
    \leq& \min_{(\omega,\omega')\in S_{W}}\max_{(u,u')\in S_{U}}\{\|\omega\|\langle G, u \rangle+\|u\|\langle H, \omega \rangle+\psi((\omega,\omega'),(u,u'))\}:=\phi(G,H).
\end{align*}
Therefore, we have for any $c\in\mathbb{R}$,
\begin{equation*}
    \Pr(\Phi(\mathbf{W})>-c)=\Pr(-\Phi(\mathbf{W})<c)\leq2\Pr(-\phi(G,H)\leq c)=2\Pr(\phi(G,H)\geq -c).
\end{equation*}

\end{proof}

\section{Upper Bound for Projected Residual Mean Squared Error} \label{app_sec:prmse}

In this section, we provide the upper bound for the projected RMSE. 
Specifically, we prove Corollary \ref{Variant_Corollary2} in the main body, and then give Corollary \ref{Variant_Corollary3}, which generalized a norm.
The objective of this section is to show a general upper bound (Theorem \ref{Variant_Theorem1}). 
To this end, we analyze the projected RMSE by CGMT using Lemmas \ref{Variant_Lemma3} and \ref{Variant_Lemma4}. 
We then analyze the projected RMSE in Lemma \ref{Variant_Lemma5} to show Theorem \ref{Variant_Theorem1},  leading to Corollaries \ref{Variant_Corollary2} and \ref{Variant_Corollary3}.

In the following lemma, we rewrite the projected RMSE \eqref{def:projected_RMSE} in the form of an optimization problem to use CGMT. 
In the statement, we use the empirical squared risk $\hat{L}(\theta)$ in \eqref{def:empirical_risk} and the representation of the data matrix $\mathbf{X}$ in \eqref{eq:X_matrix} and \eqref{def:cov_wwxi}.
As the ridgeless estimator $\hat{\theta}$ satisfies $\hat{L}(\hat{\theta}) = 0$, we are interested in a parameter $\theta$ which satisfies $\hat{L}(\theta) = 0$.

\begin{lemma}\label{Variant_Lemma3}
Let $\mathcal{K}$ denote a compact set in $\mathbb{R}^{p}$. Assume Assumptions \ref{asmp:Gaussianity} and \ref{asmp:orthogonal} hold.
Define the primary optimization problem (PO) as
\begin{equation*}
    \Phi:=\max_{\substack{(\theta_{1},\theta_{2})\in S, \\ \mathbf{W}_{1}\theta_{1}+\mathbf{W}_{2}\theta_{2}=\xi}}\|\theta_{1}\|_{2}^{2}, \tag{\ref{Variant_40}}
\end{equation*}
where we define
    $S:=\{(\theta_{1},\theta_{2}):\exists\theta\in\mathcal{K}\ s.t.\ \theta_{1}=\Xi_z^{1/2}(\theta-\theta_{0})\ and\ \theta_{2}=\Sigma_{u}^{1/2}(\theta-\theta_{0}) )\}$.
Then, the following maximized projected RMSE in \eqref{def:projected_RMSE} is equal in distribution to the PO: 
\begin{align*}
   \max_{\substack{\theta \in \mathcal{K}, \hat{L}(\theta)=0}}E\left[\left(E[\langle {\theta}, X \rangle-\langle\theta_{0}, X \rangle|Z]\right)^{2}\right]&\overset{\mathcal{D}}{=}\Phi.
\end{align*}
\end{lemma}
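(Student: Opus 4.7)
The plan is to convert both the objective (projected RMSE) and the interpolation constraint into expressions that only involve the three Gaussian objects $(\mathbf{X},\mathbf{Y},\xi)$ (equivalently $(\mathbf{X},\xi)$ after substituting $\mathbf{Y}=\mathbf{X}\theta_0+\xi$), and then replace $\mathbf{X}$ by its distributional representation $\mathbf{W}_1\Xi_z^{1/2}+\mathbf{W}_2\Sigma_u^{1/2}$ from \eqref{eq:X_matrix}. For the objective, the preliminary computation already given in the paper shows $E[(E[\langle\theta-\theta_0,X\rangle\mid Z])^2]=(\theta-\theta_0)^\top\Xi_z(\theta-\theta_0)$, using $\Ep[u_i\mid Z_i]=0$. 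For the constraint, $\hat{L}(\theta)=0$ is the same as $\mathbf{Y}=\mathbf{X}\theta$, which after subtracting $\mathbf{X}\theta_0$ becomes $\mathbf{X}(\theta-\theta_0)=\xi$.

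Next I would invoke the distributional equality of the joint law of $(\mathbf{X},\xi)$ with that of $(\mathbf{W}_1\Xi_z^{1/2}+\mathbf{W}_2\Sigma_u^{1/2},\xi)$, where $(\mathbf{W}_1,\mathbf{W}_2,\xi)$ has the covariance structure \eqref{def:cov_wwxi} with $\rho=(\Sigma_u^{1/2})^+\corrcoef$. Since both sides of the claimed distributional equality are Borel functions of $(\mathbf{X},\xi)$, joint equality in distribution transfers the maximized projected RMSE onto its surrogate form. At this point the constraint becomes
\[
\mathbf{W}_1\Xi_z^{1/2}(\theta-\theta_0)+\mathbf{W}_2\Sigma_u^{1/2}(\theta-\theta_0)=\xi,
\]
and the objective becomes $\|\Xi_z^{1/2}(\theta-\theta_0)\|_2^2$. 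Introducing the change of variables $\theta_1=\Xi_z^{1/2}(\theta-\theta_0)$ and $\theta_2=\Sigma_u^{1/2}(\theta-\theta_0)$ maps the feasible set bijectively onto $S$ as defined in the lemma, so the maximum over $\theta\in\mathcal{K}$ with $\mathbf{X}(\theta-\theta_0)=\xi$ coincides with $\Phi$.

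The main technical point to verify carefully is the joint-distribution claim, i.e., that $(\mathbf{W}_1\Xi_z^{1/2}+\mathbf{W}_2\Sigma_u^{1/2},\xi)$ really has the same law as $(\mathbf{X},\xi)$ row-wise. Gaussianity (Assumption \ref{asmp:Gaussianity}) reduces this to matching first and second moments: the variance of each surrogate row is $\Xi_z+\Sigma_u=\Sigma_x$ by the covariance splitting in the orthogonal lemma preceding the statement, and the cross-covariance with $\xi_i$ is $\Sigma_u^{1/2}\rho=\Sigma_u^{1/2}(\Sigma_u^{1/2})^+\corrcoef$. The identification of this last expression with $\corrcoef$ is the only nontrivial step: it requires that $\corrcoef$ lies in the range of $\Sigma_u^{1/2}$, equivalently in the range of $\Sigma_u$. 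This follows from the structural identity $\corrcoef=\Ep[X_i\xi_i]=\Ep[u_i\xi_i]$ (using $\Ep[\xi_i\mid Z_i]=0$ to eliminate the $\Pi_0 Z_i$ part), together with the general fact that for jointly Gaussian $(u,\xi)$ the cross-covariance $\Ep[u\xi]$ always belongs to the range of $\Var(u)=\Sigma_u$.

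I expect this last moment-matching step to be the main (though still mild) obstacle, since it is the only place where the specific construction of $\rho$ via a generalized inverse has to be reconciled with the original model. Once this is settled, the remainder of the proof is purely a rewriting exercise: applying $\overset{\mathcal{D}}{=}$ to both the objective functional and the constraint, and performing the invertible (on the relevant subspaces) change of variables to pass from $\theta$ to $(\theta_1,\theta_2)\in S$. Compactness of $\mathcal{K}$ ensures that $S$ is compact and that the maxima are attained, so there are no measurability issues in applying the distributional equality.
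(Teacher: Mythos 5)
Your proposal is correct and follows essentially the same route as the paper's own proof: rewrite the projected RMSE as $\|\theta-\theta_0\|_{\Xi_z}^2$ and the interpolation constraint as $\mathbf{X}(\theta-\theta_0)=\xi$, invoke the distributional surrogate $\mathbf{X}\overset{\mathcal{D}}{=}\mathbf{W}_1\Xi_z^{1/2}+\mathbf{W}_2\Sigma_u^{1/2}$ jointly with $\xi$, and change variables to $(\theta_1,\theta_2)\in S$. You are in fact more careful than the paper on the one genuinely subtle point, namely that the \emph{joint} law of $(\mathbf{X},\xi)$ must match that of the surrogate, which requires $\Sigma_u^{1/2}(\Sigma_u^{1/2})^+\corrcoef=\corrcoef$; your observation that $\corrcoef=\Ep[u_i\xi_i]$ necessarily lies in the range of $\Sigma_u$ (by positive semidefiniteness of the joint Gaussian covariance) is exactly what is implicitly being used but not stated in the paper.
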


\begin{proof}[Proof of Lemma \ref{Variant_Lemma3}]
Note that $\hat{L}(\theta)=0$ is equivalent to $\mathbf{Y} = \mathbf{X} \theta$.
By the definitions of $\Xi_{z}$ and $\Sigma_{u}$, we have
\begin{equation*}
     \mathbf{X}\overset{\mathcal{D}}{=}\mathbf{W}_{1}\Xi_z^{1/2}+\mathbf{W}_{2}\Sigma_{u}^{1/2}.
\end{equation*}
Hence, we obtain 
\begin{align*}
    &\max_{\substack{\theta \in \mathcal{K}, \hat{L}(\theta)=0}}E\left[\left(E[\langle {\theta}, X \rangle-\langle\theta_{0}, X \rangle|Z]\right)^{2}\right]\\
    &=\max_{\substack{\theta \in \mathcal{K}, \hat{L}(\theta)=0}}({\theta}-\theta_{0})^\top\Pi_{0}\mathbb{E}[ZZ^\top]\Pi_{0}^\top({\theta}-\theta_{0}) \\
    &=\max_{\theta\in\mathcal{K},\mathbf{X}\theta=\mathbf{Y}}\|\theta-\theta_{0}\|_{\Xi_z}^{2} \\
    &=\max_{\theta\in\mathcal{K},\mathbf{X}(\theta-\theta_{0})=\xi}\|\theta-\theta_{0}\|_{\Xi_z}^{2} \\
    &\overset{\mathcal{D}}{=}\max_{\substack{\theta\in\mathcal{K}-\theta_{0} \\ (\mathbf{W}_{1}\Xi_z^{1/2}+\mathbf{W}_{2}\Sigma_{u}^{1/2})\theta=\xi}}\|\theta\|_{\Xi_z}^{2}.
\end{align*}
By the definition of $S$, we have
\begin{equation*}
   \max_{\substack{\theta\in\mathcal{K}-\theta_{0} \\ (\mathbf{W}_{1}\Xi_z^{1/2}+\mathbf{W}_{2}\Sigma_{u}^{1/2})\theta=\xi}}\|\theta\|_{\Xi_z}^{2}
    =\max_{\substack{(\theta_{1},\theta_{2})\in S \\ \mathbf{W}_{1}\Xi_z^{1/2}\theta_{1}+\mathbf{W}_{2}\Sigma_{u}^{1/2}\theta_{2}=\xi}}\|\theta_{1}\|_{2}^{2}.
\end{equation*}
Then, the stated result holds.
\end{proof}

\begin{lemma}\label{Variant_Lemma4} 
Let $G\sim N(0,I_{n})$, $H\sim N(0,I_{d})$ be Gaussian vectors independent of $\mathbf{W}_{1}, \mathbf{W}_{2},\xi$, and each other. Define the auxiliary optimization problem (AO) as
\begin{equation}\label{Variant_43}
\phi:=\max_{\substack{(\theta_{1},\theta_{2})\in S \\
\|\xi-\mathbf{W}_{2}\theta_{2}-G\|\theta_{1}\|_{2}\|_{2}\leq\langle\theta_{1},H\rangle}} \|\theta_{1}\|_{2}^{2}.
\end{equation}
Then, it holds that
\begin{equation*}
    \Pr(\Phi>t|\mathbf{W}_{2},\xi)\leq 2\Pr(\phi\geq t|\mathbf{W}_{2},\xi).
\end{equation*}
Furthermore, by taking expectations, we obtain
\begin{equation*}
    \Pr(\Phi>t)\leq 2\Pr(\phi\geq t).
\end{equation*}
\end{lemma}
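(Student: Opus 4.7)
The plan is to apply the extended CGMT (Theorem \ref{VariantCGMT}) conditional on $(\mathbf{W}_2,\xi)$, since under the block structure in \eqref{def:cov_wwxi}, $\mathbf{W}_1$ is independent of $(\mathbf{W}_2,\xi)$ with i.i.d.\ $N(0,1)$ entries. The main task is to realize $\sqrt{\Phi}$ as the value of a primary optimization of the form \eqref{POCGMT} so that its auxiliary counterpart \eqref{AOCGMT} equals $\sqrt{\phi}$ from \eqref{Variant_43}. Theorem \ref{VariantCGMT} applied with threshold $c=\sqrt{t}$ and then squared yields the conditional bound $\Pr(\Phi>t\mid\mathbf{W}_2,\xi)\leq 2\Pr(\phi\geq t\mid\mathbf{W}_2,\xi)$, whose integrated form is the unconditional statement.

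To obtain the required min-max, I would introduce a Lagrange multiplier $a\in\mathbb{R}^n$ for the equality $\mathbf{W}_1\theta_1+\mathbf{W}_2\theta_2=\xi$ and linearize the objective by the dual norm identity $\|\theta_1\|_2=\sup_{\|v\|_2\leq 1}v^\top\theta_1$. Truncating $a$ to a ball $B_n(M)$ (to be released by monotone convergence as $M\to\infty$, using the compactness of $S$ inherited from $\mathcal{K}$) and invoking strong LP duality yields
\begin{equation*}
\sqrt{\Phi}=\min_{a\in B_n(M)}\max_{(\theta_1,\theta_2)\in S,\;\|v\|_2\leq 1}\Big\{v^\top\theta_1-a^\top(\mathbf{W}_1\theta_1+\mathbf{W}_2\theta_2-\xi)\Big\}.
\end{equation*}
The single Gaussian-coupled term $-a^\top\mathbf{W}_1\theta_1=\langle\theta_1,-\mathbf{W}_1^\top a\rangle$ matches $\langle u,\mathbf{W}\omega\rangle$ under $\omega=a$, $(u,u')=(\theta_1,(\theta_2,v))$, and $\mathbf{W}=-\mathbf{W}_1^\top$ (still $p\times n$ with i.i.d.\ $N(0,1)$ entries by Gaussian symmetry). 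Theorem \ref{VariantCGMT} replaces this coupling by $\|a\|_2\langle G,\theta_1\rangle+\|\theta_1\|_2\langle H,a\rangle$ with independent standard Gaussians $G,H$ of matching dimensions. Solving the $a$-infimum in polar coordinates gives value $0$ precisely when $\langle G,\theta_1\rangle\geq\|\|\theta_1\|_2 H+\xi-\mathbf{W}_2\theta_2\|_2$ and $-\infty$ otherwise; the remaining maximization over $\|v\|_2\leq 1$ restores $\|\theta_1\|_2$. After swapping the labels $G\leftrightarrow H$ and using $-G\stackrel{\mathcal{D}}{=}G$, the inequality takes the form $\|\xi-\mathbf{W}_2\theta_2-G\|\theta_1\|_2\|_2\leq\langle H,\theta_1\rangle$ in \eqref{Variant_43}, so the AO value coincides with $\sqrt{\phi}$.

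The principal obstacle is that, with $v$ as an optimization variable, the bilinear $v^\top\theta_1$ is not jointly concave in $(u,u')=(\theta_1,(\theta_2,v))$, so the minimax-swap step underlying Theorem \ref{VariantCGMT} (via Rockafellar's corollary) is not directly available on the joint variable. I would sidestep this by first applying the theorem for each fixed $v\in B_p(1)$---for such $v$ the integrand is bilinear in $(a,(\theta_1,\theta_2))$ and the convex-concave hypothesis holds---and then upgrading to the supremum over $v$ using that $v$ is an inert index in both Gaussian processes (neither $-a^\top\mathbf{W}_1\theta_1$ nor $\|a\|_2\langle G,\theta_1\rangle+\|\theta_1\|_2\langle H,a\rangle$ depends on $v$), so the variance-matching inequalities driving the Gordon comparison continue to hold after adjoining $v$ to the max index. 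The truncation of $a$ is then removed by $M\to\infty$, completing the argument.
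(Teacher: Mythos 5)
The proposal has a genuine gap centered on the choice of tool and the direction of the minimax.

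\textbf{The claimed representation $\sqrt{\Phi}=\min_a\max_{(\theta,v)}[\ldots]$ is false.} The exact Lagrangian identity for the equality constraint gives $\sqrt{\Phi}=\max_{(\theta,v)\in S\times B_p(1)}\min_{a\in\mathbb{R}^n}\bigl[v^\top\theta_1-a^\top(\mathbf{W}_1\theta_1+\mathbf{W}_2\theta_2-\xi)\bigr]$, with min and max in the opposite order. Upgrading this $\max\min$ to a $\min\max$ would require joint concavity of $v^\top\theta_1$ in $(\theta,v)$, which fails (you acknowledge this), and indeed a duality gap can occur. For instance with $p=n=1$, $S=[-2,2]$, constraint $\theta_1=1/2$: the primal $\sqrt{\Phi}=1/2$, while $\min_a\max_{\theta_1\in[-2,2],\,v\in[-1,1]}[v\theta_1-a(\theta_1-1/2)]=2$ (minimum at $a=0$). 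So the equality you invoke, and hence the asserted match with a PO of the form \eqref{POCGMT}, does not hold. (Weak duality $\sqrt{\Phi}\leq\min_a\max_{\theta,v}[\ldots]$ does hold, but then the AO of the relaxed problem need not match $\sqrt{\phi}$, as your own AO computation reveals: solving the $a$-infimum first actually evaluates $\max_{\theta,v}\min_a[\text{AO}]$, which is only a lower bound on $\min_a\max_{\theta,v}[\text{AO}]$, so the comparison would run the wrong way.)

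\textbf{The ``per-$v$ then upgrade'' fix does not close the gap.} Applying Theorem~\ref{VariantCGMT} for each fixed $v$ gives $\Pr(\Phi_v>c)\leq 2\Pr(\phi_v\geq c)$, but this does not compose into a bound on $\Pr(\sup_v\Phi_v>c)$. The inert-index observation is correct for the covariance conditions of Gordon's comparison, but Gordon's one-sided inequality gives $\Pr(\min_a\max_{\theta,v}X<c)\leq 2\Pr(\min_a\max_{\theta,v}Y\leq c)$, which is the opposite direction from what the lemma requires; the direction in Theorem~\ref{VariantCGMT}'s conclusion, $\Pr(\mathrm{PO}>c)\leq 2\Pr(\mathrm{AO}\geq c)$, is obtained in its proof precisely via the minimax swap, and that is the step whose hypothesis you have lost.

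\textbf{What the paper actually does} (delegating to Lemma~4 of Koehler et al.) avoids both issues by neither linearizing the norm nor attempting to present $\Phi$ as a $\min\max$. One writes the exact Lagrangian $\Phi=\max_{\theta\in S}\inf_\lambda\bigl[\|\theta_1\|_2^2+\lambda^\top(\xi-\mathbf{W}_1\theta_1-\mathbf{W}_2\theta_2)\bigr]$, so $-\Phi=\min_{\theta\in S}\sup_\lambda[\ldots]$ is naturally a $\min\max$; then one applies the one-sided Gaussian comparison (Theorem~10 of Koehler et al.), which requires no convex-concavity, conditional on $(\mathbf{W}_2,\xi)$, with $\lambda$ truncated to a ball and $M\to\infty$ using compactness of $S$. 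The inner $\sup_\lambda$ of the resulting AO evaluates exactly: it equals $-\|\theta_1\|_2^2$ when $\|\xi-\mathbf{W}_2\theta_2-\|\theta_1\|_2 G\|_2\leq-\langle H,\theta_1\rangle$ and $+\infty$ otherwise, so the AO is $-\phi'$ with $H\to-H$, and $\phi'\overset{\mathcal{D}}{=}\phi$ finishes the conditional bound; integrating out $(\mathbf{W}_2,\xi)$ gives the unconditional statement. Your $v$-linearization is not incorrect per se, but it is unnecessary and it is precisely what broke the concavity that led you to Theorem~\ref{VariantCGMT}, which is the wrong tool here.
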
 
\begin{proof}[Proof of Lemma \ref{Variant_Lemma4}]

This lemma is quite similar to Lemma 4 in \citet{koehler2022uniform}. The only difference between the two is the objective function of the constrained maximization problems. However, because the objective function (\ref{Variant_43}) does not affect the proof of Lemma 4  in \citet{koehler2022uniform}, the result of Lemma \ref{Variant_Lemma4} also holds.
\end{proof}

We then offer a  bound on the projected RMSE. 
The following lemma is an extension of Lemma 5 in \citet{koehler2022uniform} to the case where the covariates correlate with errors.

As preparation, we define the Gaussian width, which is used in Lemma  \ref{Variant_Lemma5}.
\begin{definition}[Gaussian width  \citep{vershynin2018high}]
The Gaussian width of a set $S\subset \mathbb{R}^{p}$ is
\begin{equation*}
    W(S):=\underset{H \sim N(0,I_{d})}{E}\left[\sup_{s\in S}|\langle s,H \rangle|\right].
\end{equation*}
\end{definition}
\begin{lemma}\label{Variant_Lemma5}
Let $\beta=12\sqrt{\frac{\log(32/\delta)}{n}}+3\sqrt{\frac{\rank(\Sigma_{u})}{n}}$. If $n$ is sufficiently large such that $\beta\leq1$, for every $\delta \in (0,1)$, the following holds with probability at least $1-\delta$:
\begin{align}\label{Variant_48}
    \phi&\leq \frac{1+\beta}{n}\left\{W(\Xi_z^{1/2}\mathcal{K})+\rad(\Xi_z^{1/2}\mathcal{K})\sqrt{2\log(16/\delta)}+\|\theta_{0}\|_{\Xi_z}\sqrt{2\log(16/\delta)}\right\}^{2}-\tilde{\sigma}^{2},
\end{align}
where we define
\begin{align*}
\tilde{\sigma}^{2}&:=\sigma^2 - \|\corrcoef\|_{\Sigma_u^+}^2 =\min_{\theta_{2}\in\Sigma_{u}^{1/2}\mathbb{R}^{p}}\left(\sigma^{2}-\|\rho \|^2+\|\theta_{2}-\rho\|_{2}^{2}\right).
\end{align*}

\end{lemma}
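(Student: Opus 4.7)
The plan is to attack the AO constraint \eqref{Variant_43} by bounding its two sides uniformly: upper-bound $\langle \theta_1, H\rangle$ uniformly over $\theta_1 \in \Xi_z^{1/2}(\mathcal{K}-\theta_0)$, and lower-bound $\|\xi - \mathbf{W}_2\theta_2 - tG\|_2$ uniformly in $\theta_2$ and in the random scalar $t := \|\theta_1\|_2 \geq 0$. If both succeed, the constraint forces $n(\tilde\sigma^2+t^2)(1-\beta) \leq A^2$, where $A$ denotes the quantity in braces in \eqref{Variant_48}, from which $t^2 \leq A^2/[n(1-\beta)] - \tilde\sigma^2$ follows; for $\beta$ bounded away from $1$, the factor $(1-\beta)^{-1}$ can be absorbed into $1+\beta$ at the cost of adjusting the absolute constants in the definition of $\beta$, yielding the claimed bound on $\phi = \max_S t^2$.

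For the upper bound, I would decompose $\langle \theta_1, H\rangle = \langle \Xi_z^{1/2}\theta, H\rangle - \langle \Xi_z^{1/2}\theta_0, H\rangle$. Borell's Gaussian concentration applied to the Lipschitz functional $H \mapsto \sup_{\theta \in \mathcal{K}}\langle \Xi_z^{1/2}\theta, H\rangle$ (with Lipschitz constant $\rad(\Xi_z^{1/2}\mathcal{K})$ and mean $W(\Xi_z^{1/2}\mathcal{K})$) controls the first piece with probability at least $1-\delta/4$; the second piece is a single centered Gaussian with variance $\|\theta_0\|_{\Xi_z}^2$, controlled by the standard Gaussian tail. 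A union bound delivers $\sup_{\theta_1}\langle \theta_1, H\rangle \leq A$ on an event of probability at least $1-\delta/2$.

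For the lower bound, I would exploit the joint Gaussianity \eqref{def:cov_wwxi} to write $\xi = \mathbf{W}_2 \rho + \zeta$, where $\zeta \sim N(0, \tilde\sigma^2 I_n)$ is independent of $(\mathbf{W}_2, G, H)$ and $\rho = (\Sigma_u^{1/2})^+\corrcoef \in \Sigma_u^{1/2}\mathbb{R}^p$. Because $\theta_2 \in \Sigma_u^{1/2}\mathbb{R}^p$ by the definition of $S$, the vector $\mathbf{W}_2(\rho - \theta_2)$ lies in the column span $V$ of $\mathbf{W}_2 \Sigma_u^{1/2}$, a subspace of dimension at most $\rank(\Sigma_u)$. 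Projecting onto $V^{\perp}$ via $\Pi$ therefore produces the $\theta_2$-free bound
\begin{equation*}
\|\xi - \mathbf{W}_2\theta_2 - tG\|_2^2 \geq \|\Pi(\zeta - tG)\|_2^2 = \tilde\sigma^2\|\Pi \tilde Y_1\|_2^2 - 2t\tilde\sigma\langle \Pi \tilde Y_1, \Pi \tilde Y_2\rangle + t^2\|\Pi \tilde Y_2\|_2^2,
\end{equation*}
where $\tilde Y_1 := \zeta/\tilde\sigma$ and $\tilde Y_2 := G$ are independent $N(0, I_n)$ vectors independent of $\mathbf{W}_2$. Standard $\chi^2$ concentration bounds each $\|\Pi \tilde Y_j\|_2^2$ below by $n - \rank(\Sigma_u) - O(\sqrt{n\log(1/\delta)})$, and a Gaussian bilinear tail controls the cross term by $O(\sqrt{(n-\rank(\Sigma_u))\log(1/\delta)})$; absorbing the cross term symmetrically into the $\tilde\sigma^2+t^2$ coefficient via AM--GM produces the desired $(\tilde\sigma^2+t^2)n(1-\beta)$ lower bound with $\beta$ of the stated form. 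The main obstacle is making this lower bound hold \emph{uniformly} in the random $t$; the decomposition into the three $t$-independent Gaussian functionals of $(\tilde Y_1, \tilde Y_2)$ circumvents a union bound over $t$ by controlling the entire quadratic in $t$ on a single high-probability event.
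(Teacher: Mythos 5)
Your proof is correct but takes a genuinely different route from the paper's on the key lower-bound step. The upper bound on $\langle\theta_1,H\rangle$ is handled identically in both (Borell--TIS concentration for the supremum over $\theta\in\mathcal{K}$, a Gaussian tail for the $\theta_0$ piece). For the lower bound, the paper expands $\|\xi-\mathbf{W}_2\theta_2-\|\theta_1\|_2G\|_2^2$, invokes Lemma~\ref{Variant_Lemma1} to bound the cross term $\langle\xi-\mathbf{W}_2\theta_2,G\rangle$ uniformly over $\theta_2$, uses Lemma~\ref{Variant_Lemma2} to lower-bound $\|G\|_2$ and $\|\xi-\mathbf{W}_2\theta_2\|_2$, and only afterwards minimizes the scalar $\sigma^2-2\rho^\top\theta_2+\|\theta_2\|_2^2$ over $\theta_2\in\Sigma_u^{1/2}\mathbb{R}^p$ to produce $\tilde\sigma^2$. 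You instead write $\xi=\mathbf{W}_2\rho+\zeta$ with $\zeta\sim N(0,\tilde\sigma^2 I_n)$ independent of $(\mathbf{W}_2,G,H)$, and project $\xi-\mathbf{W}_2\theta_2-tG$ onto $\bigl(\mathbf{W}_2\Sigma_u^{1/2}\mathbb{R}^p\bigr)^{\perp}$, so the $\theta_2$-dependent vector $\mathbf{W}_2(\rho-\theta_2)$ is annihilated \emph{deterministically} before any probability is spent; $\tilde\sigma^2$ then arises automatically as the variance of $\zeta$ rather than as the value of a minimization. Your route buys a transparent, manifestly uniform treatment of the $\theta_2$-dependence via geometry alone, reduces the remaining work to three $t$-independent Gaussian functionals controlled on a single event, and incidentally yields a deficit of order $\rank(\Sigma_u)/n$ instead of $\sqrt{\rank(\Sigma_u)/n}$, so the effective $\beta$ you produce is dominated by the one in the statement and the claim still holds. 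The paper's route stays aligned with the toolbox of \citet{koehler2022uniform} and keeps the quadratic in $\theta_2$ visible, which is what makes $\tilde\sigma^2=\min_{\theta_2\in\Sigma_u^{1/2}\mathbb{R}^p}(\sigma^2-\|\rho\|^2+\|\theta_2-\rho\|_2^2)$ the natural form to present. After the final linearization $(1-\beta)^{-1}\le 1+2\beta$ and constant-chasing, both arrive at the stated inequality.
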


\begin{proof}[Proof of Lemma \ref{Variant_Lemma5}]
Fix $\delta \in (0,1)$ in this proof.
To simplify notations, we define coefficients:
\begin{align*}
\alpha_{1}:=2\sqrt{\frac{\log(32/\delta)}{n}}\quad \mbox{~and~}\quad
\alpha_{2}:=\sqrt{\frac{\rank(\Sigma_{u})+1}{n}}+2\sqrt{\frac{\log(16/\delta)}{n}}. 
\end{align*}

To prepare for the derivation of the upper bound, we consider a list of the following inequalities, each of which holds with probability at least $1-\delta/8$.
\begin{enumerate}
    \item[(i)]  By (\ref{Variant_26}) in Lemma \ref{Variant_Lemma1}, uniformly over all $\theta_{2}\in \Sigma_{u}^{1/2}(\mathcal{K}-\theta_{0})$, 
    it holds that
    \begin{equation}\label{Variant_49}
    |\langle\xi-\mathbf{W}_{2}\theta_{2}, G \rangle|\leq\|\xi-\mathbf{W}_{2}\theta_{2}\|_{2}\|G\|_{2}\alpha_{2}.
\end{equation}
$V$, $s$, and $\delta$ in Lemma \ref{Variant_Lemma1} correspond to $G$, $\xi-\mathbf{W}_{2}\theta_{2}$, and $\delta/8$ in (\ref{Variant_49}), respectively.

\item[(ii)] By Lemma \ref{Variant_Lemma2}, it holds that
\begin{equation}\label{Variant_52}
    -\alpha_{1}\leq\frac{1}{\sqrt{n}}\|G\|_{2}-1\leq\alpha_{1}.
\end{equation}
Moreover, as we obtain the following from \eqref{eq:X_matrix} that 
\begin{equation*}
\begin{pmatrix}
W_{2,i} \\
\xi_{i}
\end{pmatrix}
\sim N\left(
\begin{pmatrix}
\mathbf{0}_{p\times 1} \\
0
\end{pmatrix},
\begin{pmatrix}
 I_{p\times p} & \mathbf{\rho} \\
 \mathbf{\rho}^{T} & \sigma^{2}
\end{pmatrix}
\right),
\end{equation*}
we have $\xi_{i}-W_{2,i}^{T}\theta_{2}\sim N(0, \sigma^{2}-2\rho^{T}\theta_{2}+\|\theta_{2}\|^2)$. As $\{\xi_{i}-W_{2,i}^{T}\theta_{2}\}_{i=1}^{n}$ are i.i.d., we have
\[
\xi-\mathbf{W}_{2}\theta_{2} \overset{\mathcal{D}}{=}\sqrt{\sigma^{2}-2\rho^{T}\theta_{2}+\|\theta_{2}\|^2}G.
\]
Further, by Lemma \ref{Variant_Lemma2}, we have
\begin{align}
    -\alpha_{1}\sqrt{\sigma^{2}-2\rho^{T}\theta_{2}+\|\theta_{2}\|^2}&\leq\frac{1}{\sqrt{n}}\|\xi-\mathbf{W}_{2}\theta_{2}\|_{2}-\sqrt{\sigma^{2}-2\rho^{T}\theta_{2}+\|\theta_{2}\|^2} \notag  \\
    &\leq\alpha_{1}\sqrt{\sigma^{2}-2\rho^{T}\theta_{2}+\|\theta_{2}\|^2}. \label{Variant_53}
\end{align}

\item[(iii)] By the standard Gaussian tail bound $\Pr(|Z|\geq t)\leq2e^{-t^{2}/2}$, 
it holds that
\begin{equation}\label{Variant_54}
    |\langle \Xi_z^{1/2}\theta_{0}, H\rangle| \overset{\mathcal{D}}{=} |Z'| \leq\|\theta_{0}\|_{\Xi_z}\sqrt{2\log(16/\delta)},
\end{equation}
where $Z' \sim N(0,\|\theta_{0}\|^{2}_{\Xi_z})$.

\item[(iv)] By Theorem \ref{Variant_Theorem6}, it holds that
\begin{equation}\label{Variant_55}
    \max_{\theta_{1}\in\Xi_z^{1/2}\mathcal{K}}|\langle \theta_{1},H\rangle|\leq W(\Xi_z^{1/2}\mathcal{K})+\rad(\Xi_z^{1/2}\mathcal{K})\sqrt{2\log(16/\delta)}
\end{equation}
because $\max_{\theta_{1}\in\Xi_z^{1/2}\mathcal{K}}|\langle \theta_{1}, H\rangle|$ is a $\rad(\Xi_{z}^{1/2}\mathcal{K})$-Lipschitz function of $H$, and $W(\Xi_z^{1/2} \mK) = \Ep[\sup_{\theta_{1}\in\Xi_z^{1/2}\mathcal{K}}|\langle \theta_{1}, H\rangle|]$.
\end{enumerate}

We further prepare several inequalities.
By squaring the last constraint in the definition of the auxiliary optimization problem $\phi$, we see that
\begin{align*}
    \langle \theta_{1}, H \rangle^{2}&\geq \|\xi-\mathbf{W}_{2}\theta_{2}-\|\theta_{1}\|_{2}G\|^{2} \\
    &=\|\xi-\mathbf{W}_{2}\theta_{2}\|_{2}^{2}+\|\theta_{1}\|_{2}^{2}\|G\|^{2}_{2}-2\langle \xi-\mathbf{W}_{2}\theta_{2} , \|\theta_{1}\|_{2}G\rangle.
\end{align*}
From (\ref{Variant_49}) and the AM-GM inequality ($a^{2}/2+b^{2}/2\geq ab$), we have
\begin{equation*}
    \langle \theta_{1}, H \rangle^{2}\geq (1-\alpha_{2})[\|\xi-\mathbf{W}_{2}\theta_{2}\|_{2}^{2}+\|\theta_{1}\|_{2}^{2}\|G\|^{2}_{2}].
\end{equation*}
From the rearrangement of the above inequality, we have
\begin{align}
    \|\theta_{1}\|_{2}^{2}&\leq \frac{(1-\alpha_{2})^{-1}\langle \theta_{1}, H \rangle^{2}-\|\xi-\mathbf{W}_{2}\theta_{2}\|_{2}^{2}}{\|G\|_{2}^{2}} \notag \\
    &\leq \frac{(1-\alpha_{2})^{-1}\langle \theta_{1}, H \rangle^{2}-\|\xi-\mathbf{W}_{2}\theta_{2}\|_{2}^{2}}{n(1-\alpha_{1})^{2}} \notag \\
    &\leq \frac{(1-\alpha_{2})^{-1}\langle \theta_{1}, H \rangle^{2}-n(1-\alpha_{1})^{2}(\|\theta_{2}\|^2-2\rho^{T}\theta_{2}+\sigma^{2})}{n(1-\alpha_{1})^{2}}, \label{5-inequaltiy}
\end{align}
where the second inequality holds from (\ref{Variant_52}) and the third inequality holds from  (\ref{Variant_53}).

Now, we are ready to construct the upper bound on $\phi$ from the restriction of the optimization problem \eqref{Variant_43}. 
Plugging (\ref{5-inequaltiy}) into (\ref{Variant_43}), we obtain 
\begin{align*}
    \phi&\leq
    \max_{\theta_{1}\in\Xi_z^{1/2}(\mathcal{K}-\theta_{0})}\frac{(1-\alpha_{2})^{-1}\langle \theta_{1}, H \rangle^{2}}{n(1-\alpha_{1})^{2}}+\max_{\theta_{2}\in\Sigma_{u}^{1/2}(\mathcal{K}-\theta_{0})} -(\|\theta_{2}\|^2-2\rho^{T}\theta_{2}+\sigma^{2}) \\
     &\leq \frac{1}{n(1-\alpha_{2})(1-\alpha_{1})^{2}}\left(\max_{\theta_{1}\in\Xi_z^{1/2}\mathcal{K}}|\langle \theta_{1}, H \rangle|+|\langle\Xi_z^{1/2}\theta_{0},H\rangle|\right)^{2} \\
     & \quad -\min_{\theta_{2}\in\Sigma_{u}^{1/2}(\mathcal{K}-\theta_{0})}\left(\sigma^{2}-\|\rho \|^2+\|\theta_{2}-\rho\|_{2}^{2}\right) \\
     &\leq \frac{1}{n(1-\alpha_{2})(1-\alpha_{1})^{2}}\left(\max_{\theta_{1}\in\Xi_z^{1/2}\mathcal{K}}|\langle \theta_{1}, H \rangle|+|\langle\Xi_z^{1/2}\theta_{0},H\rangle|\right)^{2} \\
     & \quad -\min_{\theta_{2}\in\Sigma_{u}^{1/2}\mathbb{R}^{p}}\left(\sigma^{2}-\|\rho \|^2+\|\theta_{2}-\rho\|_{2}^{2}\right) \\
     &\leq \frac{1}{n(1-\alpha_{2})(1-\alpha)^{2}}\left(W(\Xi_z^{1/2}\mathcal{K})+\rad(\Xi_z^{1/2}\mathcal{K})\sqrt{2\log(16/\delta)}+\|\theta_{0}\|_{\Xi_z}^{2}\sqrt{2\log(16/\delta)}\right)^{2}-\tilde{\sigma}^{2},
\end{align*}
where the fourth inequality holds from  (\ref{Variant_54}) and (\ref{Variant_55}).

We simplify the effect of $\alpha_{1}$ and $\alpha_{2}$ on the upper bound for $\phi$. As $(1-\alpha_{1})^{2}\geq1-2\alpha_{1}$, we have
\begin{equation*}
    \frac{1}{(1-\alpha_{2})(1-\alpha_{1})^{2}}\leq\frac{1}{(1-\alpha_{2})(1-2\alpha_{1})}.
\end{equation*}
If $\alpha_{1}<1/2 and \alpha_{2}<1$,
\begin{align*}
    (1-2\alpha_{1})(1-\alpha_{2})&=1-\alpha_{2}-2\alpha_{1}+2\alpha_{1}\alpha_{2} \\
    &\geq1-\alpha_{2}-2\alpha_{1}.
\end{align*}
Assume $\alpha_{2}+2\alpha_{1}<1/2$. By using the inequality $(1-x)^{-1}\leq1+2x$ for $x\in[0,1/2]$, we can show that
\begin{equation*}
     \frac{1}{(1-\alpha_{2})(1-\alpha_{1})^{2}}\leq\frac{1}{(1-\alpha_{2})(1-2\alpha_{1})}\leq1+2\alpha_{2}+4\alpha_{1}.
\end{equation*}
Therefore, if we choose $\beta$ to satisfy the following inequality:
\begin{equation*}
    2\alpha_{2}+4\alpha_{1}\leq 3\sqrt{\frac{\rank(\Sigma_{u})}{n}}+12\sqrt{\frac{\log(32/\delta)}{n}}:=\beta,
\end{equation*}
the stated result holds.
\end{proof}

Finally, we obtain the generalization bound from Lemma \ref{Variant_Lemma5}.
\noindent
\begin{theorem}[General Bound]\label{Variant_Theorem1}
There exists an absolute constant $C_{1}\leq24$ such that the following is true. Assume Assumptions \ref{asmp:Gaussianity} and \ref{asmp:orthogonal} hold. Let $\mathcal{K}$ denote an arbitrary compact set, and take $\Sigma_x=\Xi_z + \Sigma_{u}$. Fixing $\delta\leq1/4$, let $\beta=C_{1}\left(\sqrt{\rank(\Sigma_{u})/n}+\sqrt{\log(1/\delta)/n}\right)$.
If $n$ is large enough that $\beta\leq1$, then the following holds with probability at least $1-\delta$:
\begin{align*}
    &\max_{\theta\in\mathcal{K}, Y=X\theta}\|\theta-\theta_{0}\|_{\Xi_z}^{2}\\
    &\leq \frac{1+\beta}{n}\left[W(\Xi_z^{1/2}\mathcal{K})+\left(\rad(\Xi_z^{1/2}\mathcal{K})+\|\theta_{0}\|_{\Xi_z}\right)\sqrt{2\log\frac{32}{\delta}}\right]^{2}-\tilde{\sigma}^{2}.
\end{align*}
\end{theorem}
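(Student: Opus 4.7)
The plan is to chain the three lemmas already established earlier in the section. First, by Lemma \ref{Variant_Lemma3}, the left-hand side of the bound is equal in distribution to the primary optimization value
\begin{equation*}
\Phi = \max_{\substack{(\theta_1,\theta_2)\in S \\ \mathbf{W}_1\theta_1 + \mathbf{W}_2\theta_2 = \xi}} \|\theta_1\|_2^2,
\end{equation*}
so it suffices to control the tail of $\Phi$. Second, Lemma \ref{Variant_Lemma4} (our extended CGMT applied to this PO) gives $\Pr(\Phi > t) \leq 2\Pr(\phi \geq t)$, where $\phi$ is the auxiliary optimization value in \eqref{Variant_43}. Third, Lemma \ref{Variant_Lemma5} furnishes a high-probability upper bound on $\phi$ of the desired shape.

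Concretely, I would invoke Lemma \ref{Variant_Lemma5} with confidence parameter $\delta/2$ rather than $\delta$, so that after the factor $2$ coming from CGMT in Lemma \ref{Variant_Lemma4} is absorbed, the final failure probability is at most $\delta$. This replaces the $\log(16/\delta)$ and $\log(32/\delta)$ terms appearing inside Lemma \ref{Variant_Lemma5} by $\log(32/\delta)$ and $\log(64/\delta)$ respectively; the latter is absorbed into the displayed $\log(32/\delta)$ by adjusting absolute constants. The parameter $\beta$ in Lemma \ref{Variant_Lemma5} is $12\sqrt{\log(32/\delta)/n} + 3\sqrt{\mathrm{rank}(\Sigma_u)/n}$, which is dominated by $C_1(\sqrt{\mathrm{rank}(\Sigma_u)/n} + \sqrt{\log(1/\delta)/n})$ once $C_1$ is chosen as a suitable absolute constant (for instance, $C_1 \leq 24$ works after accounting for the shift $\log(32/\delta) \leq \log(32) + \log(1/\delta)$ and collecting constants). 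The assumption $\beta \leq 1$ in the theorem then implies the analogous smallness condition required by Lemma \ref{Variant_Lemma5}.

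Finally, combining these three steps gives: with probability at least $1-\delta$,
\begin{equation*}
\max_{\theta \in \mathcal{K},\, Y = X\theta}\|\theta - \theta_0\|_{\Xi_z}^2 \leq \frac{1+\beta}{n}\Bigl[W(\Xi_z^{1/2}\mathcal{K}) + \bigl(\rad(\Xi_z^{1/2}\mathcal{K}) + \|\theta_0\|_{\Xi_z}\bigr)\sqrt{2\log(32/\delta)}\Bigr]^2 - \tilde{\sigma}^2,
\end{equation*}
which is precisely the stated conclusion.

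I do not foresee a substantial obstacle here; the theorem is essentially a clean assembly of the three preceding lemmas. The only bookkeeping point requiring care is the propagation of the CGMT factor $2$ into the failure probability and the bookkeeping of the absolute constants entering $\beta$ and the logarithmic terms, but neither of these presents conceptual difficulty. The substantive work has already been performed in Lemmas \ref{Variant_Lemma3}--\ref{Variant_Lemma5}, particularly in the tight control of the AO $\phi$ under correlated noise via the identity $\xi - \mathbf{W}_2 \theta_2 \overset{\mathcal{D}}{=} \sqrt{\sigma^2 - 2\rho^\top \theta_2 + \|\theta_2\|^2}\, G$, which is how the corrected noise variance $\tilde{\sigma}^2$ enters the bound.
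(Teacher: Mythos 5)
Your proposal is correct and follows exactly the paper's own route: chain Lemma~\ref{Variant_Lemma3} (distributional equality with the PO), Lemma~\ref{Variant_Lemma4} (CGMT giving the factor~$2$), and Lemma~\ref{Variant_Lemma5} with $\delta$ replaced by $\delta/2$ to absorb that factor. The bookkeeping you describe for the logarithmic arguments and the constant $C_1\leq 24$ is also the intended one.
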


\begin{proof}[Proof of Theorem \ref{Variant_Theorem1}]
For any $t>0$, it holds from Lemmas \ref{Variant_Lemma3} and \ref{Variant_Lemma4} that
\begin{equation*}
    \Pr\left(\max_{\theta\in\mathcal{K}, Y=X\theta}\|\theta-\theta_{0}\|_{\Xi_z}^{2}>t\right)\leq2\Pr(\phi\geq t).
\end{equation*}
Lemma \ref{Variant_Lemma5} implies that the above term is upper bounded by $\delta$ if we choose $t$ using the result (\ref{Variant_48}) with $\delta$ replaced by $\delta/2$. Then, we obtain the stated result.
\end{proof}

By using the definition of the radius of sets and the Gaussian width, we can reduce the generalization bound in Theorem \ref{Variant_Theorem1} to a simpler bound:
\begin{corollary}\label{Variant_Corollary3}
There exists an absolute constant $C_{1}\leq32$ such that the following is true. Assume Assumptions \ref{asmp:Gaussianity} and \ref{asmp:orthogonal} hold. Pick $\Sigma_x=\Xi_z + \Sigma_{u}$, fix $\delta\leq 1/4$, and let $\gamma=C_{1}(\sqrt{\log(1/\delta)/r_{\|\cdot\|}(\Xi_z)}+\sqrt{\log(1/\delta)/n}+\sqrt{\rank(\Sigma_{u})/n})$. If $B\geq\|\theta_{0}\|$ and $n$ is large enough that $\gamma\leq1$, the following holds with probability at least $1-\delta$:
\begin{align*}
    \max_{\|\theta\|\leq B,  \mathbf{Y}=\mathbf{X}\theta}\|\theta-\theta_{0}\|_{\Xi_z}^{2}&\leq (1+\gamma)\frac{\left(B\mathbb{E}\|\Sigma_{2}^{1/2}H\|_{*}\right)^{2}}{n}-\tilde{\sigma}^{2}.
\end{align*}
\end{corollary}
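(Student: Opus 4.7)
The plan is to obtain Corollary \ref{Variant_Corollary3} as a direct specialization of Theorem \ref{Variant_Theorem1} to the norm ball $\mathcal{K}=\{\theta:\|\theta\|\leq B\}$. The three geometric quantities appearing on the right-hand side of Theorem \ref{Variant_Theorem1}, namely $W(\Xi_z^{1/2}\mathcal{K})$, $\rad(\Xi_z^{1/2}\mathcal{K})$, and $\|\theta_0\|_{\Xi_z}$, all admit clean expressions in terms of $B$, $\mathbb{E}\|\Xi_z^{1/2}H\|_*$, and the effective $\|\cdot\|$-rank $r_{\|\cdot\|}(\Xi_z)$ from Definition \ref{def:effrank_gennorm}, and the whole proof reduces to plugging these expressions in.

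First, I would identify each piece explicitly. By the definition of the dual norm, $W(\Xi_z^{1/2}\mathcal{K})=\mathbb{E}\sup_{\|\theta\|\le B}\langle \Xi_z^{1/2}\theta,H\rangle = B\,\mathbb{E}\|\Xi_z^{1/2}H\|_*$. For the radius, $\rad(\Xi_z^{1/2}\mathcal{K})=\sup_{\|\theta\|\le B}\|\theta\|_{\Xi_z}=B\sup_{\|u\|\le 1}\|u\|_{\Xi_z}$, and from the very definition of $r_{\|\cdot\|}(\Xi_z)$ this rewrites as $B\,\mathbb{E}\|\Xi_z^{1/2}H\|_*/\sqrt{r_{\|\cdot\|}(\Xi_z)}$. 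Finally, since $\|\theta_0\|\le B$ by hypothesis, the same bound gives $\|\theta_0\|_{\Xi_z}\le \rad(\Xi_z^{1/2}\mathcal{K})=B\,\mathbb{E}\|\Xi_z^{1/2}H\|_*/\sqrt{r_{\|\cdot\|}(\Xi_z)}$.

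Substituting these three identities into the bound of Theorem \ref{Variant_Theorem1} yields, with probability at least $1-\delta$,
\begin{equation*}
\max_{\|\theta\|\le B,\,\mathbf{Y}=\mathbf{X}\theta}\|\theta-\theta_0\|_{\Xi_z}^2 \le \frac{1+\beta}{n}\,\bigl(B\,\mathbb{E}\|\Xi_z^{1/2}H\|_*\bigr)^2\left[1+\frac{2\sqrt{2\log(32/\delta)}}{\sqrt{r_{\|\cdot\|}(\Xi_z)}}\right]^2-\tilde{\sigma}^2,
\end{equation*}
where $\beta=C_1(\sqrt{\rank(\Sigma_u)/n}+\sqrt{\log(1/\delta)/n})$ is the constant from Theorem \ref{Variant_Theorem1}. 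It remains to absorb the bracketed factor together with the $(1+\beta)$ prefactor into a single $(1+\gamma)$ with $\gamma$ of the stated form. Under the assumption that $n$ is large enough that the combined error is at most one, I would use the elementary inequality $(1+x)^2\le 1+3x$ for $x\in[0,1]$, followed by $(1+\beta)(1+C'\sqrt{\log(1/\delta)/r_{\|\cdot\|}(\Xi_z)})\le 1+C_1''(\beta+\sqrt{\log(1/\delta)/r_{\|\cdot\|}(\Xi_z)})$, choosing the absolute constant so that the resulting $\gamma$ matches the definition in the statement with $C_1\le 32$.

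The main obstacle is not conceptual but careful bookkeeping of the absolute constants: one must verify that the squaring of the bracket together with the multiplication by $1+\beta$ still fits inside the prescribed $\gamma$, which is why the constant in the corollary ($C_1\le 32$) is slightly larger than the one from Theorem \ref{Variant_Theorem1} ($C_1\le 24$). All other steps are direct applications of the definitions of dual norm, Gaussian width, and effective $\|\cdot\|$-rank.
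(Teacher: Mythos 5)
Your proposal is correct and follows essentially the same route as the paper's own proof: specialize Theorem~\ref{Variant_Theorem1} to the norm ball, express $W(\Xi_z^{1/2}\mathcal{K})$, $\rad(\Xi_z^{1/2}\mathcal{K})$, and the bound on $\|\theta_0\|_{\Xi_z}$ through $B$, $\mathbb{E}\|\Xi_z^{1/2}H\|_*$, and $r_{\|\cdot\|}(\Xi_z)$, and then absorb the resulting prefactor into $(1+\gamma)$. The only cosmetic difference is in the elementary inequality chosen for the absorption step (you use $(1+x)^2\le 1+3x$ whereas the paper iterates $(1+x)(1+y)\le 1+x+2y$), which does not change the argument.
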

\begin{proof}[Proof of Corollary \ref{Variant_Corollary3}]

Let $\mathcal{K}$ define $\{\theta:\|\theta\|\leq B\}$ in Theorem \ref{Variant_Theorem1}. By the definition of the Gaussian width and the radius of a set, we have
\begin{align*}
     W(\Xi_z^{1/2}\mathcal{K})&=\mathbb{E}\sup_{\|\theta\|\leq B}|\langle\Xi_z^{1/2}\theta, H\rangle|=\mathbb{E}\sup_{\|\theta\|\leq B}|\langle\theta,\Xi_z^{1/2} H\rangle|=B\mathbb{E}\|\Xi_z^{1/2}H\|_{*}, \\
     \rad(\Xi_z^{1/2}\mathcal{K})&=\sup_{\|\theta\|\leq B}\|\Xi_z^{1/2}\theta\|_{2}=B\sup_{\|\theta\|\leq 1}\|\theta\|_{\Xi_{z}}.
\end{align*}
Hence, we obtain
\begin{equation*}
    r_{\|\cdot\|}(\Xi_{z})=\left(\frac{W(\Xi_z^{1/2}\mathcal{K})}{\mathrm{rad}(\Xi_z^{1/2}\mathcal{K})}\right)^{2}.
\end{equation*}
As we have 
$\|\theta_{0}\|_{\Xi_{z}}=\sqrt{(\theta_{0}^{\top}\Xi_{z}\theta_{0}/\|\theta_{0}\|^{2})\cdot\|\theta_{0}\|^{2}}$,
it holds that  $\|\theta_{0}\|_{\Xi_{z}}\leq \|\theta_{0}\|\sup_{\|\theta\|\leq 1}\|\theta\|_{\Xi_{z}}$. By definition, it is clear that $\|\theta_{0}\|_{\Xi_{z}}\leq \mathrm{rad}(\Xi_{z}^{1/2}\mathcal{K})$. Hence,
\begin{align*}
    &W(\Xi_z^{1/2}\mathcal{K})+\left(\rad(\Xi_z^{1/2}\mathcal{K})+\|\theta_{0}\|_{\Xi_z}\right)\sqrt{2\log\frac{32}{\delta}}\\
    &\leq W(\Xi_z^{1/2}\mathcal{K})+2\sqrt{2\log\frac{32}{\delta}}\rad(\Xi_z^{1/2}\mathcal{K}) \\
    &= W(\Xi_z^{1/2}\mathcal{K})+2\sqrt{\frac{2\log(32/\delta)}{r_{\|\cdot\|}(\Xi_{z})}} W(\Xi_z^{1/2}\mathcal{K}) \\
    &= \left(1+2\sqrt{\frac{2\log(32/\delta)}{r_{\|\cdot\|}(\Xi_z)}}\right)B\mathbb{E}\|\Xi_z^{1/2}H\|_{*}
\end{align*}
holds where the last equality holds by the definition of $r_{\|\cdot\|}(\Xi_{z})$. Provided that $\gamma\leq1$ and $\delta\leq1/4$, we obtain
\begin{align*}
&(1+\beta)\left(1+2\sqrt{\frac{2\log(32/\delta)}{r_{\|\cdot\|}(\Xi_z)}}\right)^{2}\\
&\leq\left(1+\beta+4\sqrt{\frac{2\log(32/\delta)}{r_{\|\cdot\|}(\Xi_z)}}\right)\left(1+2\sqrt{\frac{2\log(32/\delta)}{r_{\|\cdot\|}(\Xi_z)}}\right) \\
&\leq 1+\gamma,
\end{align*}
where the inequalities follow from using $(1+x)(1+y) \leq1+x+2y$ for $x\leq1$. 
Plugging into Theorem \ref{Variant_Theorem1} completes the proof.
\end{proof}

When we consider the Euclidean space, we can reduce the main generalization bound to a simpler bound.

\vspace{1\baselineskip}

\noindent
\textbf{Corollary \ref{Variant_Corollary2}} \textit{
There exists an absolute constant $C_{1}\leq32$ such that the following is true. Assume Assumptions \ref{asmp:Gaussianity} and \ref{asmp:orthogonal} hold. Pick $\Sigma_x=\Xi_z + \Sigma_{u}$, fix $\delta\leq 1/4$, and let $\gamma=C_{1}\left(\sqrt{\log(1/\delta)/r(\Xi_z)}+\sqrt{\log(1/\delta)/n}+\sqrt{\rank(\Sigma_{u})/n}\right)$. If $B\geq\|\theta_{0}\|_{2}$ and $n$ is large enough that $\gamma\leq1$, the following holds with probability at least $1-\delta$:
\begin{align*}
    \max_{\|\theta\|_{2}\leq B,  \mathbf{Y}=\mathbf{X}\theta}\|\theta-\theta_{0}\|_{\Xi_z}^{2}&\leq (1+\gamma)\frac{B^{2}\trace(\Xi_z)}{n}-\tilde{\sigma}^{2}.
\end{align*}
}

\begin{proof}[Proof of Corollary \ref{Variant_Corollary2}]

By trivial calculation, we have
\begin{equation*}
    W(\Xi_z^{1/2}\mathcal{K})\leq B\trace(\Xi_z)^{1/2}\ \ \textit{and}\ \ \rad(\Xi_z^{1/2}\mathcal{K})=B\|\Xi_z\|_{\mathrm{op}}^{1/2}.
\end{equation*}
By the definition of $\rad(\Xi_z^{1/2}\mathcal{K})$, we have $\|\theta_{0}\|_{\Xi_z}\leq \rad(\Xi_z^{1/2}\mathcal{K})=B\|\Xi_z\|_{op}^{1/2}$. Hence,
\begin{align*}
    &W(\Xi_z^{1/2}\mathcal{K})+\left(\rad(\Xi_z^{1/2}\mathcal{K})+\|\theta_{0}\|_{\Xi_z}\right)\sqrt{2\log\frac{32}{\delta}} \\
    &\leq W(\Xi_z^{1/2}\mathcal{K})+2\sqrt{2\log\frac{32}{\delta}}\rad(\Xi_z^{1/2}\mathcal{K}) \\
    &\leq B\trace(\Xi_z)^{1/2}+2\sqrt{2\log\frac{32}{\delta}}B\|\Xi_z\|^{1/2}_{\mathrm{op}} \\
    &= \left(1+2\sqrt{\frac{2\log(32/\delta)}{r(\Xi_z)}}\right)B\trace(\Xi_z)^{1/2}
\end{align*}
holds. 
The last equality holds by the definition of the effective rank $r(\Xi_{z})$ in Definition \ref{def:effrank}. 
Under our assumptions that $\gamma\leq1$ and $\delta\leq1/4$, we can show that 
\begin{align*}
&(1+\beta)\left(1+2\sqrt{\frac{2\log(32/\delta)}{r(\Xi_z)}}\right)^{2}\\
&\leq\left(1+\beta+4\sqrt{\frac{2\log(32/\delta)}{r(\Xi_z)}}\right)\left(1+2\sqrt{\frac{2\log(32/\delta)}{r(\Xi_z)}}\right)\\
&\leq 1+\gamma,
\end{align*}
where the inequality follows from using  $(1+x)(1+y) \leq1+x+2y$ for $x\leq1$ and $y \geq 0$. 
Plugging into Theorem \ref{Variant_Theorem1} completes the proof.
\end{proof}

\section{Bounds for the Ridgeless Estimator} \label{app_sec:norm_estimator}

In this section, we provide an upper bound of a norm of the ridgeless estimator with the existence of a correlation between the covariates and the error terms. 
In Lemmas \ref{Variant_Lemma6} and \ref{Variant_Lemma7}, 
we rewrite the norm of the estimator to apply CGMT. 
Lemma \ref{Variant_Lemma8} bounds an element in the rewritten form of the norm.
Then, Theorem \ref{Variant_Theorem4} develops the desired bound on the norm, and Theorem \ref{Variant_Theorem2} offers its Euclidean norm case.

First, we formulate the constrained minimization problem with Gaussian covariates.

\begin{lemma}\label{Variant_Lemma6}
Assume Assumptions \ref{asmp:Gaussianity} and \ref{asmp:orthogonal} hold. Let $\|\cdot\|$ denote an arbitrary norm. Define the primary optimization problem (PO) as
\begin{equation*}
    \Phi:=\min_{\mathbf{W}_{1}\theta_{1}+\mathbf{W}_{2}\theta_{2}=\xi} \|\Sigma_x^{-1/2}(\theta_{1}+\theta_{2})\|,
\end{equation*}
where $\theta_{1}=\Xi_{z}^{1/2}\theta$ and $\theta_{2}=\Sigma_{u}^{1/2}\theta$ for $\theta\in\mathbb{R}^{p}$. 
Then, for any $t$, it holds that
\begin{equation*}
    \Pr\left(\min_{\mathbf{X}\theta=\mathbf{Y}}\|\theta\|>t\right)\leq
    \Pr\left(\|\theta_{0}\|+\Phi>t\right).
\end{equation*}
\end{lemma}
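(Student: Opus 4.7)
The plan is to decouple $\theta_0$ from the interpolator via the triangle inequality, rewrite the residual minimum-norm problem using the Gaussian representation in \eqref{eq:X_matrix}, and then reduce it to the problem defining $\Phi$ by exploiting Assumption \ref{asmp:orthogonal} to recognize $\Sigma_x^{-1/2}(\Xi_z^{1/2}+\Sigma_u^{1/2})$ as an orthogonal projection.

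First, for any minimizer $\eta^\star$ of $\min_{\mathbf{X}\eta=\xi}\|\eta\|$, the point $\theta_0+\eta^\star$ satisfies $\mathbf{X}(\theta_0+\eta^\star)=\mathbf{X}\theta_0+\xi=\mathbf{Y}$, hence by the triangle inequality $\min_{\mathbf{X}\theta=\mathbf{Y}}\|\theta\|\le\|\theta_0\|+\min_{\mathbf{X}\eta=\xi}\|\eta\|$. It therefore suffices to show that $\min_{\mathbf{X}\eta=\xi}\|\eta\|$ is stochastically dominated by $\Phi$.

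Second, the Gaussian representation \eqref{eq:X_matrix} gives $\mathbf{X}\overset{\mathcal{D}}{=}\mathbf{W}_1\Xi_z^{1/2}+\mathbf{W}_2\Sigma_u^{1/2}$. With $(\theta_1,\theta_2)=(\Xi_z^{1/2}\theta,\Sigma_u^{1/2}\theta)$, the constraint in $\Phi$ reads $(\mathbf{W}_1\Xi_z^{1/2}+\mathbf{W}_2\Sigma_u^{1/2})\theta=\xi$. Under Assumption \ref{asmp:orthogonal}, the ranges of $\Xi_z$ and $\Sigma_u$ are mutually orthogonal; a direct spectral calculation in the joint eigenbasis then shows that $\Sigma_x^{-1/2}(\Xi_z^{1/2}+\Sigma_u^{1/2})$ equals the orthogonal projection $P$ onto $\mathrm{range}(\Sigma_x)$. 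Consequently, under the coupling, $\Phi$ is equal in distribution to $\min_{\theta:\mathbf{X}\theta=\xi}\|P\theta\|$.

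Third, since each $X_i\sim N(0,\Sigma_x)$ lies in $\mathrm{range}(\Sigma_x)$ almost surely, $\mathbf{X}(I-P)=0$, so $\mathbf{X}(P\theta)=\mathbf{X}\theta$ for every $\theta$. In particular, whenever $\theta$ satisfies $\mathbf{X}\theta=\xi$, so does $P\theta$. It follows that $\min_{\mathbf{X}\eta=\xi}\|\eta\|\le\|P\theta\|$ for every feasible $\theta$; taking the infimum over feasible $\theta$ gives $\min_{\mathbf{X}\eta=\xi}\|\eta\|\le\Phi$ almost surely under the coupling. Combined with the triangle-inequality bound from step one, this yields $\Pr(\min_{\mathbf{X}\theta=\mathbf{Y}}\|\theta\|>t)\le\Pr(\|\theta_0\|+\Phi>t)$, as claimed. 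The main point requiring care is that $\|\cdot\|$ is an arbitrary norm, so one cannot simply assert that a minimum-norm $\eta$ already lies in $\mathrm{range}(\Sigma_x)$, since projections need not be contractions in non-Euclidean norms; the correct device is the observation that $P$ preserves the feasible set, which delivers the one-sided stochastic bound. The identification $\Sigma_x^{-1/2}(\Xi_z^{1/2}+\Sigma_u^{1/2})=P$ also uses orthogonality essentially: without Assumption \ref{asmp:orthogonal} the cross-term $\Xi_z^{1/2}\Sigma_u^{1/2}$ would not vanish and the objective in $\Phi$ as written would not collapse to a clean projection.
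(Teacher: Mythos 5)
Your proof is correct and follows essentially the same route as the paper: triangle inequality to peel off $\|\theta_0\|$, the Gaussian representation $\mathbf{X}\overset{\mathcal{D}}{=}\mathbf{W}_1\Xi_z^{1/2}+\mathbf{W}_2\Sigma_u^{1/2}$, and the algebraic identity linking $\theta$ to $\Sigma_x^{-1/2}(\theta_1+\theta_2)$.

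One small point worth noting: under Assumption \ref{asmp:orthogonal}, $J_u+J_z=p$, so the joint eigenbasis of $\Xi_z$ and $\Sigma_u$ spans all of $\mathbb{R}^p$ and $\Sigma_x$ is in fact invertible. Consequently $\Sigma_x^{-1/2}(\Xi_z^{1/2}+\Sigma_u^{1/2})$ is not merely the orthogonal projection onto $\mathrm{range}(\Sigma_x)$ but the identity $I_p$. Your detour through the projection $P$ and the observation that $P$ preserves the feasible set $\{\theta:\mathbf{X}\theta=\xi\}$ is therefore correct but unnecessary here; the paper's shorter route is just to read $\Sigma_x^{1/2}=\Xi_z^{1/2}+\Sigma_u^{1/2}$ off the joint spectral decomposition, so that $\theta\mapsto(\theta_1,\theta_2)=(\Xi_z^{1/2}\theta,\Sigma_u^{1/2}\theta)$ is a bijection onto the admissible set in $\Phi$ with $\theta=\Sigma_x^{-1/2}(\theta_1+\theta_2)$ exactly, giving $\Phi=\min_{\mathbf{X}\theta=\xi}\|\theta\|$ under the coupling. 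That said, your more careful argument has the merit of extending immediately to a setting where $\Sigma_x$ were rank-deficient (where $P$ is a genuine projection), and your caveat about projections not being contractions in arbitrary norms is the right concern in that hypothetical regime; it just happens to be moot under the standing assumption $J_u+J_z=p$.
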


\begin{proof}[Proof of Lemma \ref{Variant_Lemma6}]

We have $\mathbf{X}\overset{D}{=}\mathbf{W}_{1}\Xi_z^{1/2}+\mathbf{W}_{2}\Sigma_{u}^{1/2}$ by equality in distribution. It follows from the triangle inequality and change of variables that
\begin{align*}
    \min_{\mathbf{X}\theta=\mathbf{Y}}\|\theta\|&=\min_{\mathbf{X}\theta=\xi}\|\theta+\theta_{0}\| \leq \|\theta_{0}\|+\min_{(\mathbf{W}_{1}\Xi_z^{1/2}+\mathbf{W}_{2}\Sigma_{u}^{1/2})\theta=\xi}\|\theta\|.
\end{align*}
As $\Sigma_x^{1/2}\theta=\Xi_z^{1/2}\theta+\Sigma_{u}^{1/2}\theta$, we have $\theta=\Sigma_x^{-1/2}(\theta_{1}+\theta_{2})$ where $\theta_{1}=\Xi_z^{1/2}\theta$ and $\theta_{2}=\Sigma_{u}^{1/2}\theta$. Then, the following inequality holds:
\begin{equation*}
    \min_{X\theta=Y}\|\theta\|\leq \|\theta_{0}\|+\min_{\mathbf{W}_{1}\theta_{1}+\mathbf{W}_{2}\theta_{2}=\xi}\|\Sigma_x^{-1/2}(\theta_{1}+\theta_{2})\|. 
\end{equation*}
\end{proof}

As in Lemma \ref{Variant_Lemma4}, we use the result of Theorem \ref{VariantCGMT} to derive the auxiliary optimization problem.
\begin{lemma}\label{Variant_Lemma7}
In the same setting as Lemma \ref{Variant_Lemma6}, let $G\sim N(0,I_{n})$ and $H\sim N(0,I_{d})$ be Gaussian vectors independent of $\xi,\mathbf{W}_{1},\mathbf{W}_{2}$, and each other. Define the auxiliary optimization problem (AO) as
\begin{equation}\label{Variant_58}
    \phi:=\min_{\|\xi-\mathbf{W}_{2}\theta_{2}-\|\theta_{1}\|_{2}G\|_{2}\leq\langle H,\theta_{1} \rangle} \|\Sigma_x^{-1/2}(\theta_{1}+\theta_{2})\|.
\end{equation}
Then, it holds that
\begin{equation*}
    \Pr(\Phi>t|\xi,\mathbf{W}_{2})\leq2\Pr(\phi\geq t|\xi,\mathbf{W}_{2}),
\end{equation*}
and taking the expectations we have
\begin{equation*}
    \Pr(\Phi>t)\leq2\Pr(\phi\geq t).
\end{equation*}
\end{lemma}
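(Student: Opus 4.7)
The plan is to reformulate $\Phi$ as a convex--concave saddle-point problem by introducing a Lagrange multiplier $u\in\mathbb{R}^{n}$ for the equality constraint $\mathbf{W}_{1}\theta_{1}+\mathbf{W}_{2}\theta_{2}=\xi$, and then invoke the extended CGMT (Theorem \ref{VariantCGMT}) conditionally on $(\mathbf{W}_{2},\xi)$. Concretely, I would write
\[
\Phi = \min_{\theta_{1},\theta_{2}} \sup_{u\in\mathbb{R}^{n}} \bigl\{\|\Sigma_{x}^{-1/2}(\theta_{1}+\theta_{2})\| + \langle u,\mathbf{W}_{1}\theta_{1}\rangle + \langle u,\mathbf{W}_{2}\theta_{2}-\xi\rangle\bigr\},
\]
since the inner supremum enforces $\mathbf{W}_{1}\theta_{1}+\mathbf{W}_{2}\theta_{2}=\xi$ (taking the value $0$ when satisfied and $+\infty$ otherwise). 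The coupling term $\langle u,\mathbf{W}_{1}\theta_{1}\rangle$ is precisely the bilinear Gaussian object to which Theorem \ref{VariantCGMT} applies, and $\psi((\theta_{1},\theta_{2}),u):=\|\Sigma_{x}^{-1/2}(\theta_{1}+\theta_{2})\|+\langle u,\mathbf{W}_{2}\theta_{2}-\xi\rangle$ is convex in $(\theta_{1},\theta_{2})$ and linear (hence concave) in $u$, as required.

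The execution proceeds in three moves. First, truncate $u$ to a ball $\{\|u\|_{2}\leq R\}$ and, if needed, localize $(\theta_{1},\theta_{2})$ to a large compact convex set so that both $S_{W}$ and $S_{U}$ are compact and convex. Call the resulting truncated PO $\Phi_{R}$; it satisfies $\Phi_{R}\nearrow \Phi$ as $R\to\infty$. Applying the extended CGMT with $\mathbf{W}=\mathbf{W}_{1}$, $\omega=\theta_{1}$, $\omega'=\theta_{2}$ yields the associated AO
\[
\phi_{R} = \min_{\theta_{1},\theta_{2}} \max_{\|u\|_{2}\leq R} \bigl\{\|\Sigma_{x}^{-1/2}(\theta_{1}+\theta_{2})\| + \langle u,\, \|\theta_{1}\|_{2}G+\mathbf{W}_{2}\theta_{2}-\xi\rangle + \|u\|_{2}\langle H,\theta_{1}\rangle\bigr\},
\]
together with $\Pr(\Phi_{R}>t\mid\mathbf{W}_{2},\xi)\leq 2\Pr(\phi_{R}\geq t\mid\mathbf{W}_{2},\xi)$. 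Second, perform the inner maximization in $\phi_{R}$ via Cauchy--Schwarz: writing $u=r\hat u$ gives the value $R\bigl(\|\,\|\theta_{1}\|_{2}G+\mathbf{W}_{2}\theta_{2}-\xi\|_{2}+\langle H,\theta_{1}\rangle\bigr)_{+}$, so finiteness under $R\to\infty$ is equivalent to the constraint $\|\,\|\theta_{1}\|_{2}G+\mathbf{W}_{2}\theta_{2}-\xi\|_{2}\leq -\langle H,\theta_{1}\rangle$. Third, invoke the Gaussian symmetry $H\stackrel{\mathcal{D}}{=}-H$ (independent of everything else) to flip the sign, recovering the exact feasibility condition appearing in \eqref{Variant_58}. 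Passing to the $R\to\infty$ limit in the CGMT inequality (both $\Phi_{R}$ and $\phi_{R}$ are monotone in $R$) yields the conditional bound, and integrating out $(\mathbf{W}_{2},\xi)$ gives the unconditional one.

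The main obstacle is the compactness requirement of Theorem \ref{VariantCGMT}: the Lagrange multiplier $u$ for an equality constraint is naturally unbounded, and a priori the minimizing $(\theta_{1},\theta_{2})$ could drift to infinity. The standard remedy, which I would adopt, is the monotone limit argument sketched above, verifying that $\Phi_{R}\nearrow\Phi$ and that the CGMT inequality $\Pr(\Phi_{R}>t)\leq 2\Pr(\phi_{R}\geq t)$ survives the limit. This technical point parallels the proof of Lemma 4 of \citet{koehler2022uniform} already invoked in Lemma \ref{Variant_Lemma4}; the novelty here is the correlation between $\xi$ and $\mathbf{W}_{2}$, which is precisely why we condition on $(\mathbf{W}_{2},\xi)$ before applying the \emph{extended} CGMT that was built to accommodate partitioned randomness in $(\omega,\omega')$ and $(u,u')$.
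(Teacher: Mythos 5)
Your proof follows essentially the same route as the paper's: Lagrange-dualize the interpolation constraint, truncate the multiplier (and localize $(\theta_1,\theta_2)$) to compact convex sets, apply the extended CGMT conditionally on $(\mathbf{W}_2,\xi)$, collapse the inner maximization into a feasibility constraint, and pass to the limit in $R$. The one place you are slightly more careful is the sign in that constraint: the inner maximum yields $\|\xi-\mathbf{W}_2\theta_2-\|\theta_1\|_2 G\|_2 \leq -\langle H,\theta_1\rangle$, which you repair by invoking $H\overset{\mathcal{D}}{=}-H$, whereas the paper writes $\leq \langle H,\theta_1\rangle$ directly and relies on the same symmetry implicitly.
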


\begin{proof}[Proof of Lemma \ref{Variant_Lemma7}]

We reformulate $\Phi$ to apply the extended CGMT (Theorem \ref{VariantCGMT}).
By using Lagrangian multipliers, it holds that
\begin{align*}
    \Phi&=\min_{\theta_{1},\theta_{2}}\max_{\lambda}\|\Sigma_x^{-1/2}(\theta_{1}+\theta_{2})\|+\langle\lambda,\mathbf{W}_{1}\theta_{1}+\mathbf{W}_{2}\theta_{2}-\xi\rangle \\
    &=\min_{\theta_{1},\theta_{2}}\max_{\lambda}\langle\lambda,\mathbf{W}_{1}\theta_{1}\rangle+\|\Sigma_x^{-1/2}(\theta_{1}+\theta_{2})\|-\langle\lambda,\xi-\mathbf{W}_{2}\theta_{2}\rangle.
\end{align*}
As $\mathbf{W}_{1}$ is independent of $\mathbf{W}_{2}$ and $\xi$, the distribution of $\mathbf{W}_{1}$ is unchanged even though we condition on $\mathbf{W}_{2}$ and $\xi$. 
For any $r,t>0$, we define
\begin{equation*}
    \Phi_{r}(t):=\min_{\|\Sigma_x^{-1/2}(\theta_{1}+\theta_{2})\|\leq 2t}\max_{\|\lambda\|_{2}\leq r}\langle\lambda,\mathbf{W}_{1}\theta_{1}\rangle+\|\Sigma_x^{-1/2}(\theta_{1}+\theta_{2})\|-\langle\lambda,\xi-\mathbf{W}_{2}\theta_{2}\rangle. 
\end{equation*}
The corresponding AO is defined as follows:
\begin{align*}
    &\phi_{r}(t)\\
    &:=\min_{\|\Sigma_x^{-1/2}(\theta_{1}+\theta_{2})\|\leq 2t}\max_{\|\lambda\|_{2}\leq r}\|\theta_{1}\|_{2}\langle G,\lambda\rangle +\|\lambda\|_{2}\langle H,\theta_{1}\rangle+\|\Sigma_x^{-1/2}(\theta_{1}+\theta_{2})\|-\langle\lambda,\xi-\mathbf{W}_{2}\theta_{2}\rangle \\
    &=\min_{\|\Sigma_x^{-1/2}(\theta_{1}+\theta_{2})\|\leq 2t}\max_{\|\lambda\|_{2}\leq r}\|\lambda\|_{2}\langle H,\theta_{1}\rangle-\langle\lambda,\xi-\mathbf{W}_{2}\theta_{2}-G\|\theta_{1}\|_{2}\rangle+\|\Sigma_x^{-1/2}(\theta_{1}+\theta_{2})\|  \\
    &=\min_{\|\Sigma_x^{-1/2}(\theta_{1}+\theta_{2})\|\leq 2t}\max_{0\leq\lambda\leq r}\lambda(\langle H,\theta_{1}\rangle+\|\xi-\mathbf{W}_{2}\theta_{2}-G\|\theta_{1}\|_{2}\|_{2})+\|\Sigma_x^{-1/2}(\theta_{1}+\theta_{2})\|.
\end{align*}
As two optimization problems $\Phi_{r}(t)$ and $\phi_{r}(t)$ are defined on compact sets, we can apply Theorem \ref{VariantCGMT} to those two optimization problems. As an intermediate problem between $\Phi$ and $\Phi_{r}(t)$, we introduce 
\begin{align*}
    \Phi(t)&:=\min_{\|\Sigma_x^{-1/2}(\theta_{1}+\theta_{2})\|\leq 2t}\max_{\lambda}\langle\lambda,\mathbf{W}_{1}\theta_{1}\rangle+\|\Sigma_x^{-1/2}(\theta_{1}+\theta_{2})\|-\langle\lambda,\xi-\mathbf{W}_{2}\theta_{2}\rangle \\
    &=\min_{\substack{\mathbf{W}_{1}\theta_{1}+\mathbf{W}_{2}\theta_{2}=\xi \\
\|\Sigma_x^{-1/2}(\theta_{1}+\theta_{2})\|\leq 2t}}\|\Sigma_x^{-1/2}(\theta_{1}+\theta_{2})\|
\end{align*}
and also define the corresponding AO as 
\begin{align*}
    &\phi(t)\\
    &:=\min_{\|\Sigma_x^{-1/2}(\theta_{1}+\theta_{2})\|\leq 2t}\max_{\lambda\geq 0}\lambda(\langle H,\theta_{1}\rangle+\|\xi-\mathbf{W}_{2}\theta_{2}-G\|\theta_{1}\|_{2}\|_{2})+\|\Sigma_x^{-1/2}(\theta_{1}+\theta_{2})\| \\
    &=\min_{\substack{ \|\xi-\mathbf{W}_{2}\theta_{2}-G\|\theta_{1}\|_{2}\|_{2}\leq  \langle H,\theta_{1}\rangle \\
    \|\Sigma_x^{-1/2}(\theta_{1}+\theta_{2})\|\leq 2t 
    }}\|\Sigma_x^{-1/2}(\theta_{1}+\theta_{2})\|_{2}.
\end{align*}
By definition, clearly, $\Phi\leq \Phi(t)$. Therefore, if $\Phi>t$, $\Phi(t)>t$ holds. If $t\geq \Phi$, then there exists $(\theta_{1}^{*},\theta_{2}^{*})$ such that $\|\Sigma_x^{-1/2}(\theta_{1}^{*}+\theta_{2}^{*})\|\leq t$ and $\mathbf{W}_{1}\theta_{1}^{*}+\mathbf{W}_{2}\theta_{2}^{*}=\xi$. As $\|\Sigma_x^{-1/2}(\theta_{1}^{*}+\theta_{2}^{*})\|\leq 2t$, we obtain
\begin{equation*}
    \Phi(t)\leq \|\Sigma_x^{-1/2}(\theta_{1}^{*}+\theta_{2}^{*})\|\leq t.
\end{equation*}
Therefore, it holds that
\begin{equation*}
    \Phi>t \Leftrightarrow \Phi(t)>t.
\end{equation*}
Likewise, $\phi(t)>t$ is equivalent to $\phi>t$.

To establish the result $\Pr(\Phi>t)\leq 2\pr(\phi>t)$, we need to clarify the relationship between $\Phi$ and $\Phi_{r}(t)$, $\phi$ and $\phi_{r}(t)$, respectively, that is,
\begin{equation*}
    Pr(\Phi>t|\xi,\mathbf{W}_{2})\leq\lim_{r\rightarrow\infty} \Pr(\Phi_{r}(t)>t|\xi,\mathbf{W}_{2}),
\end{equation*}
and
\begin{equation*}
    \lim_{r\rightarrow\infty} \Pr(\phi_{r}(t)>t|\xi,\mathbf{W}_{2})\leq \Pr(\phi>t|\xi,\mathbf{W}_{2}).
\end{equation*} 
As $\phi_{r}(t)\leq \phi(t)$ for any $r$, $\Pr(\phi_{r}(t)>t|\xi,\mathbf{W}_{2})\leq \Pr(\phi(t)>t|\xi,\mathbf{W}_{2})$ holds. Then, all we need to show is the following:
\begin{equation*}
    \Phi_{r}(t)\rightarrow \Phi(t) \quad \text{as $r\rightarrow\infty$}.
\end{equation*}
We consider the following two cases: (i) $\Phi(t) = \infty$ and (ii) $\Phi(t) < \infty$.

\textbf{Case (i)}:
    $\Phi(t)=\infty$, that is, the minimization problem defining $\Phi(t)$ is infeasible. In this case, for all $\|\Sigma_x^{-1/2}(\theta_{1}+\theta_{2})\|\leq 2t$, we have
    \begin{equation*}
        \|\mathbf{W}_{1}\theta_{1}+\mathbf{W}_{2}\theta_{2}-\xi\|_{2}>0.
    \end{equation*}
    By closedness, there exists $\eta=\eta(\mathbf{W}_{1},\mathbf{W}_{2}, \xi)$ such that
    \begin{equation*}
        \|\mathbf{W}_{1}\theta_{1}+\mathbf{W}_{2}\theta_{2}-\xi\|_{2}\geq \eta.
    \end{equation*}
    By definition, $\eta$ is independent of $r$. Then, it holds that
    \begin{equation*}
        \Phi_{r}(t)=\min_{\|\Sigma_x^{-1/2}(\theta_{1}+\theta_{2})\|_{2}\leq 2t}\max_{\|\lambda\|_{2}\leq r}\langle\lambda,\mathbf{W}_{1}\theta_{1}+\mathbf{W}_{2}\theta_{2}-\xi\rangle+\|\Sigma_x^{-1/2}(\theta_{1}+\theta_{2})\|\geq r\eta.
    \end{equation*}
    Therefore, $\Phi_{r}(t)\rightarrow\infty$ as $r\rightarrow\infty$.
    
    \textbf{Case (ii)}: $\Phi(t)<\infty$, that is, the minimization problem defining $\Phi(t)$ is feasible. By compactness, $\Phi_{r}(t)$ has solutions for the minimax problem. Let $(\theta_{1}(r), \theta_{2}(r))$ be one of solutions for $\Phi_{r}(t)$. If we take a sequence $\{(\theta_{1}(r), \theta_{2}(r))\}_{r=1}^{\infty}$, there exists a convergent subsequence $\{(\theta_{1}(r_{n}), \theta_{2}(r_{n}))\}_{n=1}^{\infty}$ by sequential compactness. Let $\{(\theta_{1}(\infty), \theta_{2}(\infty))\}$ be a convergent point of this subsequence. 
    For the sake of contradiction, assume that $\mathbf{W}_{1}\theta_{1}(\infty)+\mathbf{W}_{2}\theta_{2}(\infty)\neq \xi$. By continuity, there exists $\eta$ and $\varepsilon$ such that, if $\|(\theta_{1},\theta_{2})-(\theta_{1}(\infty), \theta_{2}(\infty))\|_{2}\leq \varepsilon$, 
    \begin{equation*}
               \|\mathbf{W}_{1}\theta_{1}+\mathbf{W}_{2}\theta_{2}-\xi\|_{2}\geq \eta.
    \end{equation*}
    This implies that for a sufficiently large $n$, it holds that
    \begin{equation*}
         \|\mathbf{W}_{1}\theta_{1}(r_{n})+\mathbf{W}_{2}\theta_{2}(r_{n})-\xi\|_{2}\geq\eta.
    \end{equation*}
    As in the previous section, we have
    \begin{equation*}
        \Phi_{r_{n}}(t)=\max_{\|\lambda\|_{2}\leq r_{n}}\langle\lambda,\mathbf{W}_{1}\theta_{1}(r_{n})+\mathbf{W}_{2}\theta_{2}(r_{n})-\xi\rangle+\|\Sigma_x^{-1/2}(\theta_{1}(r_{n})+\theta_{2}(r_{n}))\|\geq r_{n}\eta.
    \end{equation*}
    Hence, $\lim_{n\rightarrow\infty}\Phi_{r_{n}}(t)=\infty$. However, this is a contradiction because, for any $r$, $\Phi_{r}(t)\leq \Phi(t)<\infty$. Therefore,  $\mathbf{W}_{1}\theta_{1}(\infty)+\mathbf{W}_{2}\theta_{2}(\infty)=\xi$.
    If we set $\lambda=0$, we have
    \begin{equation*}
        \Phi_{r_{n}}(t)\geq \|\Sigma_x^{-1/2}(\theta_{1}(r_{n})+\theta_{2}(r_{n}))\|.
    \end{equation*}
    By continuity, we show that
    \begin{equation*}
        \lim\inf_{n\rightarrow\infty}\Phi_{r_{n}}(t)\geq \lim_{n\rightarrow\infty}\|\Sigma_x^{-1/2}(\theta_{1}(r_{n})+\theta_{2}(r_{n}))\|=\|\Sigma_x^{-1/2}(\theta_{1}(\infty)+\theta_{2}(\infty))\|\geq \Phi(t).
    \end{equation*}
     As, for any $r$, $\Phi_{r}(t)\leq \Phi(t)$, we have
     \begin{equation*}
         \lim\sup_{n\rightarrow\infty}\Phi_{r_{n}}(t)\leq \Phi(t)\leq \lim\inf_{n\rightarrow\infty}\Phi_{r_{n}}(t),
     \end{equation*}
     that is, $\lim_{n\rightarrow\infty}\Phi_{r_{n}}(t)=\Phi(t)$. As $\Phi_{r}(t)$ is an increasing function in terms of $r$, we have $\lim_{r\rightarrow\infty}\Phi_{r}(t)=\Phi(t)$.

Through the application of Theorem \ref{VariantCGMT} and two inequalities, $\Pr(\Phi>t|\xi,\mathbf{W}_{2})\leq\lim_{r\rightarrow\infty} \Pr(\Phi_{r}(t)>t|\xi,\mathbf{W}_{2})\ $ and $\lim_{r\rightarrow\infty} \Pr(\phi_{r}(t)>t|\xi,\mathbf{W}_{2})\leq \Pr(\phi>t|\xi,\mathbf{W}_{2})$, we prove the result $\Pr(\Phi>t)\leq 2\pr(\phi>t)$. 
By the last part of Theorem \ref{VariantCGMT}, we have
\begin{equation*}
    \Pr(\Phi_{r}(t)>t|\xi,\mathbf{W}_{2})\leq 2\Pr(\phi_{r}(t)>t|\xi,\mathbf{W}_{2}).
\end{equation*}
As $\Phi_{r}(t)$ monotonically increases to $\Phi(t)$ almost surely, it follows from the continuity of the probability measure that
\begin{align*}
    \Pr(\Phi>t|\xi,\mathbf{W}_{2})&=\Pr(\Phi(t)>t|\xi,\mathbf{W}_{2}) \\
    &\leq \Pr(\cup_{r}\cap_{r'\geq r}\Phi_{r'}(t)>t|\xi,\mathbf{W}_{2}) \\
    &=\Pr(\varliminf_{r\rightarrow\infty}\Phi_{r}(t)>t|\xi,\mathbf{W}_{2}) \\
    &=\lim_{r\rightarrow\infty} \Pr(\Phi_{r}(t)>t|\xi,\mathbf{W}_{2}).
\end{align*}
As $\phi_{r}(t)\leq \phi(t)$ holds for any $r$, $\Pr(\phi_{r}(t)>t|\xi,\mathbf{W}_{2})\leq \Pr(\phi(t)>t|\xi,\mathbf{W}_{2})$. Therefore, we have
\begin{align*}
    \Pr(\Phi>t|\xi,\mathbf{W}_{2})\leq 2\lim_{r\rightarrow\infty}\Pr(\phi_{r}(t)>t|\xi,\mathbf{W}_{2})\leq 2\Pr(\phi(t)>t|\xi,\mathbf{W}_{2}).
\end{align*}
\end{proof}

Then, we obtain the general upper bound for the auxiliary optimization problem  (AO).
\begin{lemma}\label{Variant_Lemma8}
Denote $P_{z}$ and $P_{u}$ as the orthogonal projection matrix onto the space spanned by $\Xi_z$ and $\Sigma_{u}$, respectively. Let $v_{*}=\arg\min_{v\in\partial\|\Xi_z^{1/2}H\|}\|v\|_{\Xi_z}$. Assume that there exists $\varepsilon_{1},\varepsilon_{2}\geq0$ such that with probability at least $1-\delta/2$,
\begin{equation}\label{Variant_65}
    \|v^{*}\|_{\Xi_z}\leq(1+\varepsilon_{1})\mathbb{E}\|v^{*}\|_{\Xi_z},
\end{equation}
and
\begin{equation}\label{Variant_66}
    \|P_{z}v^{*}\|^{2}\leq 1+\varepsilon_{2}.
\end{equation}
Define $\varepsilon$ as
\begin{equation*}
    \varepsilon:=16\sqrt{\frac{\rank(\Sigma_{u})}{n}}+28\sqrt{\frac{\log(32/\delta)}{n}}+8\sqrt{\frac{\log(8/\delta)}{r_{\|\cdot\|}(\Xi_z)}}+2(1+\varepsilon_{1})^{2}\frac{n}{R_{\|\cdot\|}(\Xi_z)}+2\varepsilon_{2}.
\end{equation*}
If $n$ and the effective ranks are sufficiently large such that $\varepsilon\leq1$, then with probability at least $1-\delta$, it holds that
\begin{equation}\label{Variant_67}
    \phi^{2}\leq\|\Sigma_{u}^{+}\corrcoef\|^{2}+(1+\varepsilon)\tilde{\sigma}^{2}\frac{n}{(\mathbb{E}\|\Sigma_{2}^{1/2}H\|_{*})^{2}},
\end{equation}
where we denote $r_{\|\cdot\|}(\Sigma)$ and $R_{\|\cdot\|}(\Sigma)$ as follows:
\begin{equation*}
    r_{\|\cdot\|}(\Sigma)=\left(\frac{E\|\Sigma^{1/2}H\|_{*}}{\sup_{\|u\|\leq1}\|u\|_{\Sigma}}\right)^{2}\ \text{and} \ R_{\|\cdot\|}(\Sigma)=\left(\frac{E\|\Sigma^{1/2}H\|_{*}}{E\|v^{*}\|_{\Sigma}}\right)^{2}.
\end{equation*}
\end{lemma}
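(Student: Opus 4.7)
The plan is to construct an explicit feasible point for the auxiliary problem in \eqref{Variant_58} and evaluate the objective there; the shape of the target bound suggests that the feasible point should consist of a signal part aligned with $v^\ast$ and a noise-correction part equal to $\rho = (\Sigma_u^{1/2})^+ \corrcoef$. Specifically, I would take $\theta = \Sigma_u^+ \corrcoef + c\, v^\ast$ for some scalar $c > 0$, so that under Assumption \ref{asmp:orthogonal} the induced pair $(\theta_1, \theta_2) = (\Xi_z^{1/2}\theta, \Sigma_u^{1/2}\theta)$ becomes
\begin{equation*}
\theta_1 = c\, \Xi_z^{1/2} v^\ast, \qquad \theta_2 = \rho.
\end{equation*}
The choice $\theta_2 = \rho$ is dictated by the identity $\corrcoef = \Sigma_u^{1/2}\rho$ coming from \eqref{def:cov_wwxi}, which makes the residual $\xi' := \xi - \mathbf{W}_2\rho$ a vector of i.i.d.\ $N(0,\tilde\sigma^2)$ entries independent of $\mathbf{W}_2$, thereby decoupling the noise from the covariance structure.

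The first step is to verify that this ansatz is feasible for a suitable $c$. Substituting and using the subgradient property $\langle v^\ast, \Xi_z^{1/2}H\rangle = \|\Xi_z^{1/2}H\|_\ast$ together with $\|\theta_1\|_2 = c\|v^\ast\|_{\Xi_z}$, the constraint $\|\xi - \mathbf{W}_2\theta_2 - \|\theta_1\|_2 G\|_2 \le \langle H, \theta_1\rangle$ reduces, after squaring, to
\begin{equation*}
\|\xi'\|_2^2 + c^2\|v^\ast\|_{\Xi_z}^2\|G\|_2^2 - 2c\|v^\ast\|_{\Xi_z}\langle \xi', G\rangle \le c^2 \|\Xi_z^{1/2}H\|_\ast^2.
\end{equation*}
I would then apply Lemma \ref{Variant_Lemma2} to $\|\xi'\|_2/\tilde\sigma$ and $\|G\|_2$, Lemma \ref{Variant_Lemma1} to the cross term $\langle \xi', G\rangle$, Gaussian Lipschitz concentration (Theorem \ref{Variant_Theorem6}) to replace $\|\Xi_z^{1/2}H\|_\ast$ by its expectation, and the hypotheses \eqref{Variant_65}--\eqref{Variant_66} to control $\|v^\ast\|_{\Xi_z}$ and $\|P_z v^\ast\|$. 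Solving the resulting inequality for $c$ yields an admissible choice with $c^2 \le (1+\varepsilon)\,\tilde\sigma^2 n/(\Ep\|\Xi_z^{1/2}H\|_\ast)^2$, where $\varepsilon$ collects all the multiplicative fluctuations.

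The second step evaluates the objective at this feasible point. By the orthogonal splitting, $\Sigma_x^{-1/2}\Xi_z^{1/2}$ equals the orthogonal projector $P_z$ onto $\mathrm{range}(\Xi_z)$ and a direct spectral calculation shows $\Sigma_x^{-1/2}\rho = \Sigma_u^+\corrcoef$ (using $\corrcoef \in \mathrm{range}(\Sigma_u)$). Hence
\begin{equation*}
\Sigma_x^{-1/2}(\theta_1+\theta_2) = c\, P_z v^\ast + \Sigma_u^+\corrcoef,
\end{equation*}
and the two summands live in the mutually orthogonal Euclidean subspaces $\mathrm{range}(\Xi_z)$ and $\mathrm{range}(\Sigma_u)$, so their squared norms are additive. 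Bounding $\|P_z v^\ast\|^2 \le 1+\varepsilon_2$ via \eqref{Variant_66} and combining with the ceiling on $c^2$ from Step~1 gives exactly the claimed bound $\phi^2 \le \|\Sigma_u^+\corrcoef\|^2 + (1+\varepsilon)\,\tilde\sigma^2 n/(\Ep\|\Xi_z^{1/2}H\|_\ast)^2$.

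The main obstacle is the bookkeeping: the concentration inputs (two applications of Lemma \ref{Variant_Lemma2}, one of Lemma \ref{Variant_Lemma1}, Lipschitz concentration for $\|\Xi_z^{1/2}H\|_\ast$, and the hypothesis-level fluctuations $\varepsilon_1,\varepsilon_2$) each contribute a multiplicative $(1+o(1))$ factor, and one must verify that after linearizing via $(1+a)(1+b) \le 1+a+2b$ for small $a,b$ the composite error reduces to the stated
$\varepsilon \lesssim \sqrt{\rank(\Sigma_u)/n} + \sqrt{\log(1/\delta)/n} + \sqrt{\log(1/\delta)/r_{\|\cdot\|}(\Xi_z)} + (1+\varepsilon_1)^2 n/R_{\|\cdot\|}(\Xi_z) + \varepsilon_2$ under the standing assumption $\varepsilon \le 1$. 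A secondary subtlety is that the clean additivity of $\|P_z v^\ast\|^2$ and $\|\Sigma_u^+\corrcoef\|^2$ rests on Euclidean orthogonality of the range spaces; for the general norm $\|\cdot\|$ in the objective, the same step requires a carefully tuned weighted triangle inequality and a verification that the resulting cross-term is absorbable into $\varepsilon$ without degrading the coefficient~$1$ in front of $\|\Sigma_u^+\corrcoef\|^2$.
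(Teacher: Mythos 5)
Your overall strategy — pick an explicit feasible $\theta$ for the AO consisting of a correlation-cancelling part plus a signal part aligned with the subgradient minimizer, solve the constraint for the scale, then evaluate the objective — is exactly the paper's approach. But your ansatz $\theta = \Sigma_u^+\corrcoef + c\,v^*$ is not quite right, and the gap is substantive for general norms.

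The problem is that $v^*$ need not lie in $\mathrm{range}(\Xi_z)$. It is a general unit vector from the subgradient set $\partial\|\Xi_z^{1/2}H\|_*$, so in general $\Sigma_u^{1/2}v^* \neq 0$. Consequently, with your $\theta$ you actually get
\begin{equation*}
\theta_2 = \Sigma_u^{1/2}\theta = \rho + c\,\Sigma_u^{1/2}v^*,
\end{equation*}
not $\theta_2 = \rho$, and likewise $\Sigma_x^{-1/2}(\theta_1 + \theta_2) = \theta = \Sigma_u^+\corrcoef + c\,v^*$, not $\Sigma_u^+\corrcoef + c\,P_z v^*$. The extra term $c\,\Sigma_u^{1/2}v^*$ re-couples the residual to the noise correlation (so $\xi - \mathbf{W}_2\theta_2$ is no longer $\xi'$ plus something independent of the scale $c$), and it also injects a component of $v^*$ into $\mathrm{range}(\Sigma_u)$ that spoils the additive decomposition you invoke in Step 2. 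The paper avoids both problems with a single fix: it chooses $\theta = P_u(\Sigma_u^+\corrcoef) + s\,P_z v^*$, i.e.\ it projects $v^*$ onto $\mathrm{range}(\Xi_z)$ before adding it. Then $\Sigma_u^{1/2}P_z v^* = 0$ under Assumption \ref{asmp:orthogonal}, so $\theta_2 = (\Sigma_u^{1/2})^+\corrcoef = \rho$ exactly, while $\theta_1 = s\,\Xi_z^{1/2}v^*$ is unchanged (since $\Xi_z^{1/2}P_z = \Xi_z^{1/2}$). The objective then splits cleanly into the $P_u$- and $P_z$-components, and hypothesis \eqref{Variant_66} becomes applicable directly to $\|P_z v^*\|^2$. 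Your Euclidean intuition ($v^* = \Xi_z^{1/2}H/\|\Xi_z^{1/2}H\|_2 \in \mathrm{range}(\Xi_z)$, so $P_z v^* = v^*$) masked the issue, but the lemma is stated for a general norm. The rest of your argument — the concentration inputs, the quadratic inequality in the scale parameter, the linearization of the $(1+\alpha_1)(1+\alpha_2)$-type factors — mirrors the paper's proof and is fine once the ansatz is corrected; the final bookkeeping concern you raise about exact additivity in a general norm is real and is the same step the paper itself treats tersely.
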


\begin{proof}[Proof of Lemma \ref{Variant_Lemma8}]
Fix $\delta \in (0,1)$ in this proof.
To simplify notations, we define coefficients:
\begin{align*}
\alpha_{1}:=2\sqrt{\frac{\log(32/\delta)}{n}} \mbox{~and~}
\alpha_{2}:=\sqrt{\frac{\rank(\Sigma_{u})+1}{n}}+2\sqrt{\frac{\log(16/\delta)}{n}}. 
\end{align*}
To prepare for the derivation of the upper bound as in the proof of Lemma \ref{Variant_Lemma5}, we consider the following three inequalities:
\begin{enumerate}
 \item[(i)]  By Lemma \ref{Variant_Lemma1}, uniformly over all $\theta_{2}\in \Sigma_{u}^{1/2}(\mathbb{R}^{p}-\theta_{0})$, 
    it holds that
    \begin{equation}
    |\langle\xi-\mathbf{W}_{2}\theta_{2}, G \rangle|\leq\|\xi-\mathbf{W}_{2}\theta_{2}\|_{2}\|G\|_{2}\alpha_{2}. \tag{\ref{Variant_49}}
\end{equation}
\item[(ii)] By Lemma \ref{Variant_Lemma2}, it holds that
\begin{equation}\label{Variant_69}
    -\alpha_{1}\leq\frac{1}{\sqrt{n}}\|G\|_{2}-1\leq\alpha_{1} \tag{\ref{Variant_52}}
\end{equation}
and 
\begin{align}
    -\alpha_{1}\sqrt{\sigma^{2}-2\rho^{T}\theta_{2}+\theta_{2}^{T}\theta_{2}}&\leq\frac{1}{\sqrt{n}}\|\xi-\mathbf{W}_{2}\theta_{2}\|_{2}-\sqrt{\sigma^{2}-2\rho^{T}\theta_{2}+\theta_{2}^{T}\theta_{2}} \notag \\
    &\leq\alpha_{1}\sqrt{\sigma^{2}-2\rho^{T}\theta_{2}+\theta_{2}^{T}\theta_{2}}. \tag{\ref{Variant_53}}
\end{align}
\item[(iii)] By Theorem \ref{Variant_Theorem6}, it holds that
\begin{align}
    \|\Xi_z^{1/2}H\|_{*}&\geq\mathbb{E}\|\Xi_z^{1/2}H\|_{*}-\sup_{\|u\|\leq1}\|u\|_{\Xi_{z}}\sqrt{2\log(8/\delta)} \notag \\
    &=\left(1-\sqrt{\frac{2\log(8/\delta)}{r_{\|\cdot\|}(\Xi_z)}}\right) \mathbb{E}\|\Xi_z^{1/2}H\|_{*} \label{Variant_71},
\end{align}
because $\|\Xi^{1/2}_{z}H\|_{*}$ is a $\sup_{\|u\|\leq 1}\|u\|_{\Xi_{z}}$-Lipschitz continuous function of $H$.
\end{enumerate}

We construct the upper bound from the restriction of the optimization problem \eqref{Variant_58}. From the restriction of the auxiliary problem, we have 
\begin{align*}
    \|\xi-\mathbf{W}_{2}\theta_{2}-\|\theta_{1}\|_{2}G\|_{2}^{2}&=\|\xi-\mathbf{W}_{2}\theta_{2}\|_{2}^{2}-2\|\theta_{1}\|_{2}\langle \xi-\mathbf{W}_{2}\theta_{2},G\rangle+\|\theta_{1}\|_{2}^{2}\|G\|_{2}^{2} \\
    &\leq (1+\alpha_{2})\left(\|\xi-\mathbf{W}_{2}\theta_{2}\|_{2}^{2}+\|\theta_{1}\|_{2}^{2}\|G\|_{2}^{2}\right),
\end{align*}
where the last inequality follows from (\ref{Variant_49}) and the AM-GM inequality. Combining the results of (\ref{Variant_69}) and (\ref{Variant_70}) yields 
\begin{align}\label{8-1}
    \|\xi-\mathbf{W}_{2}\theta_{2}-\|\theta_{1}\|_{2}G\|_{2}^{2}
    &\leq (1+\alpha_{2})(1+\alpha_{1})^{2}n\left(\sigma^{2}-2\rho^{T}\theta_{2}+\theta_{2}^{T}\theta_{2}+\theta_{1}^{T}\theta_{1}\right).
\end{align}

To consider an upper bound of the ridgeless estimator, we need to choose a suitable $\theta$ which satisfies the restriction of the auxiliary problem. We consider the following form of $\theta$:
\begin{equation*}
    \theta=P_{u}(\Sigma_{u}^{+}\corrcoef)+sP_{z}v^{*}.
\end{equation*}
As $\Sigma_{u}^{1/2}\theta=\Sigma_{u}^{1/2}\Sigma_{u}^{+}\corrcoef=(\Sigma_{u}^{1/2})^{+}\corrcoef$ and $\Xi_z^{1/2}\theta=s\Xi_z^{1/2}v^{*}$, from (\ref{8-1}) and the restriction of the auxiliary problem, it suffices to choose $s$ such that
\begin{align*}
    &(1+\alpha_{2})(1+\alpha_{1})^{2}n\left(\sigma^{2}-2\rho^{T}\Sigma_{u}^{1/2}\Sigma_{u}^{+}\corrcoef+\corrcoef^{T}\Sigma_{u}^{+}\Sigma_{u}\Sigma_{u}^{+}\corrcoef+s^{2}\|\Xi_z^{1/2}v^{*}\|_{2}^{2}\right) \\
    &=(1+\alpha_{2})(1+\alpha_{1})^{2}n(\tilde{\sigma}^{2}+s^{2}\|\Xi_z^{1/2}v^{*}\|_{2}^{2}) \\
    &\leq (\langle H,s\Xi_z^{1/2}v^{*} \rangle)^{2} \\
    &=s^{2}\|\Xi_z^{1/2}H\|_{*}^{2}.
\end{align*}
Solving for $s$, we can choose
\begin{equation*}
    s^{2}=\tilde{\sigma}^{2}\left(\frac{\|\Xi_z^{1/2}H\|_{*}^{2}}{(1+\alpha_{2})(1+\alpha_{1})^{2}n}-\|v^{*}\|^{2}_{\Xi_z}\right)^{-1},
\end{equation*}
under the assumption that 
\begin{equation}\label{positive}
    \left(\frac{\|\Xi_z^{1/2}H\|_{*}^{2}}{(1+\alpha_{2})(1+\alpha_{1})^{2}n}-\|v^{*}\|^{2}_{\Xi_z}\right)>0.
\end{equation}

We need to guarantee (\ref{positive}) holds. By (\ref{Variant_65}) and (\ref{Variant_71}), we have
\begin{align*}
    &\frac{\|\Xi_z^{1/2}H\|_{*}^{2}}{(1+\alpha_{2})(1+\alpha_{1})^{2}n}-\|v^{*}\|^{2}_{\Xi_z} \\
    \geq & \frac{(\mathbb{E}\|\Xi_z^{1/2}H\|_{*})^{2}}{(1+\alpha_{2})(1+\alpha_{1})^{2}n}\left(1-\sqrt{\frac{2\log(8/\delta)}{r_{\|\cdot\|}(\Xi_z)}}\right)^{2}-(1+\varepsilon_{1})^{2}(\mathbb{E}\|v^{*}\|_{\Xi_{z}})^{2} \\
    \geq & \frac{(\mathbb{E}\|\Xi_z^{1/2}H\|_{*})^{2}}{n}\left(\frac{1}{(1+\alpha_{2})(1+\alpha_{1})^{2}}\left(1-2\sqrt{\frac{2\log(8/\delta)}{r_{\|\cdot\|}(\Xi_z)}}\right)
    -(1+\varepsilon_{1})^{2}\frac{n}{R_{\|\cdot\|}(\Sigma_{2})}\right),
\end{align*}
where the last inequality follows from the definition of $R_{\|\cdot\|}(\cdot)$.

As in the proof of Lemma \ref{Variant_Lemma5}, we linearize the terms including $\alpha_{1}$ and $\alpha_{2}$ to simplify the upper bound. Provided that $\alpha_{1}<1$, we have
\begin{align*}
    (1+\alpha_{2})(1+\alpha_{1})^{2}&=1+2\alpha_{1}+\alpha_{1}^{2}+\alpha_{2}+2\alpha_{2}\alpha_{1}+\alpha_{2}\alpha_{1}^{2} \\
    &\leq 1+3\alpha_{1}+4\alpha_{2}.
\end{align*}
As $(1-x)^{-1}\geq 1+x$ for any $x$, it holds that
\begin{align*}
\frac{1}{(1+\alpha_{2})(1+\alpha_{1})^{2}}&\geq ( 1+3\alpha_{1}+4\alpha_{2})^{-1} \\
&\geq( 1-(3\alpha_{1}+4\alpha_{2})).
\end{align*}
Hence,  we have
\begin{align*}
    &\frac{1}{(1+\alpha_{2})(1+\alpha_{1})^{2}}\left(1-2\sqrt{\frac{2\log(8/\delta)}{r_{\|\cdot\|}(\Xi_z)}}\right)
    -(1+\varepsilon_{1})^{2}\frac{n}{R_{\|\cdot\|}(\Sigma_{2})} \\
    \geq &( 1-(3\alpha_{1}+4\alpha_{2}))\left(1-2\sqrt{\frac{2\log(8/\delta)}{r_{\|\cdot\|}(\Xi_z)}}\right)
    -(1+\varepsilon_{1})^{2}\frac{n}{R_{\|\cdot\|}(\Sigma_{2})} \\
    \geq &1-(3\alpha_{1}+4\alpha_{2})-2\sqrt{\frac{2\log(8/\delta)}{r_{\|\cdot\|}(\Xi_z)}}
    -(1+\varepsilon_{1})^{2}\frac{n}{R_{\|\cdot\|}(\Sigma_{2})} \\
    \geq &1-\varepsilon',
\end{align*}
where
\begin{equation*}
    \varepsilon'=8\sqrt{\frac{\rank(\Sigma_{u})}{n}}+14\sqrt{\frac{\log(32/\delta)}{n}}+4\sqrt{\frac{\log(8/\delta)}{r_{\|\cdot\|}(\Xi_z)}}+(1+\varepsilon_{1})^{2}\frac{n}{R_{\|\cdot\|}(\Xi_z)}.
\end{equation*}

Finally, we derive the upper bound of the ridgeless estimator. If $\varepsilon'\leq 1/2$, because $(1-x)^{-1}\leq 1+2x$ for $x\in[0,1/2]$, it holds that 
\begin{align*}
    s^{2}&\leq \tilde{\sigma}^{2}\frac{n}{(\mathbb{E}\|\Xi_z^{1/2}H\|_{*})^{2}}\frac{1}{1-\varepsilon'} \leq \tilde{\sigma}^{2}\frac{n}{(\mathbb{E}\|\Xi_z^{1/2}H\|_{*})^{2}}(1+2\varepsilon'). 
\end{align*}
Then, it holds from (\ref{Variant_66}) that 
\begin{equation*}
    \phi^{2}\leq\|\Sigma_{u}^{+}\corrcoef\|^{2}+s^{2}\|P_{z}v^{*}\|^{2} \\
    \leq \|\Sigma_{u}^{+}\corrcoef\|^{2}+s^{2}(1+\varepsilon_{2}).
\end{equation*}
Therefore, we have
\begin{equation*}
    \phi^{2}\leq\|\Sigma_{u}^{+}\corrcoef\|^{2}+(1+\varepsilon)\tilde{\sigma}^{2}\frac{n}{(\mathbb{E}\|\Xi_{z}^{1/2}H\|_{*})^{2}},
\end{equation*}
with $\varepsilon=2\varepsilon'+2\varepsilon_{2}$.
\end{proof}

We can now derive the general norm bound in the case where the covariates correlate with errors.
\begin{theorem}[General norm bound]\label{Variant_Theorem4}
There exists an absolute constant $C_{2}\leq 56$ such that the following
is true. Under Assumptions \ref{asmp:Gaussianity} and \ref{asmp:orthogonal} with covariance split $\Sigma_{x}=\Xi_z + \Sigma_{u}$, let $\|\cdot\|$ be an
arbitrary norm, and fix $\delta\leq1/4$. Denote the $\ell_{2}$ orthogonal projection matrix onto the space spanned by $\Xi_z$, $\Sigma_{u}$ as $P_{z}$, $P_{u}$, respectively. Let $H$ be normally distributed with mean zero and variance $I_{d}$, that is, $H\sim N(0,I_{d})$. Denote $v_{*}$ as $\arg\min_{v\in\partial\|\Xi_z^{1/2}H\|_{*}} \|v\|_{\Xi_z}$. Suppose that there exist $\varepsilon_{1},\varepsilon_{2}\geq0$ such that with probability at least $1-\delta/4$ 
\begin{equation*}
\|v^{*}\|_{\Xi_z}\leq(1+\varepsilon_{1})E\|v^{*}\|_{\Xi_z}
\end{equation*}
and
\begin{equation*}
    \|Pv^{*}\|^{2}\leq1+\varepsilon_{2}.
\end{equation*}
Let $\varepsilon$ denote $C_{2}\left(\sqrt{\frac{\rank(\Sigma_{u})}{n}}+\sqrt{\frac{\log(1/\delta)}{r_{\|\cdot\|}(\Xi_z)}}+\sqrt{\frac{\log(1/\delta)}{n}}+(1+\varepsilon_{1})^{2}\frac{n}{R_{\|\cdot\|}(\Xi_z)}+\varepsilon_{2}\right)$. Then, if $n$ and the effective ranks are large enough that $\varepsilon\leq1$, with probability at least $1-\delta$, it holds that
\begin{equation*}
    \|\hat{\theta}\|\leq\|\theta_{0}\|+\|\Sigma_{u}^{+}\corrcoef\|+(1+\varepsilon)^{1/2}\tilde{\sigma}\frac{\sqrt{n}}{(\mathbb{E}\|\Xi_{z}^{1/2}H\|_{*})}.
\end{equation*}
\end{theorem}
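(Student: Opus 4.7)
The proof follows by chaining Lemmas \ref{Variant_Lemma6}, \ref{Variant_Lemma7}, and \ref{Variant_Lemma8}, in which all of the probabilistic work has already been carried out. My plan is as follows. First, Lemma \ref{Variant_Lemma6} reduces the norm of the general-norm ridgeless estimator to
\[
\|\hat\theta\|\;\leq\;\|\theta_{0}\|+\Phi,
\]
where $\Phi$ is the primary optimization value defined there, so the theorem follows from a high-probability upper bound on $\Phi$. Next, Lemma \ref{Variant_Lemma7}---itself an application of the extended CGMT (Theorem \ref{VariantCGMT})---yields the comparison $\Pr(\Phi>t)\leq 2\Pr(\phi\geq t)$ for every $t$, where $\phi$ is the auxiliary optimization problem defined in \eqref{Variant_58}.

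The quantitative input is Lemma \ref{Variant_Lemma8}, which I apply at confidence level $\delta/2$ so that the extra factor of $2$ from the CGMT comparison leaves a total failure probability of at most $\delta$. This produces, on an event of probability at least $1-\delta/2$,
\[
\phi^{2}\;\leq\;\|\Sigma_{u}^{+}\corrcoef\|^{2}+(1+\varepsilon)\tilde\sigma^{2}\,\frac{n}{(\mathbb{E}\|\Xi_{z}^{1/2}H\|_{*})^{2}}.
\]
Subadditivity $\sqrt{a+b}\leq\sqrt{a}+\sqrt{b}$ converts this into
\[
\phi\;\leq\;\|\Sigma_{u}^{+}\corrcoef\|+(1+\varepsilon)^{1/2}\tilde\sigma\,\frac{\sqrt{n}}{\mathbb{E}\|\Xi_{z}^{1/2}H\|_{*}},
\]
which is exactly the target shape. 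Combining the three steps gives, with probability at least $1-\delta$,
\[
\|\hat\theta\|\;\leq\;\|\theta_{0}\|+\Phi\;\leq\;\|\theta_{0}\|+\|\Sigma_{u}^{+}\corrcoef\|+(1+\varepsilon)^{1/2}\tilde\sigma\,\frac{\sqrt{n}}{\mathbb{E}\|\Xi_{z}^{1/2}H\|_{*}}.
\]

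The only substantive work that remains is bookkeeping of numerical constants. Substituting $\delta\mapsto\delta/2$ in Lemma \ref{Variant_Lemma8} inflates each $\sqrt{\log(c/\delta)}$ factor by a bounded additive amount, so the coefficients $16$, $28$, $8$, and $2$ appearing in the definition of $\varepsilon$ there can be collapsed into a single universal constant $C_{2}$; a direct arithmetic check then verifies the claimed bound $C_{2}\leq 56$. This constant tracking is the main obstacle in terms of care required, but it introduces no new probabilistic ideas beyond the CGMT comparison and the Gaussian concentration estimates on $G$, $H$, $\xi$, and $v^{*}$ already deployed in the preceding lemmas.
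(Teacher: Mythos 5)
Your proposal is correct and follows the same route as the paper's proof: chain Lemma \ref{Variant_Lemma6}, Lemma \ref{Variant_Lemma7}, and Lemma \ref{Variant_Lemma8} at level $\delta/2$, then take the square root of the $\phi^2$ bound (implicitly in the paper, explicitly in your write-up) and move $\|\theta_0\|$ across. The only nitpick is that Lemma \ref{Variant_Lemma6} gives a stochastic domination, not the pointwise inequality $\|\hat\theta\|\leq\|\theta_0\|+\Phi$ as you state it, but this has no effect since your subsequent reasoning is probabilistic anyway.
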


\begin{proof}[Proof of Theorem \ref{Variant_Theorem4}]
For any $t>0$, it holds from Lemmas \ref{Variant_Lemma6} and \ref{Variant_Lemma7} that
\begin{equation*}
\Pr(\|\hat{\theta}\|>t)\leq\Pr(\Phi>t-\|\theta_{0}\|)\leq2\Pr(\phi\geq t-\|\theta_{0}\|).
\end{equation*}
Lemma \ref{Variant_Lemma8} implies that the above term is upper bounded by $\delta$ if we choose $t-\|\theta_{0}\|$ using the result (\ref{Variant_67}) with $\delta$ replaced by $\delta/2$. We obtain the stated result by moving $\|\theta_{0}\|$ to the other side.
\end{proof}

When we consider the Euclidean space, we can reduce the upper bound of the ridgeless estimator to a simpler bound.
\noindent
\begin{theorem}[Euclidean norm bound; special case of Theorem \ref{Variant_Theorem4}]\label{Variant_Theorem2}
Fix any $\delta\leq1/4$. Under Assumptions \ref{asmp:Gaussianity} and \ref{asmp:orthogonal} with covariance splitting $\Sigma_x=\Xi_z + \Sigma_{u}$, there exists some $\varepsilon\lesssim \sqrt{\frac{\rank(\Sigma_{u})}{n}}+\sqrt{\frac{\log(1/\delta)}{r(\Xi_z)}}+\sqrt{\frac{\log(1/\delta)}{n}}+\frac{n\log(1/\delta)}{R(\Xi_z)}$ such that the following is true. If $n$ and the effective ranks are such that $\varepsilon\leq1$ and $R(\Xi_z)\gtrsim \log(1/\delta)^{2}$, then with probability at least $1-\delta$, it holds that
\begin{equation}\label{Variant_6}
\|\hat{\theta}\|_{2}\leq\|\theta_{0}\|_{2}+\|\Sigma_{u}^{+}\corrcoef\|_{2}+(1+\varepsilon)^{1/2}\tilde{\sigma}\sqrt{\frac{n}{\trace(\Xi_z)}},
\end{equation}
where $\tilde{\sigma}^{2}:=\sigma^{2}-\corrcoef^{T}\Sigma_{u}^{+}\corrcoef$.
\end{theorem}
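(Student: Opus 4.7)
The plan is to deduce Theorem \ref{Variant_Theorem2} as the Euclidean-norm specialization of Theorem \ref{Variant_Theorem4}, verifying that each norm-dependent ingredient collapses to its classical counterpart with the residuals absorbed into $\varepsilon$.

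First I would compute the quantities appearing in Theorem \ref{Variant_Theorem4} when $\|\cdot\| = \|\cdot\|_{2}$. Since the Euclidean norm is self-dual, $\partial\|\Xi_z^{1/2}H\|_{2}$ is the singleton $\{\Xi_z^{1/2}H/\|\Xi_z^{1/2}H\|_{2}\}$, so $v^{*}=\Xi_z^{1/2}H/\|\Xi_z^{1/2}H\|_{2}$ and $\|v^{*}\|_{\Xi_z}=\|\Xi_z H\|_{2}/\|\Xi_z^{1/2}H\|_{2}$. Using $\Ep\|\Xi_z^{1/2}H\|_{2}^{2}=\trace(\Xi_z)$ together with Gaussian Poincar\'e (variance at most $\|\Xi_z\|_{\mathrm{op}}$), one gets $\Ep\|\Xi_z^{1/2}H\|_{2}=\sqrt{\trace(\Xi_z)}(1+O(1/r(\Xi_z)))$ and, analogously at the level of $\Xi_z$, $\Ep\|v^{*}\|_{\Xi_z}\asymp\sqrt{\trace(\Xi_z^{2})/\trace(\Xi_z)}$. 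Plugging these into Definition \ref{def:effrank_gennorm} yields $r_{\|\cdot\|_{2}}(\Xi_z)=r(\Xi_z)(1+o(1))$ and $R_{\|\cdot\|_{2}}(\Xi_z)=R(\Xi_z)(1+o(1))$, so the general effective ranks can be replaced by their classical counterparts from Definition \ref{def:effrank}.

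Next I would verify the two concentration hypotheses on $\varepsilon_{1}, \varepsilon_{2}$ required by Theorem \ref{Variant_Theorem4}. For $\varepsilon_{2}$: since $P_{u}$ is an $\ell_{2}$-orthogonal projection and $\|v^{*}\|_{2}=1$ by construction, $\|P_{u}v^{*}\|_{2}^{2}\leq 1$ holds deterministically, hence $\varepsilon_{2}=0$. For $\varepsilon_{1}$: the numerator $\|\Xi_z H\|_{2}$ and denominator $\|\Xi_z^{1/2}H\|_{2}$ defining $\|v^{*}\|_{\Xi_z}$ are Lipschitz functions of $H$ with Lipschitz constants $\|\Xi_z\|_{\mathrm{op}}$ and $\|\Xi_z\|_{\mathrm{op}}^{1/2}$, respectively. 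A Gaussian Lipschitz tail bound applied to each, a union bound, and the assumption $R(\Xi_z)\gtrsim\log(1/\delta)^{2}$ (which, together with $r(\Xi_z)\geq\sqrt{R(\Xi_z)}$, keeps the denominator bounded away from zero on a good event) yield $\|v^{*}\|_{\Xi_z}\leq(1+\varepsilon_{1})\Ep\|v^{*}\|_{\Xi_z}$ on an event of probability at least $1-\delta/4$ with $\varepsilon_{1}\lesssim\sqrt{\log(1/\delta)/r(\Xi_z)}$.

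Finally I would substitute these estimates into Theorem \ref{Variant_Theorem4}. Replacing $\Ep\|\Xi_z^{1/2}H\|_{*}$ by $\sqrt{\trace(\Xi_z)}$, $R_{\|\cdot\|}(\Xi_z)$ by $R(\Xi_z)$, and $r_{\|\cdot\|}(\Xi_z)$ by $r(\Xi_z)$, the conclusion of Theorem \ref{Variant_Theorem4} becomes exactly the advertised
\begin{equation*}
\|\hat\theta\|_{2}\leq\|\theta_{0}\|_{2}+\|\Sigma_{u}^{+}\corrcoef\|_{2}+(1+\varepsilon)^{1/2}\tilde\sigma\sqrt{n/\trace(\Xi_z)},
\end{equation*}
with collected residual
\begin{equation*}
\varepsilon\lesssim\sqrt{\rank(\Sigma_{u})/n}+\sqrt{\log(1/\delta)/r(\Xi_z)}+\sqrt{\log(1/\delta)/n}+n\log(1/\delta)/R(\Xi_z),
\end{equation*}
where the term $(1+\varepsilon_{1})^{2}n/R(\Xi_z)$ in Theorem \ref{Variant_Theorem4} is absorbed into the last summand using $\varepsilon_{1}\lesssim\sqrt{\log(1/\delta)/r(\Xi_z)}$ and the bound $\sqrt{\log(1/\delta)}\leq\log(1/\delta)$ valid for $\delta\leq 1/4$. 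The main obstacle will be the bookkeeping of $\varepsilon_{1}$: since $\|v^{*}\|_{\Xi_z}$ is a \emph{ratio} rather than a Lipschitz function of $H$, a single Gaussian tail bound does not apply, so one must handle numerator and denominator separately on a good event whose failure probability is folded into $\delta$, and then track how $(1+\varepsilon_{1})^{2}$ propagates through $(\Ep\|\Xi_z^{1/2}H\|_{*})^{2}\asymp\trace(\Xi_z)$ in the leading $\tilde\sigma\sqrt{n/\trace(\Xi_z)}$ term so as to match the $\log(1/\delta)$ dependence stated in the theorem.
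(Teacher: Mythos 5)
The specialization strategy is correct (plug the Euclidean case into Theorem~\ref{Variant_Theorem4}, identify $v^{*}=\Xi_z^{1/2}H/\|\Xi_z^{1/2}H\|_2$, set $\varepsilon_2=0$ by the $\ell_2$-projection argument), and that matches the paper. The genuine gap is in your treatment of $\varepsilon_1$. You claim that splitting the numerator and denominator of $\|v^{*}\|_{\Xi_z}=\|\Xi_z H\|_2/\|\Xi_z^{1/2}H\|_2$ and applying Gaussian Lipschitz concentration to each, with a union bound, gives $\varepsilon_1\lesssim\sqrt{\log(1/\delta)/r(\Xi_z)}$. This is not correct: the denominator indeed concentrates at relative scale $\sqrt{\log(1/\delta)/r(\Xi_z)}$, but the numerator $\|\Xi_z H\|_2$ has Lipschitz constant $\|\Xi_z\|_{\mathrm{op}}$ and mean $\asymp\sqrt{\trace(\Xi_z^2)}$, so its relative fluctuation is $\sqrt{\log(1/\delta)/r(\Xi_z^2)}$, where $r(\Xi_z^2)=\trace(\Xi_z^2)/\|\Xi_z\|_{\mathrm{op}}^2 = r(\Xi_z)^2/R(\Xi_z)$. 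When $r(\Xi_z)\asymp\sqrt{R(\Xi_z)}$ (which the standing assumptions allow), $r(\Xi_z^2)=\Theta(1)$, so the numerator does not concentrate tightly and $\varepsilon_1$ can be of order $\sqrt{\log(1/\delta)}$, not $\sqrt{\log(1/\delta)/r(\Xi_z)}$.

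The paper sidesteps this by not attempting to show $\varepsilon_1$ is small at all. Instead, it uses the deviation bound \eqref{Variant_78} — which only gives a one-sided high-probability \emph{upper} bound on $\|v^{*}\|_{\Xi_z}$ inflated by a $\log(1/\delta)$ factor — to \emph{define} $\varepsilon_1$ so that $(1+\varepsilon_1)\Ep\|v^{*}\|_{\Xi_z}=c\sqrt{\log(16/\delta)\,\trace(\Xi_z^2)/\trace(\Xi_z)}$. Then, using $(\Ep\|\Xi_z^{1/2}H\|_2)^2\gtrsim\trace(\Xi_z)$ from \eqref{Variant_72}, the combination $(1+\varepsilon_1)^2\,n/R_{\|\cdot\|_2}(\Xi_z)$ is bounded directly by $n\log(16/\delta)/R(\Xi_z)$. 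This is why the final $\varepsilon$ carries the factor $n\log(1/\delta)/R(\Xi_z)$ (a full $\log$, not $\sqrt{\log}$). Your displayed expression for $\varepsilon$ happens to coincide with the paper's, but it is not consistent with your claimed $\varepsilon_1$ bound — if $\varepsilon_1\lesssim\sqrt{\log(1/\delta)/r(\Xi_z)}$ were true, then under $R(\Xi_z)\gtrsim\log^2(1/\delta)$ you would have $\varepsilon_1\lesssim 1$ and the term would be $n/R(\Xi_z)$, not $n\log(1/\delta)/R(\Xi_z)$. To repair your argument, either prove $\varepsilon_1\lesssim\sqrt{\log(1/\delta)}$ (giving $(1+\varepsilon_1)^2\lesssim\log(1/\delta)$) or, more directly, follow the paper and bound the product $(1+\varepsilon_1)^2\,n/R_{\|\cdot\|_2}(\Xi_z)$ in one shot via \eqref{Variant_78} and \eqref{Variant_72}, without decomposing it into a separate tight bound on $\varepsilon_1$.
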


\begin{proof}[Proof of Theorem \ref{Variant_Theorem2}]
Throughout this proof, we simplify the upper bound, especially $
    n/R_{\|\cdot\|_{2}}(\Xi_z)$ and $1/r_{\|\cdot\|_{2}}(\Xi_{z})$, in Theorem \ref{Variant_Theorem4}.
By the definition of the dual norm and $\partial\|\Xi_{x}^{1/2}H\|_{*}$ with Euclidean norm, $v^{*}$ is equal to $\Xi_z^{1/2}H/\|\Xi_z^{1/2}H\|_{2}$. Hence, $\|v^{*}\|_{\Xi_z}$ is $\|\Xi_zH\|_{2}/\|\Xi_z^{1/2}H\|_{2}$. From the result of (\ref{Variant_78}), for some constant $c>0$, we can choose $\varepsilon_{1}$ such that
\begin{equation*}
    (1+\varepsilon_{1})E\|v^{*}\|_{\Xi_z}=c\sqrt{\log(16/\delta)\frac{\trace(\Xi_z^{2})}{\trace(\Xi_z)}}.
\end{equation*}
If we assume effective rank is sufficiently large, (\ref{Variant_72}) provides that $\left(E\|\Xi_z^{1/2}H\|_{2}\right)^{2}\gtrsim \trace(\Xi_z)$. Therefore, we have
\begin{equation*}
    (1+\varepsilon_{1})^{2}\frac{n}{R_{\|\cdot\|_{2}}(\Xi_z)}=n\frac{(1+\varepsilon_{1})^{2}(E\|v^{*}\|_{\Xi_z})^{2}}{\left(E\|\Xi_z^{1/2}H\|_{2}\right)^{2}}\lesssim n\log(16/\delta)\frac{\trace(\Xi_z^{2})}{\trace(\Xi_z)^{2}}=\frac{n\log(16/\delta)}{R(\Xi_z)}.
\end{equation*}
Moreover, because $P_{z}$ is an $l_{2}$ projection matrix, let $\varepsilon_{2}$ be zero. Then, it holds from (\ref{Variant_74}) of Lemma \ref{Variant_Lemma9}  that 
\begin{equation*}
\varepsilon\lesssim  \sqrt{\frac{\rank(\Sigma_{u})}{n}}+\sqrt{\frac{\log(1/\delta)}{n}}+ \sqrt{\frac{\log(1/\delta)}{r(\Xi_z)}}+\frac{n\log(1/\delta)}{R(\Xi_z)}. 
\end{equation*}
By using the inequality $(1-x)^{-1}\leq1+2x$ for $x\in[0,1/2]$ and (\ref{Variant_72}) of Lemma \ref{Variant_Lemma9}, we finally obtain
\begin{align*}
    (1+\varepsilon)^{1/2}\tilde{\sigma}\frac{\sqrt{n}}{E\|\Xi_z^{1/2}H\|_{2}}&\leq (1+\varepsilon)^{1/2}\left(1-\frac{1}{r(\Xi_z)}\right)^{-1/2}\tilde{\sigma}\frac{\sqrt{n}}{\trace(\Xi_z)}  \\
    &\leq (1+\varepsilon)^{1/2}\left(1+\frac{2}{r(\Xi_z)}\right)^{1/2}\tilde{\sigma}\frac{\sqrt{n}}{\trace(\Xi_z)} \\
    &\leq \left(1+2\varepsilon+\frac{2}{r(\Xi_z)}\right)^{1/2}\tilde{\sigma}\frac{\sqrt{n}}{\trace(\Xi_z)},
\end{align*}
with $\varepsilon$ replaced by
\begin{equation*}
    \varepsilon'=2\varepsilon+\frac{2}{r(\Xi_z)}\lesssim \sqrt{\frac{\rank(\Sigma_{u})}{n}}+ \sqrt{\frac{\log(1/\delta)}{n}}+ \sqrt{\frac{\log(1/\delta)}{r(\Xi_z)}}+\frac{n\log(1/\delta)}{R(\Xi_z)}.
\end{equation*}
\end{proof}

\section{Benign Overfitting} \label{app_sec:example_benign}
In this section, we state the primary result on the conditions of benign overfitting by combining the results from the two previous sections. 
First, we derive the result with an arbitrary norm.

\begin{theorem}[Benign Overfitting]\label{Variant_Theorem5}
Fix any $\delta\leq1/2$. Under Assumptions \ref{asmp:Gaussianity} and \ref{asmp:orthogonal} with covariance splitting $\Sigma_x=\Xi_z + \Sigma_{u}$, let $\gamma$ and $\varepsilon$ be as defined in Corollary \ref{Variant_Corollary3} and Theorem \ref{Variant_Theorem4}. Suppose that $n$ and the effective ranks are such that $R(\Xi_z)\gtrsim \log(1/\delta)^{2}$ and $\gamma,\varepsilon\leq1$. Then, with probability at least $1-\delta$, it holds that 
\begin{align*}
    &\|\hat{\theta}-\theta_{0}\|_{\Xi_z}^{2}\leq(1+\gamma)(1+\varepsilon)\left(\tilde{\sigma}+(\|\Sigma_{u}^{+}\corrcoef\|+\|\theta_{0}\|)\frac{\mathbb{E}\|\Xi_{z}^{1/2}H\|_{*}}{\sqrt{n}}\right)^{2}-\tilde{\sigma}^{2}.
\end{align*}
\end{theorem}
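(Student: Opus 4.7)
The plan is to combine the two ingredients already assembled in the preceding sections: the deterministic upper bound on the projected RMSE that holds uniformly over a norm ball in Corollary \ref{Variant_Corollary3}, and the high-probability bound on the norm of the ridgeless estimator in Theorem \ref{Variant_Theorem4}. The strategy is to first pin down a radius $B$ such that $\hat{\theta}$ lies in the ball $\{\theta:\|\theta\|\leq B\}$ with high probability, and then substitute this $B$ into the uniform bound.

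Concretely, I would set
\begin{equation*}
    B := \|\theta_{0}\| + \|\Sigma_{u}^{+}\corrcoef\| + (1+\varepsilon)^{1/2}\tilde{\sigma}\,\frac{\sqrt{n}}{\mathbb{E}\|\Xi_{z}^{1/2}H\|_{*}},
\end{equation*}
which is clearly at least $\|\theta_{0}\|$ so the hypothesis of Corollary \ref{Variant_Corollary3} is met. Applying Theorem \ref{Variant_Theorem4} with probability parameter $\delta/2$ gives the event $\mathcal{E}_1=\{\|\hat{\theta}\|\leq B\}$ with $\Pr(\mathcal{E}_1)\geq 1-\delta/2$; on $\mathcal{E}_1$, the ridgeless estimator $\hat{\theta}$ is a feasible point in the constrained maximization of Corollary \ref{Variant_Corollary3} (since it also satisfies $\mathbf{Y}=\mathbf{X}\hat{\theta}$). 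Applying Corollary \ref{Variant_Corollary3} with parameter $\delta/2$ yields an event $\mathcal{E}_2$ of probability at least $1-\delta/2$ on which
\begin{equation*}
    \max_{\|\theta\|\leq B,\ \mathbf{Y}=\mathbf{X}\theta}\|\theta-\theta_{0}\|_{\Xi_{z}}^{2} \leq (1+\gamma)\,\frac{\bigl(B\,\mathbb{E}\|\Xi_{z}^{1/2}H\|_{*}\bigr)^{2}}{n} - \tilde{\sigma}^{2}.
\end{equation*}
On $\mathcal{E}_1\cap \mathcal{E}_2$, which has probability at least $1-\delta$ by a union bound, the same bound applies with $\hat{\theta}$ in place of the maximum.

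It remains to put the right-hand side in the advertised form. Substituting the chosen $B$ gives
\begin{equation*}
    B\,\frac{\mathbb{E}\|\Xi_{z}^{1/2}H\|_{*}}{\sqrt{n}}
    = (\|\theta_{0}\|+\|\Sigma_{u}^{+}\corrcoef\|)\,\frac{\mathbb{E}\|\Xi_{z}^{1/2}H\|_{*}}{\sqrt{n}} + (1+\varepsilon)^{1/2}\tilde{\sigma}.
\end{equation*}
Writing $a$ for the first summand on the right and $b$ for $\tilde\sigma$, and using the elementary inequality $(a+(1+\varepsilon)^{1/2}b)^{2}\leq (1+\varepsilon)(a+b)^{2}$ (which follows from $a\leq(1+\varepsilon)^{1/2}a$ for $\varepsilon\geq 0$), we obtain
\begin{equation*}
    \frac{\bigl(B\,\mathbb{E}\|\Xi_{z}^{1/2}H\|_{*}\bigr)^{2}}{n} \leq (1+\varepsilon)\left(\tilde{\sigma}+(\|\Sigma_{u}^{+}\corrcoef\|+\|\theta_{0}\|)\,\frac{\mathbb{E}\|\Xi_{z}^{1/2}H\|_{*}}{\sqrt{n}}\right)^{2},
\end{equation*}
which, combined with the Corollary \ref{Variant_Corollary3} bound and subtraction of $\tilde{\sigma}^{2}$, gives exactly the claimed estimate.

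There is no genuinely new analytic obstacle here; the work has been done in Corollary \ref{Variant_Corollary3} and Theorem \ref{Variant_Theorem4}. The only care points are (i) verifying that $B\geq\|\theta_{0}\|$ so Corollary \ref{Variant_Corollary3} applies (immediate from nonnegativity of the added terms), (ii) checking that the effective-rank hypotheses $R(\Xi_{z})\gtrsim \log(1/\delta)^{2}$ and the smallness of $\gamma,\varepsilon$ are compatible between the two invocations (they are, after the standard rescaling $\delta\mapsto \delta/2$ which only changes constants absorbed in $\gamma,\varepsilon$), and (iii) the algebraic repackaging of the $(1+\varepsilon)^{1/2}\tilde{\sigma}$ term into a single $(1+\varepsilon)$ factor, which is the inequality noted above.
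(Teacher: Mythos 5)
Your proposal is correct and follows essentially the same path as the paper's proof: define $B=\|\theta_{0}\|+\|\Sigma_{u}^{+}\corrcoef\|+(1+\varepsilon)^{1/2}\tilde{\sigma}\sqrt{n}/\mathbb{E}\|\Xi_{z}^{1/2}H\|_{*}$ via Theorem \ref{Variant_Theorem4}, plug into Corollary \ref{Variant_Corollary3}, and absorb the $(1+\varepsilon)^{1/2}$ factor via $(a+(1+\varepsilon)^{1/2}b)^2\leq(1+\varepsilon)(a+b)^2$. Your explicit $\delta/2$ splitting and union bound only makes overt what the paper leaves implicit, so there is no substantive difference.
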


\begin{proof}[Proof of Theorem \ref{Variant_Theorem5}]
From the result of Theorem \ref{Variant_Theorem4}, if we adopt
\begin{equation*}
    B=\|\theta_{0}\|+\|\Sigma_{u}^{+}\corrcoef\|+(1+\varepsilon)^{1/2}\tilde{\sigma}\frac{\sqrt{n}}{(\mathbb{E}\|\Xi_{z}^{1/2}H\|_{*})},
\end{equation*}
then $\{\theta:\|\theta\|\leq B\}\cap\{\theta: \mathbf{X}\theta=\mathbf{Y}\}$ is not empty with high probability. Clearly, $B>\|\theta_{0}\|$. This intersection necessarily includes the ridgeless estimator $\hat{\theta}$. Therefore, it holds from Corollary \ref{Variant_Corollary3} that
\begin{align*}
     \|\hat{\theta}-\theta_{0}\|_{\Xi_z}^{2} 
     \leq&  \max_{\|\theta\|\leq B, \mathbf{Y}=\mathbf{X}\theta}\|\theta-\theta_{0}\|_{\Xi_z}^{2} \\
     \leq& (1+\gamma)\frac{B^{2}(\mathbb{E}\|\Xi_{z}^{1/2}H\|_{*})^{2}}{n}-\tilde{\sigma}^{2} \\
     =&(1+\gamma)\left((\|\theta_{0}\|+\|\Sigma_{u}^{+}\corrcoef\|)\frac{\mathbb{E}\|\Xi_{z}^{1/2}H\|_{*}}{\sqrt{n}}+(1+\varepsilon)^{1/2}\tilde{\sigma}\right)^{2}-\tilde{\sigma}^{2} \\
     \leq& (1+\gamma)(1+\varepsilon)\left(\tilde{\sigma}+(\|\theta_{0}\|+\|\Sigma_{u}^{+}\corrcoef\|)\frac{\mathbb{E}\|\Xi_{z}^{1/2}H\|_{*}}{\sqrt{n}}\right)^{2}-\tilde{\sigma}^{2}.
\end{align*}
Then, we obtain the statement.
\end{proof}

\noindent
\textbf{Theorem \ref{Variant_Theorem11}} (Sufficient conditions) 
\textit{
Under Assumptions \ref{asmp:Gaussianity} and \ref{asmp:orthogonal}, let $\hat{\theta}$ be the ridgeless estimator. Let $\|\cdot\|$ denote an arbitrary norm. Suppose that as $n$ goes to $\infty$, the covariance splitting $\Sigma_x=\Xi_z + \Sigma_{u}$ satisfies the following conditions:}
\begin{enumerate}
    \item[(i)] (Small large-variance dimension.)
    \begin{equation*}
        \lim_{n\rightarrow\infty}\frac{\rank(\Sigma_{u})}{n}=0.
    \end{equation*}
    \item[(ii)] (Large effective dimension.)
    \begin{equation*}
        \lim_{n\rightarrow\infty}\frac{1}{r_{\|\cdot\|}(\Xi_z)}=0 \quad \text{and} \quad \lim_{n\rightarrow\infty}\frac{n}{R_{\|\cdot\|}(\Xi_z)}=0.
    \end{equation*}
    \item[(iii)] (No aliasing condition.)
    \begin{equation*}
        \lim_{n\rightarrow\infty}\frac{\|\theta_{0}\|\mathbb{E}\|\Xi_{z}^{1/2}H\|_{*}}{\sqrt{n}}=0.
    \end{equation*}
    \item[(iv)] (Contracting $\ell_{2}$ projection condition.) For any $\eta>0$,
    \begin{equation*}
\lim_{n\rightarrow\infty}\Pr(\|P_{u}v^{*}\|^{2}>1+\eta)=0.
    \end{equation*}
    \item[(v)] (Condition for the minimal interpolation of instrumental variable)
    \begin{equation*}
        \lim_{n\rightarrow\infty}\frac{\|\Sigma_{u}^{+}\corrcoef\|\mathbb{E}\|\Xi_{z}^{1/2}H\|_{*}}{\sqrt{n}}=0.
    \end{equation*}
\end{enumerate}
Then, $ \|\hat{\theta}-\theta_{0}\|_{\Xi_z}^{2}$ converges to $0$ in probability.

\begin{proof}[Proof of Theorem \ref{Variant_Theorem11}]
We take advantage of the upper bound on the projected RMSE derived in Theorem \ref{Variant_Theorem5}.
To begin with, we reorganize the upper bound in Theorem \ref{Variant_Theorem5} to elucidate the terms that should be sufficiently small for the projected RMSE to converge.
Fix any $\eta > 0$.
By trivial calculation, we have
\begin{align}
    &(1+\gamma)(1+\varepsilon)\left(\tilde{\sigma}+(\|\theta_{0}\|+\|\Sigma_{u}^{+}\corrcoef\|)\frac{\mathbb{E}\|\Xi_{z}^{1/2}H\|_{*}}{\sqrt{n}}\right)^{2}-\tilde{\sigma}^{2}  \notag \\
    &=(1+\gamma)(1+\varepsilon)\left(\left((\|\theta_{0}\|+\|\Sigma_{u}^{+}\corrcoef\|)\frac{\mathbb{E}\|\Xi_{z}^{1/2}H\|_{*}}{\sqrt{n}}\right)^{2}+2\tilde{\sigma}(\|\theta_{0}\|+\|\Sigma_{u}^{+}\corrcoef\|)\frac{\mathbb{E}\|\Xi_{z}^{1/2}H\|_{*}}{\sqrt{n}}\right)  \notag \\
    &\quad  +(\gamma+\varepsilon+\gamma\varepsilon)\tilde{\sigma}^{2} \notag \\
    &\leq(1+\gamma)(1+\varepsilon)\left(\left((\|\theta_{0}\|+\|\Sigma_{u}^{+}\corrcoef\|)\frac{\mathbb{E}\|\Xi_{z}^{1/2}H\|_{*}}{\sqrt{n}}\right)^{2}+2\sigma(\|\theta_{0}\|+\|\Sigma_{u}^{+}\corrcoef\|)\frac{\mathbb{E}\|\Xi_{z}^{1/2}H\|_{*}}{\sqrt{n}}\right) \notag \\
    &\quad +(\gamma+\varepsilon+\gamma\varepsilon)\sigma^{2} \notag \\
    &\leq \eta. \label{Variant_83}
\end{align}
The second to last inequality follows $\tilde{\sigma}^{2}=\sigma^2 - \|\corrcoef\|_{\Sigma_u^+}^2\leq\sigma^{2}$, and the last inequality holds by selecting sufficiently small $\gamma, \varepsilon$, and  $(\|\theta_{0}\|+\|\Sigma_{u}^{+}\corrcoef\|)(\mathbb{E}\|\Xi_{z}^{1/2}H\|_{*}/\sqrt{n})$. 

 Fix any $\delta>0$. Conditions (i) and (ii) in Theorem \ref{Variant_Theorem11} make $\gamma$ sufficiently small for large enough $n$. $(\|\theta_{0}\|+\|\Sigma_{u}^{+}\corrcoef\|)(\mathbb{E}\|\Xi_{z}^{1/2}H\|_{*}/\sqrt{n})$ goes to zero from conditions (iii) and (v). From condition (iv) of Theorem \ref{Variant_Theorem11}, $\varepsilon_{2}$ in $\varepsilon$ can also be arbitrarily small.
 
Finally, we need to specify the conditions when $\varepsilon$ can be sufficiently small. By the definition of $R_{\|\cdot\|(\Xi_{z})}$, we have
\begin{equation*}
    \sqrt{\frac{n}{R_{\|\cdot\|}(\Xi_{z})}}=\mathbb{E}\left[\frac{\|v^{*}\|_{\Xi_{z}}}{\mathbb{E}\|\Xi_{z}^{1/2}H\|_{*}/\sqrt{n}}\right].
\end{equation*}
It holds from the Markov inequality that for any $\eta'>0$,
\begin{align}\label{Markov}
    \Pr\left(\frac{\|v^{*}\|_{\Xi_{z}}}{\mathbb{E}\|\Xi_{z}^{1/2}H\|_{*}/\sqrt{n}}>\sqrt{\eta'}\right)&\leq \frac{1}{\sqrt{\eta'}}\mathbb{E}\left[\frac{\|v^{*}\|_{\Xi_{z}}}{\mathbb{E}\|\Xi_{z}^{1/2}H\|_{*}/\sqrt{n}}\right] \notag \\
&=\frac{1}{\sqrt{\eta'}}\sqrt{\frac{n}{R_{\|\cdot\|}(\Xi_{z})}}.
\end{align}
As $n/R_{\|\cdot\|}(\Xi_{z})$ converges to zero in its limit, the left-hand side of (\ref{Markov}) can be arbitrarily small. Hence, we can pick up $\varepsilon_{1}$ such that 
\begin{align*}
    (1+\varepsilon_{1})\mathbb{E}\|v^{*}\|_{\Xi_{z}}=\sqrt{\eta'}\frac{\mathbb{E}\|\Xi_{z}^{1/2}H\|_{*}}{\sqrt{n}},
\end{align*}
which implies that
\begin{equation*}
     (1+\varepsilon_{1})^{2}\frac{n}{R_{\|\cdot\|}(\Xi_{z})}=\frac{n}{\left(\mathbb{E}\|\Xi_{z}^{1/2}H\|_{*}\right)^{2}}((1+\varepsilon_{1})\mathbb{E}\|v^{*}\|_{\Xi_{z}})^{2}=\eta'.
\end{equation*}
We have shown that $\gamma$, $\varepsilon$, and $(\|\theta_{0}\|+\|\Sigma_{u}^{+}\corrcoef\|)(\mathbb{E}\|\Xi_{z}^{1/2}H\|_{*}/\sqrt{n})$ are so small that (\ref{Variant_83}) holds for sufficiently large $n$. Therefore, we obtain
\begin{equation*}
    \Pr(\|\hat{\theta}-\theta_{0}\|_{\Xi_z}^{2}>\eta)\leq \delta
\end{equation*}
for any fixed $\eta$. As $\eta$ and $\delta$ are arbitrary, we have for any $\eta$, 
\begin{equation*}
    \lim_{n\rightarrow\infty}\Pr(\|\hat{\theta}-\theta_{0}\|_{\Xi_z}^{2}>\eta)=0.
\end{equation*}
Then, we obtain the statement.
\end{proof}

Second, we establish sufficient conditions of benign overfitting with the Euclidean norm.

\vspace{1\baselineskip}

\noindent
\textbf{Theorem \ref{Variant_Theorem3}} (Benign Overfitting) \textit{
Fix any $\delta\leq1/2$. Under Assumptions \ref{asmp:Gaussianity} and \ref{asmp:orthogonal} with covariance splitting $\Sigma_x=\Xi_z + \Sigma_{u}$, let $\gamma$ and $\varepsilon$ be as defined in Corollary \ref{Variant_Corollary2} and Theorem \ref{Variant_Theorem2}. Suppose that $n$ and the effective ranks are such that $R(\Xi_z)\gtrsim \log(1/\delta)^{2}$ and $\gamma,\varepsilon\leq1$. Then, with probability at least $1-\delta$,
\begin{align*}
    &\|\hat{\theta}-\theta_{0}\|_{\Xi_z}^{2}\leq(1+\gamma)(1+\varepsilon)\left(\tilde{\sigma}+(\|\Sigma_{u}^{+}\corrcoef\|_{2}+\|\theta_{0}\|_{2})\sqrt{\frac{\trace(\Xi_z)}{n}}\right)^{2}-\tilde{\sigma}^{2}.
\end{align*}
}

\begin{proof}[Proof of Theorem \ref{Variant_Theorem3}]
From the result of Theorem \ref{Variant_Theorem2}, if we adopt
\begin{equation*}
    B=\|\theta_{0}\|_{2}+\|\Sigma_{u}^{+}\corrcoef\|_{2}+(1+\varepsilon)^{1/2}\tilde{\sigma}\sqrt{\frac{n}{\trace(\Xi_z)}},
\end{equation*}
then $\{\theta:\|\theta\|_{2}\leq B\}\cap\{\theta: \mathbf{X}\theta=\mathbf{Y}\}$ is not empty with high probability. Clearly, $B>\|\theta_{0}\|_{2}$. This intersection necessarily includes the ridgeless estimator $\hat{\theta}$. Therefore, it holds from Corollary \ref{Variant_Corollary2} that
\begin{align*}
     \|\hat{\theta}-\theta_{0}\|_{\Xi_z}^{2} \leq&  \max_{\|\theta\|_{2}\leq B, \mathbf{Y}=\mathbf{X}\theta}\|\theta-\theta_{0}\|_{\Xi_z}^{2} \\
     \leq& (1+\gamma)\frac{B^{2}\trace(\Xi_z)}{n}-\tilde{\sigma}^{2} \\
     =&(1+\gamma)\left((\|\theta_{0}\|_{2}+\|\Sigma_{u}^{+}\corrcoef\|_{2})\sqrt{\frac{\trace(\Xi_z)}{n}}+(1+\varepsilon)^{1/2}\tilde{\sigma}\right)^{2}-\tilde{\sigma}^{2} \\
     \leq& (1+\gamma)(1+\varepsilon)\left(\tilde{\sigma}+(\|\theta_{0}\|_{2}+\|\Sigma_{u}^{+}\corrcoef\|_{2})\sqrt{\frac{\trace(\Xi_z)}{n}}\right)^{2}-\tilde{\sigma}^{2}.
\end{align*}
\end{proof}

\noindent
\textbf{Theorem \ref{Variant_Theorem12}} (Sufficient conditions) \textit{
Under Assumptions \ref{asmp:Gaussianity} and \ref{asmp:orthogonal}, let $\hat{\theta}$ be the ridgeless estimator. Suppose that as $n$ goes to $\infty$, the covariance splitting $\Sigma_x=\Xi_z + \Sigma_{u}$ satisfies  the following conditions:
\begin{enumerate}
    \item[(i)] (Small large-variance dimension.)
    \begin{equation*}
        \lim_{n\rightarrow\infty}\frac{\rank(\Sigma_{u})}{n}=0.
    \end{equation*}
    \item[(ii)] (Large effective dimension.)
    \begin{equation*}
        \lim_{n\rightarrow\infty}\frac{n}{R(\Xi_z)}=0.
    \end{equation*}
    \item [(iii)] (No aliasing condition.)
    \begin{equation*}
        \lim_{n\rightarrow\infty}\|\theta_{0}\|_{2}\sqrt{\frac{\trace(\Xi_z)}{n}}=0.
    \end{equation*}
    \item [(iv)] (Condition for the minimal interpolation of instrumental variable)
    \begin{equation*}
        \lim_{n\rightarrow\infty}\|\Sigma_{u}^{+}\corrcoef\|_{2}\sqrt{\frac{\trace(\Xi_z)}{n}}=0.
    \end{equation*}
\end{enumerate}
Then, $ \|\hat{\theta}-\theta_{0}\|_{\Xi_z}^{2}$ converges to $0$ in probability.
}

\begin{proof}[Proof of Theorem \ref{Variant_Theorem12}]

As in the proof of Theorem \ref{Variant_Theorem11}, we rearrange the upper bound derived in Theorem \ref{Variant_Theorem3} to clarify which terms should be sufficiently small for the projected RMSE to converge. 
By trivial calculation, we have
\begin{align}
    &(1+\gamma)(1+\varepsilon)\left(\tilde{\sigma}+(\|\theta_{0}\|_{2}+\|\Sigma_{u}^{+}\corrcoef\|_{2})\sqrt{\frac{\trace(\Xi_z)}{n}}\right)^{2}-\tilde{\sigma}^{2}  \notag \\
    &=(1+\gamma)(1+\varepsilon)\left(\left((\|\theta_{0}\|_{2}+\|\Sigma_{u}^{+}\corrcoef\|_{2})\sqrt{\frac{\trace(\Xi_z)}{n}}\right)^{2}\right.    +\left.2\tilde{\sigma}(\|\theta_{0}\|_{2}+\|\Sigma_{u}^{+}\corrcoef\|_{2})\sqrt{\frac{\trace(\Xi_z)}{n}}\right)  \notag \\
    & \quad +(\gamma+\varepsilon+\gamma\varepsilon)\tilde{\sigma}^{2} \notag \\
    &\leq(1+\gamma)(1+\varepsilon)\left(\left((\|\theta_{0}\|_{2}+\|\Sigma_{u}^{+}\corrcoef\|_{2})\sqrt{\frac{\trace(\Xi_z)}{n}}\right)^{2}\right.    +\left.2\sigma(\|\theta_{0}\|_{2}+\|\Sigma_{u}^{+}\corrcoef\|_{2})\sqrt{\frac{\trace(\Xi_z)}{n}}\right) \notag  \\
    & \quad +(\gamma+\varepsilon+\gamma\varepsilon)\sigma^{2}. \label{ineq:squared2}
\end{align}
The inequality follows  $\tilde{\sigma}^{2}\leq \sigma^{2}$ as  in the proof of Theorem \ref{Variant_Theorem11}.

 Fix any $\eta>0$ and $\delta>0$. From Lemma 5 of \citet{bartlett2020benign}, it holds that $R(\Xi_z)\leq r(\Xi_z)^{2}$. If $R(\Xi_z)=\orderomega(n)$ holds as the second condition in Theorem \ref{Variant_Theorem12}, we have $r(\Xi_z)=\orderomega(\sqrt{n})=\orderomega(1)$, which implies the convergence of $1/r(\Xi_z)$ to zero. Hence, conditions (i) and (ii) in Theorem \ref{Variant_Theorem3} make $\gamma$ and $\varepsilon$ sufficiently small for large enough $n$. $(\|\theta_{0}\|_{2}+\|\Sigma_{u}^{+}\corrcoef\|_{2})\sqrt{\trace(\Xi_z)/n}$ goes to zero from conditions (iii) and (iv). Hence, for sufficiently large $n$, we obtain that \eqref{ineq:squared2} is no more than $\eta$.
Therefore, we obtain
\begin{equation*}
    \Pr(\|\hat{\theta}-\theta_{0}\|_{\Xi_z}^{2}>\eta)\leq \delta,
\end{equation*}
for any fixed $\eta$. As $\eta$ and $\delta$ are arbitrary, we have for any $\eta$, 
\begin{equation*}
    \lim_{n\rightarrow\infty}\Pr(\|\hat{\theta}-\theta_{0}\|_{\Xi_z}^{2}>\eta)=0.
\end{equation*}
\end{proof}

\section{Non-Orthogonal Case} \label{app_sec:non_orthogonal}

We present the proof of the non-orthogonal case independently in this section because the case requires additional complicated analysis and is not a simple extension of the orthogonal case.

We present the results in Section \ref{sec:non-orthogonal} for the case where $\Xi_z$ and $\Sigma_{u}$ are non-orthogonal. 
In this section, we use $\Sigma_1 \in \R^{p \times p}$ and $\Sigma_2 \in \R^{p \times p}$ as notations for (potentially non-orthogonal) matrices as the statements in this section can be regarded as generic results for general matrices. 
In the setting for regression with endogeneity, these notations correspond to $\Xi_z$ and $\Sigma_u$, respectively.

First, we introduce auxiliary lemmas for this section.
\begin{lemma}[Corollary 2 in \citet{koehler2022uniform}]\label{Exogenous_Corollary2}
There exists an absolute constant $C_{1}\leq 66$ such that the following is true. Under Assumption \ref{asmp:Gaussianity} with covariance $\Sigma_x=\Sigma_{1}+\Sigma_{2}$, fix $\delta\leq 1/4$ and  let $\gamma=C_{1}(\sqrt{\log(1/\delta)/r(\Sigma_{2})}+\sqrt{\log(1/\delta)/n}+\sqrt{\rank(\Sigma_{1})/n})$. If $B\geq \|\theta_{0}\|_{2}$ and $n$ is large enough that $\gamma\leq 1$, the following holds with probability at least $1-\delta$:
\begin{equation}\label{Variant_5}
    \sup_{\|\theta\|_{2}\leq B, \hat{L}(\theta)=0}L(\theta)\leq (1+\gamma)\frac{B^{2}\trace(\Sigma_{2})}{n},
\end{equation}
where $\hat{L}(\theta)=\|\mathbf{Y}-\mathbf{X}\theta\|^{2}/n$ and $L(\theta):=E[(Y_1-X_1^\top\theta)]^{2}$. 
\end{lemma}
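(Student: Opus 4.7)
The statement is Corollary~2 of \citet{koehler2022uniform}, cited here without proof. My plan is to supply one by specializing the paper's own CGMT machinery (Lemmas~\ref{Variant_Lemma3}--\ref{Variant_Lemma5}) to the independent-noise case $\corrcoef=0$. The first simplification I would use is that, since $\xi\perp X$ here, $L(\theta)=\|\theta-\theta_0\|_{\Sigma_x}^{2}+\sigma^{2}$ and $\tilde\sigma=\sigma$, so the claim is equivalent to the uniform bound
\[
    \sup_{\|\theta\|_2\le B,\;\hat L(\theta)=0}\|\theta-\theta_0\|_{\Sigma_x}^{2}\;\le\;(1+\gamma)\frac{B^{2}\trace(\Sigma_2)}{n}-\sigma^{2},
\]
i.e.\ a non-orthogonal analogue of Corollary~\ref{Variant_Corollary2}. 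Crucially, no orthogonality between $\Sigma_1$ and $\Sigma_2$ is needed for the identity $\|\theta-\theta_0\|_{\Sigma_x}^{2}=\|\Sigma_1^{1/2}(\theta-\theta_0)\|_2^{2}+\|\Sigma_2^{1/2}(\theta-\theta_0)\|_2^{2}$, which is just bilinearity of the quadratic form.

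I would then replicate the PO--AO reformulation of Lemmas~\ref{Variant_Lemma3}--\ref{Variant_Lemma4}: write $X\stackrel{d}{=}\Sigma_1^{1/2}W_1+\Sigma_2^{1/2}W_2$ with mutually independent Gaussians $W_1,W_2,\xi$, change variables to $\theta_j=\Sigma_j^{1/2}(\theta-\theta_0)$, and apply Theorem~\ref{VariantCGMT} to the $\mathbf{W}_2$-direction (the benign one). This leaves the $\mathbf{W}_1$-direction untouched as a noise absorber and yields the auxiliary constraint
\[
    \|\xi-\mathbf{W}_1\theta_1-\|\theta_2\|_2 G\|_2\;\le\;\langle\theta_2,H\rangle,
\]
with $G\sim N(0,I_n)$ and $H\sim N(0,I_p)$ independent of $\mathbf{W}_1,\xi$; the objective remains $\|\theta_1\|_2^{2}+\|\theta_2\|_2^{2}$.

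Third, I would analyze this AO exactly as in Lemma~\ref{Variant_Lemma5}, but with $\corrcoef=0$, so that $\xi-\mathbf{W}_1\theta_1\sim N(0,(\sigma^{2}+\|\theta_1\|_2^{2})I_n)$. Squaring the constraint, using chi-square concentration of $\|G\|_2^{2}\approx n$ and $\|\xi-\mathbf{W}_1\theta_1\|_2^{2}\approx(\sigma^{2}+\|\theta_1\|_2^{2})n$, and dropping a cross term via AM--GM yield, uniformly in feasible $(\theta_1,\theta_2)$,
\[
    \|\theta_1\|_2^{2}+\|\theta_2\|_2^{2}+\sigma^{2}\;\le\;(1+o(1))\,\frac{\langle\theta_2,H\rangle^{2}}{n}.
\]
Taking the supremum over $\theta_2\in\Sigma_2^{1/2}(\mathcal K-\theta_0)$ with $\mathcal K=\{\|\theta\|_2\le B\}$, and using $W(\Sigma_2^{1/2}\mathcal K)\le B\sqrt{\trace(\Sigma_2)}$ together with $\rad(\Sigma_2^{1/2}\mathcal K)=B\|\Sigma_2\|_{\mathrm{op}}^{1/2}$ and $r(\Sigma_2)=\trace(\Sigma_2)/\|\Sigma_2\|_{\mathrm{op}}$, Borell--TIS concentration would deliver the stated bound after absorbing all fluctuations into $\gamma\lesssim\sqrt{\log(1/\delta)/r(\Sigma_2)}+\sqrt{\log(1/\delta)/n}+\sqrt{\rank(\Sigma_1)/n}$.

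The hard part will be the non-orthogonality of $\Sigma_1$ and $\Sigma_2$: both $\|\theta_1\|_2^{2}$ and $\|\theta_2\|_2^{2}$ enter the objective, yet the bound depends only on $\trace(\Sigma_2)$. I would argue that any worst-case maximizer must concentrate in the benign direction $\theta_2$, with $\|\theta_1\|_2^{2}$ contributing at most at the $O(\sigma^{2}\rank(\Sigma_1)/n)$ scale; this is what permits the $-\sigma^{2}$ subtraction to survive and surfaces as the $\sqrt{\rank(\Sigma_1)/n}$ term in $\gamma$. Verifying this uniformly over interpolators is the technical crux, though otherwise the argument follows the blueprint of Lemma~\ref{Variant_Lemma5} verbatim with $\corrcoef$ set to zero.
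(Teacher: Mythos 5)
Your proof plan is sound. The paper states this lemma as Corollary~2 of \citet{koehler2022uniform} and does not reprove it, so there is no internal proof to compare against; the question is whether your sketch works, and it does, modulo one misdiagnosis. The reduction $L(\theta)=\|\theta-\theta_0\|_{\Sigma_x}^2+\sigma^2$, the identity $\|\theta-\theta_0\|_{\Sigma_x}^2=\|\theta_1\|_2^2+\|\theta_2\|_2^2$ (which indeed needs no orthogonality), and the PO--AO reformulation applying Theorem~\ref{VariantCGMT} to the benign direction all correctly mirror the blueprint of Lemmas~\ref{Variant_Lemma3}--\ref{Variant_Lemma5} with the roles swapped.

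The ``hard part'' you flag at the end is not actually hard, and your own displayed inequality already dissolves it. After squaring the AO constraint and applying the concentration steps, you obtained
\[
\|\theta_1\|_2^2+\|\theta_2\|_2^2+\sigma^2\;\le\;(1+o(1))\,\frac{\langle\theta_2,H\rangle^2}{n},
\]
and the left-hand side \emph{is} the objective $\|\theta-\theta_0\|_{\Sigma_x}^2$ plus $\sigma^2$. There is nothing further to argue about the worst-case maximizer ``concentrating in the benign direction'': the $\|\theta_1\|_2^2$ term appears on the right automatically because it is the variance of $\xi-\mathbf{W}_1\theta_1\sim N(0,(\sigma^2+\|\theta_1\|_2^2)I_n)$, so it cancels against its appearance in the objective. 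No $O(\sigma^2\rank(\Sigma_1)/n)$ side argument is needed, and the $\sqrt{\rank(\Sigma_1)/n}$ contribution to $\gamma$ has a different origin (see below), not a bound on $\|\theta_1\|_2^2$.

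The one step you should make precise is the uniformity, over $\theta_1\in\Sigma_1^{1/2}(\mathcal{K}-\theta_0)$, of the lower bound $\|\xi-\mathbf{W}_1\theta_1\|_2^2\gtrsim n(\sigma^2+\|\theta_1\|_2^2)$: pointwise chi-square concentration does not suffice because the Gaussian vector $\xi-\mathbf{W}_1\theta_1$ depends on $\theta_1$. The remedy is to expand $\|\xi-\mathbf{W}_1\theta_1\|_2^2=\|\xi\|_2^2-2\langle\xi,\mathbf{W}_1\theta_1\rangle+\|\mathbf{W}_1\theta_1\|_2^2$ and treat the terms separately: $\|\xi\|_2^2\approx n\sigma^2$ pointwise by Lemma~\ref{Variant_Lemma2}; $\|\mathbf{W}_1\theta_1\|_2^2\approx n\|\theta_1\|_2^2$ uniformly by the empirical-covariance concentration of Lemma~\ref{Variant_Corollary4}; and the cross term by the projection Lemma~\ref{Variant_Lemma1}. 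This three-term split is exactly the structure the paper uses in the proof of Lemma~\ref{Exogenous_Variant_Lemma8}, and it is where $\sqrt{\rank(\Sigma_1)/n}$ enters $\gamma$.
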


\begin{lemma}[Corollary 4 in \citet{koehler2022uniform}]\label{Variant_Corollary4}
Suppose $X_{1},\cdots,X_{n}\sim N(0,\Sigma)$ are independent with $\Sigma:p\times p$ a positive semidefinite matrix, $t>0$, and $n\geq 4(d+t^{2})$. Let $\hat{\Sigma}=\sum_{i}X_{i}X_{i}^\top/n$ be the empirical covariance matrix. Then, with probability at least $1-\delta$,
\begin{equation*}
    (1-\varepsilon)\Sigma\preceq\hat{\Sigma}\preceq(1+\varepsilon)\Sigma,
\end{equation*}
with $\varepsilon=3\sqrt{d/n}+3\sqrt{2\log(2/\delta)/n}$.
\end{lemma}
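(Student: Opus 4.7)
The plan is to reduce the multiplicative Löwner inequality to a one-sided operator-norm bound on the empirical covariance of a \emph{whitened} Gaussian sample, and then appeal to a classical extremal singular value concentration for Gaussian matrices. First I would write $X_i = \Sigma^{1/2} Z_i$ with $Z_i \sim N(0, I_d)$, so that $\hat\Sigma = \Sigma^{1/2} \hat\Sigma_Z \Sigma^{1/2}$ where $\hat\Sigma_Z := \tfrac{1}{n} \mathbf{Z}^\top \mathbf{Z}$ for $\mathbf{Z} = (Z_1, \ldots, Z_n)^\top$. Since $X_i$ lies in $\mathrm{range}(\Sigma)$ almost surely, both $\hat\Sigma$ and $\Sigma$ are supported on that subspace, and the desired sandwich $(1-\varepsilon)\Sigma \preceq \hat\Sigma \preceq (1+\varepsilon)\Sigma$ becomes equivalent to the spectral-radius bound $\|\hat\Sigma_Z - I_d\|_{\mathrm{op}} \leq \varepsilon$ after restricting attention to $\mathrm{range}(\Sigma)$.

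Next I would invoke the Davidson--Szarek type extremal singular value bound for Gaussian matrices (e.g.\ Theorem~4.6.1 of \citet{vershynin2018high}): for every $t > 0$, with probability at least $1 - 2 e^{-t^2/2}$,
\[
\sqrt{n} - \sqrt{d} - t \;\leq\; \sigma_{\min}(\mathbf{Z}) \;\leq\; \sigma_{\max}(\mathbf{Z}) \;\leq\; \sqrt{n} + \sqrt{d} + t.
\]
Taking $t := \sqrt{2\log(2/\delta)}$ makes the failure probability at most $\delta$. Squaring and dividing by $n$ yields, with $a := \sqrt{d/n}$ and $b := \sqrt{2\log(2/\delta)/n}$,
\[
(1 - a - b)^2 \;\leq\; \lambda_{\min}(\hat\Sigma_Z) \;\leq\; \lambda_{\max}(\hat\Sigma_Z) \;\leq\; (1 + a + b)^2.
\]

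Finally, I would use the hypothesis $n \geq 4(d + t^2)$, which forces $a \leq 1/2$ and $b \leq 1/2$, hence $a+b \leq 1$ and therefore $(a+b)^2 \leq a+b$. This yields $(1 + a + b)^2 \leq 1 + 3(a+b)$ and $(1 - a - b)^2 \geq 1 - 2(a+b) \geq 1 - 3(a+b)$, so setting $\varepsilon := 3(a+b) = 3\sqrt{d/n} + 3\sqrt{2\log(2/\delta)/n}$ yields the stated PSD sandwich on $\mathrm{range}(\Sigma)$, and hence globally since both sides vanish on $\ker(\Sigma)$. The only mild subtlety is the rank-deficient case, which is dispatched by this restriction to $\mathrm{range}(\Sigma)$; otherwise the argument is a direct packaging of a well-known Gaussian matrix concentration inequality, so I do not anticipate any substantial obstacle.
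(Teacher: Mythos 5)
The paper cites this result from \citet{koehler2022uniform} (their Corollary 4) without reproducing a proof, so there is no in-paper argument to compare against; your blind reconstruction is correct and is in fact the standard one behind the cited result. You correctly reduce to the whitened sample via $\hat\Sigma = \Sigma^{1/2}\hat\Sigma_Z\Sigma^{1/2}$, apply the Davidson--Szarek/Vershynin extremal singular value bound with $t = \sqrt{2\log(2/\delta)}$ to get $(1 - a - b)^2 \le \lambda_{\min}(\hat\Sigma_Z) \le \lambda_{\max}(\hat\Sigma_Z) \le (1 + a + b)^2$ with $a = \sqrt{d/n}$, $b = \sqrt{2\log(2/\delta)/n}$, and then use the hypothesis $n \ge 4(d + t^2)$ (which gives $a, b \le 1/2$ hence $a+b \le 1$ and $(a+b)^2 \le a+b$) to linearize the squares and obtain $\varepsilon = 3(a+b)$; the restriction to $\mathrm{range}(\Sigma)$ correctly handles the rank-deficient case, where $d$ is interpreted as $\rank(\Sigma)$.
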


\begin{lemma}\label{Variant_10_Orthogonal}
Take any covariance matrix $\Sigma_{1},\Sigma_{2}$. If $\Sigma^{1/2}_{1}\Sigma^{1/2}_{2}\neq0$, it holds that with probability at least $1-\delta$,
\begin{equation}\label{Variant_76_Orthogonal}
    1-\frac{\|\Sigma^{1/2}_{1}\Sigma^{1/2}_{2}H\|^{2}_{2}}{\trace(\Sigma_{1}\Sigma_{2})}\lesssim \frac{\log(4/\delta)}{\sqrt{R(\Sigma_{1}\Sigma_{2})}}.
\end{equation}
Moreover, it holds that with probability at least $1-\delta$,
\begin{equation}\label{Variant_77_Orthogonal}
    \|\Sigma^{1/2}_{1}\Sigma^{1/2}_{2}H\|^{2}_{2}\lesssim\log(4/\delta)\trace(\Sigma_{1}\Sigma_{2}).
\end{equation}
Therefore, if $R(\Sigma_{2})\gtrsim log(4/\delta)^{2}$ holds, we have
\begin{equation}\label{Variant_78_Orthogonal}
    \left(\frac{\|\Sigma^{1/2}_{1}\Sigma^{1/2}_{2}H\|_{2}}{\|\Sigma^{1/2}_{2}H\|_{2}}\right)^{2}\lesssim \log(4/\delta)\frac{\trace(\Sigma_{1}\Sigma_{2})}{\trace(\Sigma_{2})}.
\end{equation}
\end{lemma}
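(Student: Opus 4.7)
The plan is to reduce all three inequalities to concentration of a single Gaussian quadratic form and then combine them for part three. Write $A := \Sigma_{2}^{1/2}\Sigma_{1}\Sigma_{2}^{1/2}$, which is positive semidefinite. Then $\|\Sigma_{1}^{1/2}\Sigma_{2}^{1/2}H\|_{2}^{2} = H^{\top}\Sigma_{2}^{1/2}\Sigma_{1}\Sigma_{2}^{1/2}H = H^{\top}AH$, and using the cyclic property of the trace one checks that $\trace(A)=\trace(\Sigma_{1}\Sigma_{2})$ and $\trace(A^{2})=\trace((\Sigma_{1}\Sigma_{2})^{2})$, so $R(A)=R(\Sigma_{1}\Sigma_{2})$. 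Hence the quantity to control is a Gaussian chaos of order two in $H\sim N(0,I)$ whose first two trace moments are expressible in terms of $\Sigma_{1}\Sigma_{2}$.

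For parts one and two, I would apply the Hanson--Wright inequality (or the standard Bernstein bound for Gaussian quadratic forms), giving for any $t>0$
\begin{equation*}
\Pr\bigl(|H^{\top}AH - \trace(A)| \geq t\bigr) \leq 2\exp\!\Bigl(-c\min\Bigl(\tfrac{t^{2}}{\trace(A^{2})},\tfrac{t}{\|A\|_{\mathrm{op}}}\Bigr)\Bigr).
\end{equation*}
Setting $t = C\log(4/\delta)\sqrt{\trace(A^{2})}$ with $C$ large enough that both branches yield probability at most $\delta$ (using $\|A\|_{\mathrm{op}}\leq \sqrt{\trace(A^{2})}$ and $\log(4/\delta)\geq 1$), and dividing by $\trace(A)$, I obtain a two-sided bound
\begin{equation*}
\Bigl|1-\tfrac{H^{\top}AH}{\trace(A)}\Bigr| \lesssim \frac{\log(4/\delta)}{\sqrt{R(A)}} = \frac{\log(4/\delta)}{\sqrt{R(\Sigma_{1}\Sigma_{2})}}.
\end{equation*}
The lower tail of this bound is exactly \eqref{Variant_76_Orthogonal}. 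For \eqref{Variant_77_Orthogonal}, I read off the upper tail $H^{\top}AH \leq \trace(A)\bigl(1+C\log(4/\delta)/\sqrt{R(A)}\bigr)$ and swallow the effective-rank term into the constant: since $1/\sqrt{R(A)}\leq 1$ (as $R(A)\geq 1$ whenever $A\neq 0$) and $\log(4/\delta)\geq 1$, the right side is $\lesssim \log(4/\delta)\trace(\Sigma_{1}\Sigma_{2})$.

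For \eqref{Variant_78_Orthogonal}, I specialize \eqref{Variant_76_Orthogonal} to $\Sigma_{1}\leftarrow I$ (so $\Sigma_{1}\Sigma_{2}\leftarrow\Sigma_{2}$ and $R(\Sigma_{1}\Sigma_{2})\leftarrow R(\Sigma_{2})$), obtaining
\begin{equation*}
\|\Sigma_{2}^{1/2}H\|_{2}^{2} \gtrsim \trace(\Sigma_{2})\Bigl(1 - C\tfrac{\log(4/\delta)}{\sqrt{R(\Sigma_{2})}}\Bigr) \gtrsim \trace(\Sigma_{2}),
\end{equation*}
where the last step uses the hypothesis $R(\Sigma_{2})\gtrsim \log(4/\delta)^{2}$ to make the bracket bounded below by a positive absolute constant. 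Combining with \eqref{Variant_77_Orthogonal} (on a common high-probability event via a union bound, adjusting $\delta$ by a constant) yields \eqref{Variant_78_Orthogonal}.

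The only delicate point is matching the form $\log(4/\delta)/\sqrt{R(\Sigma_{1}\Sigma_{2})}$ rather than the tighter Gaussian rate $\sqrt{\log(4/\delta)/R(\Sigma_{1}\Sigma_{2})}$ that a single-branch Hanson--Wright application naively suggests; here I am deliberately using the weaker envelope because it simultaneously dominates the sub-exponential branch $\log(4/\delta)\|A\|_{\mathrm{op}}/\trace(A)\leq \log(4/\delta)/\sqrt{R(A)}$ (valid since $\|A\|_{\mathrm{op}}^{2}\leq\trace(A^{2})$), which is what allows a single unified bound across both regimes. This is the main technical subtlety; everything else is bookkeeping with traces.
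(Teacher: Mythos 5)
Your proof is correct and is essentially the same argument as the paper's: the paper diagonalizes $\Sigma_2^{1/2}\Sigma_1\Sigma_2^{1/2}$ and applies the scalar sub-exponential Bernstein inequality to $\sum_i \lambda_i (H_i^2-1)$, while you apply Hanson--Wright to $H^\top A H$ directly, which is the same bound in matrix form. The merging of the sub-Gaussian and sub-exponential branches into the single envelope $\log(4/\delta)/\sqrt{R}$ via $\|A\|_{\mathrm{op}}^2\leq\trace(A^2)$ (equivalently $R\leq r^2$) and the union-bound derivation of \eqref{Variant_78_Orthogonal} both match the paper step for step.
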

\begin{proof}[Proof of Lemma \ref{Variant_10_Orthogonal}] As $\Sigma_{2}^{1/2}\Sigma_{1}\Sigma_{2}^{1/2}$ is a real symmetric matrix, there exists an orthogonal matrix $Q$ such that $Q\Sigma_{2}^{1/2}\Sigma_{1}\Sigma_{2}^{1/2}Q^\top$ is a diagonal matrix. Further, $QH$ has the normal standard distribution by the definition of $H$. Therefore, without loss of generality, $\Sigma_{2}^{1/2}\Sigma_{1}\Sigma_{2}^{1/2}$ can be considered as a diagonal matrix that consists of eigenvalues of $\Sigma_{2}^{1/2}\Sigma_{1}\Sigma_{2}^{1/2}$, $\lambda_{1},\cdots, \lambda_{p}$. By the sub-exponential Bernstein inequality (\citet{vershynin2018high}, Theorem 2.8.2), we have with probability at least $1-\delta/2$
\begin{align*}
    \left|\frac{\|\Sigma^{1/2}_{1}\Sigma^{1/2}_{2}H\|^{2}_{2}}{\trace(\Sigma_{1}\Sigma_{2})}-1\right|&=\left|\sum_{i=1}^{p}\frac{\lambda_{i}}{\sum_{k}\lambda_{k}}(H^{2}_{i}-1)\right| \\
    &\lesssim \sqrt{\frac{\log(4/\delta)}{R(\Sigma_{1}\Sigma_{2})}}\lor
    \frac{\log(4/\delta)}{r(\Sigma_{1}\Sigma_{2})}\leq \frac{\log(4/\delta)}{\sqrt{R(\Sigma_{1}\Sigma_{2})}},
\end{align*}
where the last inequality follows from the fact $R(\Sigma_{1}\Sigma_{2})\leq (r(\Sigma_{1}\Sigma_{2}))^{2}$. By definition, clearly $R(\Sigma_{1}\Sigma_{2})\geq 1$. Therefore, we have
\begin{equation*}
    \|\Sigma^{1/2}_{1}\Sigma^{1/2}_{2}H\|^{2}_{2}\lesssim\log(4/\delta)\trace(\Sigma_{1}\Sigma_{2}).
\end{equation*}
Provided $R(\Sigma_{2})$ is sufficiently large, we obtain
$\|\Sigma_{2}^{1/2}H\|^{2}_{2}\geq \frac{1}{2}\trace(\Sigma_{2})$.
Therefore, it holds that
\begin{equation*}
    \left(\frac{\|\Sigma^{1/2}_{1}\Sigma^{1/2}_{2}H\|_{2}}{\|\Sigma^{1/2}_{2}H\|_{2}}\right)^{2}\lesssim \log(4/\delta)\frac{\trace(\Sigma_{1}\Sigma_{2})}{\trace(\Sigma_{2})}.
\end{equation*}
\end{proof}

\subsection{When $X_{i}$ and $\xi_{i}$ are independent}

\begin{lemma}\label{Exogenous_Variant_Lemma8}
Denote $P$ as the projection matrix onto the space spanned by $\Sigma_{2}$. Let $v_{*}$ denote $\arg\min_{v\in\partial\|\Sigma_2^{1/2}H\|_{*}}\|v\|_{\Sigma_2}$. Assume that there exist $\varepsilon_{1},\varepsilon_{2}\ \text{and}\ \varepsilon_{3}\geq0$ such that with probability at least $1-\delta/4$,
\begin{equation}\label{Exogenous_Variant_65}
    \|v^{*}\|_{\Sigma_2}\leq(1+\varepsilon_{1})\mathbb{E}\|v^{*}\|_{\Sigma_2},
\end{equation}
\begin{equation}\label{Exogenous_Variant_66}
    \|Pv^{*}\|\leq 1+\varepsilon_{2},
\end{equation}
and
\begin{equation}\label{Exogenous_third}
     \|\Sigma_{1}^{1/2}Pv^{*}\|_{2}\leq (1+\varepsilon_{3})\Ep\|\Sigma_{1}^{1/2}Pv^{*}\|_{2}.
 \end{equation}
Define $\varepsilon$ as 
\begin{align*}
    \varepsilon:=&84\sqrt{\frac{\rank(\Sigma_{1})}{n}}+156\sqrt{\frac{\log(32/\delta)}{n}}+8\sqrt{\frac{\log (8/\delta)}{r_{\|\cdot\|}(\Sigma_{2})}}+2(1+\varepsilon_{1})^{2}\frac{n}{R_{\|\cdot\|}(\Sigma_{2})} \\
    &+2(1+\varepsilon_{3})^{2}\frac{n}{(\mathbb{E}\|\Sigma_{2}^{1/2}H\|_{*})^{2}}(\Ep\|\Sigma_{1}^{1/2}Pv^{*}\|_{2})^{2},
\end{align*}
where $r_{\|\cdot\|}(\Sigma)$ and $R_{\|\cdot\|}(\Sigma)$ are the effective ranks with general norms as provided in Definition \ref{def:effrank_gennorm}.
If $n$ and the effective ranks are sufficiently large such that $\varepsilon\leq1$, then with probability at least $1-\delta$, it holds that
\begin{equation}\label{Exogenous_Variant_67}
    \phi^{2}\leq(1+\varepsilon)\sigma^{2}\frac{n}{(\Ep\|\Sigma_{2}^{1/2}H\|_{*})^{2}}
\end{equation}
\end{lemma}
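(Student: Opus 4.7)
The plan is to adapt the orthogonal-case proof of Lemma \ref{Variant_Lemma8} to the non-orthogonal setting, specialized to $\corrcoef = 0$. Since the covariate and noise are independent, the joint covariance of $(\mathbf{W}_{2,i}, \xi_i)$ becomes block diagonal, which gives $\xi - \mathbf{W}_2 \theta_2 \overset{\mathcal{D}}{=} \sqrt{\sigma^2 + \|\theta_2\|_2^2}\, G'$ with $G' \sim N(0, I_n)$, and correspondingly the $\|\Sigma_u^+ \corrcoef\|^2$ and $\tilde{\sigma}$ corrections disappear from the final bound, leaving $(1+\varepsilon)\sigma^2 n/(\mathbb{E}\|\Sigma_2^{1/2}H\|_*)^2$.

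I would start by reusing the three high-probability concentration events from the proof of Lemma \ref{Variant_Lemma8}: (i) the Cauchy--Schwarz-type bound on $|\langle \xi - \mathbf{W}_2\theta_2, G\rangle|$ from Lemma \ref{Variant_Lemma1}, (ii) concentration of $\|G\|_2$ and $\|\xi - \mathbf{W}_2\theta_2\|_2$ via Lemma \ref{Variant_Lemma2} (now using the simpler variance $\sigma^2 + \|\theta_2\|_2^2$ because $\rho = 0$), and (iii) Lipschitz concentration of $\|\Sigma_2^{1/2}H\|_*$ around its mean through Theorem \ref{Variant_Theorem6}. Combining them with the AO constraint via AM--GM yields the inequality
\begin{equation*}
\|\xi - \mathbf{W}_2 \theta_2 - \|\theta_1\|_2 G\|_2^2 \leq (1+\alpha_2)(1+\alpha_1)^2 n(\sigma^2 + \|\theta_2\|_2^2 + \|\theta_1\|_2^2),
\end{equation*}
identical in form to the orthogonal derivation up to the $\rho = 0$ simplification, with the same $\alpha_1, \alpha_2$.

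The key new step is constructing a feasible trial point for the AO in the non-orthogonal setting. I would take $\theta = s P v^*$, where $P$ is the orthogonal projection onto the range of $\Sigma_2$ and $v^* \in \partial \|\Sigma_2^{1/2} H\|_*$ is the minimizer from Definition \ref{def:effrank_gennorm}. This gives $\theta_2 = \Sigma_2^{1/2}\theta = s \Sigma_2^{1/2} v^*$ and, crucially, $\theta_1 = \Sigma_1^{1/2}\theta = s \Sigma_1^{1/2} P v^*$, which vanishes in the orthogonal case but is genuinely nonzero here. Feasibility $\|\xi - \mathbf{W}_2\theta_2 - \|\theta_1\|_2 G\|_2 \leq \langle H, \theta_1 \rangle$ then reduces, after using $\langle \Sigma_2^{1/2}H, v^*\rangle = \|\Sigma_2^{1/2}H\|_*$ and assumptions \eqref{Exogenous_Variant_65} and \eqref{Exogenous_third} to replace $\|v^*\|_{\Sigma_2}$ and $\|\Sigma_1^{1/2}P v^*\|_2$ by their expectations times $(1+\varepsilon_1)$ and $(1+\varepsilon_3)$ respectively, to choosing $s^2$ of order $\sigma^2 / \bigl(\|\Sigma_2^{1/2}H\|_*^2/((1+\alpha_2)(1+\alpha_1)^2 n) - \|v^*\|_{\Sigma_2}^2 - \|\Sigma_1^{1/2} P v^*\|_2^2\bigr)$. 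The extra subtracted $\|\Sigma_1^{1/2}Pv^*\|_2^2$ is precisely where the new $(1+\varepsilon_3)^2 \frac{n}{(\mathbb{E}\|\Sigma_2^{1/2}H\|_*)^2}(\mathbb{E}\|\Sigma_1^{1/2}P v^*\|_2)^2$ contribution to $\varepsilon$ arises.

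Finally, I would bound the AO objective $\|\Sigma_x^{-1/2}(\theta_1 + \theta_2)\|^2 \leq (1 + \varepsilon_2) s^2$ via assumption \eqref{Exogenous_Variant_66}, and then linearize all the $(1+\alpha_i)$, $(1+\varepsilon_j)$, and $(1-x)^{-1}$ factors through $(1-x)^{-1} \leq 1 + 2x$ for $x \in [0, 1/2]$, exactly as in the orthogonal proof, so that all error contributions collapse into the single $\varepsilon$ stated in the lemma. The main obstacle is the feasibility step: unlike the orthogonal case, the subtracted $\|\Sigma_1^{1/2}Pv^*\|_2^2$ forces justifying that the denominator remains positive, which is why this extra term must appear inside $\varepsilon \leq 1$ and why assumption \eqref{Exogenous_third} becomes indispensable. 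A secondary difficulty is controlling the objective $\|\Sigma_x^{-1/2}(\theta_1+\theta_2)\|^2$ by $(1+\varepsilon_2) s^2$ when $\Sigma_x^{1/2} \neq \Sigma_1^{1/2} + \Sigma_2^{1/2}$; this step will require a convexity-type inequality on $\Sigma_x^{-1/2}(\Sigma_1^{1/2}+\Sigma_2^{1/2})Pv^*$ together with assumption \eqref{Exogenous_Variant_66}.
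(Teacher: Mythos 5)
Your plan identifies the right trial point $\theta = sPv^{*}$ and correctly anticipates that the extra $\|\Sigma_1^{1/2}Pv^{*}\|_2^{2}$ must be subtracted from the denominator when solving for $s$, which is where assumption \eqref{Exogenous_third} and the new $(1+\varepsilon_3)^2$-term in $\varepsilon$ originate. However, there is a genuine gap in your item (ii). You propose to invoke Lemma \ref{Variant_Lemma2} (scalar chi-square concentration) for $\|\xi-\mathbf{W}_2\theta_2\|_2$ after noting that $\xi - \mathbf{W}_2\theta_2 \overset{\mathcal{D}}{=} \sqrt{\sigma^2 + \|\theta_2\|_2^2}\,G'$. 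That distributional identity, and therefore Lemma \ref{Variant_Lemma2}, applies only for a \emph{fixed} $\theta_2$. In Lemma \ref{Variant_Lemma8} (orthogonal case) this is fine: the trial point puts $\theta_2 = (\Sigma_u^{1/2})^{+}\corrcoef$, a deterministic vector, while $s$ multiplies only $\theta_1$. Here $\theta_2 = s\,\Sigma_1^{1/2}Pv^{*}$ \emph{also} scales with $s$, and $s$ is determined a posteriori from the feasibility inequality, hence is random and adapted to $(\xi,\mathbf{W}_2,G,H)$. A pointwise chi-square bound therefore does not cover the $\theta_2$ you actually need. The paper's proof circumvents this by \emph{not} combining $\xi$ and $\mathbf{W}_2\theta_2$ into one Gaussian: it expands $\|\xi - \mathbf{W}_2\theta_2\|_2^2$ and controls $\|\xi\|_2^2$ (chi-square), $\|\mathbf{W}_2\theta_2\|_2^2$ uniformly over $\theta_2$ via the empirical-covariance concentration Lemma \ref{Variant_Corollary4} (which you never invoke), and $\langle\xi,\mathbf{W}_2\theta_2\rangle$ by a second application of the projection Lemma \ref{Variant_Lemma1}. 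This produces the distinct $\alpha_2$, the extra multiplicative $(1+\alpha_3)^2$ factor, and the noticeably larger numerical constants $84$ and $156$ in $\varepsilon$ compared with the $16$ and $28$ of the orthogonal case; your claimed bound with ``the same $\alpha_1,\alpha_2$'' cannot be correct.

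Two smaller points. First, your labeling is swapped relative to the paper's convention ($\theta_1 = \Sigma_2^{1/2}\theta$, $\theta_2 = \Sigma_1^{1/2}\theta$): in the AO constraint $\|\xi-\mathbf{W}_2\theta_2 - \|\theta_1\|_2 G\|_2 \le \langle H,\theta_1\rangle$, it is $\theta_1$ that must lie along $\Sigma_2^{1/2}v^{*}$ so that $\langle H,\theta_1\rangle = s\|\Sigma_2^{1/2}H\|_*$; with your choice $\theta_1 = s\Sigma_1^{1/2}Pv^{*}$ the subgradient identity does not apply. Second, the ``secondary difficulty'' you anticipate about $\|\Sigma_x^{-1/2}(\theta_1+\theta_2)\|$ is a non-issue in the paper's treatment: since the trial point is $\theta = sPv^{*}$ in closed form, one simply bounds $\phi^2 \le \|\theta\|^2 = s^2\|Pv^{*}\|^2 \le (1+\varepsilon_2)^2 s^2$ directly via \eqref{Exogenous_Variant_66}, without any convexity-type inequality on $\Sigma_x^{-1/2}$.
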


\begin{proof}[Proof of Lemma \ref{Exogenous_Variant_Lemma8}]
Denote $\alpha_{1},\alpha_{2}$, and $\alpha_{3}$ as follows:
\begin{align*}
\alpha_{1}&:=2\sqrt{\frac{\log(32/\delta)}{n}}, \\
\alpha_{2}&:=3\sqrt{\frac{\rank(\Sigma_{1})}{n}}+3\sqrt{\frac{2\log(16/\delta)}{n}}, \\
\alpha_{3}&:=\sqrt{\frac{\rank(\Sigma_{1})+1}{n}}+2\sqrt{\frac{\log(16/\delta)}{n}}. 
\end{align*}
To prepare for the derivation of the upper bound, we consider a list of the following inequalities, and each of these holds with probability at least $1-\delta/8$.
\begin{enumerate}
    \item[(i)]  By (\ref{Variant_26}) in Lemma \ref{Variant_Lemma1}, uniformly over all $\theta_{2}\in \Sigma_{1}^{1/2}(\mathbb{R}^{p})$, 
    it holds that
    \begin{equation}\label{Exogenous_Variant_49}
    |\langle\xi-\mathbf{W}_{2}\theta_{2}, G \rangle|\leq\|\xi-\mathbf{W}_{2}\theta_{2}\|_{2}\|G\|_{2}\alpha_{3}
\end{equation}
and 
\begin{equation}\label{Exogenous_Variant_50}
    |\langle\xi, \mathbf{W}_{2}\theta_{2} \rangle|\leq\|\xi\|_{2}\|\mathbf{W}_{2}\theta_{2}\|_{2}\alpha_{3}.
\end{equation}
For (\ref{Exogenous_Variant_49}), $V$ and $s$ in Lemma \ref{Variant_Lemma1} correspond to $G$ and $\xi-\mathbf{W}_{2}\theta_{2}$, respectively. For (\ref{Exogenous_Variant_50}), $V$ and $s$ in Lemma \ref{Variant_Lemma1} correspond to $\xi$ and $\mathbf{W}_{2}\theta_{2}$, respectively.
$\delta$ is replaced by $\delta/8$.

\item[(ii)] By Lemma \ref{Variant_Corollary4}, uniformly over all $\theta_{2}\in \Sigma_{1}^{1/2}(\mathbb{R}^{p})$, it holds that
\begin{equation}\label{Exogenous_Variant_51}
    (1-\alpha_{2})\|\theta_{2}\|^{2}_{2}\leq \frac{\|\mathbf{W}_{2}\theta_{2}\|^{2}_{2}}{n}\leq  (1+\alpha_{2})\|\theta_{2}\|^{2}_{2}.
\end{equation}
$\Sigma$ and $d$ in Lemma \ref{Variant_Corollary4} correspond to $\Sigma_{1}$ and  $\rank(\Sigma_{1})$ in (\ref{Exogenous_Variant_51}), respectively.

\item[(iii)] By Lemma \ref{Variant_Lemma2}, it holds that
\begin{equation}\label{Exogenous_Variant_52}
    -\alpha_{1}\leq\frac{1}{\sqrt{n}}\|G\|_{2}-1\leq\alpha_{1}
\end{equation}
and 
\begin{equation}\label{Exogenous_Variant_53}
    -\alpha_{1}\sigma\leq\frac{1}{\sqrt{n}}\|\xi\|_{2}-\sigma\leq\alpha_{1}\sigma.
\end{equation}

\item[(iv)] By Theorem \ref{Variant_Theorem6}, it holds that
\begin{align}
    \|\Sigma_2^{1/2}H\|_{*}&\geq\mathbb{E}\|\Sigma_2^{1/2}H\|_{*}-\sup_{\|u\|\leq1}\|u\|_{\Sigma_2}\sqrt{2\log(8/\delta)} \notag \\
    &=\left(1-\sqrt{\frac{2\log(8/\delta)}{r_{\|\cdot\|}(\Sigma_2)}}\right) \mathbb{E}\|\Sigma^{1/2}_{2}H\|_{*} \label{Exogenous_Variant_71}
\end{align}
because $\|\Sigma^{1/2}_{2}H\|_{*}$ is a $\sup_{\|u\|\leq1}\|u\|_{\Sigma_{2}}$-Lipschitz continuous function of $H$.

\end{enumerate}
We construct the upper bound from the restriction of the optimization problem \eqref{Variant_58}. It holds from (\ref{Exogenous_Variant_49}), (\ref{Exogenous_Variant_50}), and the AM-GM inequality that
\begin{align*}
    \|\xi-\mathbf{W}_{2}\theta_{2}-\|\theta_{1}\|_{2}G\|_{2}^{2}&\leq (1+\alpha_{3})( \|\xi-\mathbf{W}_{2}\theta_{2}\|^{2}_{2}+\|\theta_{1}\|_{2}^{2}\|G\|_{2}^{2}) \\
    &\leq (1+\alpha_{3})((1+\alpha_{3}) (\|\xi\|^{2}+\|\mathbf{W}_{2}\theta_{2}\|^{2}_{2})+\|\theta_{1}\|_{2}^{2}\|G\|_{2}^{2}).
\end{align*}
From the results of (\ref{Exogenous_Variant_51}) (\ref{Exogenous_Variant_52}), and (\ref{Exogenous_Variant_53}), we obtain
$\|\mathbf{W}_{2}\theta_{2}\|^{2}_{2}\leq n(1+\alpha_{2})\|\theta_{2}\|^{2}_{2}$, $\|\xi\|^{2}_{2}\leq(1+\alpha_{1})^{2}n\sigma^{2}$, and $
    \|G\|^{2}\leq (1+\alpha_{1})^{2}n$.
Therefore, we have
\begin{equation}\label{Exogenous_inequality}
     \|\xi-\mathbf{W}_{2}\theta_{2}-\|\theta_{1}\|_{2}G\|_{2}^{2}\leq n(1+\alpha_{1})^{2}(1+\alpha_{2})(1+\alpha_{3})^{2}(\sigma^{2}+\|\theta_{2}\|^{2}_{2}+\|\theta_{1}\|^{2}_{2}).
\end{equation}

To consider an upper bound of the ridgeless estimator, we need to choose a suitable $\theta$ which satisfies the restriction of the auxiliary problem. We define $\theta:=s(Pv^{*})$. Then, we have
    $\theta_{1}=s\Sigma_{2}^{1/2}v^{*}$ and 
    $\theta_{2}=s\Sigma_{1}^{1/2}Pv^{*}$.
If we consider the value of $s$ that satisfies the inequality:
\begin{equation*}
   n(1+\alpha_{1})^{2}(1+\alpha_{2})(1+\alpha_{3})^{2}(\sigma^{2}+\|s\Sigma_{1}^{1/2}Pv^{*}\|^{2}_{2}+\|s\Sigma_{2}^{1/2}v^{*}\|^{2}_{2})\leq s^{2}\langle H,\Sigma_{2}^{1/2} v^{*} \rangle^{2},
\end{equation*}
it holds from (\ref{Exogenous_inequality}) that 
\begin{equation*}
    \|\xi-\mathbf{W}_{2}\theta_{2}-\|\theta_{1}\|_{2}G\|^{2}_{2}\leq(\langle H,\theta_{1} \rangle)^{2}.
\end{equation*}
As $\langle H,\theta_{1} \rangle\geq 0$ by the definition of $\theta_{1}$, $\theta=s(Pv^{*})$ satisfies the restriction of the auxiliary problem in Lemma \ref{Variant_Lemma7}. Solving for $s$, we can select
\begin{equation*}
    s^{2}=\sigma^{2}\Biggl( ~\underbrace{\frac{\langle H,\Sigma^{1/2}_{2}v^{*} \rangle^{2}}{n(1+\alpha_{1})^{2}(1+\alpha_{2})(1+\alpha_{3})^{2}}-\|\Sigma^{1/2}_{1}Pv^{*}\|_{2}^{2}-\|\Sigma^{1/2}_{2}v^{*}\|_{2}^{2}}_{=: \Upsilon}~\Biggr)^{-1},
\end{equation*}
under the condition that $\Upsilon$ is positive.
We derive a lower bound of $\Upsilon$ as in the proof of Lemma \ref{Variant_Lemma8}: 
 \begin{align*}
     \Upsilon &= \frac{\|\Sigma_{2}^{1/2}H\|^{2}_{*}}{(1+\alpha_{1})^{2}(1+\alpha_{2})(1+\alpha_{3})^{2}n}-\|v^{*}\|^{2}_{\Sigma_{2}}-\|\Sigma^{1/2}_{1}Pv^{*}\|_{2}^{2} \\
     &\geq \frac{(\mathbb{E}\|\Sigma_{2}^{1/2}H\|_{*})^{2}}{n}\left(\frac{1}{(1+\alpha_{1})^{2}(1+\alpha_{2})(1+\alpha_{3})^{2}}\left(1-2\sqrt{\frac{2\log (8/\delta)}{r_{\|\cdot\|}(\Sigma_{2})}}\right)\right. \\
     &\qquad \left.-(1+\varepsilon_{1})^{2}\frac{n}{R_{\|\cdot\|}(\Sigma_{2})}-\frac{n}{(\mathbb{E}\|\Sigma_{2}^{1/2}H\|_{*})^{2}}\|\Sigma^{1/2}_{1}Pv^{*}\|_{2}^{2}\right) \\
     &\geq \frac{(\mathbb{E}\|\Sigma_{2}^{1/2}H\|_{*})^{2}}{n}\left(\frac{1}{(1+\alpha_{1})^{2}(1+\alpha_{2})(1+\alpha_{3})^{2}}\left(1-2\sqrt{\frac{2\log (8/\delta)}{r_{\|\cdot\|}(\Sigma_{2})}}\right)\right. \\
     &\qquad \left.-(1+\varepsilon_{1})^{2}\frac{n}{R_{\|\cdot\|}(\Sigma_{2})} -(1+\varepsilon_{3})^{2}\frac{n}{(\mathbb{E}\|\Sigma_{2}^{1/2}H\|_{*})^{2}}(\Ep\|\Sigma^{1/2}_{1}Pv^{*}\|_{2})^{2}\right).
 \end{align*}
 The equality holds by the definition of $R_{\|\cdot\|}(\Sigma_{2})$, the first inequality holds from (\ref{Exogenous_Variant_65}) and (\ref{Exogenous_Variant_71}), and the second inequality follows $\|\Sigma_{1}^{1/2}Pv^{*}\|_{2}\leq (1+\varepsilon_{3})\Ep\|\Sigma_{1}^{1/2}Pv^{*}\|_{2}$ by Assumption (\ref{Exogenous_third}).

We linearize the terms including $\alpha_{1}$, $\alpha_{2}$, and $\alpha_{3}$ to simplify the upper bound. If $\alpha_{1}<1$, $\alpha_{2}<1$, and $\alpha_{3}<1$, it holds that
 \begin{align*}
(1+\alpha_{1})^{2}(1+\alpha_{2})(1+\alpha_{3})^{2}&=(1+2\alpha_{1}+\alpha_{1}^{2})(1+\alpha_{2})(1+2\alpha_{3}+\alpha_{3}^{2}) \\
&\leq (1+3\alpha_{1})(1+\alpha_{2})(1+3\alpha_{3}) \\
  &= (1+3\alpha_{1}+\alpha_{2}+3\alpha_{1}\alpha_{2})(1+3\alpha_{3}) \\
   &\leq (1+3\alpha_{1}+4\alpha_{2})(1+3\alpha_{3}) \\
    &\leq (1+12\alpha_{1}+4\alpha_{2}+15\alpha_{3}).
 \end{align*}
 As $(1+x)^{-1}\geq (1-x)$ holds for any $x$, we have
 \begin{align*}
     &\left(\frac{1}{(1+\alpha_{1})^{2}(1+\alpha_{2})(1+\alpha_{3})^{2}}\left(1-2\sqrt{\frac{2\log (8/\delta)}{r_{\|\cdot\|}(\Sigma_{2})}}\right)\right. \\
     &\qquad \left.-(1+\varepsilon_{1})^{2}\frac{n}{R_{\|\cdot\|}(\Sigma_{2})}-(1+\varepsilon_{3})^{2}\frac{n}{(\mathbb{E}\|\Sigma_{2}^{1/2}H\|_{*})^{2}}(\Ep\|\Sigma^{1/2}_{1}Pv^{*}\|_{2})^{2}\right) \\
     &\geq 1-(12\alpha_{1}+4\alpha_{2}+15\alpha_{3})-2\sqrt{\frac{2\log (8/\delta)}{r_{\|\cdot\|}(\Sigma_{2})}} \\
     & \qquad -(1+\varepsilon_{1})^{2}\frac{n}{R_{\|\cdot\|}(\Sigma_{2})}- (1+\varepsilon_{3})^{2}\frac{n}{(\mathbb{E}\|\Sigma_{2}^{1/2}H\|_{*})^{2}}(\Ep\|\Sigma_{1}^{1/2}Pv^{*}\|_{2})^{2}\\
     &\geq 1-\varepsilon'
\end{align*}
where
\begin{align*}
    \varepsilon'&=42\sqrt{\frac{\rank(\Sigma_{1})}{n}}+78\sqrt{\frac{\log(32/\delta)}{n}}+4\sqrt{\frac{\log (8/\delta)}{r_{\|\cdot\|}(\Sigma_{2})}} \\
    & \quad +(1+\varepsilon_{1})^{2}\frac{n}{R_{\|\cdot\|}(\Sigma_{2})}+(1+\varepsilon_{3})^{2}\frac{n}{(\mathbb{E}\|\Sigma_{2}^{1/2}H\|_{*})^{2}}(\Ep\|\Sigma_{1}^{1/2}Pv^{*}\|_{2})^{2}
\end{align*}
If $\varepsilon'\leq 1/2$, because $(1-x)^{-1}\leq 1+2x$ for $x\in[0,1/2]$, it holds that 
\begin{align*}
s^{2}&=\sigma^{2}\Upsilon^{-1} \leq (1-\varepsilon')^{-1}\sigma^{2}\frac{n}{(\mathbb{E}\|\Sigma_{2}^{1/2}H\|_{*})^{2}} \leq (1+2\varepsilon')\sigma^{2}\frac{n}{(\mathbb{E}\|\Sigma_{2}^{1/2}H\|_{*})^{2}}.
\end{align*}
Therefore, we have
\begin{equation*}
    \phi^{2}\leq s^{2}\|Pv^{*}\|^{2}\leq(1+\varepsilon_{2})(1+2\varepsilon')\sigma^{2}\frac{n}{(\mathbb{E}\|\Sigma_{2}^{1/2}H\|_{*})^{2}}\leq (1+\varepsilon)\sigma^{2}\frac{n}{(\mathbb{E}\|\Sigma_{2}^{1/2}H\|_{*})^{2}}
\end{equation*}
with $\varepsilon=2\varepsilon'+2\varepsilon_{2}$.
\end{proof}

\begin{theorem}[General norm bound]\label{Exogenous_Variant_Theorem4}
There exists an absolute constant $C_{2}\leq 312$ such that the following
is true. Under Assumption \ref{asmp:Gaussianity} with covariance split $\Sigma_{x}=\Sigma_{1}+\Sigma_{2}$, let $\|\cdot\|$ be an
arbitrary norm, and fix $\delta\leq1/4$. Denote the $\ell_{2}$ orthogonal projection matrix onto the space spanned by $\Sigma_{2}$ as $P$. Let $H$ be normally distributed with mean zero and variance $I_{d}$, that is, $H\sim N(0,I_{d})$. Denote $v_{*}$ as $\arg\min_{v\in\partial\|\Sigma_2^{1/2}H\|_{*}} \|v\|_{\Sigma_2}$. Assume that there exists $\varepsilon_{1},\varepsilon_{2}\ \text{and}\ \varepsilon_{3}\geq0$ such that with probability at least $1-\delta/8$,
\begin{equation*}
    \|v^{*}\|_{\Sigma_2}\leq(1+\varepsilon_{1})\mathbb{E}\|v^{*}\|_{\Sigma_2},
\end{equation*}
\begin{equation*}
    \|Pv^{*}\|\leq 1+\varepsilon_{2},
\end{equation*}
and
\begin{equation*}
     \|\Sigma_{1}^{1/2}Pv^{*}\|_{2}\leq (1+\varepsilon_{3})\Ep\|\Sigma_{1}^{1/2}Pv^{*}\|_{2}.
 \end{equation*}
Define $\varepsilon$ as 
\begin{align*}
    \varepsilon := C_{2}\left(\sqrt{\frac{\rank(\Sigma_{1})}{n}}+\sqrt{\frac{\log(1/\delta)}{r_{\|\cdot\|}(\Sigma_{2})}}+\sqrt{\frac{\log(1/\delta)}{n}}+(1+\varepsilon_{1})^{2}\frac{n}{R_{\|\cdot\|}(\Sigma_{2})}\right. \\
    \left.+(1+\varepsilon_{3})^{2}\frac{n}{(\mathbb{E}\|\Sigma_{2}^{1/2}H\|_{*})^{2}}(\Ep\|\Sigma_{1}^{1/2}Pv^{*}\|_{2})^{2}+\varepsilon_{2}\right).
\end{align*}
If $n$ and the effective ranks are sufficiently large such that $\varepsilon\leq1$, then with probability at least $1-\delta$, it holds that
\begin{equation*}
    \|\hat{\theta}\|\leq\|\theta_{0}\|+(1+\varepsilon)^{1/2}\sigma\frac{\sqrt{n}}{\Ep\|\Sigma_{2}^{1/2}H\|_{*}}.
\end{equation*}
\end{theorem}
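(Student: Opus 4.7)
The plan is to follow the proof of Theorem \ref{Variant_Theorem4} almost verbatim, substituting the orthogonal-case AO bound (Lemma \ref{Variant_Lemma8}) by its non-orthogonal analogue Lemma \ref{Exogenous_Variant_Lemma8}. Since Theorem \ref{Exogenous_Variant_Theorem4} concerns the independent-noise setting ($\corrcoef = 0$, hence $\tilde\sigma = \sigma$), no correlation terms appear in the final bound, but the covariance split $\Sigma_x = \Sigma_1 + \Sigma_2$ is now allowed to be non-orthogonal, which introduces the additional concentration hypothesis on $\|\Sigma_1^{1/2}Pv^*\|_2$ and the corresponding mixed term $(\mathbb{E}\|\Sigma_1^{1/2}Pv^*\|_2)^2/(\mathbb{E}\|\Sigma_2^{1/2}H\|_*)^2$ in the definition of $\varepsilon$.

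Concretely, I proceed in three steps. Step 1: for any $t > 0$, invoke the reduction in Lemma \ref{Variant_Lemma6} (whose argument only uses the distributional identity $\mathbf{X} \overset{d}{=} \mathbf{W}_1\Sigma_2^{1/2} + \mathbf{W}_2\Sigma_1^{1/2}$, which follows from the covariance split alone) to obtain
\begin{equation*}
\Pr(\|\hat\theta\| > t) \leq \Pr(\Phi > t - \|\theta_0\|).
\end{equation*}
Step 2: apply Lemma \ref{Variant_Lemma7}, which is the extended CGMT from Theorem \ref{VariantCGMT} applied to a Lagrangian-regularized version of the PO, to pass from PO to AO:
\begin{equation*}
\Pr(\Phi > t - \|\theta_0\|) \leq 2\Pr(\phi \geq t - \|\theta_0\|).
\end{equation*}
Step 3: apply Lemma \ref{Exogenous_Variant_Lemma8} with $\delta$ replaced by $\delta/2$ to obtain, with probability at least $1 - \delta/2$, the bound $\phi^2 \leq (1+\varepsilon)\sigma^2 n / (\mathbb{E}\|\Sigma_2^{1/2}H\|_*)^2$. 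Choosing $t - \|\theta_0\| = (1+\varepsilon)^{1/2}\sigma\sqrt{n}/\mathbb{E}\|\Sigma_2^{1/2}H\|_*$ and combining via a union bound yields the stated tail bound, which rearranges to the conclusion of the theorem.

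The main obstacle I expect is constant tracking. The constant $C_2 \leq 312$ here is roughly six times the analogous constant in Theorem \ref{Variant_Theorem4} ($\leq 56$); this factor arises from the tighter linearization of the product $(1+\alpha_1)^2(1+\alpha_2)(1+\alpha_3)^2$ inside the proof of Lemma \ref{Exogenous_Variant_Lemma8}, together with the additional concentration event for $\|\Sigma_1^{1/2}Pv^*\|_2$ controlled by $\varepsilon_3$. A secondary subtlety is the implicit use of Lemma \ref{Variant_Lemma6} in the non-orthogonal regime: one must check that the norm reformulation $\|\hat\theta\| \leq \|\theta_0\| + \Phi$ remains valid without the simplification $\Sigma_x^{1/2} = \Sigma_1^{1/2} + \Sigma_2^{1/2}$. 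This is handled by the explicit construction $\theta = s\,Pv^*$ used in the proof of Lemma \ref{Exogenous_Variant_Lemma8}, which directly produces a feasible interpolator of norm at most $s\|Pv^*\|$ and bypasses the need for the orthogonal simplification. Once these bookkeeping steps are in place, assembling the final tail bound is routine.
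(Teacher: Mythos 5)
Your proposal follows the paper's proof of Theorem \ref{Exogenous_Variant_Theorem4} essentially verbatim: reduce via Lemma \ref{Variant_Lemma6}, apply the CGMT transfer of Lemma \ref{Variant_Lemma7}, then invoke Lemma \ref{Exogenous_Variant_Lemma8} with $\delta$ replaced by $\delta/2$ and rearrange. Your remarks on constant tracking and on the validity of the norm reformulation in the non-orthogonal regime are consistent with how the paper handles Lemma \ref{Exogenous_Variant_Lemma8} (via the explicit feasible point $\theta = s\,Pv^*$), so the approach is the same.
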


\begin{proof}[Proof of Theorem \ref{Exogenous_Variant_Theorem4}]
For any $t>0$, it holds from Lemmas \ref{Variant_Lemma6} and \ref{Variant_Lemma7} that
\begin{equation*}
\Pr(\|\hat{\theta}\|>t)\leq\Pr(\Phi>t-\|\theta_{0}\|)\leq2\Pr(\phi\geq t-\|\theta_{0}\|).
\end{equation*}
Lemma \ref{Exogenous_Variant_Lemma8} implies that the above term is upper bounded by $\delta$ if we choose $t-\|\theta_{0}\|$ using the result (\ref{Exogenous_Variant_67}) with $\delta$ replaced by $\delta/2$. We obtain the stated result by moving $\|\theta_{0}\|$ to the other side.
\end{proof}

When we consider the Euclidean space, we can reduce the upper bound of the ridgeless estimator to a simpler bound.
\begin{theorem}[Euclidean norm bound; special case of Theorem \ref{Exogenous_Variant_Theorem4}]\label{Exogenous_Variant_Theorem2}
Fix any $\delta\leq1/4$. Under Assumption \ref{asmp:Gaussianity} with covariance $\Sigma_x=\Sigma_{1}+\Sigma_{2}$, there exists some $\varepsilon\lesssim \sqrt{\rank(\Sigma_{1})/n}+\sqrt{\log(1/\delta)/r(\Sigma_{2})}+\sqrt{\log(1/\delta)/n}+n\log(1/\delta)/R(\Sigma_{2})\left(1+\trace(\Sigma_{1}\Sigma_{2})/\trace(\Sigma_{2}^{2})\right)$ such that the following is true. If $n$ and the effective ranks are sufficiently large such that $\varepsilon\leq1$ and $R(\Sigma_{2})\gtrsim \log(1/\delta)^{2}$, then with probability at least $1-\delta$, it holds that
\begin{equation}\label{Exogenous_Variant_6}
\|\hat{\theta}\|_{2}\leq\|\theta_{0}\|_{2}+(1+\varepsilon)^{1/2}\sigma\sqrt{\frac{n}{\trace(\Sigma_{2})}}.
\end{equation}
\end{theorem}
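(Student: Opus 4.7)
This statement is the Euclidean-norm specialization of Theorem \ref{Exogenous_Variant_Theorem4}, and the proof mirrors the passage from Theorem \ref{Variant_Theorem4} to Theorem \ref{Variant_Theorem2} in the orthogonal case, with one extra non-orthogonality contribution that must be tracked. The strategy is to instantiate the general bound with $\|\cdot\|=\|\cdot\|_2$ and to choose the free parameters $\varepsilon_1,\varepsilon_2,\varepsilon_3$ so that each of the abstract terms in the definition of $\varepsilon$ collapses to an expression in the classical effective ranks $r(\Sigma_2)$, $R(\Sigma_2)$, plus a cross term $\trace(\Sigma_1\Sigma_2)/\trace(\Sigma_2^2)$.

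First, for the Euclidean norm we have $\partial\|\Sigma_2^{1/2}H\|_2=\{\Sigma_2^{1/2}H/\|\Sigma_2^{1/2}H\|_2\}$, so $v^{*}=\Sigma_2^{1/2}H/\|\Sigma_2^{1/2}H\|_2$. Because $v^{*}$ lies in the range of $\Sigma_2^{1/2}$ and $P$ is the $\ell_2$-projection onto the range of $\Sigma_2$, we get $Pv^{*}=v^{*}$ and $\|Pv^{*}\|_2=1$; thus we may simply take $\varepsilon_2=0$. Standard Gaussian concentration (in the same spirit as Lemma \ref{Variant_10_Orthogonal} applied with $\Sigma_1=I$) shows that when $R(\Sigma_2)\gtrsim\log(1/\delta)^2$ one has $\|\Sigma_2^{1/2}H\|_2^2\gtrsim\trace(\Sigma_2)$ with probability $\geq 1-\delta/8$, and also $(\Ep\|\Sigma_2^{1/2}H\|_2)^2\gtrsim\trace(\Sigma_2)$. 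The quantity $\|v^{*}\|_{\Sigma_2}=\|\Sigma_2 H\|_2/\|\Sigma_2^{1/2}H\|_2$ concentrates around a value of order $\sqrt{\trace(\Sigma_2^2)/\trace(\Sigma_2)}$ up to a $\sqrt{\log(1/\delta)}$ factor, so one can pick $\varepsilon_1$ satisfying $(1+\varepsilon_1)\Ep\|v^{*}\|_{\Sigma_2}\asymp\sqrt{\log(1/\delta)\,\trace(\Sigma_2^2)/\trace(\Sigma_2)}$, which yields
\[
(1+\varepsilon_1)^2\frac{n}{R_{\|\cdot\|_2}(\Sigma_2)}\lesssim\frac{n\log(1/\delta)}{R(\Sigma_2)}.
\]

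The new step, not present in the orthogonal case, is the control of $\|\Sigma_1^{1/2}Pv^{*}\|_2=\|\Sigma_1^{1/2}\Sigma_2^{1/2}H\|_2/\|\Sigma_2^{1/2}H\|_2$. Here Lemma \ref{Variant_10_Orthogonal} is applied directly to the product matrix, giving $\|\Sigma_1^{1/2}\Sigma_2^{1/2}H\|_2^2\lesssim\log(1/\delta)\trace(\Sigma_1\Sigma_2)$ with high probability. Combined with the lower bound on $\|\Sigma_2^{1/2}H\|_2^2$ from the previous paragraph, we deduce $\|\Sigma_1^{1/2}Pv^{*}\|_2^2\lesssim\log(1/\delta)\trace(\Sigma_1\Sigma_2)/\trace(\Sigma_2)$, and hence $\varepsilon_3$ can be chosen so that
\[
(1+\varepsilon_3)^2\frac{n\,(\Ep\|\Sigma_1^{1/2}Pv^{*}\|_2)^2}{(\Ep\|\Sigma_2^{1/2}H\|_2)^2}\lesssim\frac{n\log(1/\delta)\,\trace(\Sigma_1\Sigma_2)}{\trace(\Sigma_2)^2}=\frac{n\log(1/\delta)}{R(\Sigma_2)}\cdot\frac{\trace(\Sigma_1\Sigma_2)}{\trace(\Sigma_2^2)}.
\]
Summing these contributions with the universal $\sqrt{\rank(\Sigma_1)/n}$ and $\sqrt{\log(1/\delta)/n}$ pieces coming from $\varepsilon$ in Theorem \ref{Exogenous_Variant_Theorem4}, and using $r_{\|\cdot\|_2}(\Sigma_2)=r(\Sigma_2)$, yields exactly the claimed rate for $\varepsilon$.

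Finally, to reach the clean bound $(1+\varepsilon)^{1/2}\sigma\sqrt{n/\trace(\Sigma_2)}$, we replace $\Ep\|\Sigma_2^{1/2}H\|_2$ by $\sqrt{\trace(\Sigma_2)}$ at the cost of a factor $(1-1/r(\Sigma_2))^{-1/2}\leq(1+2/r(\Sigma_2))^{1/2}$, which is absorbed into $\varepsilon$ using $(1-x)^{-1}\leq 1+2x$ for $x\in[0,1/2]$, exactly as in the end of the proof of Theorem \ref{Variant_Theorem2}. The only non-routine point in the whole argument is the third step: handling the cross term $\Sigma_1^{1/2}\Sigma_2^{1/2}H$ requires a product-type concentration rather than two independent applications, and this is precisely what Lemma \ref{Variant_10_Orthogonal} provides, producing the characteristic non-orthogonality factor $\trace(\Sigma_1\Sigma_2)/\trace(\Sigma_2^2)$ in $\varepsilon$.
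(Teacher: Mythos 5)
Your proposal is correct and follows essentially the same route as the paper's proof: identify $v^{*}=\Sigma_2^{1/2}H/\|\Sigma_2^{1/2}H\|_2$ for the Euclidean norm, set $\varepsilon_2=0$ via the $\ell_2$-projection, choose $\varepsilon_1$ from the concentration of $\|v^{*}\|_{\Sigma_2}$ to convert $R_{\|\cdot\|_2}(\Sigma_2)$ into $R(\Sigma_2)$, invoke Lemma \ref{Variant_10_Orthogonal} on the product matrix $\Sigma_1^{1/2}\Sigma_2^{1/2}$ to choose $\varepsilon_3$ and produce the cross term $\trace(\Sigma_1\Sigma_2)/\trace(\Sigma_2^2)$, and then trade $\Ep\|\Sigma_2^{1/2}H\|_2$ for $\sqrt{\trace(\Sigma_2)}$ via Lemma \ref{Variant_Lemma9} at the cost of a $1/r(\Sigma_2)$ increment absorbed into $\varepsilon$. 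One tiny imprecision: you write $r_{\|\cdot\|_2}(\Sigma_2)=r(\Sigma_2)$, whereas Lemma \ref{Variant_Lemma9} only gives $r(\Sigma_2)-1\le r_{\|\cdot\|_2}(\Sigma_2)\le r(\Sigma_2)$; this is harmless under $\lesssim$ but should be stated as such.
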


\begin{proof}[Proof of Theorem \ref{Exogenous_Variant_Theorem2}]
Throughout this proof, we simplify the upper bound, especially $
    n/R_{\|\cdot\|_{2}}(\Xi_z)$, $1/r_{\|\cdot\|_{2}}(\Xi_{z})$, and $(n\Ep\|\Sigma_{1}^{1/2}Pv^{*}\|_{2}^{2})/(\mathbb{E}\|\Sigma_{2}^{1/2}H\|_{*})^{2}$, in Theorem \ref{Exogenous_Variant_Theorem4}. By the definition of the dual norm and $\partial\|\Xi_{x}^{1/2}H\|_{*}$ with Euclidean norm, $v^{*}$ is equal to $\Sigma_2^{1/2}H/\|\Sigma_2^{1/2}H\|_{2}$. Hence, $\|v^{*}\|_{\Xi_z}$ is $\|\Sigma_2H\|_{2}/\|\Sigma_2^{1/2}H\|_{2}$. From the result of (\ref{Variant_78}), for some constant $c_{1}>0$, we can choose $\varepsilon_{1}$ such that
\begin{equation*}
    (1+\varepsilon_{1})E\|v^{*}\|_{\Sigma_2}=c_{1}\sqrt{\log(64/\delta)\frac{\trace(\Sigma_2^{2})}{\trace(\Sigma_2)}}.
\end{equation*}
If we assume effective rank is sufficiently large, (\ref{Variant_72}) provides that $\left(E\|\Sigma_2^{1/2}H\|_{2}\right)^{2}\gtrsim \trace(\Sigma_2)$. Therefore, we have
\begin{align*}
    (1+\varepsilon_{1})^{2}\frac{n}{R_{\|\cdot\|_{2}}(\Sigma_2)}&=n\frac{(1+\varepsilon_{1})^{2}(E\|v^{*}\|_{\Sigma_2})^{2}}{\left(E\|\Sigma_2^{1/2}H\|_{2}\right)^{2}}\\
    &\lesssim n\log(64/\delta)\frac{\trace(\Sigma_2^{2})}{\trace(\Sigma_2)^{2}}\\
    &=\frac{n\log(64/\delta)}{R(\Sigma_2)}.
\end{align*}
It also holds from (\ref{Variant_78_Orthogonal}) that for some constant $c_{2}>0$, there exists $\varepsilon_{3}$ such that 
\begin{equation*}
    (1+\varepsilon_{3})\Ep\|\Sigma_{1}^{1/2}Pv^{*}\|_{2}=c_{2}\sqrt{\log(64/\delta)\frac{\trace(\Sigma_{1}\Sigma_{2})}{\trace(\Sigma_{2})}}.
\end{equation*}
By (\ref{Variant_72}), for sufficiently large effective rank, it holds that $(E\|\Sigma_2^{1/2}H\|_{2})^{2}\gtrsim \trace(\Sigma_2)$. Therefore, we have
\begin{align*}
    (1+\varepsilon_{3})^{2}\frac{n}{(\mathbb{E}\|\Sigma_{2}^{1/2}H\|_{2})^{2}}(\Ep\|\Sigma_{1}^{1/2}Pv^{*}\|_{2})^{2}&\lesssim  n\log(64/\delta)\frac{\trace(\Sigma_1\Sigma_2)}{\trace(\Sigma_2)^{2}} \\
    &=\frac{n\log(64/\delta)}{R(\Sigma_2)}\frac{\trace(\Sigma_1\Sigma_2)}{\trace(\Sigma_2^{2})}.
\end{align*}

Finally, we obtain the upper bound of $\varepsilon$. As $P$ is an $l_{2}$ projection matrix, let $\varepsilon_{2}$ be zero. Then, it holds from (\ref{Variant_74}) of Lemma \ref{Variant_Lemma9}  that 
\begin{equation*}
\varepsilon\lesssim  \sqrt{\frac{\rank(\Sigma_{1})}{n}}+\sqrt{\frac{\log(1/\delta)}{n}}+ \sqrt{\frac{\log(1/\delta)}{r(\Sigma_{2})}}+\frac{n\log(1/\delta)}{R(\Sigma_{2})}\left(1+\frac{\trace(\Sigma_{1}\Sigma_{2})}{\trace(\Sigma_{2}^{2})}\right). 
\end{equation*}
By using the inequality $(1-x)^{-1}\leq1+2x$ for $x\in[0,1/2]$ and (\ref{Variant_72}) of Lemma \ref{Variant_Lemma9}, we finally obtain
\begin{align*}
    (1+\varepsilon)^{1/2}\sigma\frac{\sqrt{n}}{E\|\Sigma_{2}^{1/2}H\|_{2}}&\leq (1+\varepsilon)^{1/2}\left(1-\frac{1}{r(\Sigma_{2})}\right)^{-1/2}\sigma\sqrt{\frac{n}{\trace(\Sigma_2)}}  \\
    &\leq (1+\varepsilon)^{1/2}\left(1+\frac{2}{r(\Sigma_{2})}\right)^{1/2}\sigma\sqrt{\frac{n}{\trace(\Sigma_2)}} \\
    &\leq \left(1+2\varepsilon+\frac{2}{r(\Sigma_{2})}\right)^{1/2}\sigma\sqrt{\frac{n}{\trace(\Sigma_2)}},
\end{align*}
with $\varepsilon$ replaced by
\begin{align*}
    \varepsilon'&:=2\varepsilon+\frac{2}{r(\Sigma_{2})}\\
    &\lesssim \sqrt{\frac{\rank(\Sigma_{1})}{n}}+ \sqrt{\frac{\log(1/\delta)}{n}}+ \sqrt{\frac{\log(1/\delta)}{r(\Sigma_{2})}}+\frac{n\log(1/\delta)}{R(\Sigma_{2})}\left(1+\frac{\trace(\Sigma_{1}\Sigma_{2})}{\trace(\Sigma_{2}^{2})}\right).
\end{align*}
\end{proof}

\begin{theorem}[Benign Overfitting (Non-orthogonal)]\label{Exogenous_Variant_Theorem3}
Fix any $\delta\leq1/2$. Under Assumption \ref{asmp:Gaussianity} with covariance $\Sigma_x=\Sigma_{1}+\Sigma_{2}$, let $\gamma$ and $\varepsilon$ be as defined in Lemma \ref{Exogenous_Corollary2} and Theorem \ref{Exogenous_Variant_Theorem2}, respectively. Suppose also that $n$ and the effective ranks are such that $R(\Sigma_{2})\gtrsim \log(1/\delta)^{2}$ and $\gamma,\varepsilon\leq1$, then, with probability at least $1-\delta$, it holds that 
\begin{align*}
    L(\hat{\theta})\leq (1+\gamma)(1+\varepsilon)\left(\sigma+\|\theta_{0}\|_{2}\sqrt{\frac{\trace(\Sigma_{2})}{n}}\right)^{2},
\end{align*}
where we denote $L(\theta)$ as $\Ep(y-\langle\theta,x\rangle)^{2}$.
\end{theorem}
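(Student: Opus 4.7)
The plan is to combine the Euclidean norm bound on the interpolator from Theorem \ref{Exogenous_Variant_Theorem2} with the uniform generalization bound over a Euclidean ball from Lemma \ref{Exogenous_Corollary2}. The interpolator $\hat{\theta}$ satisfies $\hat{L}(\hat{\theta}) = 0$ by construction, so if I can pin down $\|\hat{\theta}\|_{2}$ by some explicit radius $B$ with high probability, then I can apply the uniform risk bound for the $\ell_{2}$-ball of radius $B$ and specialize it to $\hat{\theta}$. This mirrors the argument already used in the proof of Theorem \ref{Variant_Theorem3} for the orthogonal case, with Corollary \ref{Variant_Corollary2} and Theorem \ref{Variant_Theorem2} replaced by Lemma \ref{Exogenous_Corollary2} and Theorem \ref{Exogenous_Variant_Theorem2}, respectively.

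First, I would invoke Theorem \ref{Exogenous_Variant_Theorem2} (applied at confidence level $\delta/2$) to obtain, with probability at least $1-\delta/2$, the bound $\|\hat{\theta}\|_{2} \leq B$ with
\begin{equation*}
B := \|\theta_{0}\|_{2} + (1+\varepsilon)^{1/2}\sigma\sqrt{\frac{n}{\trace(\Sigma_{2})}}.
\end{equation*}
Since evidently $B \geq \|\theta_{0}\|_{2}$, Lemma \ref{Exogenous_Corollary2} (also at confidence level $\delta/2$) gives, with probability at least $1-\delta/2$,
\begin{equation*}
\sup_{\|\theta\|_{2}\leq B,\,\hat{L}(\theta)=0} L(\theta) \leq (1+\gamma)\,\frac{B^{2}\trace(\Sigma_{2})}{n}.
\end{equation*}
A union bound puts both conclusions on a single event of probability at least $1-\delta$, and on that event $\hat{\theta}$ belongs to the admissible set on the left, so $L(\hat{\theta})$ is controlled by the right-hand side.

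It then remains to rewrite the right-hand side in the target form. Substituting $B$ gives
\begin{equation*}
\frac{B^{2}\trace(\Sigma_{2})}{n} = \left(\|\theta_{0}\|_{2}\sqrt{\frac{\trace(\Sigma_{2})}{n}} + (1+\varepsilon)^{1/2}\sigma\right)^{2},
\end{equation*}
and I would apply the elementary inequality $a + (1+\varepsilon)^{1/2} b \leq (1+\varepsilon)^{1/2}(a + b)$, valid for nonnegative $a, b$ when $\varepsilon\geq 0$, to pull out the factor $1+\varepsilon$ and land exactly on $(1+\gamma)(1+\varepsilon)(\sigma + \|\theta_{0}\|_{2}\sqrt{\trace(\Sigma_{2})/n})^{2}$.

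Structurally this proof is essentially bookkeeping on top of the two preceding results, so I do not foresee a substantive obstacle. The only bit of care required is the $\delta$-splitting: applying each of Theorem \ref{Exogenous_Variant_Theorem2} and Lemma \ref{Exogenous_Corollary2} at level $\delta/2$ enlarges the constants $\gamma$ and $\varepsilon$ by at most a logarithmic factor in $1/\delta$, which can be absorbed into the definitions stated in those results. The hypotheses $R(\Sigma_{2}) \gtrsim \log(1/\delta)^{2}$ and $\gamma,\varepsilon \leq 1$ are exactly what is needed to ensure both underlying bounds are in force.
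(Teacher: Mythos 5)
Your proposal is correct and follows essentially the same route as the paper's own proof: pick $B = \|\theta_{0}\|_{2} + (1+\varepsilon)^{1/2}\sigma\sqrt{n/\trace(\Sigma_{2})}$ from Theorem \ref{Exogenous_Variant_Theorem2}, note that $\hat{\theta}$ lies in $\{\|\theta\|_{2}\le B,\ \hat{L}(\theta)=0\}$, apply Lemma \ref{Exogenous_Corollary2}, and then absorb the $(1+\varepsilon)^{1/2}$ factor into the squared bracket. Your explicit mention of the $\delta/2$ union bound is a small bit of bookkeeping the paper leaves implicit, but it does not change the argument.
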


\begin{proof}[Proof of Theorem \ref{Exogenous_Variant_Theorem3}]
From the result of Theorem \ref{Exogenous_Variant_Theorem2}, if we adopt
\begin{equation*}
    B=\|\theta_{0}\|_{2}+(1+\varepsilon)^{1/2}\sigma\sqrt{\frac{n}{\trace(\Sigma_{2})}},
\end{equation*}
then $\{\theta:\|\theta\|_{2}\leq B\}\cap\{\theta: \mathbf{X}\theta=\mathbf{Y}\}$ is not empty with high probability. 
This intersection necessarily contains the ridgeless estimator $\hat{\theta}$. Clearly, $B>\|\theta_{0}\|_{2}$. Therefore, it holds from Lemma \ref{Exogenous_Corollary2} that
\begin{align*}
     L(\hat{\theta})&\leq \sup_{\|\theta\|_{2}\leq B, \hat{L}(\theta)=0}L(\theta) \\
     &\leq (1+\gamma)\left(\|\theta_{0}\|_{2}+(1+\varepsilon)^{1/2}\sigma\sqrt{\frac{n}{\trace(\Sigma_{2})}}\right)^{2}\frac{\trace(\Sigma_{2})}{n} \\
     &\leq (1+\gamma)(1+\varepsilon)\left(\sigma+\|\theta_{0}\|_{2}\sqrt{\frac{\trace(\Sigma_{2})}{n}}\right)^{2},
\end{align*}
where we denote $\hat{L}(\theta)$ as $\|\mathbf{Y}-\mathbf{X}\theta\|_{2}^{2}/n$.
\end{proof}

\noindent
\textbf{Theorem \ref{Exogenous_Variant_Theorem12}} (Sufficient conditions: Non-Orthogonal Case when $X_{i}$ and $\xi_{i}$ are independent) \textit{
Under Assumption \ref{asmp:Gaussianity}, let $\hat{\theta}$ be the ridgeless estimator. Suppose also that as $n$ goes to $\infty$, there exists a sequence of covariance $\Sigma_x=\Sigma_{1}+\Sigma_{2}$ such that the following conditions hold:
\begin{enumerate}
    \item[(i)] (Small large-variance dimension.)
    \begin{equation*}
        \lim_{n\rightarrow\infty}\frac{\rank(\Sigma_{1})}{n}=0.
    \end{equation*}
    \item[(ii)] (Large effective dimension.)
    \begin{equation*}
        \lim_{n\rightarrow\infty}\frac{n}{R(\Sigma_{2})}=0.
    \end{equation*}
    \item[(iii)] (No aliasing condition.)
    \begin{equation*}
        \lim_{n\rightarrow\infty}\|\theta_{0}\|_{2}\sqrt{\frac{\trace(\Sigma_{2})}{n}}=0.
    \end{equation*}
    \item[(iv)] (The cost of non-orthogonality)
    \begin{equation*}
        \lim_{n\rightarrow\infty}\frac{n}{R(\Sigma_{2})}\left(\frac{\trace(\Sigma_{1}\Sigma_{2})}{\trace(\Sigma_{2}^{2})}\right)=0.
    \end{equation*}
\end{enumerate}
Then, $ L(\hat{\theta})$ converges to $\sigma^{2}$ in probability.
}
\begin{proof}[Proof of Theorem \ref{Exogenous_Variant_Theorem12}]
Fix any $\eta>0$ and $\delta>0$. From Lemma 5 of \citet{bartlett2020benign}, it holds that $R(\Sigma_2)\leq r(\Sigma_2)^{2}$. If $R(\Sigma_2)=\orderomega(n)$ holds as the second condition in Theorem \ref{Exogenous_Variant_Theorem12}, we have $r(\Sigma_2)=\orderomega(\sqrt{n})=\orderomega(1)$, which implies  the convergence of $1/r(\Xi_z)$ to zero.  Hence, conditions (i) and (ii) in Theorem \ref{Exogenous_Variant_Theorem12} make $\gamma$ sufficiently small for large enough $n$. Clearly, $\|\theta_{0}\|_{2}\sqrt{\trace(\Sigma_{2})/n}$ goes to zero from condition (iii). By the definition of $\varepsilon$,  conditions (i) (ii), and (iv) in Theorem \ref{Exogenous_Variant_Theorem12} imply that $\varepsilon$ can be arbitrarily small. Therefore, for sufficiently large $n$, we obtain
 \begin{equation}\label{Exogenous_Variant_87}
   (1+\gamma)(1+\varepsilon)\left(\sigma+\|\theta_{0}\|_{2}\sqrt{\frac{\trace(\Sigma_{2})}{n}}\right)^{2}-\sigma^{2}\leq\eta.
\end{equation}

We have shown that $\gamma$, $\varepsilon$, and $\|\theta_{0}\|_{2}\sqrt{\trace(\Sigma_{2})/n}$ are so small that  equation (\ref{Exogenous_Variant_87}) holds for sufficiently large $n$. Therefore, we obtain
\begin{equation*}
    \Pr(|L(\hat{\theta})-\sigma^{2}|>\eta)\leq\delta
\end{equation*}
for any fixed $\eta$. As $\eta$ and $\delta$ are arbitrary, we have for any $\eta$, 
\begin{equation*}
    \lim_{n\rightarrow\infty}\Pr(|L(\hat{\theta})-\sigma^{2}|>\eta)=0.
\end{equation*}
\end{proof}

\subsection{When $X_{i}$ and $\xi_{i}$ are dependent}

Throughout this subsection, we assume $\tilde{\sigma}^{2}=\sigma^2 -\|\corrcoef\|^{2}_{\Sigma_{u}^{+}}>0$ holds.

\begin{lemma}\label{Non_ortho_Variant_Lemma8}
Denote $P_{z}$, $P_{u}$ as the projection matrix onto the space spanned by $\Xi_z$ and $\Sigma_{u}$, respectively. Let $v_{*}=\arg\min_{v\in\partial\|\Xi_z^{1/2}H\|}\|v\|_{\Xi_z}$. Assume that there exists $\varepsilon_{1}$, $\varepsilon_{2}$, and $\varepsilon_{3}\geq0$ such that with probability at least $1-\delta/4$,
\begin{equation}\label{Non_ortho_Variant_65}
    \|v^{*}\|_{\Xi_z}\leq(1+\varepsilon_{1})\mathbb{E}\|v^{*}\|_{\Xi_z},
\end{equation}
\begin{equation}\label{Non_ortho_Variant_66}
    \|P_{z}v^{*}\|\leq 1+\varepsilon_{2},
\end{equation}
and
\begin{equation}\label{Non_ortho_third}
     \|\Sigma_{u}^{1/2}P_{z}v^{*}\|_{2}\leq (1+\varepsilon_{3})\Ep\|\Sigma_{u}^{1/2}P_{z}v^{*}\|_{2}.
 \end{equation}
Denote $\varepsilon$ as 
\begin{align*}
    \varepsilon:=& 12\sqrt{\frac{\rank(\Sigma_{u})}{n}}+24\sqrt{\frac{\log(32/\delta)}{n}}+8\sqrt{\frac{\log(8/\delta)}{r_{\|\cdot\|}(\Xi_z)}}+2(1+\varepsilon_{1})^{2}\frac{n}{R_{\|\cdot\|}(\Xi_z)} \\
    &+64\frac{\|\Sigma_{u}^{+}\corrcoef\|_{2}}{\tilde{\sigma}}\frac{\mathbb{E}\|\Xi_z^{1/2}H\|_{2}}{\sqrt{n}}\left(1+\sqrt{\frac{2\log(8/\delta)}{r_{\|\cdot\|_{2}}(\Xi_z)}}\right)  \\
    &+2(1+\varepsilon_{3})^{2}\frac{n}{(\mathbb{E}\|\Xi_{z}^{1/2}H\|_{*})^{2}}(\Ep\|\Sigma_{u}^{1/2}P_{z}v^{*}\|_{2})^{2}+2\varepsilon_{2},
\end{align*}
where we denote $r_{\|\cdot\|}(\Sigma)$ and $R_{\|\cdot\|}(\Sigma)$ as follows:
\begin{align*}
    r_{\|\cdot\|}(\Sigma)=\left(\frac{E\|\Sigma^{1/2}H\|_{*}}{\sup_{\|u\|\leq1}\|u\|_{\Sigma}}\right)^{2}\ \text{and} \ R_{\|\cdot\|}(\Sigma)=\left(\frac{E\|\Sigma^{1/2}H\|_{*}}{E\|v^{*}\|_{\Sigma}}\right)^{2}.
\end{align*}
If $n$ and the effective ranks are sufficiently large such that $\varepsilon\leq1$, then with probability at least $1-\delta$, the AO defined in \eqref{Variant_58} is upper bounded as
\begin{equation}\label{Non_ortho_Variant_67}
    \phi^{2}\leq\left(\|\Sigma_{u}^{+}\corrcoef\|_{2}+(1+\varepsilon)(2\eta_{1}+\tilde{\sigma}+\eta_{2})\sqrt{\frac{n}{(\mathbb{E}\|\Xi_z^{1/2}H\|_{*})^{2}}}\right)^{2},
\end{equation}
where we denote $\eta_{1}$ and $\eta_{2}$ as follows:
\begin{align*}
    \eta_{1}&:=\sqrt{(1+\varepsilon_{1})^{2}\frac{n}{R_{\|\cdot\|}(\Xi_{z})}}\|\Xi_{z}^{1/2}\Sigma_{u}^{+}\corrcoef\|_{2},  \\
    \eta_{2}&:=\sqrt{\frac{(\mathbb{E}\|\Xi_z^{1/2}H\|_{2})^{2}}{n}\left(1+\sqrt{\frac{2\log(8/\delta)}{r_{\|\cdot\|_{2}}(\Xi_z)}}\right)^{2}\|\Sigma_{u}^{+}\corrcoef\|^{2}_{2}+\|\Xi_{z}^{1/2} \Sigma_{u}^{+}\corrcoef\|^{2}_{2}}.
\end{align*}
\end{lemma}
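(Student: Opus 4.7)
The plan is to mirror the proof of Lemma \ref{Variant_Lemma8} (the orthogonal correlated-noise case), using the same ansatz for a feasible $\theta$ in the auxiliary program, but carefully tracking the new cross-terms that appear because $\Xi_z$ and $\Sigma_u$ are no longer orthogonal. The bookkeeping will generate precisely the quantities $\eta_1$ and $\eta_2$ that appear in the statement.

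First, I will fix the same high-probability events used in Lemma \ref{Variant_Lemma8}. With $\alpha_1 \asymp \sqrt{\log(32/\delta)/n}$ and $\alpha_3 \asymp \sqrt{(\rank(\Sigma_u)+1)/n} + \sqrt{\log(16/\delta)/n}$, I will invoke Lemma \ref{Variant_Lemma1} to get $|\langle \xi - \mathbf{W}_2\theta_2, G\rangle| \leq \alpha_3\|\xi-\mathbf{W}_2\theta_2\|_2\|G\|_2$ uniformly in $\theta_2$; Lemma \ref{Variant_Lemma2} to get $|\|G\|_2/\sqrt{n} - 1|\leq \alpha_1$ and $|\|\xi-\mathbf{W}_2\theta_2\|_2/\sqrt{n} - \sqrt{\sigma^2 - 2\rho^\top\theta_2 + \|\theta_2\|_2^2}|\leq \alpha_1\sqrt{\sigma^2 - 2\rho^\top\theta_2 + \|\theta_2\|_2^2}$; and Theorem \ref{Variant_Theorem6} to get the Lipschitz lower bound $\|\Xi_z^{1/2}H\|_* \geq (1 - \sqrt{2\log(8/\delta)/r_{\|\cdot\|}(\Xi_z)})\mathbb{E}\|\Xi_z^{1/2}H\|_*$. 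I will also use (\ref{Non_ortho_Variant_65})--(\ref{Non_ortho_third}) and a standard Gaussian tail bound for the new scalar $\langle H, \Xi_z^{1/2}\Sigma_u^+\corrcoef\rangle$, which is controlled by $\|\Xi_z^{1/2}\Sigma_u^+\corrcoef\|_2\sqrt{2\log(1/\delta)}$; union-bounding at level $\delta/8$ costs only an absolute constant.

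Next, I take $\theta := \Sigma_u^+\corrcoef + s P_z v^*$, so $\theta_1 = \Xi_z^{1/2}\Sigma_u^+\corrcoef + s \Xi_z^{1/2} P_z v^*$ and $\theta_2 = (\Sigma_u^{1/2})^+\corrcoef + s \Sigma_u^{1/2} P_z v^*$. In the orthogonal case the extra pieces $\Xi_z^{1/2}\Sigma_u^+\corrcoef$ and $\Sigma_u^{1/2}P_z v^*$ vanish; here they persist and are exactly the objects \eqref{Non_ortho_third} and the new tail bound in Step~1 control. Squaring the AO constraint $\|\xi-\mathbf{W}_2\theta_2-\|\theta_1\|_2 G\|_2 \leq \langle H,\theta_1\rangle$ and plugging in the concentration estimates, the left-hand side is at most $n(1+\alpha_1)^2(1+\alpha_3)(\tilde\sigma^2 + 2 s\, \rho^\top \Sigma_u^{1/2}P_z v^* + s^2\|\Sigma_u^{1/2}P_z v^*\|_2^2 + \|\theta_1\|_2^2)$, where I have used the identity $\sigma^2 - 2\rho^\top(\Sigma_u^{1/2})^+\corrcoef + \|(\Sigma_u^{1/2})^+\corrcoef\|_2^2 = \tilde\sigma^2$. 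Expanding $\|\theta_1\|_2^2 = \|\Xi_z^{1/2}\Sigma_u^+\corrcoef\|_2^2 + 2 s\langle\Xi_z^{1/2}\Sigma_u^+\corrcoef, \Xi_z^{1/2}P_z v^*\rangle + s^2\|\Xi_z^{1/2}P_z v^*\|_2^2$ and bounding the right-hand side by $(\langle H,\Xi_z^{1/2}\Sigma_u^+\corrcoef\rangle + s\|\Xi_z^{1/2}H\|_*)^2$ gives, after the AM--GM inequality, a quadratic inequality in $s$.

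The quadratic coefficient on the right is $\|\Xi_z^{1/2}H\|_*^2/((1+\alpha_1)^2(1+\alpha_3)n)$ and on the left is $\|v^*\|_{\Xi_z}^2 + \|\Sigma_u^{1/2}P_z v^*\|_2^2/n$; by (\ref{Non_ortho_Variant_65}), \eqref{Non_ortho_third}, and the lower bound on $\|\Xi_z^{1/2}H\|_*$, their difference is at least a $(1-\varepsilon')$ multiple of $(\mathbb{E}\|\Xi_z^{1/2}H\|_*)^2/n$, where $\varepsilon'$ absorbs $(1+\varepsilon_1)^2 n/R_{\|\cdot\|}(\Xi_z)$, $(1+\varepsilon_3)^2 n (\mathbb{E}\|\Sigma_u^{1/2}P_z v^*\|_2)^2/(\mathbb{E}\|\Xi_z^{1/2}H\|_*)^2$, and the usual $\alpha_1,\alpha_3, 1/\sqrt{r_{\|\cdot\|}(\Xi_z)}$ terms. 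The linear-in-$s$ and $s$-free remainders factor naturally as $n^{-1}(\mathbb{E}\|\Xi_z^{1/2}H\|_*)^2$ times $(2\eta_1 + \tilde\sigma + \eta_2)^2$, where $\eta_1$ collects the $\sqrt{n/R_{\|\cdot\|}(\Xi_z)}\,\|\Xi_z^{1/2}\Sigma_u^+\corrcoef\|_2$ contribution (from the orthogonality defect in $\theta_1$), and $\eta_2$ collects $\|\Xi_z^{1/2}\Sigma_u^+\corrcoef\|_2$ together with $\|\Sigma_u^+\corrcoef\|_2\,(\mathbb{E}\|\Xi_z^{1/2}H\|_2)/\sqrt{n}$ (from the Gaussian tail of $\langle H,\Xi_z^{1/2}\Sigma_u^+\corrcoef\rangle$ combined with the $\rho^\top \Sigma_u^{1/2}P_z v^*$ cross-term). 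Solving the quadratic yields $s \leq (1+\varepsilon)^{1/2}(2\eta_1 + \tilde\sigma + \eta_2)\sqrt{n}/\mathbb{E}\|\Xi_z^{1/2}H\|_*$. Finally, since $\phi = \|\theta\|_2 \leq \|\Sigma_u^+\corrcoef\|_2 + s\|P_z v^*\|_2 \leq \|\Sigma_u^+\corrcoef\|_2 + s(1+\varepsilon_2)$, absorbing $\varepsilon_2$ into $\varepsilon$ and squaring yields \eqref{Non_ortho_Variant_67}.

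The main obstacle is the careful algebra of Step~3--Step~4: identifying which cross-terms land in the coefficient $(1-\varepsilon')$ of $s^2$ (and thus must be controlled by (\ref{Non_ortho_Variant_65}) and \eqref{Non_ortho_third}) versus which land in the $\eta_1,\eta_2$ slack. In particular, the term $2 s\rho^\top \Sigma_u^{1/2}P_z v^*$ could be controlled either by $|\rho^\top \Sigma_u^{1/2}P_z v^*| \leq \|\Sigma_u^+\corrcoef\|_2\|\Sigma_u^{1/2}P_z v^*\|_2$ or by Cauchy--Schwarz on $\rho$ directly, and the right routing is needed to produce the precise division $\|\Sigma_u^+\corrcoef\|_2/\tilde\sigma$ in the coefficient of $\mathbb{E}\|\Xi_z^{1/2}H\|_2/\sqrt n$ inside $\varepsilon$. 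The rest is routine linearization of $(1+\alpha)^{\pm 1}$ factors as in Lemma \ref{Variant_Lemma8}.
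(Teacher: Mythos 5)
Your high-level strategy — take the ansatz $\theta = \Sigma_u^+\corrcoef + sP_z v^*$, reduce the AO constraint to a quadratic in $s$, solve for $s$, then bound $\phi \leq \|\Sigma_u^+\corrcoef\|_2 + s\|P_z v^*\|$ — matches the paper, but two of the mechanisms you rely on are wrong, and a third (the actual source of the $\|\Sigma_u^+\corrcoef\|_2/\tilde\sigma$ factor) is absent.

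First, the cross-term $2s\,\rho^\top\Sigma_u^{1/2}P_z v^*$ you keep does not exist: writing $\theta_2 = (\Sigma_u^{1/2})^+\corrcoef + s\Sigma_u^{1/2}P_z v^*$ and expanding $\sigma^2 - 2\rho^\top\theta_2 + \|\theta_2\|_2^2$, the $s$-linear piece from $-2\rho^\top\theta_2$ is $-2s\rho^\top\Sigma_u^{1/2}P_z v^*$ while the $s$-linear piece from $\|\theta_2\|_2^2$ is $+2s\,\corrcoef^\top\Sigma_u^+\Sigma_u P_z v^*$, and since $\corrcoef^\top\Sigma_u^+\Sigma_u = \rho^\top\Sigma_u^{1/2}$ these cancel identically, leaving only $\tilde\sigma^2 + s^2\|\Sigma_u^{1/2}P_z v^*\|_2^2$. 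This cancellation is exactly why the paper's $\beta_2$ collapses to $-(v^*)^\top\Xi_z\Sigma_u^+\corrcoef$ — the only surviving $s$-linear term comes from $\|\theta_1\|_2^2$, and it is this term alone that produces $\eta_1$. By carrying a phantom $\rho^\top\Sigma_u^{1/2}P_z v^*$ and proposing to Cauchy–Schwarz it, you would introduce contributions ($\sigma\Ep\|\Sigma_u^{1/2}P_z v^*\|_2$-type) that do not appear in the lemma's $\eta_1,\eta_2$.

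Second, your "naive squaring" of the AO constraint is not sufficient, because $\langle H,\Xi_z^{1/2}\Sigma_u^+\corrcoef\rangle$ may be negative, in which case $\langle H,\theta_1\rangle$ could fail to dominate $\|\xi - \mathbf W_2\theta_2 - \|\theta_1\|_2 G\|_2$ even when $s^2\|\Xi_z^{1/2}H\|_*^2$ does. The paper handles this by establishing the implication $(\|\xi - \mathbf W_2\theta_2 - \|\theta_1\|_2 G\|_2 - \langle H,\Xi_z^{1/2}\Sigma_u^+\corrcoef\rangle)^2 \leq s^2\|\Xi_z^{1/2}H\|_*^2 \Rightarrow$ AO constraint, which moves the $s$-free scalar to the left. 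It then expands $(a-b)^2 \leq a^2(1 + 2|b|/a) + b^2$ rather than the crude $\leq 2a^2 + 2b^2$, exploiting the lower bound $a = \|\xi - \mathbf W_2\theta_2 - \|\theta_1\|_2G\|_2 \geq \sqrt{(1-\alpha_2)(1-\alpha_1)^2 n\tilde\sigma^2}$. This produces the multiplicative factor $\gamma := 2|\langle H,\Xi_z^{1/2}\Sigma_u^+\corrcoef\rangle|/\sqrt{(1-\alpha_2)(1-\alpha_1)^2 n\tilde\sigma^2}$, and it is bounding $\gamma$ — not bounding any phantom cross-term — that generates the $\|\Sigma_u^+\corrcoef\|_2/\tilde\sigma \cdot \mathbb{E}\|\Xi_z^{1/2}H\|_2/\sqrt n$ contribution in $\varepsilon$. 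Without this two-step reduction, the proof does not reach the stated bound. You should redo the expansion noting the cancellation, and incorporate the move-to-the-left plus refined-square trick before solving the quadratic.
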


\begin{proof}[Proof of Lemma \ref{Non_ortho_Variant_Lemma8}]
This proof has four steps (i) preparation, (ii) introducing a coefficient $s$, (iii) deriving a bound on the coefficient $s$, and (iv) developing a bound on $\phi^2$ in \eqref{Variant_58}.

\textit{Step (i): Preparation}.
Denote $\alpha_{1}$ and $\alpha_{2}$ as follows:
\begin{align*}
\alpha_{1}&:=2\sqrt{\frac{\log(32/\delta)}{n}}, \\
\alpha_{2}&:=\sqrt{\frac{\rank(\Sigma_{u})+1}{n}}+2\sqrt{\frac{\log(16/\delta)}{n}}. 
\end{align*}
To prepare for the derivation of the upper bound as in the proof of Lemma \ref{Variant_Lemma5}, we consider the following three inequalities: 
\begin{enumerate}
 \item [(i)] By Lemma \ref{Variant_Lemma1}, uniformly over all $\theta_{2}\in \Sigma_{u}^{1/2}(\mathbb{R}^{p})$, 
    it holds that
    \begin{equation}
    |\langle\xi-\mathbf{W}_{2}\theta_{2}, G \rangle|\leq\|\xi-\mathbf{W}_{2}\theta_{2}\|_{2}\|G\|_{2}\alpha_{2}. \tag{\ref{Variant_49}}
\end{equation}
\item [(ii)] By Lemma \ref{Variant_Lemma2}, it holds that
\begin{equation} 
    -\alpha_{1}\leq\frac{1}{\sqrt{n}}\|G\|_{2}-1\leq\alpha_{1} \tag{\ref{Variant_52}}
\end{equation}
and 
\begin{align}
    -\alpha_{1}\sqrt{\sigma^{2}-2\rho^{T}\theta_{2}+\theta_{2}^{T}\theta_{2}}&\leq\frac{1}{\sqrt{n}}\|\xi-\mathbf{W}_{2}\theta_{2}\|_{2}-\sqrt{\sigma^{2}-2\rho^{T}\theta_{2}+\theta_{2}^{T}\theta_{2}} \notag \\
    &\leq\alpha_{1}\sqrt{\sigma^{2}-2\rho^{T}\theta_{2}+\theta_{2}^{T}\theta_{2}}. \label{Variant_70}
\end{align}

\item [(iii)] By Theorem \ref{Variant_Theorem6}, it holds that
\begin{align}
    \|\Xi_z^{1/2}H\|_{*}&\geq\mathbb{E}\|\Xi_z^{1/2}H\|_{*}-\sup_{\|u\|\leq1}\|u\|_{\Xi_{z}}\sqrt{2\log(8/\delta)} \notag \\
    &=\left(1-\sqrt{\frac{2\log(8/\delta)}{r_{\|\cdot\|}(\Xi_z)}}\right) \mathbb{E}\|\Xi_z^{1/2}H\|_{*} \tag{\ref{Variant_71}}
\end{align}
because $\|\Xi^{1/2}_{z}H\|_{*}$ is a $\sup_{\|u\|\leq1}\|u\|_{\Xi_{z}}$-Lipschitz continuous function of $H$. By Theorem \ref{Variant_Theorem6}, it also holds that
\begin{align}
    \|\Xi_z^{1/2}H\|_{*}&\leq\mathbb{E}\|\Xi_z^{1/2}H\|_{*}+\sup_{\|u\|\leq1}\|u\|_{\Xi_{z}}\sqrt{2\log(8/\delta)} \notag \\
    &=\left(1+\sqrt{\frac{2\log(8/\delta)}{r_{\|\cdot\|}(\Xi_z)}}\right) \mathbb{E}\|\Xi_z^{1/2}H\|_{*}. \label{Variant_71_2}
\end{align}
\end{enumerate}

\textit{Step (ii): Introducing the coefficient $s$}.
To derive an upper bound of the ridgeless estimator, we need to choose a suitable $\theta$ which satisfies the restriction of the auxiliary problem $\phi$ in Lemma \ref{Variant_Lemma7}.
We consider the following form of $\theta$:
\begin{align}
    \theta := P_{u}\Sigma_{u}^{+}\corrcoef+sP_{z}v^{*}. \label{decomp:theta}
\end{align}
Here, the coefficient $s$ describes a volume of $\theta$ along with the space spanned by $\Xi_z$.
By the setting, we have
$\theta_{1}=\Xi_{z}^{1/2}\Sigma_{u}^{+}\corrcoef+s\Xi_{z}^{1/2}v^{*}$ and     $\theta_{2}=\Sigma_{u}^{1/2}\Sigma_{u}^{+}\corrcoef+s\Sigma_{u}^{1/2}P_{z}v^{*}$.
Hence, we need to choose $s$ that attains the restriction of the auxiliary problem $\phi$, that is,
\begin{equation*}
    \|\xi-\mathbf{W}_{2}\theta_{2}-\|\theta_{1}\|_{2}G\|_{2}\leq \langle H, \theta_{1}\rangle. \tag{\ref{Variant_58}}
\end{equation*}
By the definition of $\theta_{1}$, we have the following result:
\begin{align}\label{odd_inequality}
&(\|\xi-\mathbf{W}_{2}\theta_{2}-\|\theta_{1}\|_{2}G\|_{2}-\langle H, \Xi_{z}^{1/2}\Sigma_{u}^{+}\corrcoef\rangle )^{2}\leq s^{2}\|\Xi_{z}^{1/2}H\|_{*}^{2} \\
\Rightarrow  &\|\xi-\mathbf{W}_{2}\theta_{2}-\|\theta_{1}\|_{2}G\|_{2}\leq \langle H, \theta_{1}\rangle. \notag
\end{align}
Therefore, it is sufficient to consider $s$ satisfying the inequality \eqref{odd_inequality}.

For the derivation of the inequality \eqref{odd_inequality}, we need to consider the upper bound of 
$(\|\xi-\mathbf{W}_{2}\theta_{2}-\|\theta_{1}\|_{2}G\|_{2}-\langle H, \Xi_{z}^{1/2}\Sigma_{u}^{+}\corrcoef\rangle )^{2}$. It holds from (\ref{Variant_49}) and the AM-GM inequality that
\begin{align*}
    \|\xi-\mathbf{W}_{2}\theta_{2}-\|\theta_{1}\|_{2}G\|_{2}^{2}&=\|\xi-\mathbf{W}_{2}\theta_{2}\|_{2}^{2}-2\|\theta_{1}\|_{2}\langle \xi-\mathbf{W}_{2}\theta_{2},G\rangle+\|\theta_{1}\|_{2}^{2}\|G\|_{2}^{2} \\
    &\geq \|\xi-\mathbf{W}_{2}\theta_{2}\|_{2}^{2}-2\alpha_{2}\|\theta_{1}\|_{2}\| \xi-\mathbf{W}_{2}\theta_{2}\|_{2}\|G\|_{2}+\|\theta_{1}\|_{2}^{2}\|G\|_{2}^{2} \\
    &\geq (1-\alpha_{2})\left(\|\xi-\mathbf{W}_{2}\theta_{2}\|_{2}^{2}+\|\theta_{1}\|_{2}^{2}\|G\|_{2}^{2}\right).
\end{align*}
Combining the results of (\ref{Variant_69}) and (\ref{Variant_70}) yields 
\begin{align*}
    &(1-\alpha_{2})\left(\|\xi-\mathbf{W}_{2}\theta_{2}\|_{2}^{2}+\|\theta_{1}\|_{2}^{2}\|G\|_{2}^{2}\right)\\
    &\geq (1-\alpha_{2})(1-\alpha_{1})^{2}n\left(\sigma^{2}-2\rho^{T}\theta_{2}+\theta_{2}^{T}\theta_{2}+\theta_{1}^{T}\theta_{1}\right) .
\end{align*}
Then, we have
\begin{align*}
    \left(\sigma^{2}-2\rho^{T}\theta_{2}+\theta_{2}^{T}\theta_{2}+\theta_{1}^{T}\theta_{1}\right)&=\left(\sigma^{2}-2\rho^{T}\Sigma_{u}^{1/2}\theta+\theta^{\top}\Sigma_{u}\theta+\theta^{T}\Xi_{z}\theta\right) \\
    &\geq \left(\sigma^{2}-2\rho^{T}\Sigma_{u}^{1/2}\theta+\theta^{\top}\Sigma_{u}\theta\right) \\
    &=\sigma^{2}-\rho^{\top}\rho+\|\Sigma_{u}^{1/2}\theta-\rho\|_{2}^{2} \\
    &\geq \sigma^{2}-\rho^{\top}\rho+\min_{\theta\in\mathbb{R}^{p}}\|\Sigma_{u}^{1/2}\theta-\rho\|_{2}^{2} \\
    &=\tilde{\sigma}^{2}>0.
\end{align*}
Therefore, we have
\begin{equation*}
    0<\frac{1}{ \|\xi-\mathbf{W}_{2}\theta_{2}-\|\theta_{1}\|_{2}G\|_{2}}\leq \frac{1}{\sqrt{(1-\alpha_{2})(1-\alpha_{1})^{2}n\tilde{\sigma}^{2}}}.
\end{equation*}
By trivial calculation, we obtain
\begin{align*}
    &(\|\xi-\mathbf{W}_{2}\theta_{2}-\|\theta_{1}\|_{2}G\|_{2}-\langle H, \Xi_{z}^{1/2}\Sigma_{u}^{+}\corrcoef\rangle )^{2} \\
    &\leq \|\xi-\mathbf{W}_{2}\theta_{2}-\|\theta_{1}\|_{2}G\|_{2}^{2}+2\|\xi-\mathbf{W}_{2}\theta_{2}-\|\theta_{1}\|_{2}G\|_{2}|\langle H, \Xi_{z}^{1/2}\Sigma_{u}^{+}\corrcoef\rangle|+(\langle H, \Xi_{z}^{1/2}\Sigma_{u}^{+}\corrcoef\rangle)^{2} \\
    &=\|\xi-\mathbf{W}_{2}\theta_{2}-\|\theta_{1}\|_{2}G\|_{2}^{2}\left(1+2\frac{|\langle H, \Xi_{z}^{1/2}\Sigma_{u}^{+}\corrcoef\rangle|}{\|\xi-\mathbf{W}_{2}\theta_{2}-\|\theta_{1}\|_{2}G\|_{2}}\right)+(\langle H, \Xi_{z}^{1/2}\Sigma_{u}^{+}\corrcoef\rangle)^{2} \\
    &\leq\|\xi-\mathbf{W}_{2}\theta_{2}-\|\theta_{1}\|_{2}G\|_{2}^{2}\left(1+2\frac{|\langle H, \Xi_{z}^{1/2}\Sigma_{u}^{+}\corrcoef\rangle|}{\sqrt{(1-\alpha_{2})(1-\alpha_{1})^{2}n\tilde{\sigma}^{2}}}\right)+(\langle H, \Xi_{z}^{1/2}\Sigma_{u}^{+}\corrcoef\rangle)^{2} \\
    &=\|\xi-\mathbf{W}_{2}\theta_{2}-\|\theta_{1}\|_{2}G\|_{2}^{2}(1+\gamma)+(\langle H, \Xi_{z}^{1/2}\Sigma_{u}^{+}\corrcoef\rangle)^{2}.
\end{align*}
Combining the results of (\ref{Variant_69}) and (\ref{Variant_70}) yields 
\begin{align*}
    \|\xi-\mathbf{W}_{2}\theta_{2}-\|\theta_{1}\|_{2}G\|_{2}^{2}
    &\leq (1+\alpha_{2})(1+\alpha_{1})^{2}n\left(\sigma^{2}-2\rho^{T}\theta_{2}+\theta_{2}^{T}\theta_{2}+\theta_{1}^{T}\theta_{1}\right) .
\end{align*}
Then, it holds that
\begin{align*}
    &(\|\xi-\mathbf{W}_{2}\theta_{2}-\|\theta_{1}\|_{2}G\|_{2}-\langle H, \Xi_{z}^{1/2}\Sigma_{u}^{+}\corrcoef\rangle )^{2} \\
    \leq &(1+\alpha_{2})(1+\alpha_{1})^{2}(1+\gamma)n\left(\sigma^{2}-2\rho^{T}\theta_{2}+\theta_{2}^{T}\theta_{2}+\theta_{1}^{T}\theta_{1}\right)+(\langle H, \Xi_{z}^{1/2}\Sigma_{u}^{+}\corrcoef\rangle)^{2} 
\end{align*}
Therefore, we should choose $s$ that satisfies the subsequent equality:
\begin{align}\label{s_equality}
    &s^{2}\|\Xi_{z}^{1/2}H\|^{2}_{*}\\
    &=(1+\alpha_{2})(1+\alpha_{1})^{2}(1+\gamma)n\left(\sigma^{2}-2\rho^{T}\theta_{2}+\theta_{2}^{T}\theta_{2}+\theta_{1}^{T}\theta_{1}\right)+(\langle H, \Xi_{z}^{1/2}\Sigma_{u}^{+}\corrcoef\rangle)^{2} . \notag 
\end{align}

We clarify $s$ that satisfies \eqref{s_equality}. By trivial calculation, we obtain the following results:
\begin{align*}
    &  s^{2}\|\Xi_{z}^{1/2}H\|^{2}_{*}\\
    &=(1+\alpha_{2})(1+\alpha_{1})^{2}(1+\gamma)n\left(\sigma^{2}-2\rho^{T}\theta_{2}+\theta_{2}^{T}\theta_{2}+\theta_{1}^{T}\theta_{1}\right)+(\langle H, \Xi_{z}^{1/2}\Sigma_{u}^{+}\corrcoef\rangle)^{2} \\
    \Leftrightarrow&\beta_{1}\left(s+\frac{\beta_{2}}{\beta_{1}}\right)^{2}-\frac{\beta_{2}^{2}}{\beta_{1}}-(\tilde{\sigma}^{2}-\beta_{3})=0,
\end{align*}
where we define
\begin{align*}
    \beta_{1}&:=\left(\frac{\|\Xi_{z}^{1/2}H\|_{*}^{2}}{(1+\alpha_{2})(1+\alpha_{1})^{2}(1+\gamma)n}-\|v^{*}\|_{\Xi_{z}}^{2}-(v^{*})^\top P_{z}\Sigma_{u} P_{z}v^{*}\right), \\
    \beta_{2}&:=\left(\rho^{\top}\Sigma_{u}^{1/2}P_{z}v^{*}-\corrcoef^\top\Sigma_{u}^{+}\Sigma_{u}P_{z}v^{*}-v^{*}\Xi_{z}\Sigma_{u}^{+}\corrcoef\right), \\
    \beta_{3}&:=-\frac{(\langle H, \Xi_{z}^{1/2}\Sigma_{u}^{+}\corrcoef)\rangle)^{2}}{(1+\alpha_{2})(1+\alpha_{1})^{2}(1+\gamma)n}-(\corrcoef^{\top}\Sigma_{u}^{+}\Xi_{z} \Sigma_{u}^{+}\corrcoef), \\
    \gamma&:=2\frac{|\langle H, \Xi_{z}^{1/2}\Sigma_{u}^{+}\corrcoef\rangle|}{\sqrt{(1-\alpha_{2})(1-\alpha_{1})^{2}n\tilde{\sigma}^{2}}}.
\end{align*}
Therefore, we choose $s$ such that 
\begin{equation*}
    s=\left(-\frac{\beta_{2}}{\beta_{1}}+\sqrt{\frac{\beta_{2}^{2}}{\beta_{1}^{2}}+\frac{\tilde{\sigma}^{2}-\beta_{3}}{\beta_{1}}}\right),
\end{equation*}
under the assumption that 
\begin{equation}\label{positive_non}
     \beta_{1}>0 \quad\text{and}\quad \tilde{\sigma}^{2}-\beta_{3}\geq 0.
\end{equation}

We need to guarantee (\ref{positive_non}) holds. First, we prove 
    $\tilde{\sigma}^{2}-\beta_{3}\geq0$.
By definition, we have $\tilde{\sigma}^{2}>0$ and $-\beta_{3}\geq0$.
Second, we show $\beta_{1}>0$. 
By (\ref{Non_ortho_Variant_65}), (\ref{Non_ortho_third}), and (\ref{Variant_71}), we have
\begin{align*}
    \beta_1 \geq & \frac{(\mathbb{E}\|\Xi_z^{1/2}H\|_{*})^{2}}{(1+\alpha_{2})(1+\alpha_{1})^{2}(1+\gamma)n}\left(1-\sqrt{\frac{2\log(8/\delta)}{r_{\|\cdot\|}(\Xi_z)}}\right)^{2} \\
    & \quad -(1+\varepsilon_{1})^{2}(\mathbb{E}\|v^{*}\|_{\Xi_{z}})^{2}-(1+\varepsilon_{3})^{2}(\Ep\|\Sigma_{u}^{1/2}P_{z}v^{*}\|_{2})^{2} \\
    \geq & \frac{(\mathbb{E}\|\Xi_z^{1/2}H\|_{*})^{2}}{n}\left(\frac{1}{(1+\alpha_{2})(1+\alpha_{1})^{2}(1+\gamma)}\left(1-2\sqrt{\frac{2\log(8/\delta)}{r_{\|\cdot\|}(\Xi_z)}}\right)
    \right. \\
   &\left. \quad - (1+\varepsilon_{1})^{2}\frac{n}{R_{\|\cdot\|}(\Sigma_{2})} - (1+\varepsilon_{3})^{2}\frac{n}{(\mathbb{E}\|\Xi_{z}^{1/2}H\|_{*})^{2}}(\Ep\|\Sigma_{u}^{1/2}P_{z}v^{*}\|_{2})^{2}\right).
\end{align*}

We linearize the terms including $\alpha_{1}$, $\alpha_{2}$, and $\gamma$ to simplify the upper bound. Provided that $\alpha_{1},\alpha_{2}<1/2$, we have
\begin{align*}
    (1+\alpha_{2})(1+\alpha_{1})^{2}(1+\gamma)&=(1+2\alpha_{1}+\alpha_{1}^{2}+\alpha_{2}+2\alpha_{2}\alpha_{1}+\alpha_{2}\alpha_{1}^{2})(1+\gamma) \\
    &\leq (1+3\alpha_{1}+3\alpha_{2})(1+\gamma) \\
    &\leq 1+3\alpha_{1}+3\alpha_{2}+4\gamma.
\end{align*}
As $(1-x)^{-1}\geq 1+x$ for any $x$, it holds that
\begin{align*}
\frac{1}{(1+\alpha_{2})(1+\alpha_{1})^{2}(1+\gamma)}&\geq ( 1+3\alpha_{1}+3\alpha_{2}+4\gamma)^{-1} \\
&\geq( 1-(3\alpha_{1}+3\alpha_{2}+4\gamma)).
\end{align*}
Moreover, by the Cauchy-Schwarz inequality, we have
\begin{align*}
    \gamma\leq \frac{2\|\Xi_{z}^{1/2}H\|_{2}\|\|\Sigma_{u}^{+}\corrcoef\|_{2}}{\sqrt{(1-\alpha_{2})(1-\alpha_{1})^{2}n\tilde{\sigma}^{2}}}.
\end{align*}
As $(1-x)^{-1}\leq 1+2x$ for $x\in [0,1/2]$, it holds that
\begin{equation*}
    \frac{1}{\sqrt{(1-\alpha_{2})(1-\alpha_{1})^{2}}}\leq \frac{1}{(1-\alpha_{2})(1-\alpha_{1})}\leq 4.
\end{equation*}
By (\ref{Variant_71_2}), we have
\begin{equation*}
    \frac{\|\Xi_{z}^{1/2}H\|_{2}}{\sqrt{n}}\leq  \frac{\mathbb{E}\|\Xi_z^{1/2}H\|_{2}}{\sqrt{n}}\left(1+\sqrt{\frac{2\log(8/\delta)}{r_{\|\cdot\|_{2}}(\Xi_z)}}\right).
\end{equation*}
Therefore, it holds that
\begin{equation*}
    \gamma \leq \frac{8\|\Sigma_{u}^{+}\corrcoef\|_{2}}{\tilde{\sigma}}\frac{\mathbb{E}\|\Xi_z^{1/2}H\|_{2}}{\sqrt{n}}\left(1+\sqrt{\frac{2\log(8/\delta)}{r_{\|\cdot\|_{2}}(\Xi_z)}}\right).
\end{equation*}
Hence,  we have
\begin{align*}
    &\frac{1}{(1+\alpha_{2})(1+\alpha_{1})^{2}(1+\gamma)}\left(1-2\sqrt{\frac{2\log(8/\delta)}{r_{\|\cdot\|}(\Xi_z)}}\right) \\
    &\quad -(1+\varepsilon_{1})^{2}\frac{n}{R_{\|\cdot\|}(\Sigma_{2})}-(1+\varepsilon_{3})^{2}\frac{n}{(\mathbb{E}\|\Xi_{z}^{1/2}H\|_{*})^{2}}(\Ep\|\Sigma_{u}^{1/2}P_{z}v^{*}\|_{2})^{2} \\
    &\geq (1-(3\alpha_{1}+3\alpha_{2}+4\gamma))\left(1-2\sqrt{\frac{2\log(8/\delta)}{r_{\|\cdot\|}(\Xi_z)}}\right)
    -(1+\varepsilon_{1})^{2}\frac{n}{R_{\|\cdot\|}(\Sigma_{2})} \\
    & \quad -(1+\varepsilon_{3})^{2}\frac{n}{(\mathbb{E}\|\Xi_{z}^{1/2}H\|_{*})^{2}}(\Ep\|\Sigma_{u}^{1/2}P_{z}v^{*}\|_{2})^{2} \\
    &\geq 1-(3\alpha_{1}+3\alpha_{2}+4\gamma)-2\sqrt{\frac{2\log(8/\delta)}{r_{\|\cdot\|}(\Xi_z)}}
    -(1+\varepsilon_{1})^{2}\frac{n}{R_{\|\cdot\|}(\Sigma_{2})} \\
    & \quad -(1+\varepsilon_{3})^{2}\frac{n}{(\mathbb{E}\|\Xi_{z}^{1/2}H\|_{*})^{2}}(\Ep\|\Sigma_{u}^{1/2}P_{z}v^{*}\|_{2})^{2} \\
    &\geq 1-\varepsilon'
\end{align*}
where we define
\begin{align*}
    \varepsilon'=&6\sqrt{\frac{\rank(\Sigma_{u})}{n}}+12\sqrt{\frac{\log(32/\delta)}{n}}+32\frac{\|\Sigma_{u}^{+}\corrcoef\|_{2}}{\tilde{\sigma}}\frac{\mathbb{E}\|\Xi_z^{1/2}H\|_{2}}{\sqrt{n}}\left(1+\sqrt{\frac{2\log(8/\delta)}{r_{\|\cdot\|_{2}}(\Xi_z)}}\right) \\
    +&4\sqrt{\frac{\log(8/\delta)}{r_{\|\cdot\|}(\Xi_z)}}+(1+\varepsilon_{1})^{2}\frac{n}{R_{\|\cdot\|}(\Xi_z)}+(1+\varepsilon_{3})^{2}\frac{n}{(\mathbb{E}\|\Xi_{z}^{1/2}H\|_{*})^{2}}(\Ep\|\Sigma_{u}^{1/2}P_{z}v^{*}\|_{2})^{2}.
\end{align*}
As $\varepsilon'$ is assumed to be less than $1/2$ and $(\mathbb{E}\|\Xi_z^{1/2}H\|_{*})^{2}/n$ is positive, we have
\begin{align}\label{beta1_lowerbound}
    0&<(1-\varepsilon')\frac{(\mathbb{E}\|\Xi_z^{1/2}H\|_{*})^{2}}{n}
    \leq  \beta_1.
\end{align}

\textit{Step (iii): Bound on the coefficient $s$}.
As $s$ is too complicated, we need to obtain a simplified upper bound of $s$. By trivial calculation, we have
\begin{equation*}
    s\leq \frac{2|\beta_{2}|}{\beta_{1}}+\sqrt{\frac{\tilde{\sigma}^{2}}{\beta_{1}}}+\sqrt{\frac{|\beta_{3}|}{\beta_{1}}}.
\end{equation*}
Then, we consider an upper bound of $1/\beta_{1}$. It holds from (\ref{beta1_lowerbound}) that
\begin{align}\label{beta1}
    \frac{1}{\beta_{1}}&\leq (1-\varepsilon')^{-1}\frac{n}{(\mathbb{E}\|\Xi_z^{1/2}H\|_{*})^{2}} \leq (1+2\varepsilon')\frac{n}{(\mathbb{E}\|\Xi_z^{1/2}H\|_{*})^{2}},
\end{align}
where the last inequality holds because $(1-x)^{-1}\leq 1+2x$ for $x\in [0,1/2]$.

Next, we show  an upper bound of $|\beta_{2}|$. As $\corrcoef:=\Sigma_{u}^{1/2}\rho$ and $\corrcoef^\top\Sigma_{u}^{+}\Sigma_{u}=\corrcoef^\top(\Sigma_{u}^{1/2})^{+}\Sigma_{u}^{1/2}$, we have
\begin{align*}
    \beta_{2}=&\left(\rho^{\top}\Sigma_{u}^{1/2}P_{z}v^{*}-\corrcoef^\top\Sigma_{u}^{+}\Sigma_{u}P_{z}v^{*}-v^{*}\Xi_{z}\Sigma_{u}^{+}\corrcoef\right) \\
    =&\left(\corrcoef^\top(\Sigma_{u}^{1/2})^{+}\Sigma_{u}^{1/2} P_{z}v^{*}-\corrcoef^\top \Sigma_{u}^{+}\Sigma_{u}P_{z}v^{*}-v^{*}\Xi_{z}\Sigma_{u}^{+}\corrcoef\right) \\
=&-v^{*}\Xi_{z}\Sigma_{u}^{+}\corrcoef.
\end{align*}
By the Cauchy-Schwarz inequality, it holds that
$    |\beta_{2}|\leq \|v^{*}\|_{\Xi_{z}}\|\Xi_{z}^{1/2}\Sigma_{u}^{+}\corrcoef\|_{2}$.
From assumption (\ref{Non_ortho_Variant_65}) and the definition of $R_{\|\cdot\|}(\Xi_{z})$, we obtain
\begin{equation}\label{beta2}
    |\beta_{2}|\leq (1+\varepsilon_{1})\sqrt{\frac{(\mathbb{E}\|\Xi_z^{1/2}H\|_{*})^{2}}{n}}\sqrt{\frac{n}{R_{\|\cdot\|}(\Xi_{z})}}\|\Xi_{z}^{1/2}\Sigma_{u}^{+}\corrcoef\|_{2}.
\end{equation}
Combining the result (\ref{beta2}) with (\ref{beta1}), we have
\begin{equation}\label{first_term}
    \frac{2|\beta_{2}|}{\beta_{1}}\leq 2(1+2\varepsilon')(1+\varepsilon_{1})\sqrt{\frac{n}{(\mathbb{E}\|\Xi_z^{1/2}H\|_{*})^{2}}}\sqrt{\frac{n}{R_{\|\cdot\|}(\Xi_{z})}}\|\Xi_{z}^{1/2}\Sigma_{u}^{+}\corrcoef\|_{2}.
\end{equation}

Finally, we derive an upper bound of $\beta_{3}$ and $s$. By using the triangular inequality on $\beta_{3}$, we have
\begin{equation}\label{beta3_1}
    |\beta_{3}|\leq \frac{(\langle H, \Xi_{z}^{1/2}\Sigma_{u}^{+}\corrcoef\rangle)^{2}}{(1+\alpha_{2})(1+\alpha_{1})^{2}n}+\|\Xi_{z}^{1/2}\Sigma_{u}^{+}\corrcoef\|^{2}_{2}.
\end{equation}
By the Cauchy-Schwarz inequality, it holds that
\begin{equation}\label{beta3_2}
    \frac{(\langle H, \Xi_{z}^{1/2}\Sigma_{u}^{+}\corrcoef\rangle)^{2}}{(1+\alpha_{2})(1+\alpha_{1})^{2}n}\leq \frac{\|\Xi_{z}^{1/2}H\|_{2}^{2}}{(1+\alpha_{2})(1+\alpha_{1})^{2}n}\|\Sigma_{u}^{+}\corrcoef\|_{2}^{2}.
\end{equation}
By (\ref{Variant_71_2}), we have
\begin{equation}\label{beta3_3}
    \frac{\|\Xi_{z}^{1/2}H\|_{2}^{2}}{(1+\alpha_{2})(1+\alpha_{1})^{2}n}\leq  \frac{(\mathbb{E}\|\Xi_z^{1/2}H\|_{2})^{2}}{n}\frac{1}{(1+\alpha_{2})(1+\alpha_{1})^{2}}\left(1+\sqrt{\frac{2\log(8/\delta)}{r_{\|\cdot\|_{2}}(\Xi_z)}}\right)^{2}.
\end{equation}
Hence, it holds from (\ref{beta1}), (\ref{beta3_1}), (\ref{beta3_2}), and (\ref{beta3_3}) that
\begin{align}\label{third_term}
    \sqrt{\frac{|\beta_{3}|}{\beta_{1}}}&\leq  \sqrt{(1+2\varepsilon')}\sqrt{\frac{n}{(\mathbb{E}\|\Xi_z^{1/2}H\|_{*})^{2}}} \notag \\ 
    &\times
    \sqrt{\frac{(\mathbb{E}\|\Xi_z^{1/2}H\|_{2})^{2}}{n}\left(1+\sqrt{\frac{2\log(8/\delta)}{r_{\|\cdot\|_{2}}(\Xi_z)}}\right)^{2}\|\Sigma_{u}^{+}\corrcoef\|^{2}_{2}+\|\Xi_{z}^{1/2} \Sigma_{u}^{+}\corrcoef\|^{2}_{2}}.
\end{align}
From (\ref{first_term}) and (\ref{third_term}), we obtain
\begin{equation}\label{upperbound_of_s}
    s\leq \sqrt{\frac{n}{(\mathbb{E}\|\Xi_z^{1/2}H\|_{*})^{2}}}A 
\end{equation}
where $A$ is defined and bounded as follows:
\begin{align}
    A&:=2(1+2\varepsilon')(1+\varepsilon_{1})\sqrt{\frac{n}{R_{\|\cdot\|}(\Xi_{z})}}\|\Xi_{z}^{1/2}\Sigma_{u}^{+}\corrcoef\|_{2} +\sqrt{(1+2\varepsilon')\tilde{\sigma}^{2}} \notag \\
    &\quad +\sqrt{(1+2\varepsilon')}
    \sqrt{\frac{(\mathbb{E}\|\Xi_z^{1/2}H\|_{2})^{2}}{n}\left(1+\sqrt{\frac{2\log(8/\delta)}{r_{\|\cdot\|_{2}}(\Xi_z)}}\right)^{2}\|\Sigma_{u}^{+}\corrcoef\|^{2}_{2}+\|\Xi_{z}^{1/2} \Sigma_{u}^{+}\corrcoef\|^{2}_{2}} \notag \\
    &\leq 2(1+2\varepsilon')\eta_1 + \sqrt{(1 + 2 \varepsilon')}\tilde{\sigma} +  (1+2\varepsilon')\eta_2, \label{def:bound_A}
\end{align}
where 
\begin{align*}
     \eta_{1}&:=\sqrt{(1+\varepsilon_{1})^{2}\frac{n}{R_{\|\cdot\|}(\Xi_{z})}}\|\Xi_{z}^{1/2}\Sigma_{u}^{+}\corrcoef\|_{2},
\end{align*}
and 
\begin{align*}
    \eta_{2}&:=\sqrt{\frac{(\mathbb{E}\|\Xi_z^{1/2}H\|_{2})^{2}}{n}\left(1+\sqrt{\frac{2\log(8/\delta)}{r_{\|\cdot\|_{2}}(\Xi_z)}}\right)^{2}\|\Sigma_{u}^{+}\corrcoef\|^{2}_{2}+\|\Xi_{z}^{1/2} \Sigma_{u}^{+}\corrcoef\|^{2}_{2}}.
\end{align*}

\textit{Step (iv): Bound $\phi^2$.}
We simplify the upper bound \eqref{upperbound_of_s} and derive the upper bound of $\phi^{2}$. By trivial calculation, we utilize the definition of $\theta$ as \eqref{decomp:theta} and obtain
\begin{align*}
    \phi^2 &\leq \|\theta\|^{2} \\
    &\leq \|P_{u}\Sigma_{u}^{+}\corrcoef+sP_{z}v^{*}\|^{2} \\
        &\leq (\|P_{u}\Sigma_{u}^{+}\corrcoef\|+s\|P_{z}v^{*}\|)^{2} \\
            &\leq (\|\Sigma_{u}^{+}\corrcoef\|+s(1+\varepsilon_{2}))^{2} \\
        &\leq \left(\|\Sigma_{u}^{+}\corrcoef\|+\sqrt{\frac{n}{(\mathbb{E}\|\Xi_z^{1/2}H\|_{*})^{2}}}A (1+\varepsilon_{2})\right)^{2}.
\end{align*}
The second to last inequality follows the assumption in \eqref{Non_ortho_Variant_66}, and the last inequality follows the upper bound on $s$ as \eqref{upperbound_of_s}.
We apply \eqref{def:bound_A} and set $\varepsilon:=2(\varepsilon'+\varepsilon_{2})$, and then 
it holds that
\begin{equation*}
    \phi^{2}\leq\left(\|\Sigma_{u}^{+}\corrcoef\|+(1+\varepsilon)(2\eta_{1}+\tilde{\sigma}+\eta_{2})\sqrt{\frac{n}{(\mathbb{E}\|\Xi_z^{1/2}H\|_{*})^{2}}}\right)^{2}.
\end{equation*}
\end{proof}

\begin{theorem}[General norm bound]\label{Non_ortho_Variant_Theorem4}
There exists an absolute constant $C_{2}\leq 160$ such that the following
is true. Under Assumption \ref{asmp:Gaussianity} with $\Sigma_{x}=\Xi_z+\Sigma_{u}$, let $\|\cdot\|$ be an
arbitrary norm, and fix $\delta\leq1/4$. Denote the $\ell_{2}$ orthogonal projection matrix onto the space spanned by $\Xi_z$ and $\Sigma_{u}$ as $P_{z}$ and $P_{u}$, respectively. Let $H$ be normally distributed with mean zero and variance $I_{d}$, that is, $H\sim N(0,I_{d})$. Denote $v_{*}$ as $\arg\min_{v\in\partial\|\Xi_z^{1/2}H\|_{*}} \|v\|_{\Xi_z}$. Suppose that there exist $\varepsilon_{1}\ \text{and}\ \varepsilon_{2}\geq0$ such that with probability at least $1-\delta/8$,
\begin{equation*}
    \|v^{*}\|_{\Xi_z}\leq(1+\varepsilon_{1})\mathbb{E}\|v^{*}\|_{\Xi_z},
\end{equation*}
\begin{equation*}
    \|P_{z}v^{*}\|\leq 1+\varepsilon_{2},
\end{equation*}
and
\begin{equation*}
     \|\Sigma_{u}^{1/2}P_{z}v^{*}\|_{2}\leq (1+\varepsilon_{3})\Ep\|\Sigma_{u}^{1/2}P_{z}v^{*}\|_{2}.
 \end{equation*}
Denote $\varepsilon$ as follows:
\begin{align*}
    \varepsilon:=&C_{2}\left(\sqrt{\frac{\rank(\Sigma_{u})}{n}}+\sqrt{\frac{\log(1/\delta)}{r_{\|\cdot\|_{2}}(\Xi_z)}}+\sqrt{\frac{\log(1/\delta)}{n}}+(1+\varepsilon_{1})^{2}\frac{n}{R_{\|\cdot\|_{2}}(\Xi_z)}\right. \\
     &+\frac{\|\Sigma_{u}^{+}\corrcoef\|_{2}}{\tilde{\sigma}}\frac{\mathbb{E}\|\Xi_z^{1/2}H\|_{2}}{\sqrt{n}}\left(1+\sqrt{\frac{\log(1/\delta)}{r_{\|\cdot\|_{2}}(\Xi_z)}}\right) \\
     & \left.+(1+\varepsilon_{3})^{2}\frac{n}{(\mathbb{E}\|\Xi_{z}^{1/2}H\|_{*})^{2}}(\Ep\|\Sigma_{u}^{1/2}P_{z}v^{*}\|_{2})^{2}+\varepsilon_{2}\right).
\end{align*}
If $n$ and the effective ranks are large enough that $\varepsilon\leq1$, with probability at least $1-\delta$, it holds that
\begin{equation*}
    \|\hat{\theta}\|\leq\|\theta_{0}\|+\|\Sigma_{u}^{+}\corrcoef\|+(1+\varepsilon)(2\eta_{1}+\tilde{\sigma}+\eta_{2})\sqrt{\frac{n}{(\mathbb{E}\|\Xi_z^{1/2}H\|_{*})^{2}}}.
\end{equation*}
\end{theorem}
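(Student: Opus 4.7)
The plan is to obtain Theorem \ref{Non_ortho_Variant_Theorem4} as a direct consequence of the three-step chain already assembled in the appendix: (i) Lemma \ref{Variant_Lemma6} rewrites $\|\hat\theta\|$ as $\|\theta_0\|$ plus the primary optimization $\Phi$ over $(\theta_1,\theta_2)$ with constraint $\mathbf{W}_1\theta_1+\mathbf{W}_2\theta_2=\xi$; (ii) Lemma \ref{Variant_Lemma7} invokes the extended CGMT (Theorem \ref{VariantCGMT}) to dominate $\Phi$ in probability by the auxiliary optimization $\phi$, yielding $\Pr(\Phi>t)\leq 2\Pr(\phi\geq t)$; and (iii) Lemma \ref{Non_ortho_Variant_Lemma8} provides the non-orthogonal high-probability bound on $\phi^2$. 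Concretely, I would fix $t>0$, apply steps (i)--(ii) to get
\begin{equation*}
\Pr(\|\hat\theta\|>t)\;\leq\;\Pr(\Phi> t-\|\theta_0\|)\;\leq\;2\Pr\bigl(\phi\geq t-\|\theta_0\|\bigr),
\end{equation*}
and then apply Lemma \ref{Non_ortho_Variant_Lemma8} with $\delta$ replaced by $\delta/2$.

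Next, I would choose $t$ so that the right-hand side becomes at most $\delta$. From Lemma \ref{Non_ortho_Variant_Lemma8}, this means taking
\begin{equation*}
t-\|\theta_0\|\;=\;\|\Sigma_u^+\corrcoef\|+(1+\varepsilon)(2\eta_1+\tilde\sigma+\eta_2)\sqrt{\tfrac{n}{(\mathbb{E}\|\Xi_z^{1/2}H\|_*)^{2}}},
\end{equation*}
with the $\varepsilon$ produced by that lemma (after the $\delta/2$ substitution). Transposing $\|\theta_0\|$ gives the stated inequality. The constants $C_2\leq 160$ and the form of $\varepsilon$ are obtained by tracking the $\sqrt{\log(32/\delta)}$-type logarithmic inflation when passing from $\delta$ to $\delta/2$ in each of the error terms contributing to $\varepsilon$ in Lemma \ref{Non_ortho_Variant_Lemma8}, and absorbing the numerical constants into $C_2$.

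The main routine nuisance (rather than a true obstacle) is bookkeeping: one must verify that each of the three high-probability events required by Lemma \ref{Non_ortho_Variant_Lemma8} (for the upper bound on $\|v^*\|_{\Xi_z}$, on $\|P_zv^*\|$, and on $\|\Sigma_u^{1/2}P_zv^*\|_2$) holds at level $1-\delta/8$ after the $\delta\mapsto\delta/2$ substitution, matching the assumption in the theorem statement. Then a union bound over these events and the CGMT event gives total probability at least $1-\delta$. The real conceptual content is already contained in Lemma \ref{Non_ortho_Variant_Lemma8} (the delicate choice $\theta=P_u\Sigma_u^+\corrcoef+sP_zv^*$ together with the $\gamma$-perturbation that handles the correlation $\corrcoef$), so no new analytical step is needed here.

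Finally, I would verify that the definition of $\varepsilon$ in the theorem statement is a valid upper bound (up to the absolute constant $C_2$) on the combination $2\varepsilon'+2\varepsilon_2$ derived inside Lemma \ref{Non_ortho_Variant_Lemma8}. This amounts to observing that the five terms inside the theorem's $\varepsilon$ are (up to constants) exactly the terms that appeared in the lemma's $\varepsilon'$, after using $\log(32/\delta)\asymp\log(1/\delta)$ for $\delta\leq 1/4$. Under the hypothesis $\varepsilon\leq 1$, the lemma's $\varepsilon'\leq 1/2$ is met and the derivation applies, completing the argument.
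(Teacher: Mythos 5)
Your proposal reproduces the paper's argument essentially verbatim: apply Lemma \ref{Variant_Lemma6} and Lemma \ref{Variant_Lemma7} to reduce the norm bound to a tail bound on the auxiliary optimization $\phi$, then invoke Lemma \ref{Non_ortho_Variant_Lemma8} with $\delta$ replaced by $\delta/2$ and transpose $\|\theta_0\|$. Your added bookkeeping remarks (that the three high-probability assumptions are stated at level $1-\delta/8$ precisely to accommodate the $\delta\mapsto\delta/2$ substitution, and that $\varepsilon$ absorbs the $\log(32/\delta)\asymp\log(1/\delta)$ inflation into $C_2$) are correct and make explicit what the paper's terse proof leaves implicit.
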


\begin{proof}[Proof of Theorem \ref{Non_ortho_Variant_Theorem4}]
For any $t>0$, it holds from Lemmas \ref{Variant_Lemma6} and \ref{Variant_Lemma7} that
\begin{equation*}
\Pr(\|\hat{\theta}\|>t)\leq\Pr(\Phi>t-\|\theta_{0}\|)\leq2\Pr(\phi\geq t-\|\theta_{0}\|).
\end{equation*}
Lemma \ref{Non_ortho_Variant_Lemma8} implies that the above term is upper bounded by $\delta$ if we choose $t-\|\theta_{0}\|$ using the result (\ref{Non_ortho_Variant_67}) with $\delta$ replaced by $\delta/2$. We obtain the stated result by moving $\|\theta_{0}\|$ to the other side.
\end{proof}

\noindent
\textbf{Theorem \ref{Non_ortho_Variant_Theorem2}} (Euclidean norm bound; special case of Theorem \ref{Non_ortho_Variant_Theorem4}) \textit{
Fix any $\delta\leq1/4$.  
Under the model assumptions with covariance $\Sigma_x=\Xi_z+\Sigma_{u}$, there exists some $\varepsilon\lesssim \sqrt{\rank(\Sigma_{u})/n}+\sqrt{\log(1/\delta)/n}+(1+(\|\Sigma_{u}^{+}\corrcoef\|_{2}/\tilde{\sigma})\sqrt{\trace(\Xi_{z})/n}) \sqrt{\log(1/\delta)/r(\Xi_z)}+(n\log(1/\delta))/(R(\Xi_z))(1+\trace(\Sigma_{u}\Xi_{z})/\trace(\Xi_{z}^{2}))+(\|\Sigma_{u}^{+}\corrcoef\|_{2}/\tilde{\sigma})\sqrt{\trace(\Xi_{z})/n}$ such that the following is true. If $n$ and the effective ranks are such that $\varepsilon\leq1$ and $R(\Xi_z)\gtrsim \log(1/\delta)^{2}$, then with probability at least $1-\delta$, it holds that
\begin{equation*}
\|\hat{\theta}\|_{2}\leq\|\theta_{0}\|_{2}+\|\Sigma_{u}^{+}\corrcoef\|_{2}+(1+\varepsilon)^{1/2}(2\eta_{1}+\tilde{\sigma}+\eta_{2})\sqrt{\frac{n}{\trace({\Xi_{z}})}}.
\end{equation*}
}

\begin{proof}[Proof of Theorem \ref{Non_ortho_Variant_Theorem2}]
Throughout this proof, we simplify the upper bound, especially $
    n/R_{\|\cdot\|_{2}}(\Xi_z)$, $1/r_{\|\cdot\|_{2}}(\Xi_{z})$ and $(n\Ep\|\Sigma_{u}^{1/2}P_{z}v^{*}\|_{2}^{2})/(\mathbb{E}\|\Xi_{z}^{1/2}H\|_{*})^{2}$, in Theorem \ref{Non_ortho_Variant_Theorem4}. By the definition of the dual norm and $\partial\|\Xi_{x}^{1/2}H\|_{*}$ with Euclidean norm, $v^{*}$ is equal to $\Xi_z^{1/2}H/\|\Xi_z^{1/2}H\|_{2}$. Hence, $\|v^{*}\|_{\Xi_z}$ is $\|\Xi_zH\|_{2}/\|\Xi_z^{1/2}H\|_{2}$. From the result of (\ref{Variant_78}), for some constant $c>0$, we can choose $\varepsilon_{1}$ such that
\begin{equation*}
    (1+\varepsilon_{1})E\|v^{*}\|_{\Xi_z}=c\sqrt{\log(64/\delta)\frac{\trace(\Xi_z^{2})}{\trace(\Xi_z)}}.
\end{equation*}
If we assume effective rank is sufficiently large, (\ref{Variant_72}) provides that $\left(E\|\Xi_z^{1/2}H\|_{2}\right)^{2}\gtrsim \trace(\Xi_z)$. Therefore, we have
\begin{equation*}
    (1+\varepsilon_{1})^{2}\frac{n}{R_{\|\cdot\|_{2}}(\Xi_z)}=n\frac{(1+\varepsilon_{1})^{2}(E\|v^{*}\|_{\Xi_z})^{2}}{\left(E\|\Xi_z^{1/2}H\|_{2}\right)^{2}}\lesssim n\log(16/\delta)\frac{\trace(\Xi_z^{2})}{\trace(\Xi_z)^{2}}=\frac{n\log(16/\delta)}{R(\Xi_z)}.
\end{equation*}
Moreover, it holds from (\ref{Variant_78_Orthogonal}) that for some constant $c_{2}>0$, there exists $\varepsilon_{3}$ such that 
\begin{equation*}
    (1+\varepsilon_{3})\Ep\|\Sigma_{u}^{1/2}Pv^{*}\|_{2}=c_{2}\sqrt{\log(64/\delta)\frac{\trace(\Sigma_{u}\Xi_{z})}{\trace(\Xi_{z})}}.
\end{equation*}
By (\ref{Variant_72}), for sufficiently large effective rank, it holds that $(E\|\Xi_z^{1/2}H\|_{2})^{2}\gtrsim \trace(\Xi_z)$. Therefore, we have
\begin{align*}
    (1+\varepsilon_{3})^{2}\frac{n}{(\mathbb{E}\|\Xi_{z}^{1/2}H\|_{2})^{2}}(\Ep\|\Sigma_{u}^{1/2}P_{z}v^{*}\|_{2})^{2}&\lesssim  n\log(64/\delta)\frac{\trace(\Sigma_u\Xi_{z})}{\trace(\Xi_z)^{2}}\\
    &=\frac{n\log(64/\delta)}{R(\Xi_z)}\frac{\trace(\Sigma_u\Xi_z)}{\trace(\Xi_z^{2})}.
\end{align*}
By trivial calculation, for any covariance matrix $\Sigma$, we have
\begin{align*}
    \trace(\Sigma)=\Ep\|\Sigma^{1/2}H\|^{2}_{2}&=(\Ep\|\Sigma^{1/2}H\|_{2})^{2}+\mathrm{Var}\|\Sigma^{1/2}H\|_{2} \\
    &\geq (\Ep\|\Sigma^{1/2}H\|_{2})^{2}.
\end{align*}
Therefore, we have
\begin{align*}
\frac{\|\Sigma_{u}^{+}\corrcoef\|_{2}}{\tilde{\sigma}}\frac{\mathbb{E}\|\Xi_z^{1/2}H\|_{2}}{\sqrt{n}}\left(1+\sqrt{\frac{2\log(8/\delta)}{r_{\|\cdot\|_{2}}(\Xi_z)}}\right)&\leq \frac{\|\Sigma_{u}^{+}\corrcoef\|_{2}}{\tilde{\sigma}}\sqrt{\frac{\trace(\Xi_{z})}{n}}\left(1+\sqrt{\frac{2\log(8/\delta)}{r_{\|\cdot\|_{2}}(\Xi_z)}}\right).
\end{align*}

Finally, we obtain the upper bound of $\varepsilon$. As $P_{z}$ is an $l_{2}$ projection matrix, let $\varepsilon_{2}$ be zero. Then, it holds from (\ref{Variant_74}) of Lemma \ref{Variant_Lemma9}  that 
\begin{align*}
\varepsilon\lesssim  &\sqrt{\frac{\rank(\Sigma_{u})}{n}}+\sqrt{\frac{\log(1/\delta)}{n}}+\left(1+\frac{\|\Sigma_{u}^{+}\corrcoef\|_{2}}{\tilde{\sigma}}\sqrt{\frac{\trace(\Xi_{z})}{n}}\right) \sqrt{\frac{\log(1/\delta)}{r(\Xi_z)}} \\
&+\frac{n\log(1/\delta)}{R(\Xi_z)}\left(1+\frac{\trace(\Sigma_{u}\Xi_{z})}{\trace(\Xi_{z}^{2})}\right)+\frac{\|\Sigma_{u}^{+}\corrcoef\|_{2}}{\tilde{\sigma}}\sqrt{\frac{\trace(\Xi_{z})}{n}}. 
\end{align*}
By using the inequality $(1-x)^{-1}\leq1+2x$ for $x\in[0,1/2]$ and (\ref{Variant_72}) of Lemma \ref{Variant_Lemma9}, we finally obtain
\begin{align*}
    (1+\varepsilon)\frac{\sqrt{n}}{E\|\Xi_z^{1/2}H\|_{2}}&\leq (1+\varepsilon)\left(1-\frac{1}{r(\Xi_z)}\right)^{-1/2}\sqrt{\frac{n}{\trace(\Xi_z)}}  \\
    &\leq (1+\varepsilon)\left(1+\frac{2}{r(\Xi_z)}\right)^{1/2}\sqrt{\frac{n}{\trace(\Xi_z)}}.
\end{align*}
As $\varepsilon\leq1$, we have
\begin{align*}
    (1+\varepsilon)\left(1+\frac{2}{r(\Xi_z)}\right)^{1/2}&=\left((1+2\varepsilon+\varepsilon^{2})\left(1+\frac{2}{r(\Xi_z)}\right)\right)^{1/2} \\
    &\leq \left((1+3\varepsilon)\left(1+\frac{2}{r(\Xi_z)}\right)\right)^{1/2} \\
    &\leq \left(1+3\varepsilon+\frac{8}{r(\Xi_z)}\right)^{1/2}.
\end{align*}
Therefore, it holds that
\begin{equation*}
    (1+\varepsilon)\frac{\sqrt{n}}{E\|\Xi_z^{1/2}H\|_{2}}\leq \left(1+3\varepsilon+\frac{8}{r(\Xi_z)}\right)^{1/2}\sqrt{\frac{n}{\trace(\Xi_z)}},
\end{equation*}
and we can replace $\varepsilon$ with
\begin{align*}
    \varepsilon'=&3\varepsilon+\frac{8}{r(\Xi_z)} \\
    &\lesssim \sqrt{\frac{\rank(\Sigma_{u})}{n}}+\sqrt{\frac{\log(1/\delta)}{n}}+\left(1+\frac{\|\Sigma_{u}^{+}\corrcoef\|_{2}}{\tilde{\sigma}}\sqrt{\frac{\trace(\Xi_{z})}{n}}\right) \sqrt{\frac{\log(1/\delta)}{r(\Xi_z)}} \\
+&\frac{n\log(1/\delta)}{R(\Xi_z)}\left(1+\frac{\trace(\Sigma_{u}\Xi_{z})}{\trace(\Xi_{z}^{2})}\right)+\frac{\|\Sigma_{u}^{+}\corrcoef\|_{2}}{\tilde{\sigma}}\sqrt{\frac{\trace(\Xi_{z})}{n}}. 
\end{align*}
\end{proof}

\begin{theorem}[Benign Overfitting (Non-orthogonal)]\label{Non_ortho_Variant_Theorem3}
Fix any $\delta\leq1/2$. Under the model assumptions with $\Sigma_x=\Xi_z+\Sigma_{u}$, let $\gamma$ and $\varepsilon$ be as defined in Corollary \ref{Variant_Corollary2} and Theorem \ref{Non_ortho_Variant_Theorem2}. Suppose that $n$ and the effective ranks are such that $R(\Xi_z)\gtrsim \log(1/\delta)^{2}$ and $\gamma,\varepsilon\leq1$. Then, with probability at least $1-\delta$,
\begin{align*}
    \|\hat{\theta}-\theta_{0}\|_{\Xi_z}^{2}&\leq(1+\gamma)(1+\varepsilon)\left((\|\theta_{0}\|_{2}+\|\Sigma_{u}^{+}\corrcoef\|_{2})\sqrt{\frac{\trace(\Xi_z)}{n}}+(2\eta_{1}+\tilde{\sigma}+\eta_{2})\right)^{2}-\tilde{\sigma}^{2}.
\end{align*}
\end{theorem}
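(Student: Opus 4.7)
The proof plan follows the template of Theorem \ref{Variant_Theorem3} in the orthogonal case, combining the Euclidean norm bound on the ridgeless estimator with the projected RMSE bound over a ball of appropriate radius. First, I would set
\[
B := \|\theta_0\|_2 + \|\Sigma_u^+ \corrcoef\|_2 + (1+\varepsilon)^{1/2} (2\eta_1 + \tilde{\sigma} + \eta_2) \sqrt{\frac{n}{\trace(\Xi_z)}},
\]
so that by Theorem \ref{Non_ortho_Variant_Theorem2}, applied with confidence $\delta/2$, the event $\{\|\hat{\theta}\|_2 \leq B\}$ holds with probability at least $1 - \delta/2$. On this event, $\hat{\theta}$ lies in the intersection $\{\theta : \|\theta\|_2 \leq B\} \cap \{\theta : \mathbf{X}\theta = \mathbf{Y}\}$, which is in particular non-empty. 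Note that, since $\varepsilon, \eta_i, \tilde{\sigma} \geq 0$, this choice of $B$ automatically satisfies $B \geq \|\theta_0\|_2$ as required by the corollary used in the next step.

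Next, I would apply Corollary \ref{Variant_Corollary2} (with confidence $\delta/2$) to bound the projected RMSE uniformly over the ball. One subtlety is that Corollary \ref{Variant_Corollary2} is stated under Assumption \ref{asmp:orthogonal}, but its proof routes through Theorem \ref{Variant_Theorem1} and Lemma \ref{Variant_Lemma3}, whose only essential use of orthogonality is the distributional identity $\mathbf{X} \overset{\mathcal{D}}{=} \mathbf{W}_1 \Xi_z^{1/2} + \mathbf{W}_2 \Sigma_u^{1/2}$. This identity is a general consequence of Gaussianity and of the additive decomposition $\Sigma_x = \Xi_z + \Sigma_u$, which holds by hypothesis here. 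Hence the conclusion of the corollary carries over without modification, yielding
\[
\max_{\|\theta\|_2 \leq B,\, \mathbf{Y} = \mathbf{X}\theta} \|\theta - \theta_0\|_{\Xi_z}^2 \;\leq\; (1+\gamma) \frac{B^2 \trace(\Xi_z)}{n} - \tilde{\sigma}^2
\]
on an event of probability at least $1 - \delta/2$.

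Finally, I would substitute the chosen value of $B$ and expand. Writing $B = A + C$ with $A := \|\theta_0\|_2 + \|\Sigma_u^+ \corrcoef\|_2$ and $C := (1+\varepsilon)^{1/2} (2\eta_1 + \tilde{\sigma} + \eta_2) \sqrt{n/\trace(\Xi_z)}$, the quantity $B^2 \trace(\Xi_z)/n$ expands into $A^2 \trace(\Xi_z)/n$, a cross term $2 A (1+\varepsilon)^{1/2} (2\eta_1 + \tilde{\sigma} + \eta_2) \sqrt{\trace(\Xi_z)/n}$, and $(1+\varepsilon)(2\eta_1 + \tilde{\sigma} + \eta_2)^2$. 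Since $(1+\varepsilon)^{1/2} \leq 1+\varepsilon$ and $1 \leq 1+\varepsilon$, each of these three pieces is bounded by the corresponding piece of $(1+\varepsilon)\bigl[ A\sqrt{\trace(\Xi_z)/n} + (2\eta_1 + \tilde{\sigma} + \eta_2) \bigr]^2$. Multiplying by $(1+\gamma)$, subtracting $\tilde{\sigma}^2$, and taking the union bound over the two $(1-\delta/2)$-events yields the stated inequality with probability at least $1 - \delta$.

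The main obstacle is really bookkeeping: verifying that Corollary \ref{Variant_Corollary2} extends to the non-orthogonal decomposition (which requires inspecting its proof chain to confirm orthogonality is not used beyond the covariance splitting) and carefully handling the cross term so that the $(1+\varepsilon)^{1/2}$ factor inside $B$ can be absorbed uniformly into a single $(1+\varepsilon)$ factor on the outside without inflating constants. No new conceptual ingredients beyond Theorems \ref{Variant_Theorem1} and \ref{Non_ortho_Variant_Theorem2} are needed.
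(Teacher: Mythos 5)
Your proof is correct and follows essentially the same route as the paper: choose $B$ equal to the Euclidean norm bound from Theorem \ref{Non_ortho_Variant_Theorem2}, note that the ridgeless estimator then lies in the ball $\{\|\theta\|_2\leq B\}\cap\{\mathbf{X}\theta=\mathbf{Y}\}$, invoke Corollary \ref{Variant_Corollary2}, and expand $B^2\trace(\Xi_z)/n$ to absorb the $(1+\varepsilon)^{1/2}$ factor into a single $(1+\varepsilon)$. Your added remark that Corollary \ref{Variant_Corollary2} does not actually require Assumption \ref{asmp:orthogonal}, only the additive covariance splitting and the distributional identity $\mathbf{X}\overset{\mathcal{D}}{=}\mathbf{W}_1\Xi_z^{1/2}+\mathbf{W}_2\Sigma_u^{1/2}$, is a point the paper invokes silently; making it explicit is a minor improvement in rigor rather than a different argument.
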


\begin{proof}[Proof of Theorem \ref{Non_ortho_Variant_Theorem3}]
From the result of Theorem \ref{Non_ortho_Variant_Theorem2}, if we adopt
\begin{equation*}
    B=\|\theta_{0}\|_{2}+\|\Sigma_{u}^{+}\corrcoef\|_{2}+(1+\varepsilon)^{1/2}(2\eta_{1}+\tilde{\sigma}+\eta_{2})\sqrt{\frac{n}{\trace(\Xi_{z})}},
\end{equation*}
then $\{\theta:\|\theta\|_{2}\leq B\}\cap\{\theta: \mathbf{X}\theta=\mathbf{Y}\}$ is not empty with high probability. This intersection necessarily contains the ridgeless estimator $\hat{\theta}$. Clearly, $B>\|\theta_{0}\|_{2}$ holds. Therefore, it holds from Corollary \ref{Variant_Corollary2} that
\begin{align*}
     &\|\hat{\theta}-\theta_{0}\|_{\Xi_z}^{2}\\ 
     &\leq  \max_{\|\theta\|_{2}\leq B, \mathbf{Y}=\mathbf{X} \theta}\|\theta-\theta_{0}\|_{\Xi_z}^{2} \\
     &\leq(1+\gamma)\frac{B^{2}\trace(\Xi_z)}{n}-\tilde{\sigma}^{2} \\
     &\leq(1+\gamma)(1+\varepsilon)\left((\|\theta_{0}\|_{2}+\|\Sigma_{u}^{+}\corrcoef\|_{2})\sqrt{\frac{\trace(\Xi_z)}{n}}+(2\eta_{1}+\tilde{\sigma}+\eta_{2})\right)^{2}-\tilde{\sigma}^{2}.
\end{align*}

\end{proof}

\begin{theorem}\label{Non_ortho_Pre}
Under Assumption \ref{asmp:Gaussianity}, let $\hat{\theta}$ be the ridgeless estimator. Suppose that as $n$ goes to $\infty$, the covariance splitting $\Sigma_x=\Xi_z + \Sigma_{u}$ satisfies  the following conditions:
\begin{enumerate}
    \item[(i)] (Small large-variance dimension.)
    \begin{equation*}
        \lim_{n\rightarrow\infty}\frac{\rank(\Sigma_{u})}{n}=0.
    \end{equation*}
    \item [(ii)] (Large effective dimension.)
    \begin{equation*}
        \lim_{n\rightarrow\infty}\frac{n}{R(\Xi_z)}=0.
    \end{equation*}
    \item [(iii)] (No aliasing condition.)
    \begin{equation*}
        \lim_{n\rightarrow\infty}\|\theta_{0}\|_{2}\sqrt{\frac{\trace(\Xi_z)}{n}}=0.
    \end{equation*}
    \item [(iv)] (Condition for the minimal interpolation of instrumental variable in the non-orthogonal case)
    \begin{equation*}
        \lim_{n\rightarrow\infty}\frac{\|\Sigma_{u}^{+}\corrcoef\|_{2}}{\tilde{\sigma}}\sqrt{\frac{\trace(\Xi_{z})}{n}}=0.
    \end{equation*}
     \item [(v)] (Non-orthogonality)
       \begin{enumerate}
        \item \begin{equation*}
           \lim_{n\rightarrow\infty}\eta_{1}=0.
        \end{equation*}
        \item \begin{equation*}
           \lim_{n\rightarrow\infty}\eta_{2}=0.
        \end{equation*}
        \item
        \begin{equation*}
            \lim_{n\rightarrow\infty}\frac{n}{R(\Xi_z)}\frac{\trace(\Sigma_{u}\Xi_{z})}{\trace(\Xi_{z}^{2})}=0.
        \end{equation*}
    \end{enumerate}
\end{enumerate}
Then, $ \|\hat{\theta}-\theta_{0}\|_{\Xi_z}^{2}$ converges to $0$ in probability.
\end{theorem}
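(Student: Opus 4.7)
The plan is to invoke the generic projected-RMSE bound in Theorem \ref{Non_ortho_Variant_Theorem3} and show that every quantity on the right-hand side either vanishes or cancels with the trailing $-\tilde{\sigma}^2$. Write $A := (\|\theta_0\|_2 + \|\Sigma_u^+ \corrcoef\|_2)\sqrt{\trace(\Xi_z)/n} + 2\eta_1 + \eta_2$ so that the bound takes the form $(1+\gamma)(1+\varepsilon)(A+\tilde{\sigma})^2 - \tilde{\sigma}^2$. Expanding the square gives
\begin{equation*}
(1+\gamma)(1+\varepsilon)(A^2 + 2A\tilde{\sigma}) + \bigl((1+\gamma)(1+\varepsilon) - 1\bigr)\tilde{\sigma}^2,
\end{equation*}
and since $\tilde{\sigma}^2 \leq \sigma^2$ is bounded, the whole expression will be $o_\Pr(1)$ as soon as $A\to 0$ and $\gamma+\varepsilon \to 0$.

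Next I would verify each convergence by matching the hypotheses against the definitions of $\gamma$ and $\varepsilon$ supplied by Corollary \ref{Variant_Corollary2} and Theorem \ref{Non_ortho_Variant_Theorem2}. Condition (i) directly handles $\rank(\Sigma_u)/n\to 0$. For $1/r(\Xi_z)\to 0$, I would cite Lemma 5 of \citet{bartlett2020benign}, which gives $R(\Xi_z)\leq r(\Xi_z)^2$, so condition (ii) forces $r(\Xi_z)\to\infty$. Condition (v)(c) combined with (ii) kills the mixed effective-rank term $(n/R(\Xi_z))(1+\trace(\Sigma_u \Xi_z)/\trace(\Xi_z^2))$, while condition (iv) eliminates the remaining $(\|\Sigma_u^+\corrcoef\|_2/\tilde{\sigma})\sqrt{\trace(\Xi_z)/n}$ pieces inside $\varepsilon$. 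Hence both $\gamma$ and $\varepsilon$ are $o(1)$.

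For $A\to 0$, the contribution $\|\theta_0\|_2\sqrt{\trace(\Xi_z)/n}$ vanishes by (iii); the contribution $\|\Sigma_u^+\corrcoef\|_2\sqrt{\trace(\Xi_z)/n}$ vanishes because it equals $\tilde{\sigma}$ times the quantity in (iv) and $\tilde{\sigma}$ is bounded above by $\sigma$. The terms $\eta_1$ and $\eta_2$ go to zero by (v)(a) and (v)(b) respectively. Combining, for arbitrary $\eta>0$ and $\delta>0$ we obtain $\Pr(\|\hat{\theta}-\theta_0\|_{\Xi_z}^2 > \eta)\leq \delta$ for all sufficiently large $n$, which yields the stated convergence in probability.

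The main subtlety, rather than an obstacle, is the bookkeeping that shows $\varepsilon$ can be driven below any threshold: $\varepsilon$ depends on $\delta$ through a $\sqrt{\log(1/\delta)}$ factor, so one must first fix $\delta$, then choose $n$ large enough so that $\varepsilon(n,\delta)\leq 1$ and all summands are small — exactly in the same quantifier order used in the proof of Theorem \ref{Variant_Theorem12}. A minor caveat worth noting in the write-up is that the validity of Theorem \ref{Non_ortho_Variant_Theorem3} requires $R(\Xi_z)\gtrsim \log(1/\delta)^2$, but this is automatic from (ii) for large $n$ once $\delta$ has been fixed.
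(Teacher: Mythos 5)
Your proposal is correct and follows essentially the same route as the paper's proof: both invoke Theorem~\ref{Non_ortho_Variant_Theorem3}, use Lemma~5 of \citet{bartlett2020benign} to get $1/r(\Xi_z)\to 0$ from condition (ii), and then match conditions (i)--(v) term-by-term against the definitions of $\gamma$, $\varepsilon$, $\eta_1$, $\eta_2$ in Corollary~\ref{Variant_Corollary2} and Theorem~\ref{Non_ortho_Variant_Theorem2}, taking care of the ``fix $\delta$, then take $n$ large'' quantifier order. The explicit algebraic rewriting $(1+\gamma)(1+\varepsilon)(A+\tilde{\sigma})^2-\tilde{\sigma}^2 = (1+\gamma)(1+\varepsilon)(A^2+2A\tilde{\sigma})+\bigl((1+\gamma)(1+\varepsilon)-1\bigr)\tilde{\sigma}^2$ is a slightly cleaner presentation of the same expansion the paper carries out implicitly, but the argument is identical in substance.
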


\begin{proof}[Proof of Theorem \ref{Non_ortho_Pre}]
Fix any $\kappa>0$ and $\delta>0$. From Lemma 5 of \citet{bartlett2020benign}, it holds that $R(\Xi_z)\leq r(\Xi_z)^{2}$. If $R(\Xi_z)=\orderomega(n)$ holds as the second condition in Theorem \ref{Non_ortho_Pre}, we have $r(\Xi_z)=\orderomega(\sqrt{n})=\orderomega(1)$, which implies the convergence of  $1/r(\Xi_z)$ to zero.  Hence, conditions (i) and (ii) in Theorem \ref{Non_ortho_Pre} make $\gamma$ sufficiently small for large enough $n$. Clearly, $(\|\theta_{0}\|_{2}+\|\Sigma_{u}^{+}\corrcoef\|_{2})\sqrt{\trace(\Xi_z)/n}$ goes to zero from conditions (iii) and (iv). By the definition of $\varepsilon$,  conditions (i), (ii), (iv), and (v)(c) in Theorem \ref{Non_ortho_Pre} imply that $\varepsilon$ can be arbitrarily small. Combined with conditions (v)(a) and (v)(b), for sufficiently large $n$, we obtain
\begin{align}
   &(1+\gamma)(1+\varepsilon)\left((\|\theta_{0}\|_{2}+\|\Sigma_{u}^{+}\corrcoef\|_{2})\sqrt{\frac{\trace(\Xi_z)}{n}}+(2\eta_{1}+\tilde{\sigma}+2\eta_{2})\right)^{2}-\tilde{\sigma}^{2} \leq\kappa. \label{Non_Ortho_Variant_87}
\end{align}
We have shown that $\gamma$, $\varepsilon$, $(\|\theta_{0}\|_{2}+\|\Sigma_{u}^{+}\corrcoef\|_{2})\sqrt{\trace(\Xi_z)/n}$ , $\eta_{1}$, and $\eta_{2}$ are so small that  equation (\ref{Non_Ortho_Variant_87}) holds for sufficiently large $n$. Therefore, we obtain
\begin{equation*}
   \Pr(\|\hat{\theta}-\theta_{0}\|_{\Xi_z}^{2}>\kappa)\leq\delta,
\end{equation*}
for any fixed $\kappa$. As $\kappa$ and $\delta$ are arbitrary, we have for any $\kappa$, 
\begin{equation*}
    \lim_{n\rightarrow\infty}\Pr(\|\hat{\theta}-\theta_{0}\|_{\Xi_z}^{2}>\kappa)=0.
\end{equation*}
\end{proof}

\begin{lemma}\label{sufficient_3}
Suppose $ \lim_{n\rightarrow\infty}\corrcoef^\top\Sigma_{u}^{+}\Xi_{z}\Sigma_{u}^{+}\corrcoef=0$ holds. Suppose the second condition of Theorem \ref{Non_ortho_Pre} holds. Then, with probability at least $1-\delta$,  $\lim_{n\rightarrow\infty}\eta_{1}=0$. 
\end{lemma}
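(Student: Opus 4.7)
The plan is a direct algebraic substitution followed by applying the two limit hypotheses. First, I would rewrite $\|\Xi_{z}^{1/2}\Sigma_{u}^{+}\corrcoef\|_{2}^{2}$ in a form that matches the expression assumed to vanish in the hypothesis. Since $\Sigma_{u}^{+}$ is symmetric, we have the identity
\begin{equation*}
    \|\Xi_{z}^{1/2}\Sigma_{u}^{+}\corrcoef\|_{2}^{2}
    = (\Sigma_{u}^{+}\corrcoef)^\top \Xi_{z}\,(\Sigma_{u}^{+}\corrcoef)
    = \corrcoef^\top \Sigma_{u}^{+}\Xi_{z}\Sigma_{u}^{+}\corrcoef .
\end{equation*}
Squaring the definition of $\eta_{1}$ therefore gives
\begin{equation*}
    \eta_{1}^{2} = (1+\varepsilon_{1})^{2}\,\frac{n}{R_{\|\cdot\|}(\Xi_{z})}\cdot\corrcoef^\top \Sigma_{u}^{+}\Xi_{z}\Sigma_{u}^{+}\corrcoef .
\end{equation*}

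Next, I would bound the coefficient $(1+\varepsilon_{1})^{2}\,n/R_{\|\cdot\|}(\Xi_{z})$ using the same control of $\varepsilon_{1}$ established in the proof of Theorem \ref{Variant_Theorem2} (specifically the calculation based on (\ref{Variant_78}) of Lemma \ref{Variant_Lemma9}). That argument shows that, with probability at least $1-\delta$, the choice of $\varepsilon_{1}$ can be made so that
\begin{equation*}
    (1+\varepsilon_{1})^{2}\,\frac{n}{R_{\|\cdot\|}(\Xi_{z})} \;\lesssim\; \frac{n\log(1/\delta)}{R(\Xi_{z})} .
\end{equation*}
Combining this with the identity above,
\begin{equation*}
    \eta_{1}^{2} \;\lesssim\; \frac{n\log(1/\delta)}{R(\Xi_{z})}\cdot \corrcoef^\top \Sigma_{u}^{+}\Xi_{z}\Sigma_{u}^{+}\corrcoef .
\end{equation*}

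Finally, I would apply the two hypotheses. The second condition of Theorem \ref{Non_ortho_Pre} gives $n/R(\Xi_{z}) \to 0$, while the standing assumption of the lemma gives $\corrcoef^\top \Sigma_{u}^{+}\Xi_{z}\Sigma_{u}^{+}\corrcoef \to 0$. For fixed $\delta$, $\log(1/\delta)$ is a constant, so the right-hand side vanishes as $n\to\infty$, yielding $\eta_{1}\to 0$ on the high-probability event. There is no genuine obstacle here; the only subtlety is that $\varepsilon_{1}$ is a random quantity and the bound on $(1+\varepsilon_{1})^{2}$ holds only with probability at least $1-\delta$, which is exactly what the statement of the lemma reflects.
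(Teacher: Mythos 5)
Your proof is correct and follows essentially the same route as the paper: both bound $(1+\varepsilon_1)^2 n/R_{\|\cdot\|}(\Xi_z)$ by $n\log(1/\delta)/R(\Xi_z)$ via Lemma \ref{Variant_Lemma9}, and both conclude by invoking the two limit hypotheses. The one small service you provide is spelling out the identity $\|\Xi_{z}^{1/2}\Sigma_{u}^{+}\corrcoef\|_{2}^{2}=\corrcoef^\top\Sigma_{u}^{+}\Xi_{z}\Sigma_{u}^{+}\corrcoef$ explicitly, which the paper leaves implicit.
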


\begin{proof}[Proof of Lemma \ref{sufficient_3}]
By the definition of $\eta_{1}$, we have
\begin{equation}
    \eta_{1}=\sqrt{(1+\varepsilon_{1})^{2}\frac{n}{R_{\|\cdot\|}(\Xi_{z})}}\|\Xi_{z}^{1/2}\Sigma_{u}^{+}\corrcoef\|_{2}.
\end{equation}
By (\ref{Variant_72}), for sufficiently large effective rank, it holds that $\left(E\|\Xi_z^{1/2}H\|_{2}\right)^{2}\gtrsim \trace(\Xi_z)$ and so
\begin{equation*}
    (1+\varepsilon_{1})^{2}\frac{n}{R_{\|\cdot\|_{2}}(\Xi_z)}=n\frac{(1+\varepsilon_{1})^{2}(E\|v^{*}\|_{\Xi_z})^{2}}{\left(E\|\Xi_z^{1/2}H\|_{2}\right)^{2}}\lesssim n\log(4/\delta)\frac{\trace(\Xi_z^{2})}{\trace(\Xi_z)^{2}}=\frac{n\log(4/\delta)}{R(\Xi_z)}
\end{equation*}
As $n/R(\Xi_z)$ and $\corrcoef^\top\Sigma_{u}^{+}\Xi_{z}\Sigma_{u}^{+}$ converge to zero, we have $\lim_{n\rightarrow\infty}\eta_{1}=0$.
\end{proof}

\begin{lemma}\label{sufficient_4}
Suppose $ \lim_{n\rightarrow\infty}\corrcoef^\top\Sigma_{u}^{+}\Xi_{z}\Sigma_{u}^{+}\corrcoef=0$ holds. The fourth condition of Theorem \ref{Non_ortho_Pre} implies $\lim_{n\rightarrow\infty}\eta_{2}=0$. 
\end{lemma}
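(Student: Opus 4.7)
\textbf{Proof plan for Lemma \ref{sufficient_4}.} The strategy is to bound $\eta_2^2$ as a sum of two pieces and show each one vanishes in the limit, using the hypothesis of the lemma for the first piece and the fourth hypothesis of Theorem \ref{Non_ortho_Pre} for the second. From the definition in Lemma \ref{Non_ortho_Variant_Lemma8},
\begin{equation*}
\eta_2^2 \;=\; \frac{(\mathbb{E}\|\Xi_z^{1/2}H\|_2)^2}{n}\left(1+\sqrt{\frac{2\log(8/\delta)}{r_{\|\cdot\|_2}(\Xi_z)}}\right)^2 \|\Sigma_u^+\corrcoef\|_2^2 \;+\; \|\Xi_z^{1/2}\Sigma_u^+\corrcoef\|_2^2.
\end{equation*}
For the second summand, I will simply note the identity $\|\Xi_z^{1/2}\Sigma_u^+\corrcoef\|_2^2 = \corrcoef^\top \Sigma_u^+ \Xi_z \Sigma_u^+ \corrcoef$, which tends to zero by the standing assumption of the lemma.

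For the first summand, I plan to apply Jensen's inequality to obtain $(\mathbb{E}\|\Xi_z^{1/2}H\|_2)^2 \leq \mathbb{E}\|\Xi_z^{1/2}H\|_2^2 = \trace(\Xi_z)$, giving
\begin{equation*}
\frac{(\mathbb{E}\|\Xi_z^{1/2}H\|_2)^2}{n}\|\Sigma_u^+\corrcoef\|_2^2 \;\leq\; \frac{\trace(\Xi_z)}{n}\|\Sigma_u^+\corrcoef\|_2^2.
\end{equation*}
Because $\tilde{\sigma}^2 = \sigma^2 - \|\corrcoef\|_{\Sigma_u^+}^2 \leq \sigma^2$ is uniformly bounded above by the fixed noise variance, the hypothesis $(\|\Sigma_u^+\corrcoef\|_2/\tilde\sigma)\sqrt{\trace(\Xi_z)/n} \to 0$ from condition (iv) of Theorem \ref{Non_ortho_Pre} immediately yields $\|\Sigma_u^+\corrcoef\|_2\sqrt{\trace(\Xi_z)/n} \to 0$, so the product above goes to zero. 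The multiplicative correction $(1+\sqrt{2\log(8/\delta)/r_{\|\cdot\|_2}(\Xi_z)})^2$ remains bounded (in fact tends to $1$), since condition (ii) of Theorem \ref{Non_ortho_Pre} together with the inequality $R(\Xi_z) \leq r(\Xi_z)^2$ from Lemma 5 of \citet{bartlett2020benign} forces $r_{\|\cdot\|_2}(\Xi_z) \to \infty$.

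The only subtle point is ensuring that the passage from the ratio $\|\Sigma_u^+\corrcoef\|_2/\tilde\sigma$ to the numerator $\|\Sigma_u^+\corrcoef\|_2$ is justified; this is the reason for invoking the uniform upper bound $\tilde\sigma \leq \sigma$. Beyond that, the argument is a routine assembly of Jensen's inequality and the identification of the Mahalanobis-type quantity. Combining the two bounds gives $\eta_2^2 \to 0$, hence $\eta_2 \to 0$ in the required sense.
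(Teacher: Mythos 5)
Your proof is correct and follows essentially the same route as the paper: decompose $\eta_2^2$ into the $(\mathbb{E}\|\Xi_z^{1/2}H\|_2)^2/n$-weighted piece and the $\|\Xi_z^{1/2}\Sigma_u^+\corrcoef\|_2^2$ piece, bound the former via $(\mathbb{E}\|\Xi_z^{1/2}H\|_2)^2\leq\trace(\Xi_z)$ (Jensen, equivalently the paper's $\Var\geq 0$ identity), identify the latter with $\corrcoef^\top\Sigma_u^+\Xi_z\Sigma_u^+\corrcoef$, and observe the logarithmic correction tends to $1$ because $n/R(\Xi_z)\to 0$ forces $r(\Xi_z)\to\infty$ and $r(\Sigma)-1\leq r_{\|\cdot\|_2}(\Sigma)\leq r(\Sigma)$. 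The one place you go beyond the paper is in explicitly justifying that condition (iv) of Theorem \ref{Non_ortho_Pre}, which carries a $\tilde\sigma$ in the denominator, implies the unnormalized limit $\|\Sigma_u^+\corrcoef\|_2\sqrt{\trace(\Xi_z)/n}\to 0$: the inequality $\tilde\sigma\leq\sigma$ gives $\|\Sigma_u^+\corrcoef\|_2\sqrt{\trace(\Xi_z)/n}\leq\sigma\cdot(\|\Sigma_u^+\corrcoef\|_2/\tilde\sigma)\sqrt{\trace(\Xi_z)/n}$, and $\sigma$ is fixed. The paper leaves this step implicit, so your write-up is, if anything, slightly more careful.
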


\begin{proof}[Proof of Lemma \ref{sufficient_4}]
By the definition of $\eta_{2}$, we have
\begin{align*}
    \eta_{2}:=\sqrt{\frac{(\mathbb{E}\|\Xi_z^{1/2}H\|_{2})^{2}}{n}\left(1+\sqrt{\frac{2\log(8/\delta)}{r_{\|\cdot\|_{2}}(\Xi_z)}}\right)\|\Sigma_{u}^{+}\corrcoef\|^{2}_{2}+\|\Xi_{z}^{1/2} \Sigma_{u}^{+}\corrcoef\|^{2}_{2}}.
\end{align*}
By trivial calculation, for any covariance matrix $\Sigma$, it holds that 
\begin{align*}
    \trace(\Sigma)=\Ep\|\Sigma^{1/2}H\|^{2}_{2}&=(\Ep\|\Sigma^{1/2}H\|_{2})^{2}+\mathrm{Var}(\|\Sigma^{1/2}H\|_{2}) \geq (\Ep\|\Sigma^{1/2}H\|_{2})^{2}.
\end{align*}
From the result of (\ref{Variant_74}), we have
\begin{equation*}
    \eta_{2}\lesssim \sqrt{\frac{\trace(\Xi_{z})}{n}\left(1+\sqrt{\frac{\log(1/\delta)}{r(\Xi_z)}}\right)\|\Sigma_{u}^{+}\corrcoef\|^{2}_{2}+\|\Xi_{z}^{1/2} \Sigma_{u}^{+}\corrcoef\|^{2}_{2}}.
\end{equation*}
As $R(\Xi_z)\rightarrow\infty$ implies $r(\Xi_z)\rightarrow\infty$, we have the following result:
\begin{equation*}
    \left(1+\sqrt{\frac{\log(1/\delta)}{r(\Xi_z)}}\right)\rightarrow1.
\end{equation*}
Moreover, the conditions
\begin{equation*}
   \lim_{n\rightarrow\infty}\frac{\|\Sigma_{u}^{+}\corrcoef\|_{2}}{\tilde{\sigma}}\sqrt{\frac{\trace(\Xi_{z})}{n}}=0 \quad \text{and} \quad \lim_{n\rightarrow\infty}\corrcoef^\top\Sigma_{u}^{+}\Xi_{z}\Sigma_{u}^{+}\corrcoef=0
\end{equation*}
lead to the conclusion $\lim_{n\rightarrow\infty}\eta_{2}=0$.
\end{proof}

\textbf{Theorem \ref{Non_ortho_Variant_Theorem12}} (Sufficient conditions: Non-Orthogonal Case) \textit{Under Assumption \ref{asmp:Gaussianity}, let $\hat{\theta}$ be the ridgeless estimator. Suppose that as $n$ goes to $\infty$, the covariance splitting $\Sigma_x=\Xi_z + \Sigma_{u}$ satisfies  the following conditions:
\begin{enumerate}
    \item[(i)] (Small large-variance dimension.)
    \begin{equation*}
        \lim_{n\rightarrow\infty}\frac{\rank(\Sigma_{u})}{n}=0.
    \end{equation*}
    \item[(ii)] (Large effective dimension.)
    \begin{equation*}
        \lim_{n\rightarrow\infty}\frac{n}{R(\Xi_z)}=0.
    \end{equation*}
    \item[(iii)] (No aliasing condition.)
    \begin{equation*}
        \lim_{n\rightarrow\infty}\|\theta_{0}\|_{2}\sqrt{\frac{\trace(\Xi_z)}{n}}=0.
    \end{equation*}
    \item[(iv)] (Condition for the minimal interpolation of instrumental variable in the non-orthogonal case)
    \begin{equation*}
        \lim_{n\rightarrow\infty}\frac{\|\Sigma_{u}^{+}\corrcoef\|_{2}}{\tilde{\sigma}}\sqrt{\frac{\trace(\Xi_{z})}{n}}=0.
    \end{equation*}
    \item[(v)] (Non-orthogonality)
    \begin{enumerate}
        \item \begin{equation*}
            \lim_{n\rightarrow\infty}\frac{n}{R(\Xi_z)}\frac{\trace(\Sigma_{u}\Xi_{z})}{\trace(\Xi_{z}^{2})}=0.
        \end{equation*}
        \item \begin{equation*}
             \lim_{n\rightarrow\infty}\corrcoef^\top\Sigma_{u}^{+}\Xi_{z}\Sigma_{u}^{+}\corrcoef=0.
        \end{equation*}
    \end{enumerate}
\end{enumerate}
Then, $ \|\hat{\theta}-\theta_{0}\|_{\Xi_z}^{2}$ converges to $0$ in probability.
}

\begin{proof}[Proof of Theorem \ref{Non_ortho_Variant_Theorem12}]
By Lemma \ref{sufficient_3}, it holds that from conditions (ii) and (v)(b) that $\lim_{n\rightarrow\infty}\eta_{1}=0$. We also have $\lim_{n\rightarrow\infty}\eta_{2}=0$ by Lemma \ref{sufficient_4}. Therefore, from Theorem \ref{Non_ortho_Pre}, $ \|\hat{\theta}-\theta_{0}\|_{\Xi_z}^{2}$ converges to $0$ in probability.
\end{proof}

\section{Supportive Result} \label{app_sec:support}

\begin{proposition} \label{prop:pi_0}
    The necessary condition for $\lim_{n\rightarrow\infty}\rank(\Sigma_{u})/n=0$ in Theorem \ref{Variant_Theorem12} is 
\begin{align*}
    \rank(\Sigma_u) \geq p - \min\{\mathrm{rank}(\Sigma_{z}),\mathrm{rank}(\Pi_{0})\}.
\end{align*}
\end{proposition}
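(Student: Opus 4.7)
The plan is to derive the claimed lower bound on $\rank(\Sigma_u)$ purely from the structure of the covariance decomposition induced by the data generating process in \eqref{model:reg}, combined with two elementary rank inequalities. First, starting from $X_i = \Pi_0 Z_i + u_i$ with $\Ep[u_i \mid Z_i] = 0$, I would take the outer product and expectation to obtain $\Sigma_x = \Pi_0 \Sigma_z \Pi_0^\top + \Sigma_u = \Xi_z + \Sigma_u$, where the cross terms vanish by $\Ep[u_i Z_i^\top] = \Ep[\Ep[u_i \mid Z_i] Z_i^\top] = 0$. This identity is the same one used implicitly throughout Section \ref{sec:orthogonal} (and proved as Lemma \ref{lem:Covariance_IV} for the orthogonal case), but here no orthogonality is needed.

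Next, I would invoke the sub-additivity of matrix rank, $\rank(A + B) \leq \rank(A) + \rank(B)$, applied to $\Sigma_x = \Xi_z + \Sigma_u$, yielding
\begin{equation*}
    \rank(\Sigma_x) \leq \rank(\Xi_z) + \rank(\Sigma_u).
\end{equation*}
Under the implicit assumption that $\Sigma_x$ has full rank $p$ (which is in force whenever one speaks of a $p$-dimensional Gaussian covariate $X_i$ with no degenerate directions), this simplifies to $\rank(\Sigma_u) \geq p - \rank(\Xi_z)$.

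The remaining step is to bound $\rank(\Xi_z)$ from above. Using the standard fact that $\rank(ABC) \leq \min\{\rank(A), \rank(B), \rank(C)\}$ and symmetry of $\Sigma_z$, I obtain
\begin{equation*}
    \rank(\Xi_z) = \rank(\Pi_0 \Sigma_z \Pi_0^\top) \leq \min\{\rank(\Pi_0), \rank(\Sigma_z)\}.
\end{equation*}
Chaining this with the previous display gives $\rank(\Sigma_u) \geq p - \min\{\rank(\Sigma_z), \rank(\Pi_0)\}$, which is the desired conclusion. There is no substantial obstacle: the proof is a two-line chain of rank inequalities applied to the natural covariance decomposition. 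The only conceptual point worth flagging is that the bound degrades if $\Sigma_x$ is itself rank-deficient, in which case $p$ on the right-hand side should be replaced by $\rank(\Sigma_x)$; the implication drawn in the main text for benign overfitting, namely that a high-rank $\Pi_0$ (and hence a strong instrument) is required in order for $\rank(\Sigma_u)/n \to 0$, is unaffected.
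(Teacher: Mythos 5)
Your proof is correct, and it takes a slightly different route from the paper's. The paper proves this directly from Assumption~\ref{asmp:orthogonal}: because the eigenspaces of $\Sigma_u$ and $\Xi_z$ are orthogonal with $J_u + J_z = p$ and positive eigenvalues, one has the \emph{exact} rank identity $\rank(\Xi_z) + \rank(\Sigma_u) = p$, and then combines this with $\rank(\Xi_z) = \rank(\Pi_0\Sigma_z^{1/2}) \leq \min\{\rank(\Sigma_z),\rank(\Pi_0)\}$. You instead avoid invoking orthogonality and use the rank sub-additivity $\rank(\Sigma_x) \leq \rank(\Xi_z) + \rank(\Sigma_u)$ together with the hypothesis $\rank(\Sigma_x) = p$ to get the inequality $\rank(\Sigma_u) \geq p - \rank(\Xi_z)$; the final step bounding $\rank(\Xi_z)$ by $\min\{\rank(\Pi_0),\rank(\Sigma_z)\}$ is the same in both. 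The trade-off: your argument is nominally more general (it would extend to the non-orthogonal decompositions of Section~\ref{sec:non-orthogonal} as long as $\Sigma_x$ is nondegenerate), at the cost of needing the extra hypothesis $\rank(\Sigma_x)=p$; but within the scope of Proposition~\ref{prop:pi_0}, which is stated as a consequence of Theorem~\ref{Variant_Theorem12} and hence under Assumption~\ref{asmp:orthogonal}, that hypothesis is automatically satisfied ($\Sigma_x$ is a sum of two orthogonal positive semidefinite matrices of ranks $J_u$ and $J_z$ with $J_u+J_z=p$), so the two proofs deliver exactly the same conclusion. You correctly flagged the degenerate-$\Sigma_x$ caveat, and your version is a clean alternative.
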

\begin{proof}[Proof of Proposition \ref{prop:pi_0}]
As we have $\rank(A)=\rank(A^\top)=\rank(AA^\top)=\rank(A^\top A)$ for any matrix $A$, we have $\rank(\Pi_{0}\Sigma_{z}^{1/2})=\rank(\Xi_{z})$. From the property of the matrix, we have
\begin{equation*}
    \rank(\Pi_{0}\Sigma_{z}^{1/2})\leq \min\{\rank(\Sigma_{z}^{1/2}),rank(\Pi_{0})\}.
\end{equation*}
Under Assumption \ref{asmp:orthogonal}, we have
\begin{equation*}
    \rank(\Xi_{z})+\rank(\Sigma_{u})=p.
\end{equation*}
Therefore, we obtain the statement.
\end{proof}

\begin{lemma}\label{lem:Covariance_IV}
Assume Assumption \ref{asmp:orthogonal} holds. 
Then, we obtain the following covariance splitting:
\begin{equation*}
    \Sigma_x=\Xi_z + \Sigma_{u},
\end{equation*}
where $\Xi_z$ and $\Sigma_{u}$ are positive semidefinite matrices and subspaces generated from $\Xi_z and \Sigma_{u}$, which are orthogonal. 
\end{lemma}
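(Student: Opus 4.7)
The plan is to derive the identity $\Sigma_x = \Xi_z + \Sigma_u$ directly from the data generating process $X_i = \Pi_0 Z_i + u_i$, together with the conditional mean restriction $\Ep[u_i \mid Z_i] = 0$. The orthogonality Assumption \ref{asmp:orthogonal} is used only to ensure that the decomposition matches the eigenspace splitting described in the preceding discussion (ranks adding to $p$); the algebraic identity itself requires only the model \eqref{model:reg}.

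First, I would expand the outer product using $X_i = \Pi_0 Z_i + u_i$:
\begin{align*}
    X_i X_i^\top = \Pi_0 Z_i Z_i^\top \Pi_0^\top + \Pi_0 Z_i u_i^\top + u_i Z_i^\top \Pi_0^\top + u_i u_i^\top.
\end{align*}
Taking expectations and recognizing $\Xi_z = \Pi_0 \Ep[Z_i Z_i^\top] \Pi_0^\top$ and $\Sigma_u = \Ep[u_i u_i^\top]$, it remains to show that the two cross terms $\Pi_0 \Ep[Z_i u_i^\top]$ and $\Ep[u_i Z_i^\top] \Pi_0^\top$ both vanish.

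Second, I would handle the cross terms by the tower property of conditional expectations. Using $\Ep[u_i \mid Z_i] = 0$ stipulated in the setup following \eqref{model:reg}, we get
\begin{align*}
    \Ep[Z_i u_i^\top] = \Ep\bigl[Z_i \Ep[u_i^\top \mid Z_i]\bigr] = 0,
\end{align*}
and the other cross term vanishes analogously by transposition. Combining these yields the claimed identity $\Sigma_x = \Xi_z + \Sigma_u$, and both summands are positive semidefinite as covariance matrices (of $\Pi_0 Z_i$ and $u_i$, respectively).

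There is no genuine obstacle here: the argument reduces to the standard variance decomposition for a regression-type model with a zero conditional mean residual, and Assumption \ref{asmp:orthogonal} plays no role beyond guaranteeing that the two summands have orthogonal eigenspaces with complementary ranks $J_u + J_z = p$, a fact used elsewhere in the paper but not needed for the displayed identity itself.
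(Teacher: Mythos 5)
Your proof is correct and follows essentially the same route as the paper: expand $X_i X_i^\top$ from $X_i = \Pi_0 Z_i + u_i$, kill the cross terms via $\Ep[u_i \mid Z_i] = 0$ and the tower property, and read off $\Sigma_x = \Xi_z + \Sigma_u$, with Assumption \ref{asmp:orthogonal} invoked only for the orthogonality claim. You make the vanishing of the cross terms slightly more explicit than the paper does, but there is no substantive difference.
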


\begin{proof}[Proof of Lemma \ref{lem:Covariance_IV}]
By construction, we have
\begin{equation*}
    X_{i}=\Pi_{0}Z_{i}+u_{i},
\end{equation*}
where $E[u_{i}|Z_{i}]=0$. Therefore, we have
\begin{align*}
    \Sigma_x=E[X_{i}X_{i}^\top ]&=\Pi_{0}E[Z_{i}Z_{i}^\top ]\Pi_{0}^\top +E[u_{i}u_{i}^\top ] =\Xi_z+\Sigma_{u}.
\end{align*}
By construction, clearly $\Xi_z,\Sigma_{u}$ are positive semidefinite. By Assumption \ref{asmp:orthogonal}, subspaces generated by $\Xi_z,\Sigma_{u}$ are orthogonal. Therefore, we have $\Sigma_x=\Xi_z + \Sigma_{u}$.
\end{proof}

\begin{lemma}\label{rho-sigma_inq}
Under Assumptions \ref{asmp:Gaussianity} and \ref{asmp:orthogonal}, we have the following inequality:
\begin{equation*}
     \|\rho\|_{2}^{2} = \|\corrcoef\|_{\Sigma_u^{+}}^{2}\leq\sigma^{2}.
\end{equation*}
Furthermore, the covariance matrix of $(W_{1,i}, W_{2,i}, \xi_{i})^{\top}$ in \eqref{def:cov_wwxi} is positive semi-definite.
\end{lemma}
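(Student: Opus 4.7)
The plan breaks naturally into three steps: rewriting the Mahalanobis norm, bounding it via a Hilbert-space Cauchy--Schwarz argument, and then deducing positive semi-definiteness from a Schur complement.

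First, I would unpack $\rho = (\Sigma_u^{1/2})^+ \corrcoef$ to get the identity $\|\rho\|_2^2 = \corrcoef^\top (\Sigma_u^{1/2})^+ (\Sigma_u^{1/2})^+ \corrcoef = \corrcoef^\top \Sigma_u^+ \corrcoef = \|\corrcoef\|_{\Sigma_u^+}^2$. This step is purely notational and uses the standard pseudoinverse identity $((\Sigma_u^{1/2})^+)^2 = \Sigma_u^+$.

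Next, I would combine the model $X_i = \Pi_0 Z_i + u_i$ with the moment condition $\mathbb{E}[\xi_i \mid Z_i] = 0$ to rewrite $\corrcoef = \mathbb{E}[X_i \xi_i] = \Pi_0 \mathbb{E}[Z_i \xi_i] + \mathbb{E}[u_i \xi_i] = \mathbb{E}[u_i \xi_i]$. Consequently $\rho = \mathbb{E}[(\Sigma_u^{1/2})^+ u_i \cdot \xi_i]$, and $(\Sigma_u^{1/2})^+ u_i$ has covariance $P := (\Sigma_u^{1/2})^+ \Sigma_u (\Sigma_u^{1/2})^+$, the orthogonal projection onto the range of $\Sigma_u$. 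For any deterministic vector $\alpha$, Cauchy--Schwarz in $L^2$ yields
\begin{equation*}
    (\alpha^\top \rho)^2 = \bigl(\mathbb{E}[\alpha^\top (\Sigma_u^{1/2})^+ u_i \cdot \xi_i]\bigr)^2 \leq \mathbb{E}[(\alpha^\top (\Sigma_u^{1/2})^+ u_i)^2] \cdot \mathbb{E}[\xi_i^2] = (\alpha^\top P \alpha) \sigma^2.
\end{equation*}
The key observation is that $\rho$ lies in the range of $(\Sigma_u^{1/2})^+$, which coincides with the range of $\Sigma_u$, so $P\rho = \rho$. Setting $\alpha = \rho$ thus gives $\|\rho\|_2^4 \leq \|\rho\|_2^2 \cdot \sigma^2$, and dividing (or noting the trivial case $\rho = 0$) yields $\|\rho\|_2^2 \leq \sigma^2$.

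Finally, for the positive semi-definiteness of the joint covariance in \eqref{def:cov_wwxi}, I would note that its top-left $p \times p$ block is $I_{p \times p}$, and the remaining $(p+1)\times(p+1)$ block is
\begin{equation*}
    \begin{pmatrix} I_{p\times p} & \rho \\ \rho^\top & \sigma^2 \end{pmatrix}.
\end{equation*}
By the Schur complement criterion, this block is PSD iff $\sigma^2 - \rho^\top \rho \geq 0$, which is exactly the inequality just established. Since the full matrix is block-diagonal with two PSD blocks, it is PSD. The only subtle point I expect is keeping track of the range of $\Sigma_u$ when invoking the pseudoinverse -- verifying that $\rho$ actually lies in this range is what makes the Cauchy--Schwarz bound tight and allows $P\rho = \rho$ to close the argument.
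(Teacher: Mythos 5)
Your proof is correct, and it takes a genuinely different route from the paper's. The paper establishes the inequality by an algebraic reduction: it observes that positive semi-definiteness of the $(X_i,\xi_i)$ covariance is equivalent to $\sigma^2 - \corrcoef^\top \Sigma_x^{-1}\corrcoef \ge 0$, takes this for granted (as the covariance of a genuine Gaussian vector), and then uses the orthogonality of Assumption \ref{asmp:orthogonal} to show $\Sigma_x^{-1} = \Sigma_u^+ + \Xi_z^+$ and hence $\corrcoef^\top \Sigma_x^{-1}\corrcoef = \corrcoef^\top \Sigma_u^+\corrcoef = \|\rho\|_2^2$, which closes the argument and simultaneously gives the Schur complement condition for the $(W_1, W_2, \xi)$ block. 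You instead prove the inequality directly: you use the moment condition $\mathbb{E}[\xi_i\mid Z_i]=0$ to reduce $\corrcoef$ to $\mathbb{E}[u_i\xi_i]$, then apply $L^2$ Cauchy--Schwarz to the pair $((\Sigma_u^{1/2})^+u_i, \xi_i)$, closing via $P\rho = \rho$ since $\corrcoef$ necessarily lies in the range of $\Sigma_u$. Notably, your argument does not use the orthogonality of $\Sigma_u$ and $\Xi_z$ at all -- only the instrument's exclusion restriction and the definition of $\rho$ -- so it is slightly more general than what is claimed; the paper's proof, by contrast, genuinely depends on $\Sigma_x^{-1}=\Sigma_u^++\Xi_z^+$, which fails without orthogonality. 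Both proofs finish the positive semi-definiteness of the $(W_1, W_2, \xi)$ covariance via the Schur complement criterion, though the paper derives it by a first-order condition rather than citing the criterion by name.
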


\begin{proof}[Proof of Lemma \ref{rho-sigma_inq}]
First, we clarify the necessary and sufficient condition of positive semi-definiteness of the covariance matrix of $(X_{i},\xi_{i})^\top$. The definition of positive semi-definiteness is 
\begin{equation*}
(a,b)
\begin{pmatrix}
\Sigma_{x} & \Sigma_{u}^{1/2}\mathbf{\rho} \\
\mathbf{\rho}^\top\Sigma_{u}^{1/2}  & \sigma^{2}
\end{pmatrix}
(a,b)^\top\geq0,
\end{equation*}
for any $a\in\mathbb{R}^{p}$ and $b\in\mathbb{R}$. By trivial calculation, we have
\begin{equation}\label{co-variancematrix_X_xi}
(a,b)
\begin{pmatrix}
\Sigma_{x} & \Sigma_{u}^{1/2}\mathbf{\rho} \\
\mathbf{\rho}^\top\Sigma_{u}^{1/2}  & \sigma^{2}
\end{pmatrix}
(a,b)^\top=a^\top \Sigma_{x} a+2ba^\top \Sigma_{u}^{1/2}\rho+b^{2}\sigma^{2} .
\end{equation}
By solving the first order condition of (\ref{co-variancematrix_X_xi}) with respect to $a\in\mathbb{R}^{p}$, we have the solution $a^{*}=-b\Sigma^{-1}_{x}\Sigma_{u}^{1/2}\rho$. By substituting $a^{*}$ into (\ref{co-variancematrix_X_xi}), it holds that
\begin{equation*}
   b^{2}\rho^\top \Sigma_{u}^{1/2}\Sigma^{-1}_{x}\Sigma_{u}^{1/2}\rho-2b^{2}\rho^\top \Sigma_{u}^{1/2}\Sigma^{-1}_{x}\Sigma_{u}^{1/2}\rho+b^{2}\sigma^{2}=b^{2}(\sigma^{2}-\rho^\top \Sigma_{u}^{1/2}\Sigma^{-1}_{x}\Sigma_{u}^{1/2}\rho).
\end{equation*}
Therefore, $(\sigma^{2}-\rho^\top \Sigma_{u}^{1/2}\Sigma^{-1}_{x}\Sigma_{u}^{1/2}\rho)\geq 0$ is the sufficient and necessary condition for positive semi-definiteness. Likewise, we obtain  $(\sigma^{2}-\rho^\top \rho)\geq 0$ as the sufficient and necessary condition for positive semi-definiteness for the covariance matrix of $(W_{1,i}, W_{2,i}, \xi_{i})^{\top}$.

Finally, under Assumptions \ref{asmp:Gaussianity} and \ref{asmp:orthogonal}, we show that positive semi-definiteness of the covariance matrix of $(X_{i},\xi_{i})^\top$ implies that the covariance matrix of $(W_{1,i}, W_{2,i}, \xi_{i})^\top$ is positive semi-definite. 
As $\rho$ is defined as the one that has the minimum norm subject to $\corrcoef = \Sigma_u^{1/2} \rho$,
$    \rho := \argmin\{\|b\|: \corrcoef = \Sigma_u^{1/2} b\}$.
By the property of the generalized inverse matrix, we have
\begin{equation*}
    \rho=(\Sigma_u^{1/2})^{+}\corrcoef.
\end{equation*}
Under Assumption \ref{asmp:orthogonal}, $\Sigma_{u}\Xi_{z}=0$. As $\Sigma_{u}$ and $\Xi_{z}$ are symmetric, we have 
\begin{align*}
\Sigma_{x}(\Sigma_{u}^{+}+\Xi_{z}^{+})&=(\Sigma_{u}+\Xi_{z})(\Sigma_{u}^{+}+\Xi_{z}^{+}) \\
    &=\Sigma_{u}\Sigma_{u}^{+}+\Sigma_{u}\Xi_{z}^{+}+\Xi_{z}\Sigma_{u}^{+}+\Xi_{z}\Xi_{z}^{+} \\
    &=I_{u}+\Sigma_{u}\Xi_{z}^{\top}(\Xi_{z}\Xi_{z}^\top )^{+}+\Xi_{z}\Sigma_{u}^{\top}(\Sigma_{u}\Sigma_{u}^\top )^{+}+(I_{p}-I_{u}) \\
    &=I_{p}.
\end{align*}
Hence, $\Sigma_{x}^{-1}=\Sigma_{u}^{+}+\Xi_{z}^{+}$. Then, we have
\begin{align*}
\rho^\top \Sigma_{u}^{1/2}\Sigma_{x}^{-1}\Sigma_{u}^{1/2}\rho&=\corrcoef^\top \Sigma_{x}^{-1}\corrcoef =\corrcoef^\top \Sigma_{u}^{+}\corrcoef+\corrcoef^\top\Xi_{z}^{+}\corrcoef = \corrcoef^\top \Sigma_{u}^{+}\corrcoef =\rho^\top\rho.
\end{align*}
The third equality holds because $\corrcoef=\Sigma_{u}^{1/2}\rho$ and $\Sigma_{u}$ is orthogonal to $\Xi_{z}$. The above discussion suggests
\begin{equation*}
    \sigma^{2}- \rho^\top \Sigma_{u}^{1/2}\Sigma^{-1}_{x}\Sigma_{u}^{1/2}\rho= \sigma^{2}-\rho^\top \rho.
\end{equation*}
Therefore, by the positive semi-definiteness of the covariance matrix of $(X_{i},\xi_{i})^\top$, we have
\begin{equation*}
    \sigma^{2}-\rho^\top \rho=\sigma^{2}- \corrcoef^\top \Sigma^{-1}_{x}\corrcoef\geq 0.
\end{equation*}
\end{proof}

\begin{proof}[Proof of Proposition \ref{example_1}]
 By Theorem 2 (1) in \citet{bartlett2020benign}, the first and second conditions of Definition \ref{cond:basic} are satisfied.  As $\trace(\Xi_{z})$ converges to a finite value, the third condition also holds under the assumption $\|\theta_{0}\|_{2}=o(\sqrt{n})$. 

We show the condition in Theorem \ref{Variant_Theorem12} is satisfied under the setting of Proposition \ref{example_1}. By the setting, we have
 \begin{align*}
    \frac{1}{n}\|\Sigma_{u}^{+}\corrcoef\|^{2}_{2}&=\frac{1}{n}\sum_{i=1}^{k^{*}_{n}}i^{2}\log^{2\beta}(i+1)\cdot(U\corrcoef)_{i}^{2} \lesssim \frac{1}{n}\sum_{i=1}^{k^{*}_{n}}\frac{i^{2}\log^{2\beta}(i+1)}{i^{2}\log^{2\beta}(i+1)} =  \frac{k^{*}_{n}}{n}.
\end{align*}
As $\lim_{n\rightarrow\infty}k^{*}_{n}/n=0$ holds, we have $\lim_{n\rightarrow\infty}\|\Sigma_{u}^{+}\corrcoef\|^{2}_{2}/n=0$.
\end{proof}

\begin{proof}[Proof of Proposition \ref{example_2}]
By Theorem 2 (2) in \citet{bartlett2020benign}, the first and second conditions of Definition \ref{cond:basic} are satisfied.

We prove $\theta_{0}$ and $\trace(\Xi_{z})$ satisfy the third condition stated in Definition \ref{cond:basic}. By the definition of the matrices, we have
\begin{align}\label{proof_6-1}
    \trace(\Xi_{z})&\leq \trace (\Sigma_{x})  =\sum_{i=1}^{p}(\gamma_{i}+\varepsilon_{n})  =\sum_{i=1}^{p}\gamma_{i}+p\varepsilon_{n}.
\end{align}
As $p\varepsilon_{n}$ is equal to $ne^{-o(n)}$, the second term of (\ref{proof_6-1}) converges to zero. It also holds from the definition that
\begin{align*}
    \sum_{i=1}^{p}\gamma_{i}\lesssim \sum_{i=1}^{p}\exp(-i/\tau).
\end{align*}
 As $\sum_{i=1}^{\infty}\exp(-i/\tau)$ is finite, $\trace(\Xi_{z})$ is also finite. Therefore, $\lim_{n\rightarrow\infty}\|\theta_{0}\|_{2}\sqrt{\trace(\Xi_{z})/n}=0$
 holds.

Finally, we show the setting in Proposition \ref{example_2} satisfies the condition stated in Theorem \ref{Variant_Theorem12}. By the assumption of Proposition \ref{example_2}, we have
\begin{align*}
    \frac{1}{n}\|\Sigma_{u}^{+}\corrcoef\|^{2}_{2}&=\frac{1}{n}\sum_{i=1}^{k^{*}_{n}}\frac{(U\corrcoef)_{i}^{2}}{(\gamma_{i}+\varepsilon_{n})^{2}} \leq \frac{1}{n}\sum_{i=1}^{k^{*}_{n}}\frac{(U\corrcoef)_{i}^{2}}{\gamma_{i}^{2}} \lesssim \frac{1}{n}\sum_{i=1}^{k^{*}_{n}}\frac{\exp(-2i/\tau)}{\exp(-2i/\tau)} =  \frac{k^{*}_{n}}{n}.
\end{align*}
As $k_{n}^{*}/n$ goes to zero, $\lim_{n\rightarrow\infty}\|\Sigma_{u}^{+}\corrcoef\|^{2}_{2}/n=0$ holds.
\end{proof}

\begin{proof}[Proof of Proposition \ref{non-ortho_ex}]

$\rank(\Sigma_{u})$ in Proposition \ref{non-ortho_ex} is the same as $\rank(\Sigma_{u})$ defined in Proposition \ref{example_1}. Hence, by Theorem 2 (1) in \citet{bartlett2020benign}, the first condition of Definition \ref{cond:basic} is satisfied.

To satisfy the second condition of Definition \ref{cond:basic}, we prove $n/R(\Xi_{z})$ goes to zero as $n$ goes to infinity. By the definition of $R(\Xi_{z})$, we have
\begin{align}\label{9-1}
    \frac{n}{R(\Xi_{z})}&=n\frac{\trace(\Xi_{z}^{2})}{(\trace(\Xi_{z}))^{2}} \notag \\
    &=n\frac{\frac{1}{n^{2\alpha}}\sum_{i=1}^{k_{n}^{*}}\lambda_{i}^{2}+\sum_{i=k_{n}^{*}+1}^{p}\lambda_{i}^{2}}{\left(\frac{1}{n^{\alpha}}\sum_{i=1}^{k_{n}^{*}}\lambda_{i}+\sum_{i=k_{n}^{*}+1}^{p}\lambda_{i}\right)^{2}} \notag \\
    &=\frac{\frac{1}{n^{2\alpha}}\sum_{i=1}^{k_{n}^{*}}\lambda_{i}^{2}+\sum_{i=k_{n}^{*}+1}^{p}\lambda_{i}^{2}}{\sum_{i=k_{n}^{*}+1}^{p}\lambda_{i}^{2}}\cdot n\frac{\sum_{i=k_{n}^{*}+1}^{p}\lambda_{i}^{2}}{\left(\sum_{i=k_{n}^{*}+1}^{p}\lambda_{i}\right)^{2}} \cdot \frac{\left(\sum_{i=k_{n}^{*}+1}^{p}\lambda_{i}\right)^{2}}{\left(\frac{1}{n^{\alpha}}\sum_{i=1}^{k_{n}^{*}}\lambda_{i}+\sum_{i=k_{n}^{*}+1}^{p}\lambda_{i}\right)^{2}} \notag \\
    &=\left(\frac{\sum_{i=1}^{k_{n}^{*}}\lambda_{i}^{2}}{n^{2\alpha}\sum_{i=k_{n}^{*}+1}^{p}\lambda_{i}^{2}}+1\right)
    \cdot  n\frac{\sum_{i=k_{n}^{*}+1}^{p}\lambda_{i}^{2}}{\left(\sum_{i=k_{n}^{*}+1}^{p}\lambda_{i}\right)^{2}}\cdot \left(\frac{\sum_{i=1}^{k_{n}^{*}}\lambda_{i}}{n^{^{\alpha}}\sum_{i=k_{n}^{*}+1}^{p}\lambda_{i}}+1\right)^{-2}.
\end{align}
As $\lambda_{i}=Ci^{-1}\log^{-\beta}(i+1)$ where $\beta>1$, it holds that
\begin{align}\label{9-2}
    \lim_{n\rightarrow\infty}\sum_{i=1}^{k_{n}^{*}}\lambda_{i}^{2}&\leq  \lim_{n\rightarrow\infty}\left(\sum_{i=1}^{k_{n}^{*}}\lambda_{i}\right)^{2}=\left(\lim_{n\rightarrow\infty}\sum_{i=1}^{k_{n}^{*}}\lambda_{i}\right)^{2}<\infty.
\end{align}
 Moreover, we have
\begin{align}\label{9-3}
    n^{2\alpha}\sum_{i=k_{n}^{*}+1}^{p}\lambda_{i}^{2}&\gtrsim n^{2\alpha}(p-k_{n}^{*})\left(\frac{1}{p\log^{\beta}(p+1)}\right)^{2} =\frac{1}{q}\left(1-\frac{1}{q}\frac{k_{n}^{*}}{n}\right)\left(\frac{n^{\frac{2\alpha-1}{2\beta}}}{\log(qn+1)}\right)^{2\beta}. 
\end{align}
As $\left(n^{\frac{2\alpha-1}{\beta}}/\log(qn+1)\right)^{2\beta}$ diverges to infinity and $k_{n}^{*}/n$ converges to zero as $n$ goes to infinity, it holds from \eqref{9-3} that $n^{2\alpha}\sum_{i=k_{n}^{*}+1}^{p}\lambda_{i}^{2}$ diverges to infinity. Therefore, we have
\begin{align}\label{9-4}
    \lim_{n\rightarrow\infty}\left(\frac{\sum_{i=1}^{k_{n}^{*}}\lambda_{i}^{2}}{n^{2\alpha}\sum_{i=k_{n}^{*}+1}^{p}\lambda_{i}^{2}}+1\right)=1.
\end{align}
In a similar way to (\ref{9-2}) and (\ref{9-3}), we obtain
\begin{align*}
    \lim_{n\rightarrow\infty}\sum_{i=1}^{k_{n}^{*}}\lambda_{i}&<\infty, \mbox{~and~}
   n^{{\alpha}}\sum_{i=k_{n}^{*}+1}^{p}\lambda_{i}\gtrsim \left(1-\frac{1}{q}\frac{k_{n}^{*}}{n}\right)\left(\frac{n^{\frac{\alpha}{\beta}}}{\log(qn+1)}\right)^{\beta}.
\end{align*}
Hence, we have
\begin{align}\label{9-5}
    \lim_{n\rightarrow\infty}\left(\frac{\sum_{i=1}^{k_{n}^{*}}\lambda_{i}}{n^{^{\alpha}}\sum_{i=k_{n}^{*}+1}^{p}\lambda_{i}}+1\right)^{-2}=1.
\end{align}
Combining the result in the proof of Proposition \ref{example_1}, $\lim_{n\rightarrow\infty}n\sum_{i=k_{n}^{*}+1}^{p}\lambda_{i}^{2}/\left(\sum_{i=k_{n}^{*}+1}^{p}\lambda_{i}\right)^{2}=0$, with (\ref{9-4}) and (\ref{9-5}), we have
\begin{equation*}
    \lim_{n\rightarrow\infty}\frac{n}{R(\Xi_{z})}=0.
\end{equation*}

We show the third condition in Definition \ref{cond:basic} is satisfied under the setting of Proposition \ref{non-ortho_ex}.
By the definition of $\lambda_{i}$, we have
\begin{align*}
    \trace(\Xi_{z})\leq \trace(\Sigma_{x})<\infty.
\end{align*}
 Hence, the third condition of Definition \ref{cond:basic} clearly holds under the assumption $\|\theta_{0}\|_{2}=o(\sqrt{n})$.

We prove $\Sigma_{u}$ and $\omega$  satisfy the first condition stated in Theorem \ref{Non_ortho_Variant_Theorem12}. As $\lim_{n\rightarrow\infty}  ( \sigma^{2}-\|\corrcoef\|^{2}_{\Sigma_{u}^{+}})>0$ holds by assumption, it is sufficient to show $\lim_{n\rightarrow\infty}\|\Sigma_{u}^{+}\corrcoef\|_{2}\sqrt{\trace(\Xi_z)/n}=0$. By the assumption of Proposition \ref{non-ortho_ex}, we have
\begin{align*}
    \frac{1}{n}\|\Sigma_{u}^{+}\corrcoef\|^{2}_{2}&= \frac{1}{n}\sum_{i=1}^{k^{*}_{n}}\frac{n^{2\alpha}}{(n^{\alpha}-1)^{2}}i^{2}\log(i+1))^{2\beta}(U\corrcoef)_{i}^{2}\\
    &\lesssim  \frac{1}{n}\frac{1}{1-2/n^{\alpha}+1/n^{2\alpha}}\sum_{i=1}^{k^{*}_{n}}\frac{i^{2}\log(i+1))^{2\beta}}{i^{2}\log(i+1))^{2\beta}}.
\end{align*}
From the discussion in the proof of Proposition \ref{example_1}, $\lim_{n\rightarrow \infty }\|\Sigma_{u}^{+}\corrcoef\|^{2}_{2}/n=0$ holds.

For the second condition of Theorem \ref{Non_ortho_Variant_Theorem12}, we need to show $\trace(\Sigma_{u}\Xi_{z})/\trace(\Xi_{z}^{2})$ is finite. 
By the definition of $\Sigma_{u}$ and $\Xi_{z}$, we have
\begin{align*}
    \frac{\trace(\Sigma_{u}\Xi_{z})}{\trace(\Xi_{z}^{2})}&=\frac{\left(\frac{n^{\alpha}-1}{n^{2\alpha}}\sum_{i=1}^{k_{n}^{*}}\frac{1}{i^{2}\log(i+1)^{2\beta}}\right)}{\frac{1}{n^{2\alpha}}\sum_{i=1}^{k_{n}^{*}}\frac{1}{i^{2}\log(i+1)^{2\beta}}+\sum_{i=k_{n}^{*}+1}^{p}\frac{1}{i^{2}\log(i+1)^{2\beta}}} \\
    &=\left(\frac{1}{n^{\alpha}-1}+\frac{\frac{n^{2\alpha}}{n^{\alpha}-1}\sum_{i=k_{n}^{*}+1}^{p}\frac{1}{i^{2}\log(i+1)^{2\beta}}}{\left(\sum_{i=1}^{k_{n}^{*}}\frac{1}{i^{2}\log(i+1)^{2\beta}}\right)}\right)^{-1}.
\end{align*}
By trivial calculation, we have
\begin{align*}
    \frac{n^{2\alpha}}{n^{\alpha}-1}\sum_{i=k_{n}^{*}+1}^{p}\frac{1}{i^{2}\log(i+1)^{2\beta}}&=\frac{n^{\alpha}}{n^{\alpha}-1}n^{\alpha}\sum_{i=k_{n}^{*}+1}^{p}\frac{1}{i^{2}\log(i+1)^{2\beta}}\\
    &\gtrsim \frac{n^{\alpha}}{n^{\alpha}-1}\left(1-\frac{1}{q}\frac{k_{n}^{*}}{n}\right)\left(\frac{n^{\frac{\alpha-1}{2\beta}}}{\log(qn+1)}\right)^{2\beta}.
\end{align*}
As $\left(n^{\frac{\alpha-1}{\beta}}/\log(qn+1)\right)^{2\beta}$ diverges to infinity and $k_{n}^{*}/n$ converges to zero as $n$ goes to infinity, it holds that $\lim_{n\rightarrow\infty}\trace(\Sigma_{u}\Xi_{z})/\trace(\Xi_{z}^{2})=0$.

Finally, we show the setting in Proposition \ref{non-ortho_ex} satisfies the last condition stated in Theorem \ref{Non_ortho_Variant_Theorem12}. By trivial calculation, we have
\begin{align*}
    \corrcoef^\top \Sigma_{u}^{+}\Xi_{z} \Sigma_{u}^{+}\corrcoef&\lesssim\frac{1}{n^{\alpha}-1}\sum_{i=1}^{k^{*}_{n}}\frac{1}{i\log^{\beta}(i+1))} \lesssim\frac{k^{*}_{n}}{n^{\alpha}}.
\end{align*}
Therefore, $\lim_{n\rightarrow\infty}\corrcoef^\top \Sigma_{u}^{+}\Xi_{z} \Sigma_{u}^{+}\corrcoef=0$ holds. 
\end{proof}

\begin{lemma}[Application of Theorem 5.1.4  in  \citet{vershynin2018high}]\label{Variant_Lemma1}
Assume $S$ is a subspace of dimension $d$ in $\mathbb{R}^{n}$ where $n\geq4$. Let $P_{S}$ denote the orthogonal projection onto $S$ and let $V$ denote a spherically symmetric random variable. Then, with at least $1-\delta$ probability, we have 
\begin{equation}\label{Variant_25}
    \frac{\|P_{S}V\|_{2}}{\|V\|_{2}}\leq\sqrt{\frac{d}{n}}+2\sqrt{\frac{\log(2/\delta)}{n}}.
\end{equation}
From this inequality, we also have
\begin{equation}\label{Variant_26}
    |\langle s, V\rangle |=|\langle s, P_{S}V\rangle |\leq \|s\|_{2}\|P_{S}V\|_{2}\leq  \|s\|_{2}\|V\|_{2}\left(\sqrt{\frac{d}{n}}+2\sqrt{\frac{\log(2/\delta)}{n}}\right).
\end{equation}
\end{lemma}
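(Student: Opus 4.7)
The plan is to reduce the problem to a concentration statement for the uniform distribution on the unit sphere, via the spherical symmetry of $V$, and then invoke the standard Lévy-type concentration inequality for 1-Lipschitz functions on the sphere (this is precisely what Theorem 5.1.4 of \citet{vershynin2018high} provides). First, I would write $V = \|V\|_2 \cdot U$ where $U := V/\|V\|_2$; by spherical symmetry $U$ is uniformly distributed on $\mathbb{S}^{n-1}$ and is independent of the radial part, so the quantity of interest satisfies the distributional equality
\[
    \frac{\|P_S V\|_2}{\|V\|_2} \stackrel{\mathcal{D}}{=} \|P_S U\|_2 =: f(U).
\]
This eliminates the factor $\|V\|_2$ entirely and reduces the lemma to a tail bound on the scalar random variable $f(U)$.

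Next, I would observe that $f : \mathbb{S}^{n-1} \to \mathbb{R}$ is $1$-Lipschitz, because for any $u,u' \in \mathbb{S}^{n-1}$, $|f(u) - f(u')| \leq \|P_S(u-u')\|_2 \leq \|u-u'\|_2$. The Lévy concentration inequality on the sphere (Theorem 5.1.4 in \citet{vershynin2018high}) then yields, for any $t>0$,
\[
    \Pr\bigl(f(U) \geq \mathbb{E}[f(U)] + t\bigr) \leq \exp\bigl(-n t^2/2\bigr).
\]
Setting the right-hand side equal to $\delta$ and solving for $t$ gives $t = \sqrt{2\log(1/\delta)/n}$, and since we only need $\sqrt{2\log(2/\delta)/n} \leq 2\sqrt{\log(2/\delta)/n}$ (which holds because $\sqrt{2} < 2$), the tail term in the lemma is accounted for.

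It remains to bound the expectation $\mathbb{E}[f(U)]$. By Jensen's inequality, $\mathbb{E}[f(U)] \leq \sqrt{\mathbb{E}[\|P_S U\|_2^2]}$, and by a symmetry/trace calculation—$\mathbb{E}[U U^\top] = I_n/n$, so $\mathbb{E}[\|P_S U\|_2^2] = \mathrm{tr}(P_S \mathbb{E}[UU^\top]) = \mathrm{tr}(P_S)/n = d/n$—we get $\mathbb{E}[f(U)] \leq \sqrt{d/n}$. Combining the expectation bound with the concentration bound gives $\|P_S V\|_2/\|V\|_2 \leq \sqrt{d/n} + 2\sqrt{\log(2/\delta)/n}$ with probability at least $1-\delta$, which is \eqref{Variant_25}.

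For the second inequality \eqref{Variant_26}, I would note that $P_S$ is a self-adjoint projection, so for any $s \in S$, $\langle s, V \rangle = \langle P_S s, V \rangle = \langle s, P_S V \rangle$; applying Cauchy–Schwarz and inserting the bound from \eqref{Variant_25} yields the stated inequality. The only subtle step is the constant matching at the end (going from $\sqrt{2}$ to $2$), which is handled by the elementary inequality $\sqrt{2} < 2$; the main conceptual obstacle is really just recognizing that spherical symmetry lets one decouple the direction from the radius, after which everything becomes a standard application of measure concentration on the sphere.
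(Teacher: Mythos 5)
Your proof is correct and follows exactly the canonical route implied by the paper's citation: use spherical symmetry to reduce $\|P_S V\|_2/\|V\|_2$ to $\|P_S U\|_2$ for $U\sim\mathrm{Unif}(\mathbb{S}^{n-1})$, observe this is a $1$-Lipschitz function of $U$ and apply sphere concentration (Vershynin Theorem 5.1.4), and bound $\mathbb{E}\|P_S U\|_2 \leq \sqrt{\mathbb{E}\|P_S U\|_2^2} = \sqrt{d/n}$ by Jensen plus the trace identity $\mathbb{E}[UU^\top]=I_n/n$. (The paper itself provides no proof of this lemma, so there is nothing to compare against directly.) One small caveat worth flagging: Theorem 5.1.4 of Vershynin is stated with an \emph{unspecified} absolute constant in the $\psi_2$-norm bound, so the explicit one-sided tail $\exp(-nt^2/2)$ you write is not literally what that theorem delivers; however, the sharp form of L\'evy's spherical isoperimetric inequality does give a constant of that order, and the paper's claimed deviation $2\sqrt{\log(2/\delta)/n}$ corresponds to the weaker two-sided tail $2\exp(-nt^2/4)$, so there is enough slack for the argument to go through regardless of the precise constant. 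The second inequality \eqref{Variant_26} is handled correctly: the identity $\langle s,V\rangle = \langle s,P_S V\rangle$ requires $s\in S$ (so $P_S s = s$), which you correctly note and which holds in the paper's applications.
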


\begin{lemma}[Theorem 3.1.1 in \citet{vershynin2018high}]\label{Variant_Lemma2}
Suppose that $Z\sim N(0,I_{n})$. Then,
\begin{equation*}
    \Pr(|\|Z\|_{2}-\sqrt{n}|\geq t)\leq 4e^{-t^{2}/4}.
\end{equation*}
\end{lemma}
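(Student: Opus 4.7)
The plan is to derive Lemma \ref{Variant_Lemma2} from Gaussian concentration of Lipschitz functions (Borell--TIS), combined with a short comparison of $\Ep\|Z\|_2$ against $\sqrt{n}$. First, I would observe that the map $z \mapsto \|z\|_2$ is $1$-Lipschitz with respect to the Euclidean norm (by the reverse triangle inequality). The Borell--TIS inequality applied to this $1$-Lipschitz function of a standard Gaussian then yields
\[
    \Pr(|\|Z\|_2 - \Ep\|Z\|_2| \geq s) \leq 2 e^{-s^2/2}, \qquad s \geq 0.
\]

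Second, I would pin $\Ep\|Z\|_2$ within $1$ of $\sqrt{n}$. Jensen's inequality gives the upper bound $\Ep\|Z\|_2 \leq \sqrt{\Ep\|Z\|_2^2} = \sqrt{n}$. For the matching lower bound, combine the identity $n = (\Ep\|Z\|_2)^2 + \Var(\|Z\|_2)$ with the variance estimate $\Var(\|Z\|_2) \leq 1$, itself a standard consequence of $1$-Lipschitz Gaussian concentration. This yields $\Ep\|Z\|_2 \geq \sqrt{n-1} \geq \sqrt{n} - 1$, so that $|\Ep\|Z\|_2 - \sqrt{n}| \leq 1$.

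Third, I would combine the two ingredients by the triangle inequality. For $t \geq 1$, the event $\{|\|Z\|_2 - \sqrt{n}| \geq t\}$ is contained in $\{|\|Z\|_2 - \Ep\|Z\|_2| \geq t-1\}$, whose probability is at most $2 e^{-(t-1)^2/2}$ by the first step. An elementary calculus check shows $2 e^{-(t-1)^2/2} \leq 4 e^{-t^2/4}$ for all $t \geq 0$ (the worst case is $t=2$), while for $t \in [0,1]$ the right-hand side already exceeds $1$ and the bound is vacuous. This recovers the stated inequality.

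The proof is essentially bookkeeping around a textbook concentration estimate (Theorem 3.1.1 of \citet{vershynin2018high}), so no serious conceptual obstacle arises. The only care needed is to track how the mean-to-$\sqrt{n}$ comparison loosens the sharper $2 e^{-s^2/2}$ Borell--TIS form into the stated $4 e^{-t^2/4}$ form; the constants $4$ and $1/4$ in the exponent are chosen precisely to absorb the $|\Ep\|Z\|_2 - \sqrt{n}| \leq 1$ slack.
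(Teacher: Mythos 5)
Your proof is correct, but it does not parallel the paper's: the paper provides no proof of this lemma at all, citing it wholesale as Theorem~3.1.1 of Vershynin (2018). Vershynin's own argument first concentrates $\|Z\|_2^2$ around $n$ via Bernstein's inequality for sub-exponential random variables and then passes to $\|Z\|_2$ through the elementary identity $|a-b|\,(a+b)=|a^2-b^2|$; this works for general sub-Gaussian coordinates but with unspecified absolute constants. You instead go directly through Gaussian concentration of the $1$-Lipschitz map $z\mapsto\|z\|_2$, which is the same tool the paper already invokes elsewhere as Theorem~\ref{Variant_Theorem6}, and this gives the explicit constants $4$ and $1/4$ cleanly. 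The self-contained Gaussian route is a genuine advantage here; the price is that it is Gaussian-specific, whereas the Bernstein route generalizes to sub-Gaussian vectors.

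One attribution slip worth flagging, though it does not break the proof: the bound $\Var(\|Z\|_2)\le 1$ is not a consequence of the $1$-Lipschitz tail bound alone. Integrating $\Pr(|\|Z\|_2-\Ep\|Z\|_2|\ge s)\le 2e^{-s^2/2}$ gives only $\Var(\|Z\|_2)\le 4$, and with $|\Ep\|Z\|_2-\sqrt n|\le 2$ the final inequality $2e^{-(t-2)^2/2}\le 4e^{-t^2/4}$ fails (the exponent gap peaks at $t=4$ with value $2>\log 2$). The sharper $\Var(\|Z\|_2)\le 1$ comes from the Gaussian Poincar\'e inequality applied to the same $1$-Lipschitz function --- a standard but distinct tool from the Borell--TIS tail bound. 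With that attribution corrected, the remaining bookkeeping (the event inclusion for $t\ge 1$, the vacuousness on $t\in[0,1]$, and the pointwise check that $2e^{-(t-1)^2/2}\le 4e^{-t^2/4}$ with the exponent gap maximized at $t=2$ where it equals $1/2<\log 2$) is all correct.
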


\begin{lemma}[Lemma 9 in \citet{koehler2022uniform}]\label{Variant_Lemma9}
Let $H$ be normally distributed with mean zero and variance $I_{d}$, that is, $H\sim N(0,I_{d})$. For any covariance matrix $\Sigma$, it holds that
\begin{equation}\label{Variant_72}
    \left(E\|\Sigma^{1/2}H\|_{2}\right)^{2}\geq\left(1-\frac{1}{r(\Sigma)}\right)\trace(\Sigma)
\end{equation}
and
\begin{equation*}
    \frac{1}{\trace(\Sigma)}\geq\left(1-\sqrt{\frac{8}{r(\Sigma)}}\right)E\left[\frac{1}{H^{T}\Sigma H}\right].
\end{equation*}
Consequently, it holds that
\begin{equation}\label{Variant_74}
    r(\Sigma)-1\leq r_{\|\cdot\|_{2}}(\Sigma)\leq r(\Sigma)
\end{equation}
and
\begin{equation*}
    1-\frac{4}{\sqrt{r(\Sigma)}}\leq\frac{R_{\|\cdot\|_{2}}(\Sigma)}{R(\Sigma)}\leq\left(1-\sqrt{\frac{8}{r(\Sigma^{2})}}\right)^{-1},
\end{equation*}
where we define
\begin{equation*}
    r(\Sigma)=\frac{\trace(\Sigma)}{\|\Sigma\|_{op}},\ \ \ R(\Sigma)=\frac{\trace(\Sigma)^{2}}{\trace(\Sigma^{2})},\ \ \
    r_{\|\cdot\|}(\Sigma)=\left(\frac{E\|\Sigma^{1/2}H\|_{*}}{\sup_{\|u\|\leq1}\|u\|_{\Sigma}}\right)^{2},\ \ \ 
\end{equation*}
and
\begin{equation*}
    R_{\|\cdot\|}(\Sigma)=\left(\frac{E\|\Sigma^{1/2}H\|_{*}}{E\|v^{*}\|_{\Sigma}}\right)^{2}.
\end{equation*}
\end{lemma}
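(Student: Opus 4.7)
The plan is to prove the four inequalities in order, since each later bound leans on the previous ones. Throughout, set $X:=\|\Sigma^{1/2}H\|_2^2=H^\top\Sigma H$, so $\mathbb{E}X=\trace(\Sigma)$ and $\Var(X)=2\trace(\Sigma^2)$, and note $\sup_{\|u\|_2\le 1}\|u\|_\Sigma=\|\Sigma\|_{\mathrm{op}}^{1/2}$.

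For the first bound \eqref{Variant_72}, the key identity is
\begin{equation*}
\trace(\Sigma)=\mathbb{E}\|\Sigma^{1/2}H\|_2^2=\bigl(\mathbb{E}\|\Sigma^{1/2}H\|_2\bigr)^2+\Var\bigl(\|\Sigma^{1/2}H\|_2\bigr).
\end{equation*}
The map $h\mapsto\|\Sigma^{1/2}h\|_2$ is $\|\Sigma\|_{\mathrm{op}}^{1/2}$-Lipschitz, so the Gaussian Poincaré inequality gives $\Var(\|\Sigma^{1/2}H\|_2)\le\|\Sigma\|_{\mathrm{op}}=\trace(\Sigma)/r(\Sigma)$. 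Rearranging yields the claim. For the second bound, I would control $1/X$ by combining Gaussian Lipschitz concentration of $\sqrt{X}$ around its mean, $\Pr(\sqrt{X}\le\mathbb{E}\sqrt{X}-t)\le e^{-t^2/(2\|\Sigma\|_{\mathrm{op}})}$, with the lower bound on $\mathbb{E}\sqrt{X}$ just proved. Splitting $\mathbb{E}[1/X]$ on the event $\{\sqrt{X}\ge(1-\alpha)\mathbb{E}\sqrt{X}\}$ with $\alpha\asymp 1/\sqrt{r(\Sigma)}$, bounding the complement via the exponential tail, and optimizing the truncation level produces the stated $(1-\sqrt{8/r(\Sigma)})$ constant.

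For the third claim, the definition gives $r_{\|\cdot\|_2}(\Sigma)=(\mathbb{E}\|\Sigma^{1/2}H\|_2)^2/\|\Sigma\|_{\mathrm{op}}$. Jensen's inequality immediately produces the upper bound $r_{\|\cdot\|_2}(\Sigma)\le\trace(\Sigma)/\|\Sigma\|_{\mathrm{op}}=r(\Sigma)$, and dividing \eqref{Variant_72} by $\|\Sigma\|_{\mathrm{op}}$ gives the lower bound $r_{\|\cdot\|_2}(\Sigma)\ge r(\Sigma)-1$. For the fourth claim, in the Euclidean case $\partial\|\Sigma^{1/2}H\|_2=\{\Sigma^{1/2}H/\|\Sigma^{1/2}H\|_2\}$, so $v^*=\Sigma^{1/2}H/\|\Sigma^{1/2}H\|_2$ and $\|v^*\|_\Sigma=\|\Sigma H\|_2/\|\Sigma^{1/2}H\|_2$. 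Thus
\begin{equation*}
\frac{R_{\|\cdot\|_2}(\Sigma)}{R(\Sigma)}=\frac{(\mathbb{E}\|\Sigma^{1/2}H\|_2)^2\,\trace(\Sigma^2)}{\trace(\Sigma)^2\bigl(\mathbb{E}[\|\Sigma H\|_2/\|\Sigma^{1/2}H\|_2]\bigr)^2}.
\end{equation*}
The numerator ratio $(\mathbb{E}\|\Sigma^{1/2}H\|_2)^2/\trace(\Sigma)$ is controlled by the first bound applied to $\Sigma$; for the denominator, I would apply the first bound to $\Sigma^2$ to pin down $\mathbb{E}\|\Sigma H\|_2^2=\trace(\Sigma^2)$ and use Lipschitz concentration to write $\mathbb{E}[\|\Sigma H\|_2/\|\Sigma^{1/2}H\|_2]$ as $\mathbb{E}\|\Sigma H\|_2\cdot\mathbb{E}[1/\|\Sigma^{1/2}H\|_2]$ plus a correlation term, bounding the latter via Cauchy--Schwarz and the second bound. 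The error from the concentration corrections on both sides then combines to give the claimed $1-4/\sqrt{r(\Sigma)}$ lower and $(1-\sqrt{8/r(\Sigma^2)})^{-1}$ upper factors.

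The main obstacle is the second step: bounding $\mathbb{E}[1/X]$ requires ruling out the small-probability event that $X$ is atypically close to zero, and the Gaussian tail must be balanced against the $1/X$ singularity with an absolute (not merely asymptotic) constant so that it can be reused cleanly in the fourth step. The fourth step inherits this difficulty through the $1/\|\Sigma^{1/2}H\|_2$ factor, and additionally requires that the joint concentration of $\|\Sigma H\|_2$ and $\|\Sigma^{1/2}H\|_2$ around their respective means be strong enough to decouple a product expectation at the desired $1/\sqrt{r(\cdot)}$ rate.
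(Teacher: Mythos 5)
The paper does not prove this lemma; it is cited directly as Lemma~9 of \citet{koehler2022uniform} in Section~\ref{app_sec:support}, so there is no paper-internal argument to compare against. Evaluated on its own merits, your sketch handles \eqref{Variant_72} and \eqref{Variant_74} correctly but leaves the other two bounds genuinely open. For \eqref{Variant_72}, the decomposition $\trace(\Sigma)=(\mathbb{E}\|\Sigma^{1/2}H\|_2)^2+\Var(\|\Sigma^{1/2}H\|_2)$ together with the $\|\Sigma\|_{\mathrm{op}}^{1/2}$-Lipschitz constant of $h\mapsto\|\Sigma^{1/2}h\|_2$ and the Gaussian Poincar\'e inequality is exactly right, and \eqref{Variant_74} then follows from Jensen on one side and division by $\|\Sigma\|_{\mathrm{op}}$ on the other, as you say.

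The second inequality is where your sketch has a real gap. On the complement of $\{\sqrt{X}\ge(1-\alpha)\mathbb{E}\sqrt{X}\}$ the integrand $1/X$ is unbounded, so you cannot control the bad-event contribution by a tail probability times a supremum. You must instead write $\mathbb{E}[1/X]=\int_0^\infty\Pr(X<1/t)\,dt$, feed in the one-sided Lipschitz concentration $\Pr(\sqrt{X}<\mathbb{E}\sqrt{X}-s)\le e^{-s^2/(2\|\Sigma\|_{\mathrm{op}})}$ only on the range where $1/\sqrt{t}\le\mathbb{E}\sqrt{X}$, keep track of the remaining mass, and tune the split to land on the absolute constant $\sqrt{8}$. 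You flag this yourself as the main obstacle, correctly; as written it is a missing step, not a detail.

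For the fourth claim, your plan for the lower bound does close: Cauchy--Schwarz gives $(\mathbb{E}\|v^{*}\|_{\Sigma})^2\le\trace(\Sigma^2)\,\mathbb{E}[1/X]$, and with \eqref{Variant_72} and the second inequality this yields $R_{\|\cdot\|_2}(\Sigma)/R(\Sigma)\ge(1-1/r(\Sigma))(1-\sqrt{8/r(\Sigma)})\ge 1-4/\sqrt{r(\Sigma)}$. But your ``decoupling plus correlation term'' plan for the upper bound is underspecified and unnecessary. Cleaner: apply Cauchy--Schwarz to $\sqrt{\|\Sigma H\|_2\|\Sigma^{1/2}H\|_2}$ and $\sqrt{\|\Sigma H\|_2/\|\Sigma^{1/2}H\|_2}$ to get $\mathbb{E}\|v^{*}\|_{\Sigma}\ge(\mathbb{E}\|\Sigma H\|_2)^2/\mathbb{E}[\|\Sigma H\|_2\|\Sigma^{1/2}H\|_2]$, bound the denominator by $\sqrt{\trace(\Sigma^2)\trace(\Sigma)}$ via Cauchy--Schwarz, and lower-bound $(\mathbb{E}\|\Sigma H\|_2)^2\ge(1-1/r(\Sigma^2))\trace(\Sigma^2)$ by the \emph{first} inequality applied to $\Sigma^2$. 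Combined with Jensen on the numerator this gives $R_{\|\cdot\|_2}(\Sigma)/R(\Sigma)\le(1-1/r(\Sigma^2))^{-2}$, which implies the stated $(1-\sqrt{8/r(\Sigma^2)})^{-1}$ whenever the latter is meaningful. The $r(\Sigma^2)$ in the stated bound is the tell: the mechanism is the first inequality on $\Sigma^2$, not concentration of $1/\|\Sigma^{1/2}H\|_2$, so the hard second inequality is not actually needed for the upper bound.
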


\begin{lemma}[Lemma 10 in \citet{koehler2022uniform}]
Let $H$ be normally distributed with mean zero and variance $I_{d}$, that is, $H\sim N(0,I_{d})$. For any covariance matrix $\Sigma$, it holds that with probability at least $1-\delta$
\begin{equation}\label{Variant_76}
    1-\frac{\|\Sigma^{1/2}H\|_{2}^{2}}{\trace(\Sigma)}\lesssim\frac{\log(4/\delta)}{\sqrt{R(\Sigma)}}
\end{equation}
and
\begin{equation*}
\|\Sigma H\|_{2}^{2}\lesssim \log(4/\delta)\trace(\Sigma^{2}).
\end{equation*}
Therefore, provided that $R(\Sigma)\gtrsim\log(4/\delta)^{2}$, it holds that
\begin{equation}\label{Variant_78}
    \left(\frac{\|\Sigma H\|_{2}}{\|\Sigma^{1/2} H\|_{2}}\right)^{2}\lesssim \log(4/\delta)\frac{\trace(\Sigma^{2})}{\trace(\Sigma)}.
\end{equation}
\end{lemma}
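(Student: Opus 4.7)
My plan is to exploit the rotational invariance of $H \sim N(0, I_d)$ to diagonalize $\Sigma$, and then reduce both displayed inequalities to Bernstein-type concentration for weighted sums of independent centered $\chi_1^2$ variables. Write $\Sigma = Q\Lambda Q^\top$ with $\Lambda = \mathrm{diag}(\lambda_1, \ldots, \lambda_d)$ (nonnegative eigenvalues) and $Q$ orthogonal; since $Q^\top H \sim N(0, I_d)$, I may assume without loss of generality that $\Sigma$ itself is diagonal. Then
\[
\|\Sigma^{1/2} H\|_2^2 = \sum_{i=1}^d \lambda_i H_i^2, \qquad \|\Sigma H\|_2^2 = \sum_{i=1}^d \lambda_i^2 H_i^2,
\]
are both linear combinations of independent $\chi_1^2$ variables with mean $\trace(\Sigma)$ and $\trace(\Sigma^2)$, respectively.

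\textbf{Bound (\ref{Variant_76}).} I would rewrite
\[
\frac{\|\Sigma^{1/2} H\|_2^2}{\trace(\Sigma)} - 1 = \sum_{i=1}^d w_i (H_i^2 - 1), \qquad w_i := \frac{\lambda_i}{\trace(\Sigma)}.
\]
Each summand is centered with sub-exponential norm of constant order, so Bernstein's inequality (Theorem 2.8.2 of \citet{vershynin2018high}) gives, with probability at least $1 - \delta/2$, a deviation of order
\[
\sqrt{\log(4/\delta)\,\sum_i w_i^2} \;\vee\; \log(4/\delta)\,\max_i w_i \;=\; \sqrt{\frac{\log(4/\delta)}{R(\Sigma)}} \;\vee\; \frac{\log(4/\delta)}{r(\Sigma)},
\]
where I used $\sum_i w_i^2 = \trace(\Sigma^2)/\trace(\Sigma)^2 = 1/R(\Sigma)$ and $\max_i w_i = \|\Sigma\|_{\mathrm{op}}/\trace(\Sigma) = 1/r(\Sigma)$. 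Since $R(\Sigma) \leq r(\Sigma)^2$ (Lemma 5 of \citet{bartlett2020benign}) implies $1/r(\Sigma) \leq 1/\sqrt{R(\Sigma)}$, and $\log(4/\delta) \geq 1$ for $\delta \leq 4/e$, both terms are at most $\log(4/\delta)/\sqrt{R(\Sigma)}$, yielding (\ref{Variant_76}) (in fact with absolute value).

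\textbf{Second inequality and ratio bound (\ref{Variant_78}).} Applying the identical Bernstein argument to the weighted sum $\sum_i (\lambda_i^2/\trace(\Sigma^2))(H_i^2 - 1)$ gives $|\|\Sigma H\|_2^2 - \trace(\Sigma^2)| \lesssim \log(4/\delta)\,\trace(\Sigma^2)$ with probability at least $1 - \delta/2$, hence the claimed upper bound $\|\Sigma H\|_2^2 \lesssim \log(4/\delta)\,\trace(\Sigma^2)$. A union bound then combines the two events: under $R(\Sigma) \gtrsim \log(4/\delta)^2$, the right-hand side of (\ref{Variant_76}) is at most $1/2$, which forces $\|\Sigma^{1/2}H\|_2^2 \geq \trace(\Sigma)/2$; dividing the upper bound on $\|\Sigma H\|_2^2$ by this lower bound produces (\ref{Variant_78}).

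\textbf{Main obstacle.} The only delicate step is the Bernstein bookkeeping that collapses the two tails (the sub-Gaussian term $\sqrt{\log/R}$ and the sub-exponential term $\log/r$) into a single expression of order $\log(4/\delta)/\sqrt{R(\Sigma)}$. This hinges on the inequality $R(\Sigma) \leq r(\Sigma)^2$ together with $\log(4/\delta) \geq 1$, both of which are mild. The result is essentially a diagonal instance of Hanson–Wright concentration for Gaussian quadratic forms, so no conceptually new ingredient is required beyond tracking constants carefully through Bernstein's inequality.
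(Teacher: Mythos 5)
Your proof is correct, and it matches the technique the paper itself uses: the paper cites this statement directly from Lemma~10 of \citet{koehler2022uniform} without reproducing a proof, but it proves the generalization (Lemma~\ref{Variant_10_Orthogonal}, with $\Sigma$ replaced by $\Sigma_1^{1/2}\Sigma_2^{1/2}$) by exactly the same steps you describe: diagonalize via rotational invariance of $H$, apply sub-exponential Bernstein (Theorem~2.8.2 of \citet{vershynin2018high}) to the weighted $\chi^2_1$ sum, absorb the sub-exponential tail into the sub-Gaussian one using $R(\Sigma)\le r(\Sigma)^2$ and $\log(4/\delta)\ge1$, and finish the ratio bound by lower-bounding the denominator under $R(\Sigma)\gtrsim\log(4/\delta)^2$. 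The one minor divergence is that you obtain the second display by a fresh Bernstein application to $\sum_i\lambda_i^2 H_i^2$, whereas the paper derives it from the first display together with $R\ge1$; both are valid and of comparable length, so the distinction is cosmetic.
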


\begin{theorem}[Theorem 3.25 in \citet{van2014probability}]\label{Variant_Theorem6}
Assume $f$ is $L$-Lipschitz continuous with respect to the Euclidean norm with $L>0$, that is, for $f:\mathbb{R}^{n}\rightarrow \mathbb{R}$, 
\begin{equation*}
    |f(x)-f(y)|\leq L\|x-y\|_{2},
\end{equation*}
for all $x,y\in\mathbb{R}^{n}$. Then, we have
\begin{equation}\label{Variant_23}
    \Pr(|f (Z)-E[f(Z)]|\geq t)\leq 2e^{-t^{2}/2L^{2}},
\end{equation}
where  $Z\sim N(0, I_{n})$.
\end{theorem}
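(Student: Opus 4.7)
The plan is to derive (\ref{Variant_23}) via the classical Herbst argument, which combines the Gaussian logarithmic Sobolev inequality with Chernoff bounding. First, by mollifying $f$ with a Gaussian kernel of small width $\varepsilon$, I would reduce to the case of smooth $f$ whose gradient satisfies $\|\nabla f\|_\infty \leq L$ pointwise; since smooth mollifiers of an $L$-Lipschitz $f$ inherit the same Lipschitz constant and converge uniformly on compacts, the general Lipschitz statement follows from this special case by dominated convergence applied to $f(Z) - E[f(Z)]$ as $\varepsilon \to 0$.

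Next, I would invoke the Gaussian log-Sobolev inequality, namely that for $Z \sim N(0, I_n)$ and every smooth $g$,
\begin{equation*}
E[g(Z)^2 \log(g(Z)^2)] - E[g(Z)^2] \log E[g(Z)^2] \leq 2 E[|\nabla g(Z)|^2].
\end{equation*}
Applying this with $g^2 = e^{\lambda f}$ (so that $|\nabla g|^2 = (\lambda^2/4) |\nabla f|^2 e^{\lambda f}$) and using the uniform bound $|\nabla f| \leq L$ yields a first-order differential inequality for $H(\lambda) := E[e^{\lambda f(Z)}]$. A standard manipulation rewrites this as $\frac{d}{d\lambda}\left(\lambda^{-1}\log H(\lambda)\right) \leq L^2/2$, and integrating from $0^+$ (using $\lim_{\lambda \to 0^+} \lambda^{-1} \log H(\lambda) = E[f(Z)]$) produces the sub-Gaussian moment generating function bound $E[e^{\lambda(f(Z) - E f(Z))}] \leq \exp(L^2 \lambda^2 / 2)$ valid for all $\lambda \in \mathbb{R}$ (the case $\lambda < 0$ follows by applying the same argument to $-f$).

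With this MGF bound in hand, the Chernoff/Markov argument gives $\Pr(f(Z) - E f(Z) \geq t) \leq \inf_{\lambda > 0} e^{-\lambda t + L^2 \lambda^2/2}$; optimizing at $\lambda = t/L^2$ produces the one-sided tail $e^{-t^2/(2L^2)}$. Applying the identical reasoning to $-f$, which is also $L$-Lipschitz, and summing the two tails yields the two-sided inequality (\ref{Variant_23}) with the stated constant $2$. The main obstacle is the Gaussian log-Sobolev inequality itself, but this is by now a classical theorem of Gross that can either be cited directly or proved via tensorization of the one-dimensional case together with the Bakry-Emery $\Gamma_2$ criterion applied to the Ornstein-Uhlenbeck semigroup; for the purposes of the present paper it is invoked as a standard black-box tool, and everything beyond it is a routine calculation.
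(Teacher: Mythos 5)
Your proof is correct. The paper does not actually prove Theorem \ref{Variant_Theorem6}; it cites it directly as Theorem 3.25 of \citet{van2014probability}, whose own proof proceeds by exactly the route you outline, namely the Gaussian logarithmic Sobolev inequality, the Herbst differential-inequality argument for the moment generating function, Chernoff bounding, and a union of the two one-sided tails obtained from $f$ and $-f$. The mollification step is a standard way to justify passing from smooth to general Lipschitz $f$; one can also skip it by appealing to Rademacher's theorem, since an $L$-Lipschitz function is differentiable a.e.\ with $\|\nabla f\|_2 \leq L$, and the log-Sobolev inequality applies to locally Lipschitz functions directly. Other classical proofs exist (the Pisier--Maurey Gaussian interpolation, the Tsirelson--Ibragimov--Sudakov semigroup argument, or the Gaussian isoperimetric inequality of Borell and Sudakov--Tsirelson, which gives the sharper constant $e^{-t^2/2L^2}$ without the prefactor $2$), but since the paper uses the result as a black box there is nothing to compare against; your Herbst-based argument is a complete and standard derivation.
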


\bibliographystyle{apecon}
\bibliography{main}

\end{document}